\title[Transport inequalities]{Transport Inequalities. A Survey}
\author{Nathael Gozlan, Christian L\'eonard}
    \address{Laboratoire d'analyse et math\'ematiques appliqu\'ees, UMR CNRS 8050. Universit\'e Paris Est. 5 bd Descartes, 77454 Marne la Vall\'ee, France}
    \email{nathael.gozlan@univ-mlv.fr}
     \address{Modal-X. Universit\'e Paris Ouest. B\^at.\! G, 200 av. de la R\'epublique. 92001 Nanterre, France}
 \email{christian.leonard@u-paris10.fr}
\newtheorem{thm}{Theorem}[section]
\newtheorem{lem}[thm]{Lemma}
\newtheorem{prop}[thm]{Proposition}
\newtheorem{cor}[thm]{Corollary}
\newtheorem{defi}[thm]{Definition}
\theoremstyle{remark}
\newtheorem{rem}[thm]{Remark}
\newtheorem{expl}[thm]{Example}
\newcommand{\D}{\displaystyle}
\newcommand{\X}{\mathcal{X}}
\newcommand{\Y}{\mathcal{Y}}
\newcommand{\PX}{\mathrm{P}(\X)}
\newcommand{\PpX}{\mathrm{P}_p(\X)}
\newcommand{\PXI}{\mathrm{P}(\X^\II)}
\newcommand{\R}{\mathbb{R}}
\newcommand{\Rk}{{\mathbb{R}^k}}
\def\AArm{\fam0 \rm}%
\newdimen\AAdi%
\newbox\AAbo%
\def\AAk#1#2{\setbox\AAbo=\hbox{#2}\AAdi=\wd\AAbo\kern#1\AAdi{}}%
\newcommand{\1}{{\ensuremath{{\AArm 1\AAk{-.8}{I}I}}}}
\renewcommand{\P}{\mathbb{P}}
\newcommand{\E}{\mathbb{E}}
\newcommand{\Var}{\operatorname{Var}}
\renewcommand{\epsilon}{\varepsilon}
\newcommand{\T}{\mathbf{T}}
\newcommand{\LSI}{\mathbf{LS}}
\newcommand{\rLSI}{\mathbf{rLS}}
\newcommand{\PI}{\mathbf{P}}
\newcommand{\WI}{\mathbf{W}_1\mathbf{I}}
\newcommand{\WWI}{\mathbf{W}_2\mathbf{I}}
\newcommand{\SG}{\mathbf{P}}
\newcommand{\prob}{probability measure }
\newcommand{\PXX}{\mathrm{P}(\X^2)}
\newcommand{\CX}{\mathcal{C}_b(\X)}
\newcommand{\BX}{\mathcal{B}_b(\X)}
\newcommand\scal{\!\cdot\!}
\newcommand{\mc}{\circledast}
\newcommand{\lsc}{lower semicontinuous}
\newcommand{\PU}{\mathrm{P}_\mathcal{U}}
\newcommand{\Pu}{\mathrm{P}_1}
\newcommand{\Pd}{\mathrm{P}_2}
\newcommand{\Pp}{\mathrm{P}_p}
\newcommand{\Pac}{\mathrm{P}^{\mathrm{ac}}}
\newcommand{\IX}{\int_\X}
\newcommand{\IXX}{\int_{\X^2}}
\newcommand{\IF}{I_F}
\newcommand{\IDV}{I}
\newcommand{\II}{\mathrm{I}}
\newcommand{\ii}[2]{#1_i^{\widetilde{#2}_i}}
\newcommand{\dd}{\mathbb{D}}
\def\LL{\mathcal L}
\newcommand\logsup[1]{\limsup_{#1\rightarrow\infty}\frac{1}{#1}\log}
\def\<{\langle}
\def\>{\rangle}
\newcommand{\boulette}[1]{$\bullet$\ Proof of #1.}
\newcommand{\Boulette}[1]{\par\medskip\noindent $\bullet$\ Proof of #1.}
\begin{document}

 \keywords{Transport inequalities, optimal transport, relative entropy, Fisher information, concentration of measure,
 deviation inequalities, logarithmic Sobolev inequalities, inf-convolution inequalities, large deviations}
 \subjclass[2000]{26D10, 60E15}

\begin{abstract}
This is a survey of recent developments in the area of transport
inequalities. We investigate their consequences in terms of
concentration and deviation inequalities and sketch their links
with other functional inequalities and also large deviation
theory.
\end{abstract}

\date{\today}
\maketitle

\section*{Introduction}

In the whole paper,  $\X$ is a polish (complete metric and
separable) space equipped with its Borel $\sigma$-field  and we
denote $\PX$ the set of all Borel probability measures on $\X.$

Transport inequalities  relate a cost $\mathcal{T}(\nu,\mu)$ of
transporting a generic probability measure $\nu\in\PX$ onto a
reference probability measure $\mu\in\PX$ with another functional
$J(\nu|\mu).$ A typical transport inequality is written:
$$\alpha(\mathcal{T}(\nu,\mu))\le J(\nu|\mu),\quad\textrm{ for all
}\nu\in\PX,$$ where $\alpha:[0,\infty)\to[0,\infty)$ is an
increasing function with $\alpha(0)=0$. In this case, it is said
that the reference probability measure $\mu$ satisfies
$\alpha(\mathcal{T})\le J.$

Typical transport inequalities are built with $\mathcal{T}=W^p_p$
where $W_p$ is the Wasserstein metric of order $p,$ and
$J(\cdot|\mu)=H(\cdot|\mu)$ is the relative entropy with respect
to $\mu.$ The left-hand side of $$\alpha(W_p^p)\le H$$ contains
$W$ which is built with some metric $d$ on $\X,$ while its
right-hand side is the relative entropy $H$ which, as Sanov's
theorem indicates, is a measurement of the difficulty for a large
sample of independent particles with common law $\mu$ to deviate
from the prediction of the law of large numbers. On the left-hand
side: a cost  for displacing  mass in terms of the ambient metric
$d$; on the right-hand side: a cost for displacing mass in terms
of fluctuations. Therefore, it is not  a surprise that this
interplay between displacement and fluctuations gives rise to a
quantification of how fast  $\mu(A^r)$ tends to 1 as  $r\ge 0$
increases, where $A^r:=\{x\in\X; d(x,y)\le r\textrm{ for some }
y\in A\}$ is the enlargement of size $r$  with respect to the
metric $d$ of the subset $A\subset\X.$ Indeed, we shall see that
such transport-entropy inequalities are intimately related to the
concentration of measure phenomenon and to deviation inequalities
for average observables of  samples.

Other transport inequalities are built with the Fisher information
$I(\cdot|\mu)$ instead of the relative entropy on the right-hand
side. It is known since Donsker and Varadhan, see \cite{DV75a,DS},
that $I$ is a measurement of the fluctuations of the occupation
measure of a very long trajectory of a time-continuous Markov
process with invariant ergodic law $\mu.$ Again, the
transport-information inequality $\alpha(W_p^p)\le I$ allows to
quantify concentration and deviation properties of $\mu.$

Finally, there exist also free transport inequalities. They
compare a transport cost with a free relative entropy which is the
large deviation rate function of the spectral empirical  measures
of large random matrices, as was proved by Ben Arous and Guionnet
\cite{BAG97}.

This is a survey paper about transport inequalities: a research
topic which flied off in 1996 with the publications of several
papers on the subject by Dembo,  Marton, Talagrand and Zeitouni
\cite{D96, DZ96,M96a,M96b,T96}. It was known from the end of the
sixties that the total variation norm of the difference of two
probability measures is controlled by their relative entropy. This
is expressed by the Csisz\'ar-Kullback-Pinsker inequality
\cite{Pin64, Csi67,Kul67} which is a transport inequality from
which deviation inequalities have been derived. But the keystone
of the edifice was the discovery in 1986 by Marton  \cite{M86} of
the link between transport inequalities and the concentration of
measure. This result was motivated by information theoretic
problems; it remained unknown to the analysts and probabilists
during ten years. Meanwhile, during the second part of the
nineties, important progresses about the understanding of optimal
transport have been achieved, opening the way to new unified
proofs of several related functional inequalities, including a
certain class of transport inequalities.

Concentration of measure inequalities can be obtained by means of
other functional inequalities such as isoperimetric and
logarithmic Sobolev inequalities, see the textbook by Ledoux
\cite{Led} for an excellent account on the subject. Consequently,
one expects that there are deep connections between these various
inequalities. Indeed, during the recent years, these links have
been explored and some of them have been clarified.

These recent developments will be sketched in the following pages.

No doubt that our treatment of this vast subject fails to be
exhaustive. We apologize in advance for all kind of omissions. All
comments, suggestions and reports of omissions  are welcome.

 \tableofcontents

\thanks{The authors are grateful to
the organizers of the conference on Inhomogeneous Random Systems
for this opportunity to present transport inequalities to a broad
community of physicists and mathematicians.}

\section{An overview}\label{sec-overview}

In order to present as soon as possible a couple of important
transport inequalities and their consequences in terms of
concentration of measure and deviation inequalities, let us recall
precise definitions of the optimal transport cost and the relative
entropy.

\subsection*{Optimal transport cost} Let   $c$ be a $[0,\infty)$-valued \lsc\ function on
the polish product space $\X^2$ and fix $\mu,\nu\in\PX.$ The
Monge-Kantorovich optimal transport problem is
\begin{equation}\label{MK}\tag{MK}
    \textsl{Minimize }\pi\in\PXX\mapsto\IXX c(x,y)\,d\pi(x,y)\in [0,\infty]\quad  \textsl{ subject to
    } \pi_0=\nu,\pi_1=\mu
\end{equation}
where $\pi_0,\pi_1\in P(\X)$ are the first and second marginals of
$\pi\in \PXX.$ Any $\pi\in\PXX$ such that $ \pi_0=\nu$ and
$\pi_1=\mu$ is called a \emph{coupling} of $\nu$ and $\mu.$ The
value of this convex minimization problem is
\begin{equation}\label{eq-14}
\mathcal{T}_c(\nu,\mu):=\inf\left\{ \IXX
c(x,y)\,d\pi(x,y);\pi\in\PXX; \pi_0=\nu,\pi_1=\mu\right\}\in
[0,\infty].
\end{equation}
It is called the \emph{optimal cost} for transporting $\nu$ onto
$\mu.$ Under the natural assumption that $c(x,x)=0,$ for all
$x\in\X,$ we have: $\mathcal{T}_c(\mu,\mu)=0,$ and
$\mathcal{T}_c(\nu,\mu)$ can be interpreted as a cost for coupling
$\nu$ and $\mu.$
\\
A popular cost function is $c=d^p$ with $d$ a  metric on $\X$ and
$p\ge1.$ One can prove that under some conditions
$$
W_p(\nu,\mu):=\mathcal{T}_{d^p}(\nu,\mu)^{1/p}
$$
defines a metric on a subset of $\PX$. This is the
\emph{Wasserstein metric} of order $p$ (see e.g \cite[Chp 6]{Vill2}).
A deeper investigation of optimal transport is presented at
Section \ref{section optimal transport}. It will be necessary for
a better understanding of transport inequalities.

\subsection*{Relative entropy}
The relative entropy with respect to $\mu\in\PX$ is defined by
\begin{equation*}
    H(\nu|\mu)=\left\{%
\begin{array}{ll}
    \int_\X\log\left(\frac{d\nu}{d\mu}\right)\,d\nu & \hbox{if }\nu\ll\mu \\
    +\infty & \hbox{otherwise} \\
\end{array}%
\right.,\quad \nu\in\PX.
\end{equation*}
For any probability measures $\nu\ll\mu,$ one can rewrite
$H(\nu|\mu)=\int h(d\nu/d\mu)\,d\mu$ with $h(t)=t\log t-t+1$ which
is a strictly convex nonnegative function such that
$h(t)=0\Leftrightarrow t=1.$

\begin{center}
\scalebox{1} 
{
\begin{pspicture}(0,-1.7429688)(6.955,1.7129687)
\psline[linewidth=0.02cm,arrowsize=0.05291667cm
2.0,arrowlength=1.4,arrowinset=0.4]{->}(2.5221875,-1.6770313)(2.5221875,1.7029687)
\psline[linewidth=0.02cm,arrowsize=0.05291667cm
2.0,arrowlength=1.4,arrowinset=0.4]{->}(0.9421875,-1.2970313)(6.9021873,-1.3170313)
\psbezier[linewidth=0.04,dotsize=0.07055555cm
2.0]{*-}(2.5421875,-0.49703124)(2.5221875,-1.0770313)(2.8614914,-1.281493)(3.1621876,-1.2770313)(3.4628835,-1.2725695)(5.3821874,-0.59703124)(6.7221875,0.60296875)
\psline[linewidth=0.04cm,tbarsize=0.07055555cm
5.0,rbracketlength=0.15]{)-}(2.5221875,1.3829688)(1.0821875,1.3829688)
\usefont{T1}{ptm}{m}{n}
\rput(2.2696874,-1.5770313){\small $0$}
\usefont{T1}{ptm}{m}{n}
\rput(3.1696875,-1.5570313){\small $1$}
\usefont{T1}{ptm}{m}{n}
\rput(6.6896877,-1.5570313){\small $t$}
\usefont{T1}{ptm}{m}{n}
\rput(0.5096875,1.3829688){\small $\infty$}
\usefont{T1}{ptm}{m}{n}
\rput(2.1296875,-0.49703124){\small $1$}
\psdots[dotsize=0.12,dotstyle=|](3.1221876,-1.2770313)
\end{pspicture}
}
\par\medskip
\textit{Graphic representation of $h(t)=t\log t-t+1.$}
\end{center}
    \par\medskip
Hence, $\nu\mapsto H(\nu|\mu)\in[0,\infty]$ is a convex function
and $ H(\nu|\mu)=0$ if and only if $\nu=\mu.$

\subsection*{Transport inequalities}
We can now define a general class of inequalities involving transport costs.
\begin{defi}[Transport inequalities]
Besides the cost function $c,$ consider also two functions
$J(\,\cdot\,|\mu):\PX\to [0,\infty]$ and
$\alpha:[0,\infty)\to[0,\infty)$ an increasing function such that
$\alpha(0)=0.$ One says that $\mu\in\PX$ satisfies the
\emph{transport inequality $\alpha(\mathcal{T}_c)\leq J$} if
\begin{equation}\label{eqL-b}\tag{$\alpha(\mathcal{T}_c)\leq J$}
    \alpha(\mathcal{T}_c(\nu,\mu))\le
    J(\nu|\mu),\quad \textrm{for all }\nu\in \PX.
    \end{equation}
When $J(\,\cdot\,)=H(\,\cdot\,|\mu)$, one talks about \emph{transport-entropy inequalities}.
\end{defi}
For the moment, we focus on transport-entropy inequalities, but in Section \ref{section transport-information}, we shall encounter the class of transport-information inequalities, where the
functional $J$ is the Fisher information.

Note that, because of  $ H(\mu|\mu)=0,$ for the transport-entropy
inequality to hold true, it is necessary that
$\alpha(\mathcal{T}_c(\mu,\mu))=0.$ A sufficient condition for the
latter equality is
\begin{itemize}
    \item $c(x,x)=0,$ for all $x\in\X$ and
    \item $\alpha(0)=0$.
\end{itemize}
This will always be assumed in the remainder of this article.

Among this general family of inequalities, let us isolate the
classical $\T_1$ and $\T_2$ inequalities. For $p=1$ or $p=2$, one
says that $\mu\in\Pp:=\{\nu\in\PX;\int
d(x_o,\cdot)^p\,d\nu<\infty\}$ satisfies the inequality $\T_p(C)$,
with $C>0$ if
\begin{equation*}
\tag{$\T_p(C)$}
    W_p^2(\nu,\mu)\le C H(\nu|\mu),
\end{equation*}
for all $\nu\in\PX.$
\begin{rem}\label{rem-a}
Note that this inequality implies that $\mu$ is such that
$H(\nu|\mu)=\infty$ whenever $\nu\not\in\Pp.$
\end{rem}
With the previous notation, $\T_1(C)$ stands for the inequality
$\displaystyle C^{-1}\mathcal{T}_d^2\leq H$ and $\T_2(C)$ for the
inequality $\displaystyle C^{-1}\mathcal{T}_{d^2}\leq H$. Applying
Jensen inequality, we get immediately that
\begin{equation}\label{eq-18}
    \mathcal{T}_d^2(\nu,\mu)\leq \mathcal{T}_{d^2}(\nu,\mu).
\end{equation}
As a consequence, for a given metric $d$ on $\X$, the inequality
$\T_1$ is always weaker than the inequality $\T_2$.

We now present two important examples of transport-entropy
inequalities: the Csisz\'ar-Kullback-Pinsker inequality, which is
a $\T_1$ inequality and Talagrand's $\T_2$ inequality for the
Gaussian measure.

\subsection*{Csisz\'ar-Kullback-Pinsker inequality}
The total variation distance between two probability measures $\nu$ and $\mu$ on $\X$ is defined by
$$\|\nu-\mu\|_{TV}=\sup|\nu(A)-\mu(A)|,$$
where the supremum runs over all measurable $A\subset \X.$ It
appears that the total variation distance is an optimal
transport-cost. Namely, consider the so-called Hamming metric
$$d_H(x,y)=\mathbf{1}_{x\not=y},\quad x,y\in \X,$$ which assigns the
value $1$ if $x$ is different from $y$ and the value $0$
otherwise.
Then we have the following result whose proof can be found in e.g
\cite[Lemma 2.20]{Mas07}.
\begin{prop}
For all $\nu,\mu \in \PX$,
$\mathcal{T}_{d_H}(\nu,\mu)=\|\nu-\mu\|_{TV}.$
\end{prop}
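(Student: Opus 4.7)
The plan is to rewrite the transport cost in a more transparent form and then to prove two matching inequalities. Since $d_H(x,y)=\mathbf{1}_{x\neq y}$, for any coupling $\pi$ of $\nu$ and $\mu$ we have
$$
\int_{\X^2} d_H(x,y)\,d\pi(x,y)=\pi(\{x\neq y\})=1-\pi(\{x=y\}).
$$
So minimizing over couplings amounts to maximizing $\pi(\Delta)$ where $\Delta=\{(x,x):x\in\X\}$ is the diagonal. The goal then splits into showing $\mathcal{T}_{d_H}(\nu,\mu)\geq\|\nu-\mu\|_{TV}$ for every coupling, and exhibiting one coupling for which the reverse inequality holds.

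For the lower bound, I would fix a measurable set $A\subset\X$ and note that for any coupling $\pi$,
$$
\nu(A)-\mu(A)=\int_{\X^2}\bigl(\mathbf{1}_A(x)-\mathbf{1}_A(y)\bigr)\,d\pi(x,y)\leq\int_{\X^2}\mathbf{1}_{x\neq y}\,d\pi(x,y),
$$
since the integrand $\mathbf{1}_A(x)-\mathbf{1}_A(y)$ vanishes on the diagonal and is bounded above by $1$ elsewhere. Taking the supremum over $A$ gives $\|\nu-\mu\|_{TV}\leq\int d_H\,d\pi$, and infimizing over $\pi$ yields $\|\nu-\mu\|_{TV}\leq\mathcal{T}_{d_H}(\nu,\mu)$.

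For the upper bound, I would build the canonical maximal coupling. Let $\lambda$ be any common dominating measure (say $\lambda=(\nu+\mu)/2$) with densities $f=d\nu/d\lambda$, $g=d\mu/d\lambda$, and set $m:=\int(f\wedge g)\,d\lambda$. A standard computation using the Hahn--Jordan decomposition of $\nu-\mu$ gives $m=1-\|\nu-\mu\|_{TV}$. If $m=1$ then $\nu=\mu$ and the diagonal coupling works; otherwise define
$$
\pi:=m\,(\mathrm{Id},\mathrm{Id})_{\#}\!\left(\frac{f\wedge g}{m}\,\lambda\right)+(1-m)\,\left(\frac{(f-g)_+}{1-m}\lambda\right)\otimes\left(\frac{(g-f)_+}{1-m}\lambda\right).
$$
A direct check of the marginals, using $f=f\wedge g+(f-g)_+$ and similarly for $g$, shows $\pi_0=\nu$ and $\pi_1=\mu$. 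Since the second component is supported off the diagonal only when necessary, $\pi(\{x\neq y\})\leq 1-m=\|\nu-\mu\|_{TV}$, which gives $\mathcal{T}_{d_H}(\nu,\mu)\leq\|\nu-\mu\|_{TV}$ and closes the proof.

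The main technical point is the identity $m=1-\|\nu-\mu\|_{TV}$, and, relatedly, making sure the explicit coupling is well-defined even when $\nu$ and $\mu$ are not mutually absolutely continuous. Both are handled cleanly by passing to the dominating measure $\lambda$; after that, the verification of marginals and of the bound on $\pi(\{x\neq y\})$ is routine.
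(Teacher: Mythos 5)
Your proof is correct and is precisely the standard maximal-coupling argument: the lower bound via $\nu(A)-\mu(A)\le\pi(x\neq y)$ for every coupling, and the upper bound via the explicit coupling that places mass $m=\int f\wedge g\,d\lambda=1-\|\nu-\mu\|_{TV}$ on the diagonal. The paper does not reproduce a proof but defers to \cite[Lemma 2.20]{Mas07}, which proceeds in essentially the same way, so nothing further is needed.
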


The following theorem gives the celebrated Csisz\'ar-Kullback-Pinsker inequality (see \cite{Pin64,
Csi67,Kul67}).
\begin{thm}
The inequality
$$\|\nu-\mu\|_{TV}^2\leq \frac{1}{2}H(\nu|\mu),$$
holds for all probability measures $\mu,\nu$ on $\X.$
\end{thm}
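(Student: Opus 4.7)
The strategy is a two-step reduction: first a data-processing argument to reduce to the scalar Bernoulli case, then a one-variable convexity check.

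First I would dispose of the trivial case $\nu \not\ll \mu$ (where $H(\nu|\mu) = +\infty$) and, by replacing $A$ with its complement if necessary, rewrite the total variation as a signed supremum,
$$\|\nu-\mu\|_{TV} \;=\; \sup_A \bigl(\nu(A)-\mu(A)\bigr).$$
So it suffices to fix a measurable set $A \subset \X$, set $p = \nu(A)$, $q = \mu(A)$, and prove $(p-q)^2 \leq \frac{1}{2}H(\nu|\mu)$ uniformly in $A$.

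Next I would perform the data-processing step. Writing $H(\nu|\mu) = \int_\X h(f)\,d\mu$ with $f = d\nu/d\mu$ and $h(t) = t\log t - t + 1$, I split the integral as $\int_A + \int_{A^c}$ and apply Jensen's inequality to the convex function $h$ on each piece, normalizing by $\mu(A) = q$ and $\mu(A^c) = 1-q$. Since $\int_A f\,d\mu = p$ and $\int_{A^c} f\,d\mu = 1-p$, this yields
$$H(\nu|\mu) \;\geq\; q\,h(p/q) + (1-q)\,h\!\bigl((1-p)/(1-q)\bigr) \;=\; K(p,q),$$
where $K(p,q) := p\log(p/q) + (1-p)\log\bigl((1-p)/(1-q)\bigr)$ is the relative entropy of the Bernoulli $B_p$ with respect to $B_q$; this is exactly the contraction of $H$ under the measurable map $\mathbf{1}_A : \X \to \{0,1\}$.

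The remaining task is the scalar inequality $K(p,q) \geq 2(p-q)^2$ for all $p,q \in [0,1]$. Fixing $q$ and setting $\phi(p) := K(p,q) - 2(p-q)^2$, direct computation gives $\phi(q) = 0$, $\phi'(q) = \log\frac{p(1-q)}{q(1-p)}\big|_{p=q} - 0 = 0$, and
$$\phi''(p) \;=\; \frac{1}{p(1-p)} - 4 \;\geq\; 0 \quad \text{for } p \in (0,1),$$
since $p(1-p) \leq 1/4$. Convexity of $\phi$ together with the vanishing of $\phi$ and $\phi'$ at $p=q$ forces $\phi \geq 0$ on $[0,1]$. Chaining this with the data-processing bound and taking the supremum over $A$ yields the claim. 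The main (and really only) delicate point is the sharp scalar estimate $p(1-p) \leq 1/4$: this is exactly what produces the optimal constant $1/2$, while the rest of the argument is pure Jensen.
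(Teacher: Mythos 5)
Your proof is correct, but it follows a genuinely different route from the one in the paper. You reduce to the two-point case: for each measurable $A$ you push $\nu$ and $\mu$ forward through $\mathbf{1}_A$, use Jensen on $h(t)=t\log t-t+1$ over $A$ and $A^c$ to get the data-processing bound $H(\nu|\mu)\geq K(p,q)$ with $p=\nu(A)$, $q=\mu(A)$, and then establish the scalar Bernoulli inequality $K(p,q)\geq 2(p-q)^2$ by the second-derivative computation $\partial_p^2 K=\frac{1}{p(1-p)}\geq 4$. The paper instead uses the representation $\|\nu-\mu\|_{TV}=\frac12\int|1-f|\,d\mu$ and works globally: writing $u=f-1$ and using the Taylor formula $(1+t)\log(1+t)-t=t^2\int_0^1\frac{1-s}{1+st}\,ds$, it applies Cauchy--Schwarz on $\X\times[0,1]$, the second factor evaluating exactly to $\frac12$ because $\int u\,d\mu=0$. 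Both arguments produce the sharp constant; yours isolates where sharpness comes from (the bound $p(1-p)\leq 1/4$ in the Bernoulli case, to which the general case is equivalent by contraction of $H$ under measurable maps), while the paper's is a one-shot global estimate that never needs the reduction or the one-variable analysis. Your steps are all sound, including the handling of $\nu\not\ll\mu$ and the convexity argument forcing $\phi\geq 0$ from $\phi(q)=\phi'(q)=0$ and $\phi''\geq 0$.
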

In other words, any probability $\mu$ on $\X$ enjoy the inequality $\T_1(1/2)$ with respect to the Hamming distance $d_H$ on $\X.$
\proof The following proof is taken from \cite[Remark 22.12]{Vill2} and is attributed to Talagrand.
Suppose that $H(\nu | \mu)<+\infty$ (otherwise there is nothing to prove) and let $f=\frac{d\nu}{d\mu}$ and $u=f-1$. By definition and since $\int u\,d\mu=0$, $$H(\nu|\mu)=\int_\X f\log f\,d\mu =\int_\X (1+u)\log(1+u)-u\,d\mu.$$
The function $\varphi(t)=(1+t)\log(1+t)-t$, verifies $\varphi'(t)=\log(1+t)$ and $\varphi''(t)=\frac{1}{1+t}$, $t>-1$. So, using a Taylor expansion,
$$\varphi(t)=\int_0^t (t-x)\varphi''(x)\,dx=t^2\int_0^1\frac{1-s}{1+st}\,ds,\quad t>-1.$$
So,
$$H(\nu | \mu)=\int_{\X\times[0,1]} \frac{u^2(x)(1-s)}{1+su(x)}\,ds\,d\mu(x).$$
According to Cauchy-Schwarz inequality,
\begin{align*}
    \Big(\int_{\X\times[0,1]} |u|(x)& (1-s)\,d\mu(x)ds\Big)^2\\
    &\leq \int_{\X\times[0,1]} \frac{u(x)^2(1-s)}{1+su(x)}\,d\mu(x)ds\cdot \int_{\X\times[0,1]} (1-s)(1+su(x))\,d\mu(x)ds\\
    &= \frac{H(\nu|\mu)}{2}.
\end{align*}
Since $\|\nu-\mu\|_{TV}=\frac{1}{2}\int |1-f|\,d\mu$, the
left-hand side equals $\|\nu-\mu\|_{TV}^2$ and this completes the
proof.
\endproof

\subsection*{Talagrand's transport inequality for the Gaussian measure}

In \cite{T96}, Talagrand proved the following transport inequality $\T_2$
for the standard Gaussian measure $\gamma$ on $\R$ equipped with the standard distance $d(x,y)=|x-y|$.
\begin{thm}
The standard Gaussian measure $\gamma$ on $\R$ verifies
\begin{equation}\label{eq-11}
    W_2^2(\nu,\gamma)\le 2 H(\nu|\gamma),
\end{equation}
for all $\nu\in \mathrm{P}(\R).$
\end{thm}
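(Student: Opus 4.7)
The natural plan is to exploit the explicit structure of optimal transport on the real line: the optimal map between $\gamma$ and any $\nu \in \mathrm{P}_2(\mathbb{R})$ is the monotone rearrangement. By a standard density argument, I would first reduce to the case where $\nu$ has a smooth, strictly positive density, so that $\nu\ll\gamma$ (otherwise $H(\nu|\gamma)=+\infty$) and the monotone map $T:\mathbb{R}\to\mathbb{R}$ with $T_\#\gamma=\nu$ is a $C^1$ diffeomorphism with $T'>0$. Then
\[
W_2^2(\nu,\gamma)=\int_{\mathbb{R}}(T(x)-x)^2\,d\gamma(x).
\]

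Next, I would rewrite the relative entropy using $T$. Writing $\phi(x)=(2\pi)^{-1/2}e^{-x^2/2}$ for the density of $\gamma$ and using $g(T(x))\,T'(x)=\phi(x)$ for the density $g$ of $\nu$, a change of variables gives
\[
H(\nu|\gamma)=\int_{\mathbb{R}}\log\frac{g(T(x))}{\phi(T(x))}\,d\gamma(x)
=\int_{\mathbb{R}}\left[\frac{T(x)^2-x^2}{2}-\log T'(x)\right]d\gamma(x).
\]
Comparing this with $W_2^2(\nu,\gamma)$ after expanding $(T(x)-x)^2$, the desired inequality $W_2^2(\nu,\gamma)\le 2H(\nu|\gamma)$ reduces to
\[
\int_{\mathbb{R}} x^2\,d\gamma(x)-\int_{\mathbb{R}} x\,T(x)\,d\gamma(x)\le -\int_{\mathbb{R}}\log T'(x)\,d\gamma(x).
\]

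The key manipulation is integration by parts: since $\phi'(x)=-x\phi(x)$, one gets $\int x\,T(x)\,d\gamma(x)=\int T'(x)\,d\gamma(x)$, and $\int x^2\,d\gamma(x)=1$. The inequality becomes
\[
\int_{\mathbb{R}}\bigl[T'(x)-1-\log T'(x)\bigr]\,d\gamma(x)\ge 0,
\]
which is immediate from the elementary fact that $t-1-\log t\ge 0$ for all $t>0$, with equality iff $t=1$. Finally, one passes from smooth densities to arbitrary $\nu$ by approximation, using lower semicontinuity of both $W_2^2(\cdot,\gamma)$ and $H(\cdot|\gamma)$ in the weak topology.

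The main obstacle I foresee is rigorously justifying the integration by parts in the cross term: for a general $\nu\in\mathrm{P}_2(\mathbb{R})$ the monotone map $T$ is only nondecreasing and need not be $C^1$, so $T'$ should be read as the distributional/Lebesgue derivative, and the boundary terms at $\pm\infty$ must be controlled using the Gaussian decay of $\phi$ together with $\int T^2\,d\gamma=\int y^2\,d\nu<\infty$. Once these regularity issues are handled (via convolution with a small Gaussian to smooth $\nu$, then letting the smoothing parameter go to zero), the algebraic identities above yield the result.
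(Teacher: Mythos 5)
Your proof is correct and is essentially the paper's own argument (Talagrand's): the monotone rearrangement pushing $\gamma$ onto $\nu$, the one-dimensional Monge--Amp\`ere relation $g(T(x))T'(x)=\phi(x)$, the integration by parts turning $\int xT\,d\gamma$ into $\int T'\,d\gamma$, and the elementary inequality $t-1-\log t\ge 0$. The paper merely phrases the computation for a general potential $V$ before specializing to $V(x)=x^2/2+\log(2\pi)/2$, and is in fact less careful than you are about the regularity of $T$.
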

This inequality is sharp. Indeed, taking $\nu$ to be a translation
of $\gamma,$ that is a normal law with unit variance, we easily
check that equality holds true.

The following notation will appear frequently in the sequel: if $T:\X\to \X$ is a measurable map, and $\mu$ is a probability measure on $\X$, the \emph{image of $\mu$ under $T$} is the probability measure denoted by $T_\#\mu$ and defined by
\begin{equation}
T_\#\mu(A)=\mu\left( T^{-1}(A)\right), 
\end{equation}
for all Borel set $A\subset \X$.
\proof
In the following lines, we present the short and elegant proof of
\eqref{eq-11}, as it appeared in \cite{T96}.
Let us  consider a reference measure
$$
d\mu(x)=e^{-V(x)}\,dx.
$$
We shall specify later to the Gaussian case, where the potential
$V$ is given by $V(x)=x^2/2+\log(2\pi)/2,$ $x\in\R.$ Let $\nu$ be
another probability measure on $\R.$ It is known since Fr\'echet
that any measurable map $y=T(x)$ which verifies the equation
\begin{equation}\label{eq-09}
    \nu((-\infty, T(x)])=\mu((-\infty,x]),\quad x\in\R
\end{equation}
is a coupling of $\nu$ and $\mu,$ i.e.\ such that $\nu=T_\#\mu,$
which minimizes the average squared distance (or equivalently:
which maximizes the correlation), see \eqref{eq-08} below for a
proof of this statement. Such a transport map is called a monotone
rearrangement. Clearly $T$ is increasing, and assuming from now on
that $\nu=f\mu$ is absolutely continuous with respect to $\mu,$
one sees that $T$ is Lebesgue almost everywhere differentiable
with $T'>0.$ Equation \eqref{eq-09} becomes for all real $x,$
$\int_{-\infty}^{T(x)}f(z)e^{-V(z)}\,dz=\int_{-\infty}^{x}e^{-V(z)}\,dz.$
Differentiating, one obtains
\begin{equation}\label{eq-10}
    T'(x)f(T(x))e^{-V(T(x))}=e^{-V(x)},\quad x\in\R.
\end{equation}
The relative entropy writes: $H(\nu|\mu)=\int
\log(f)\,d\nu=\int\log(f(T(x))\,d\mu$ since $\nu=T_\#\mu.$
Extracting $f(T(x))$ from \eqref{eq-10} and plugging it into this
identity, we obtain
$$
H(\nu|\mu)=\int [V(T(x))-V(x)-\log T'(x)]\,e^{-V(x)}\,dx.
$$
On the other hand, we have $\int (T(x)-x)V'(x)e^{-V(x)}\,dx=\int
(T'(x)-1)e^{-V(x)}\,dx$ as a result of an integration by parts.
Therefore,
\begin{equation}\label{eq-12}
\begin{split}
    H(\nu|\mu)&=\int \Big(V(T(x))-V(x)-V'(x)[T(x)-x]\Big)\,d\mu(x)\\
    &\hskip 3cm +\int (T'(x)-1-\log T'(x))\, d\mu(x).\\
    &\ge \int \Big(V(T(x))-V(x)-V'(x)[T(x)-x]\Big)\,d\mu(x)
\end{split}
\end{equation}
where we took advantage of $b-1-\log b\ge0$ for all $b>0,$ at the
last inequality. Of course,  the last integral  is nonnegative if
$V$ is assumed to be convex.
\\
Considering the Gaussian potential $V(x)=x^2/2+\log(2\pi)/2,$
$x\in\R,$ we have shown that
$$
H(\nu|\gamma)\ge\int_\R (T(x)-x)^2/2\ d\gamma(x)\ge
W_2^2(\nu,\gamma)/2
$$
for all $\nu\in\mathrm{P}(\R),$ which is \eqref{eq-11}.
\endproof

\subsection*{Concentration of measure}

If $d$ is a metric on $\X,$ for any $r\ge0,$ one defines the
$r$-neighborhood of the set  $A\subset\X$  by
\[A^r:=\{x\in \X; d(x,A)\leq r\},\quad r\geq 0,\]
where $d(x,A):=\inf_{y\in A}d(x,y)$ is the distance of $x$ from
$A.$

Let $\beta:[0,\infty)\to \R^+$ such that $\beta(r)\to 0$ when $r\to \infty$; it is said that the probability measure $\mu$ verifies the
\emph{concentration inequality} with profile $\beta$ if
$$\mu(A^r)\geq 1-\beta(r),\quad r\geq 0,$$
for all measurable $A\subset \X$, with $\mu(A)\geq 1/2.$

According to the following classical proposition, the
concentration of measure (with respect to metric enlargement) can
be alternatively described in terms of deviations of Lipschitz
functions from their median.

\begin{prop}\label{resL-14}
Let $(\X,d)$ be a metric space, $\mu\in \PX$ and
$\beta:[0,\infty)\to [0,1]$; the following propositions are
equivalent
\begin{enumerate}
\item The probability $\mu$ verifies the concentration inequality
$$\mu(A^r)\geq 1-\beta(r),\quad r\geq 0,$$
for all $A\subset \X$, with $\mu(A)\geq 1/2.$ \item For all
$1$-Lipschitz function $f:\X\to \R$,
$$\mu(f> m_{f}+r)\leq \beta(r),\quad r\geq 0,$$
where $m_{f}$ denotes a median of $f$.
\end{enumerate}
\end{prop}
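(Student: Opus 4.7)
The plan is to prove the two implications separately, in each case exploiting the standard dictionary between enlargements of sets and sublevel sets of the $1$-Lipschitz distance function $x\mapsto d(x,A)$.

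For the implication $(1)\Rightarrow(2)$, I would fix a $1$-Lipschitz function $f$ with a median $m_f$ and set $A:=\{f\le m_f\}$. The defining property of a median gives $\mu(A)\geq 1/2$, so hypothesis (1) is applicable. Using that $f$ is $1$-Lipschitz, for any $x\in A^r$ one picks $y\in A$ with $d(x,y)\leq r$ and writes $f(x)\leq f(y)+d(x,y)\leq m_f+r$, so that $A^r\subset \{f\leq m_f+r\}$. Hypothesis (1) then yields $\mu(f\leq m_f+r)\geq\mu(A^r)\geq 1-\beta(r)$, which is exactly (2).

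For the converse $(2)\Rightarrow(1)$, given a measurable $A$ with $\mu(A)\geq 1/2$, I would apply (2) to the function $f(x):=d(x,A)$, which is $1$-Lipschitz by the triangle inequality. Since $f$ vanishes on $A$ one has $\mu(f\leq 0)\geq \mu(A)\geq 1/2$, and $\mu(f\geq 0)=1\geq 1/2$, so $0$ is a median of $f$. Applying (2) with this choice of median gives $\mu(A^r)=\mu(f\leq r)\geq 1-\beta(r)$, which is (1).

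No serious obstacle is expected: the only mildly delicate point is that a median need not be unique, but statement (2) is phrased for \emph{some} median of each $1$-Lipschitz function, so in both directions one is free to pick the convenient median ($m_f$ itself in the first direction, $0$ in the second). The whole argument is therefore just a direct unpacking of the definitions of $A^r$, of the median, and of the Lipschitz condition.
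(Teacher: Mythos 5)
Your proof is correct and follows exactly the same route as the paper's: for $(1)\Rightarrow(2)$ take $A=\{f\le m_f\}$ and observe $A^r\subset\{f\le m_f+r\}$ via the Lipschitz bound, and for $(2)\Rightarrow(1)$ apply the deviation inequality to $x\mapsto d(x,A)$, for which $0$ is a median and $A^r=\{d(\cdot,A)\le r\}$. Your remark about the freedom in choosing the median is a fair clarification of a point the paper leaves implicit.
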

\proof $(1)\Rightarrow (2)$. Let $f$ be a $1$-Lipschitz function
and define $A=\{f\leq m_{f}\}$. Then it is easy to check that
$A^r\subset\{ f\leq m_{f}+r\}$. Since $\mu(A)\geq 1/2$, one has
$\mu(f\leq m_{f}+r)\geq \mu(A^r)\geq 1-\beta(r)$, for all $r\geq
0$.

$(2)\Rightarrow (1)$. For all $A\subset \X$, the function $f_{A}:
x\mapsto d(x,A)$ is $1$-Lipschitz. If $\mu(A)\geq 1/2$, then $0$
is  a median of $f_{A}$. Since $A^r=\{f_{A}\leq r\}$, one has
$\mu(A^r)\geq 1-\mu\{f_{A}>r\}\geq 1-\beta(r)$, $r\geq 0.$
\endproof
Applying the deviation inequality to $\pm f$, we arrive at
$$\mu(|f-m_{f}|< r)\leq 2\beta(r),\quad r\geq 0.$$
In other words, Lipschitz functions are, with a high probability,
concentrated around their median, when the concentration profile
$\beta$ decreases rapidly to zero. In the above proposition, the
median can be replaced by the mean $\mu(f)$ of $f$ (see e.g.\!
\cite{Led}):
\begin{equation}\label{eq-16}
    \mu(f>\mu(f)+ r)\leq \beta(r),\quad r\geq 0.
\end{equation}

The following theorem explains how to derive concentration
inequalities (with profiles decreasing exponentially fast) from transport-entropy inequalities of the form
$\alpha\left(\mathcal{T}_{d}\right)\leq H,$ where the cost
function $c$ is the metric $d.$ The argument used in the proof is
due to Marton \cite{M86} and is referred  as ``Marton's argument''
in the literature.
\begin{thm}\label{Marton-intro}
Let $\alpha:\R^+\to\R^+$ be a bijection and suppose that $\mu\in \PX$ verifies the transport-entropy inequality
$\alpha\left(\mathcal{T}_{d}\right)\leq H$.  Then, for all measurable $A\subset \X$ with $\mu(A)\geq
1/2$, the following concentration inequality holds
$$\mu(A^r)\geq 1-e^{-\alpha(r-r_{o})},\quad  r \geq r_{o}:=\alpha^{-1}(\log 2),$$
where $A^r$ is the enlargement of $A$ for the metric $d$ which is
defined above.

Equivalently, for all $1$-Lipschitz $f:\X\to\R$, the following inequality holds
$$\mu(f>m_f+r+r_o)\leq e^{-\alpha(r)},\quad r\geq 0.$$
\end{thm}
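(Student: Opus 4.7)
The plan is to implement Marton's argument. The main idea is to leverage the transport inequality not between $\mu$ and an arbitrary $\nu$, but between $\mu$ and a conditioning of $\mu$ on a suitable set. This turns a transport--entropy inequality into a comparison of the sizes of two sets via the distance separating them.

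Concretely, I would proceed as follows. For a measurable $C\subset\X$ with $\mu(C)>0$, introduce the conditional probability $\mu_C := \mathbf{1}_C\mu/\mu(C)$. A direct computation gives the explicit entropy bound
\[
H(\mu_C\,|\,\mu)=\log\!\bigl(1/\mu(C)\bigr).
\]
Plugging into the hypothesis $\alpha(\mathcal{T}_d)\le H$ and inverting $\alpha$ (a bijection of $\R^+$ onto itself), we obtain
\[
\mathcal{T}_d(\mu_C,\mu)\le \alpha^{-1}\!\bigl(\log(1/\mu(C))\bigr).
\]
Now take two sets $A,B\subset\X$ with positive measure. Any coupling of $\mu_A$ and $\mu_B$ is supported on $A\times B$, so its $d$-average is at least $d(A,B):=\inf_{x\in A,y\in B}d(x,y)$, which gives the lower bound $\mathcal{T}_d(\mu_A,\mu_B)\ge d(A,B)$. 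On the other hand, since $d$ is a metric, $\mathcal{T}_d=W_1$ is itself a metric on probability measures (by the gluing lemma), and in particular satisfies the triangle inequality, hence
\[
d(A,B)\le \mathcal{T}_d(\mu_A,\mu)+\mathcal{T}_d(\mu,\mu_B)\le \alpha^{-1}\!\bigl(\log(1/\mu(A))\bigr)+\alpha^{-1}\!\bigl(\log(1/\mu(B))\bigr).
\]

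To conclude, specialize $B=\X\setminus A^r$ (if this is $\mu$-negligible there is nothing to prove). By definition of $A^r$ we have $d(A,B)\ge r$. The assumption $\mu(A)\ge 1/2$ yields $\alpha^{-1}(\log(1/\mu(A)))\le \alpha^{-1}(\log 2)=r_o$, so
\[
r-r_o\le \alpha^{-1}\!\bigl(\log(1/\mu(\X\setminus A^r))\bigr),
\]
and applying $\alpha$ and exponentiating gives exactly $\mu(\X\setminus A^r)\le e^{-\alpha(r-r_o)}$, valid for $r\ge r_o$. The equivalent statement on $1$--Lipschitz functions follows at once from Proposition \ref{resL-14} applied to the profile $\beta(r)=e^{-\alpha(r-r_o)}\wedge 1$ (after the trivial shift by $r_o$).

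The only step that is not immediate bookkeeping is the triangle inequality for $\mathcal{T}_d$ when $c=d$; if one prefers to avoid invoking the general fact that $W_1$ is a metric, the inequality can be reproved ad hoc by gluing an optimal coupling of $(\mu_A,\mu)$ with one of $(\mu,\mu_B)$ along the middle marginal $\mu$ and projecting onto the first and third factors. Everything else is just the entropy computation for $\mu_C$, the trivial lower bound on $\mathcal{T}_d(\mu_A,\mu_B)$, and an inversion of $\alpha$.
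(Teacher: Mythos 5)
Your proof is correct and is exactly Marton's argument as given in the paper: conditioning $\mu$ on $A$ and on $B=\X\setminus A^r$, computing $H(\mu_C|\mu)=\log(1/\mu(C))$, bounding $\mathcal{T}_d(\mu_A,\mu_B)\ge r$ from below, and combining the triangle inequality for $\mathcal{T}_d$ with the transport-entropy hypothesis. Your extra remark on justifying the triangle inequality by gluing is a welcome bit of care that the paper leaves implicit.
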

\proof Take $A\subset \X$, with $\mu(A)\geq 1/2$ and set
$B=\X\setminus A^r$. Consider the probability measures
$d\mu_{A}(x)=\frac{1}{\mu(A)}\mathbf{1}_{A}(x)\,d\mu(x)$ and
$d\mu_{B}(x)=\frac{1}{\mu(B)}\mathbf{1}_{B}(x)\,d\mu(x)$. Obviously, if
$x\in A$ and $y\in B$, then $d(x,y)\geq r$. Consequently, if $\pi$
is a coupling between $\mu_{A}$ and $\mu_{B}$, then $\int d(x,y)\,
d\pi(x,y)\geq r$ and so $\mathcal{T}_{d}(\mu_{A},\mu_{B})\geq r$.
Now, using the triangle inequality and the transport-entropy
inequality we get
$$r\leq \mathcal{T}_{d}(\mu_{A},\mu_{B})\leq \mathcal{T}_{d}(\mu_{A},\mu)+\mathcal{T}_{d}(\mu_{B},\mu)\leq \alpha^{-1}\left(H(\mu_{A}|\mu)\right)+\alpha^{-1}\left(H(\mu_{B}|\mu)\right).$$
It is easy to check that $H(\mu_{A}|\mu)=-\log \mu(A)\leq \log 2$
and $H(\mu_{B}|\mu)=-\log(1-\mu(A^r)).$ It follows immediately
that $\mu(A^r)\geq 1-e^{-\alpha(r-r_{o})},$ for all  $r \geq
r_{o}:=\alpha^{-1}(\log 2).$
\endproof

If $\mu$ verifies $\T_2(C),$ by \eqref{eq-18} it also verifies
$\T_1(C)$ and one can apply Theorem \ref{Marton-intro}. Therefore,
it appears that if $\mu$ verifies $\T_1(C)$ or $\T_2(C)$,  then it
concentrates like a Gaussian measure:
$$\mu(A^r)\geq 1-e^{-(r-r_o)^2/C},\quad r\geq r_o=\sqrt{C\log(2)}.$$
At this stage, the difference between $\T_1$ and $\T_2$ is
invisible. It will appear clearly in the next paragraph devoted to
tensorization of transport-entropy inequalities.

\subsection*{Tensorization}

A central question in the field of concentration of measure is to
obtain concentration estimates not only for $\mu$ but for the
entire family $\{\mu^n; n\geq 1\}$ where $\mu^n$ denotes the
product probability measure $\mu\otimes \cdots \otimes \mu$ on
$\X^n$. To exploit transport-entropy inequalities, one has to know
how they tensorize. This will be investigated in details in
Section \ref{section concentration}. Let us give in this
introductory section, an insight on this important question.

It is enough to understand what happens with the product
$\X_1\times\X_2$ of two spaces. Indeed, it will be clear in a
moment that the extension to the product of $n$ spaces will follow
by induction.
\\

Let $\mu_1,$ $\mu_2$ be two probability measures on two polish
spaces $\X_1,$ $\X_2,$ respectively. Consider two cost functions
$c_1(x_1,y_1)$ and
 $c_2(x_2,y_2)$ defined on $\X_1\times\X_1$ and $\X_2\times\X_2$; they
give rise to the optimal transport cost functions
$\mathcal{T}_{c_1}(\nu_1,\mu_1),$ $\nu_1\in\mathrm{P}(\X_1)$ and
$\mathcal{T}_{c_2}(\nu_2,\mu_2),$ $\nu_2\in\mathrm{P}(\X_2).$
\\
On the product space $\X_1\times\X_2,$ we now consider the product
measure $\mu_1\otimes\mu_2$ and the cost function
\[
c_1\oplus
c_2\big((x_1,y_1),(x_2,y_2)\big):=c_1(x_1,y_1)+c_2(x_2,y_2),\quad
x_1,y_1\in\X_1, x_2,y_2\in\X_2
\]
which give rise to the tensorized optimal transport cost function
\[
\mathcal{T}_{c_1\oplus c_2}(\nu,\mu_1\otimes\mu_2), \quad\nu\in
\mathrm{P}(\X_1\times\X_2).
\]
A fundamental example is $\X_1=\X_2=\R^k$ with
$c_1(x,y)=c_2(x,y)=|y-x|_2^2:$ the Euclidean metric on $\R^k$
tensorizes as the squared Euclidean metric on $\R^{2k}.$

For any probability measure  $\nu$ on the product space
 $\X_1\times\X_2,$ let us write the disintegration of $\nu$
(conditional expectation) with respect to the first coordinate as follows:
\begin{equation}\label{eq-13}
    d\nu(x_1,x_2)=d\nu_1(x_1)d\nu_2^{x_1}(x_2).
\end{equation}
As was suggested by Marton \cite{M96a} and Talagrand \cite{T96},
it is possible to prove  the intuitively clear following
assertion:
\begin{equation}\label{eq-41}
     \mathcal{T}_{c_1\oplus c_2}(\nu,\mu_1\otimes\mu_2)
 \leq
 \mathcal{T}_{c_1}(\nu_1,\mu_1)+\int_{\X_1}\mathcal{T}_{c_2}(\nu_2^{x_1},\mu_2)\,d\nu_1(x_1).
\end{equation}
We give a detailed proof of this claim at the Appendix,
Proposition \ref{res-tenstrans}.
\\
On the other hand, it is well-known that the fundamental property
of the logarithm together with the product form of the
disintegration formula \eqref{eq-13} yield the analogous
tensorization property of the relative entropy:
\begin{equation}\label{eq-42}
    H(\nu | \mu_1\otimes\mu_2)=H(\nu_1 | \mu_1)+\int_{\X_1}
H(\nu_2^{x_1} | \mu_2)\,d\nu_1(x_1).
\end{equation}
Recall that the inf-convolution of two functions $\alpha_1$ and
$\alpha_2$ on $[0,\infty)$ is defined by
\[
\alpha_1 \square \alpha_2(t):=\inf\{\alpha_1(t_1)+ \alpha_2(t_2);
t_1, t_2\geq 0: t_1+t_2=t\},\quad t\geq 0.
\]
\begin{prop}\label{res-tensorization}
 Suppose that the transport-entropy inequalities
\begin{align*}
  \alpha_1(\mathcal{T}_{c_1}(\nu_1,\mu_1))
  &\leq H(\nu_1 | \mu_1),\quad \nu_1\in\mathrm{P}(\X_1) \\
 \alpha_2(\mathcal{T}_{c_2}(\nu_2,\mu_2))
  &\leq H(\nu_2 | \mu_2),\quad \nu_2\in\mathrm{P}(\X_2)
\end{align*}
hold with $\alpha_1, \alpha_2:[0,\infty)\to[0,\infty)$ convex
increasing functions. Then, on the product space $\X_1\times\X_2,$
we have
\begin{equation*}
    \alpha_1 \square \alpha_2\big(\mathcal{T}_{c_1\oplus c_2}(\nu,\mu_1\otimes\mu_2)\big)
    \leq H(\nu | \mu_1\otimes\mu_2),
\end{equation*}
for all $\nu\in \mathrm{P}(\X_1\times\X_2).$
\end{prop}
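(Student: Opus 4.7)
The plan is to combine the two tensorization facts displayed just before the proposition — the sub-additive upper bound \eqref{eq-41} for $\mathcal{T}_{c_1\oplus c_2}$ and the additive identity \eqref{eq-42} for $H$ — with the convexity of $\alpha_1$ and $\alpha_2$, and then read the result off from the definition of inf-convolution.

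Fix $\nu\in\mathrm{P}(\X_1\times\X_2)$ and disintegrate it as in \eqref{eq-13}. Set
\[
t_1:=\mathcal{T}_{c_1}(\nu_1,\mu_1),\qquad t_2:=\int_{\X_1}\mathcal{T}_{c_2}(\nu_2^{x_1},\mu_2)\,d\nu_1(x_1).
\]
By \eqref{eq-41} we have $\mathcal{T}_{c_1\oplus c_2}(\nu,\mu_1\otimes\mu_2)\le t_1+t_2$. Since $\alpha_1$ and $\alpha_2$ are nondecreasing, so is $\alpha_1\square\alpha_2$, and taking the decomposition $t_1+t_2=t_1+t_2$ in the infimum defining $\alpha_1\square\alpha_2$ yields
\[
\alpha_1\square\alpha_2\big(\mathcal{T}_{c_1\oplus c_2}(\nu,\mu_1\otimes\mu_2)\big)\le\alpha_1\square\alpha_2(t_1+t_2)\le \alpha_1(t_1)+\alpha_2(t_2).
\]

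Now I would bound each term using the one-dimensional hypotheses. For the first, $\alpha_1(t_1)\le H(\nu_1|\mu_1)$ directly. For the second, I apply Jensen's inequality to the convex function $\alpha_2$ against the probability measure $\nu_1$:
\[
\alpha_2(t_2)=\alpha_2\!\left(\int_{\X_1}\mathcal{T}_{c_2}(\nu_2^{x_1},\mu_2)\,d\nu_1(x_1)\right)\le \int_{\X_1}\alpha_2\!\left(\mathcal{T}_{c_2}(\nu_2^{x_1},\mu_2)\right)d\nu_1(x_1)\le \int_{\X_1}H(\nu_2^{x_1}|\mu_2)\,d\nu_1(x_1),
\]
the last step being the hypothesis on $\mu_2$ applied pointwise in $x_1$. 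Adding the two bounds and using the entropy chain rule \eqref{eq-42} gives exactly $\alpha_1(t_1)+\alpha_2(t_2)\le H(\nu|\mu_1\otimes\mu_2)$, which is the desired inequality.

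The only subtle points are (i) the monotonicity of $\alpha_1\square\alpha_2$, which is immediate from the monotonicity of the $\alpha_i$'s, and (ii) the direction of Jensen's inequality, where convexity (not merely monotonicity) of $\alpha_2$ is genuinely used — this is precisely why the hypothesis ``$\alpha_1,\alpha_2$ convex'' is needed in the statement. A minor measurability check, that $x_1\mapsto\mathcal{T}_{c_2}(\nu_2^{x_1},\mu_2)$ is measurable so that the integral defining $t_2$ makes sense, is the kind of routine point I would relegate to the appendix (and in fact it is already handled there in the proof of \eqref{eq-41}).
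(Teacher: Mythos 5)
Your proof is correct and is essentially identical to the one given in the paper: both use the sub-additivity bound \eqref{eq-41}, the monotonicity and definition of the inf-convolution, Jensen's inequality for the convex $\alpha_2$, the one-dimensional hypotheses, and finally the entropy chain rule \eqref{eq-42}. The remarks on where convexity versus monotonicity is used, and on the measurability of $x_1\mapsto\mathcal{T}_{c_2}(\nu_2^{x_1},\mu_2)$, are accurate.
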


\begin{proof}
For all
$\nu\in\mathrm{P}(\X_1\times\X_2),$
\begin{align*}
 \alpha_1\square\alpha_2 (\mathcal{T}_{c_1\oplus c_2}(\nu,\mu_1\otimes\mu_2))
  &\stackrel{(a)}{\leq}\alpha_1\square\alpha_2\left( \mathcal{T}_{c_1}(\nu_1,\mu_1)+\int_{\X_1}\mathcal{T}_{c_2}(\nu_2^{x_1}\mu_2)\,d\nu_1(x_1)\right) \\
   &\stackrel{(b)}\leq \alpha_1(\mathcal{T}_{c_1}(\nu_1,\mu_1))+\alpha_2\left(\int_{\X_1}\mathcal{T}_{c_2}(\nu_2^{x_1},\mu_2)\,d\nu_1(x_1)\right) \\
    &\stackrel{(c)}\leq \alpha_1(\mathcal{T}_{c_1}(\nu_1,\mu_1))+\int_{\X_1}\alpha_2\big(\mathcal{T}_{c_2}(\nu_2^{x_1},\mu_2)\big)\,d\nu_1(x_1) \\
   &\stackrel{(d)}\leq H(\nu_1 | \mu_1)+\int_{\X_1}H(\nu_2^{x_1} | \mu_2)\,d\nu_1(x_1) \\
   &=  H(\nu | \mu_1\otimes\mu_2).
\end{align*}
Inequality (a) is verified thanks to \eqref{eq-41} since
$\alpha_1\square\alpha_2$ is increasing, (b) follows from the very
definition of the inf-convolution, (c) follows from Jensen
inequality since $\alpha_2$ is convex, (d) follows from the
assumed transport-entropy inequalities and
the last equality is (\ref{eq-42}).
\end{proof}
Obviously, it follows by an induction argument on the dimension
$n$ that, if $\mu$ verifies $\alpha(\mathcal{T}_c)\le H,$ then
$\mu^n$ verifies $\alpha^{\square n}(\mathcal{T}_{c^{\oplus
n}})\le H$ where as a definition
$$
c^{\oplus n}\Big((x_1,y_1),\dots,(x_n,y_n)\Big):=\sum_{i=1}^n
c(x_i,y_i).
$$
Since $\alpha^{\square n}(t)=n\alpha(t/n)$ for all $t\ge0,$ we
have proved the next proposition.

\begin{prop}\label{res-n-tensorization}
Suppose that $\mu\in\PX$ verifies the transport-entropy inequality
$\alpha(\mathcal{T}_c)\le H$ with $\alpha:[0,\infty)\to[0,\infty)$
a convex increasing function. Then, $\mu^n\in\mathrm{P}(\X^n)$
verifies the transport-entropy inequality
\begin{equation*}
    n\alpha\left(\frac{\mathcal{T}_{c^{\oplus n}}(\nu,\mu^n)}{n}\right)\le H(\nu|\mu^n),
\end{equation*}
for all  $\nu\in\mathrm{P}(\X^n).$
\end{prop}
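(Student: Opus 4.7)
The plan is to argue by induction on $n$, combining Proposition \ref{res-tensorization} with the elementary identification of the $n$-fold inf-convolution of a convex function.

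For the base case $n=1$ there is nothing to prove, since $n\alpha(t/n)=\alpha(t)$ reduces the claim to the hypothesis. For the inductive step I would assume the proposition for $n$ and then invoke Proposition \ref{res-tensorization} with $\X_1:=\X^n$, $\mu_1:=\mu^n$, $c_1:=c^{\oplus n}$, $\alpha_1(t):=n\alpha(t/n)$ on one side, and $\X_2:=\X$, $\mu_2:=\mu$, $c_2:=c$, $\alpha_2:=\alpha$ on the other. The function $\alpha_1$ remains convex and increasing (a positive scaling of $\alpha$ composed with a linear reparametrisation), so the hypotheses of Proposition \ref{res-tensorization} are met, and one obtains
$$\alpha_1\square\alpha_2\bigl(\mathcal{T}_{c^{\oplus(n+1)}}(\nu,\mu^{n+1})\bigr)\le H(\nu|\mu^{n+1}),\quad \nu\in\mathrm{P}(\X^{n+1}).$$

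It then remains to evaluate the inf-convolution. I would show that $\alpha^{\square(n+1)}(t)=(n+1)\alpha(t/(n+1))$: for any decomposition $t=t_1+\cdots+t_{n+1}$ with $t_i\ge 0$, convexity of $\alpha$ together with Jensen's inequality yield
$$\frac{1}{n+1}\sum_{i=1}^{n+1}\alpha(t_i)\ge \alpha\!\left(\frac{t_1+\cdots+t_{n+1}}{n+1}\right)=\alpha\!\left(\frac{t}{n+1}\right),$$
and the symmetric choice $t_i=t/(n+1)$ shows this lower bound is attained. Plugging the resulting identity $\alpha_1\square\alpha_2=\alpha^{\square(n+1)}=(n+1)\alpha(\cdot/(n+1))$ into the previous display closes the induction.

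I do not anticipate a serious obstacle here: the tensorization proposition does the heavy lifting, and the inf-convolution computation is a one-line Jensen argument. The only thing to keep an eye on is that the convexity hypothesis on $\alpha_1$ is preserved at every step of the induction, which is immediate from the explicit form $n\alpha(\cdot/n)$.
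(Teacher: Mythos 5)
Your proof is correct and follows essentially the same route as the paper: an induction on $n$ driven by Proposition \ref{res-tensorization}, combined with the identity $\alpha^{\square n}(t)=n\alpha(t/n)$, which the paper states without proof and you justify via Jensen's inequality. The only detail worth noting is that identifying $\alpha_1\square\alpha_2$ with $\alpha^{\square(n+1)}$ uses the associativity of inf-convolution together with $\alpha_1=\alpha^{\square n}$, but this is immediate from your explicit formula.
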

We also give at the end of Section \ref{section transport inequalities} an alternative
proof of this result which is based on a duality argument. The general statements of Propositions \ref{res-tensorization} and
\ref{res-n-tensorization} appeared in the authors' paper
\cite{GL07}.

In particular, when $\alpha$ is linear, one observes that the inequality $\alpha(\mathcal{T}_c)\leq H$ tensorizes independently of the dimension. This is for example the case for the inequality $\T_2$. So, using the one dimensional $\T_2$ verified by the standard Gaussian measure $\gamma$ together with the above tensorization property, we conclude that for all positive integer $n$, the standard Gaussian measure $\gamma^n$ on $\R^n$ verifies the inequality $\T_2(2)$.

Now let us compare the concentration properties of product
measures derived from $\T_1$ or $\T_2$. Let $d$ be a metric on
$\X$, and let us consider the $\ell_1$ and $\ell_2$ product
metrics associated to the metric $d$: $$d_1(x,y)=\sum_{i=1}^n d(x_i,y_i)\quad
\text{and}\quad d_2(x,y)=\left(\sum_{i=1}^n
d^2(x_i,y_i)\right)^{1/2},\quad x,y\in \X^n.$$ The distance $d_1$ and
$d_2$ are related by the following obvious inequality
$$\frac{1}{\sqrt{n}}d_1(x,y)\leq d_2(x,y)\leq d_1(x,y),\quad x,y\in \X^n. $$

If $\mu$ verifies $\T_1$ on $\X$, then according to Proposition
\ref{res-n-tensorization}, $\mu^n$ verifies the inequality
$\T_1(nC)$ on the space $\X^n$ equipped with the metric $d_1$. It
follows from Marton's concentration Theorem \ref{Marton-intro},
that
\begin{equation}\label{comparaison T1-T2-1}
\mu^n(f>m_f+r+r_o)\leq e^{-\frac{r^2}{nC}},\quad r\geq r_o=\sqrt{nC\log(2)},
\end{equation}
for all function $f$ which is $1$-Lipschitz with respect to $d_1$. So the constants appearing in the concentration inequality are getting worse and worse when the dimension increases.

On the other hand, if $\mu$ verifies $\T_2(C)$, then according to
Proposition \ref{res-n-tensorization}, $\mu^n$ verifies the
inequality $\T_2(C)$ on the space $\X^n$ equipped with $d_2$.
Thanks to Jensen inequality $\mu^n$ also verifies the inequality
$\T_1(C)$ on $(\X^n,d_2), $ and so
\begin{equation}\label{comparaison T1-T2-2}
\mu^n(g>m_g+r+r_o)\leq e^{-\frac{r^2}{C}},\quad r\geq r_o=\sqrt{C\log(2)},
\end{equation}
for all function $g$ which is $1$-Lipschitz with respect to $d_2$.
This time, one observes that the concentration profile does not
depend on the dimension $n$. This phenomenon is called (Gaussian)
\emph{dimension-free concentration of measure}. For instance, if
$\mu=\gamma$ is the standard Gaussian measure, we thus obtain
\begin{equation}\label{eqL-03}
    \gamma^n(f>m_f+r+r_o)\ge 1-e^{-r^2/2},\quad r\ge r_o:=\sqrt{2\log 2}
\end{equation}
for all function $f$ which is $1$-Lipschitz for the Euclidean
distance on $\R^n.$ This result is very near the optimal
concentration profile obtained by an isoperimetric method, see
\cite{Led}. In fact the Gaussian dimension-free property
\eqref{comparaison T1-T2-2} is intrinsically related to the
inequality $\T_2$. Indeed, a recent result of Gozlan \cite{G09}
presented in Section \ref{section large deviations} shows that
Gaussian dimension concentration holds if and only if the
reference measure $\mu$ verifies $\T_2$ (see Theorem
\ref{Concentration->Transport} and Corollary
\ref{T2=concentration}).

Since a $1$-Lipschitz function $f$ for $d_1$ is
$\sqrt{n}$-Lipschitz for $d_2$, it is clear that
\eqref{comparaison T1-T2-2} gives back \eqref{comparaison
T1-T2-1}, when applied to $g=f/\sqrt{n}.$ On the other hand, a
$1$-Lipschitz function $g$ for $d_2$ is also $1$-Lipschitz for
$d_1$, and its is clear that for such a function $g$ the
inequality \eqref{comparaison T1-T2-2} is much better than
\eqref{comparaison T1-T2-1} applied to $f=g$. So, we see from this
considerations that $\T_2$ is a much stronger property than
$\T_1$. We refer to \cite{Led} or \cite{T95, T96c}, for examples
of applications where the independence on $n$ in concentration
inequalities plays a decisive role.

Nevertheless, dependence on $n$ in concentration is not always
something to fight against, as shown in the following example of
deviation inequalities. Indeed, suppose that $\mu$ verifies the
inequality $\alpha\left(\mathcal{T}_d\right) \leq H$, then for
all positive integer $n$,
$$\mu^n\left(f\geq \int f\,d\mu^n +t\right)\leq e^{-n\alpha(t/n)},\quad t\geq 0,$$
for all $f$ $1$-Lipschitz for $d_1$ (see Corollary \ref{resL-11}).
In particular, choose $f(x)=u(x_1)+\cdots +u(x_n)$, with $u$ a
$1$-Lipschitz function for $d$; then $f$ is $1$-Lipschitz for
$d_1$, and so if $X_i$ is an i.i.d sequence of law $\mu$, we
easily arrive at the following deviation inequality
$$\P\left(\frac{1}{n}\sum_{i=1}^n u(X_i)\geq \E[u(X_1)]+t\right)\leq e^{-n\alpha(t)},\quad t\geq 0.$$
This inequality presents the right dependence on $n$. Namely,
according to Cram\'er theorem (see \cite{DZ}) this probability
behaves like $e^{-n\Lambda^*_u(t)}$ when $n$ is large, where
$\Lambda_u^*$ is the Cram\'er transform of $u(X_1)$. The reader
can look at \cite{GL07} for more information on this subject. Let
us mention that this family of deviation inequalities characterize
the inequality $\alpha(\mathcal{T}_d)\leq H$ (see Theorem
\ref{resL-10} and Corollary \ref{resL-11}).

\section{Optimal transport}\label{section optimal transport}

Optimal transport is an active field of research. The recent
textbooks by Villani \cite{Vill, Vill2} make a very  good
account on the subject. Here, we recall basic results which will
be necessary to understand transport inequalities. But the
interplay between optimal transport and functional inequalities in
general is wider than what will be exposed below, see \cite{Vill,
Vill2} for instance.

Let us make our underlying assumptions precise. The cost function
$c$ is assumed to be a \lsc\ $[0,\infty)$-valued function on the
product $\X^2$ of the polish space $\X.$ The Monge-Kantorovich
problem with  cost function $c$ and marginals $\nu,\mu$ in $\PX,$
as well as its optimal value $\mathcal{T}_c(\nu,\mu)$ were stated
at \eqref{MK} and \eqref{eq-14} in Section \ref{sec-overview}.

\begin{prop}\label{resL-a}
The Monge-Kantorovich problem \eqref{MK} admits a solution if and
only if $\mathcal{T}_c(\nu,\mu)<\infty.$
\end{prop}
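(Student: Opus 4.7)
The plan is to apply the direct method of the calculus of variations to the convex minimization problem \eqref{MK}. The ``only if'' direction is essentially by definition: if $\pi^*$ attains the infimum and has finite cost, then $\mathcal{T}_c(\nu,\mu)=\IXX c\,d\pi^*<\infty$. The substance of the statement is the reverse implication: I would produce an optimizer as a weak limit of a minimizing sequence, which requires weak compactness of the admissible set together with lower semicontinuity of the cost functional.

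First I would argue that the set $\Pi(\nu,\mu)$ of couplings of $\nu$ and $\mu$ is nonempty and weakly compact in $\PXX$. Nonemptiness is immediate from $\nu\otimes\mu\in\Pi(\nu,\mu)$. For tightness, I would use the fact that on a polish space any singleton family of probability measures is tight: given $\epsilon>0$, choose compacts $K_1,K_2\subset\X$ with $\nu(K_1^c)<\epsilon/2$ and $\mu(K_2^c)<\epsilon/2$; then for any $\pi\in\Pi(\nu,\mu)$,
\[
\pi\bigl((K_1\times K_2)^c\bigr)\leq \pi(K_1^c\times\X)+\pi(\X\times K_2^c)=\nu(K_1^c)+\mu(K_2^c)<\epsilon,
\]
and $K_1\times K_2$ is compact in $\X^2$, so $\Pi(\nu,\mu)$ is relatively weakly compact by Prokhorov's theorem. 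Weak closedness is clear: the marginal maps $\pi\mapsto\pi_0,\pi_1$ are continuous with respect to weak convergence, since for any $f\in \CX$ the functionals $\pi\mapsto\IXX f(x)\,d\pi$ and $\pi\mapsto\IXX f(y)\,d\pi$ are weakly continuous.

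Second I would establish lower semicontinuity of $\pi\mapsto\IXX c\,d\pi$ under weak convergence. Since $c$ is a $[0,\infty)$-valued \lsc\ function on the polish space $\X^2$, it can be written as $c=\sup_k c_k$ for an increasing sequence of nonnegative bounded continuous $c_k$. By monotone convergence, $\IXX c\,d\pi=\sup_k\IXX c_k\,d\pi$ for every $\pi\in\PXX$; each summand is weakly continuous, so the supremum is weakly lower semicontinuous.

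With these two ingredients the conclusion is routine: assuming $\mathcal{T}_c(\nu,\mu)<\infty$, pick a minimizing sequence $\pi_n\in\Pi(\nu,\mu)$ with $\IXX c\,d\pi_n\to \mathcal{T}_c(\nu,\mu)$, extract a weakly convergent subsequence $\pi_{n_k}\to\pi^*\in\Pi(\nu,\mu)$ by compactness, and apply the lower semicontinuity to obtain $\IXX c\,d\pi^*\leq\liminf_k\IXX c\,d\pi_{n_k}=\mathcal{T}_c(\nu,\mu)$; the reverse inequality being automatic, $\pi^*$ is optimal. The only genuinely delicate step is the lower semicontinuity of the cost functional, which rests on the \lsc\ approximation of $c$ from below by continuous bounded functions; everything else (tightness, closedness of the marginal constraints, extraction of subsequences) is a standard application of Prokhorov's theorem on polish spaces.
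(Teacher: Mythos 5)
Your proof is correct and follows exactly the route sketched in the paper: compactness of the coupling set $\Pi(\nu,\mu)$ via tightness and Prokhorov's theorem, plus weak lower semicontinuity of $\pi\mapsto\IXX c\,d\pi$ obtained from the lower semicontinuity of $c$. You have simply filled in the details that the paper's outline leaves implicit.
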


\begin{proof}[Outline of the proof]
The main ingredients of the proof of this proposition are
\begin{itemize}
    \item the compactness with respect to the narrow topology of
$\{\pi\in\PXX; \pi_0=\nu,\pi_1=\mu\}$ which is inherited from the
tightness of $\nu$ and $\mu$ and
    \item the lower semicontinuity of $\pi\mapsto\IXX c\,d\pi$ which is inherited from
    the lower semicontinuity of $c.$
\end{itemize}
The polish assumption on $\X$ is invoked at the first item.
\end{proof}

The minimizers of \eqref{MK} are called optimal transport plans,
they are not unique in general since \eqref{MK} is not a strictly
convex problem: it is an infinite dimensional linear programming
problem.

If $d$ is a \lsc\ metric on $\X$ (possibly different from the
metric which turns $\X$ into a polish space), one can consider the
cost function $c=d^p$ with $p\ge1.$ One can prove that
$
W_p(\nu,\mu):=\mathcal{T}_{d^p}(\nu,\mu)^{1/p}
$
defines a metric on the set $P_{d^p}(\X)$ (or $\Pp$ for short) of
all probability measures which integrate $d^p(x_o,\,\cdot\,)$: it is
the so-called Wasserstein metric of order $p.$ Since
$\mathcal{T}_{d^p}(\nu,\mu)<\infty$ for all $\nu,\mu$ in $\Pp,$
Proposition \ref{resL-a} tells us that the corresponding problem
\eqref{MK} is attained in $\Pp.$

\subsection*{Kantorovich dual equality}
In the perspective of transport inequalities, the keystone is the
following result. Let $\CX$ be the space of all continuous bounded
functions on $\X$ and denote $u\oplus v(x,y)=u(x)+v(y),$
$x,y\in\X.$

\begin{thm}[Kantorovich dual equality]\label{res-01}
For all $\mu$ and $\nu$ in $\PX,$ we have
\begin{align}
    \mathcal{T}_c(\nu,\mu)
    &=\sup\left\{\IX u(x)\,d\nu(x)+\IX v(y)\,d\mu(y); u,v\in\CX,u\oplus v\le c\right\}\label{eq-01a}\\
    &= \sup\left\{\IX u(x)\,d\nu(x)+\IX v(y)\,d\mu(y); u\in L^1(\nu),v\in L^1(\mu),u\oplus v\le c\right\}.\label{eq-01b}
\end{align}
\end{thm}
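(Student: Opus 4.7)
\emph{Weak duality.} For any coupling $\pi$ of $(\nu,\mu)$ and any admissible pair $(u,v) \in L^1(\nu) \times L^1(\mu)$ with $u \oplus v \le c$,
$$\int u\,d\nu + \int v\,d\mu = \int_{\X^2}(u \oplus v)\,d\pi \le \int_{\X^2} c\,d\pi.$$
Taking the infimum over $\pi$ and the supremum over $(u,v)$ bounds the right-hand sides of both \eqref{eq-01a} and \eqref{eq-01b} by $\mathcal{T}_c(\nu,\mu)$. Since the supremum over $C_b$ is trivially at most the supremum over $L^1$, the whole statement reduces to proving $\mathcal{T}_c(\nu,\mu) \le$ (supremum in \eqref{eq-01a}).

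\emph{Strong duality via Fenchel--Rockafellar.} The plan is to invoke the Fenchel--Rockafellar theorem in $E = C_b(\X^2)$ (sup-norm). Define the convex functionals
$$\Theta(\varphi) = \begin{cases} 0 & \varphi + c \ge 0 \\ +\infty & \text{else}\end{cases}, \qquad \Xi(\varphi) = \begin{cases} \int u\,d\nu + \int v\,d\mu & \varphi = u \oplus v,\ u,v \in C_b(\X) \\ +\infty & \text{else}\end{cases}.$$
Since $\nu,\mu$ are probabilities, $\Xi$ is unambiguous: two decompositions $u \oplus v = u' \oplus v'$ force $u - u'$ and $v' - v$ to be the same constant, which integrates to the same value. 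At $\varphi_0 \equiv 1$ (take $u=0, v=1$) both functionals are finite and $\Theta$ is continuous, since any $\psi$ with $\|\psi - 1\|_\infty < 1$ satisfies $\psi \ge 0 \ge -c$. The Fenchel--Rockafellar identity then gives
$$\inf_{\varphi \in E}\bigl[\Theta(\varphi) + \Xi(\varphi)\bigr] = \max_{\ell \in E^*}\bigl[-\Theta^*(-\ell) - \Xi^*(\ell)\bigr].$$
Direct computation shows $\Xi^*(\ell) = 0$ exactly when $\ell(u \oplus v) = \int u\,d\nu + \int v\,d\mu$ for all $u,v \in C_b(\X)$ (i.e., the ``marginals'' of $\ell$ are $\nu$ and $\mu$), and $\Theta^*(-\ell) = \ell(c)$ exactly when $\ell$ is nonnegative on the cone $\{\varphi \ge 0\}$. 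After the change of variables $(u,v) \leftrightarrow (-u,-v)$, the left-hand side becomes $-\sup\{\int u\,d\nu + \int v\,d\mu : u \oplus v \le c\}$, while the right-hand side is $-\min\{\ell(c) : \ell \ge 0,\ \text{marginals } \nu,\mu\}$. Once the optimal $\ell$ is identified with a Borel probability measure $\pi$ on $\X^2$ with the prescribed marginals, this minimum equals $\mathcal{T}_c(\nu,\mu)$ and the dual equality follows.

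\emph{Reductions and main obstacle.} For a general lower semicontinuous cost I would first truncate $c_N := c \wedge N$ to reduce to bounded costs (using monotone convergence and the compactness of the set of couplings to pass $\mathcal{T}_{c_N} \nearrow \mathcal{T}_c$), and then approximate a bounded lower semicontinuous $c$ from below by an increasing sequence of bounded continuous functions, available on any metric space. The chief obstacle is the representation step: on a non-compact Polish space, $C_b(\X^2)^*$ contains ``charges at infinity'' (purely finitely additive parts), and one must use the tightness of the marginals $\nu$ and $\mu$ (which holds precisely because $\X$ is Polish) to confine the mass of the optimal $\ell$ to arbitrarily large compact subsets, thereby upgrading $\ell$ to a genuine Radon probability measure. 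This tightness-driven regularization is exactly where the polish hypothesis on $\X$ is indispensable.
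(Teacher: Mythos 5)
Your proof is correct in outline and at a level of completeness comparable to the paper's own sketch, but it follows a genuinely different route. The paper sets up a convex--concave Lagrangian $K(\pi,(u,v))$ on $\mathrm{M}(\X^2)\times\CX^2$ and obtains \eqref{eq-01a} as a minimax identity $\inf_\pi\sup_{(u,v)}K=\sup_{(u,v)}\inf_\pi K$, deferring the justification of the saddle-point interchange to L\'eonard's paper; the primal variable is the (signed) measure $\pi$ and the functions $(u,v)$ are the dual variables. You dualize in the opposite direction: the primal variable is a function $\varphi\in\mathcal{C}_b(\X^2)$, the dual variable is $\ell\in\mathcal{C}_b(\X^2)^*$, and the engine is Fenchel--Rockafellar, which is the classical approach of Villani's \emph{Topics in Optimal Transportation}. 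What your route buys is that attainment on the measure side comes for free (the theorem yields a max, not a sup, over $\ell$), and the constraint qualification is a one-line check ($\Theta$ is locally constant near $\varphi_0\equiv 1$ because $c\ge 0$); what it costs is exactly the obstacle you name, namely that $\mathcal{C}_b(\X^2)^*$ contains purely finitely additive functionals on a non-compact space. Your treatment of the two reductions (truncation $c\wedge N$ and inf-convolution regularization of a bounded \lsc\ cost) and of the second equality \eqref{eq-01b} by sandwiching is correct. One step deserves more care than your sketch gives it: when you discard the finitely additive part of the optimal $\ell$ (or confine its mass to compacts using the tightness of $\nu$ and $\mu$), the countably additive remainder has marginals that are only dominated by $\nu$ and $\mu$ with a small deficit, so you must either renormalize and pass to a limit or argue that the deficit contributes an arbitrarily small error to $\ell(c)$ for the truncated bounded cost; also note that $\Theta^*(-\ell)=\ell(c)$ must be read as $\sup\{\ell(\psi):\psi\in\mathcal{C}_b(\X^2),\ \psi\le c\}$, which coincides with $\int c\,d\pi$ for a Radon representative $\pi$ only after the monotone approximation of the \lsc\ cost that your reduction provides. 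These are fixable technical points, not gaps in the strategy.
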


Note that for all $\pi$ such that $\ \pi_0=\nu,\pi_1=\mu$ and
$(u,v)$ such that $u\oplus v\le c,$ we have $\IX u\,d\nu+\IX
v\,d\mu=\IXX u\oplus v\,d\pi\le \IXX c\,d\pi.$ Optimizing both
sides of this inequality leads us to
\begin{equation}\label{eq-03}
\begin{split}
 \sup\bigg\{\IX u\,d\nu+\IX v\,d\mu;\ & u\in L^1(\nu),v\in L^1(\mu),u\oplus v\le
 c\bigg\}\\
    \le &\inf\left\{ \IXX c\,d\pi;\pi\in\PXX; \pi_0=\nu,\pi_1=\mu\right\}
\end{split}
\end{equation}
and Theorem \ref{res-01} appears to be a \emph{no dual gap}
result.

The following is a sketch of proof which is borrowed from
L\'eonard's paper \cite{Leo10}.

\begin{proof}[Outline of the proof of Theorem \ref{res-01}] For a
detailed proof, see \cite[Thm 2.1]{Leo10}. Denote
$\mathrm{M}(\X^2)$ the space of all signed measures on $\X^2$ and
$
\iota_{\{x\in A\}}=\left\{
\begin{array}{cl}
    0 & \hbox{if }x\in A \\
    +\infty & \hbox{otherwise} \\
\end{array}
\right.. $ Consider the $(-\infty,+\infty]$-valued function
\begin{equation*}
    K(\pi,(u,v))=\IX u\,d\nu+\IX v\,d\mu-\IXX u\oplus v\,d\pi
    +\IXX c\,d\pi+\iota_{\{\pi\ge0\}},
    \quad \pi\in \mathrm{M}(\X^2), u,v\in\CX.
\end{equation*}
For each fixed $(u,v),$ it is a convex function of $\pi$ and for
each fixed $\pi,$ it is a concave function of $(u,v).$ In other
words, $K$ is a convex-concave function and one can expect that it
admits a saddle value, i.e.
\begin{equation}\label{eq-02}
    \inf_{\pi\in \mathrm{M}(\X^2)}\sup_{u,v\in\CX}K(\pi,(u,v))
    =\sup_{u,v\in\CX}\inf_{\pi\in \mathrm{M}(\X^2)} K(\pi,(u,v)).
\end{equation}
The detailed proof amounts to check that standard assumptions for
this min-max result hold true for $(u,v)$ as in \eqref{eq-01a}. We
are going to show that \eqref{eq-02} is the desired equality
\eqref{eq-01a}. Indeed, for fixed $\pi,$
\begin{align*}
  \sup_{(u,v)} K(\pi,(u,v))
  &= \IXX c\,d\pi+\iota_{\{\pi\ge0\}} + \sup_{(u,v)}\left\{\IX u\,d\nu+\IX v\,d\mu-\IXX u\oplus v\,d\pi\right\}\\
  &= \IXX c\,d\pi+\iota_{\{\pi\ge0\}} + \sup_{(u,v)}\left\{\IX u\,d(\nu-\pi_0)+\IX v\,d(\mu-\pi_1)\right\}\\
  &= \IXX c\,d\pi+\iota_{\{\pi_0=\nu,\pi_1=\mu\}}
\end{align*}
and for fixed $(u,v),$
\begin{equation*}
  \inf_{\pi} K(\pi,(u,v))
  = \IX u\,d\nu+\IX v\,d\mu+\inf_{\pi\ge0}\IXX(c-u\oplus v)\,d\pi
  = \IX u\,d\nu+\IX v\,d\mu-\iota_{\{u\oplus v\le c\}}.
\end{equation*}
Once \eqref{eq-01a} is obtained, \eqref{eq-01b} follows
immediately from \eqref{eq-03} and the following obvious
inequality:
    $ \displaystyle 
    \sup_{u,v\in\CX,u\oplus v\le c}\le \sup_{u\in L^1(\nu),v\in L^1(\mu),u\oplus v\le c}.
    $ 
\end{proof}
Let $ u$ and $ v$ be measurable functions on $\X$ such that $
u\oplus v\leq c.$  The family of inequalities $ v(y)\leq c(x,y)-
u(x),$ for all $x,y$ is equivalent to $ v(y)\leq \inf_x\{c(x,y)-
u(x)\}$ for all $y.$ Therefore, the function
\[
 u^c(y):= \inf_{x\in\X}\{c(x,y)- u(x)\},\quad y\in\X
\]
satisfies $ u^c\geq  v$ and $ u\oplus u^c\leq c.$ As $J(u,v):=\IX
u\,d\nu +\IX v\,d\mu$ is an increasing function of its arguments $
u$ and $ v,$ in view of maximizing $J$ on the set
    $ 
\{(u,v)\in L_1(\nu)\times L_1(\mu):  u\oplus v\leq c\},
    $
the couple $( u, u^c)$ is better than $(u,v).$ Performing this
trick once again, we see that with
    $
 v^{c}(x):= \inf_{y\in\X}\{c(x,y)- v(y)\},\ x\in\X,
   $
the couple $( u^{cc}, u^c)$ is  better than $( u, u^c)$ and
$(u,v).$ We have obtained the following result.

\begin{lem}\label{res-02}
Let $ u$ and $ v$ be functions on $\X$ such that $ u(x)+ v(y)\leq
c(x,y)$ for all $x,y.$ Then, $ u^c$ and $ u^{cc}$  also satisfy $
u^{cc}\geq  u,
 u^c\geq  v$ and $ u^{cc}(x)+ u^c(y)\leq c(x,y)$
for all $x,y.$
\end{lem}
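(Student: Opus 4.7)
The three claims are essentially immediate from the definitions of the $c$-transforms
$u^c(y)=\inf_{x\in\X}\{c(x,y)-u(x)\}$ and $w^c(x)=\inf_{y\in\X}\{c(x,y)-w(y)\}$, so that $u^{cc}(x)=\inf_{y\in\X}\{c(x,y)-u^c(y)\}$. I do not expect any real obstacle beyond bookkeeping of which variable is being optimized over at each step; the lemma is a general Galois-type fact about the $c$-transform and does not use any specific structure of $c$ (beyond being $[0,\infty)$-valued and measurable, which is not even required for the purely pointwise inequalities here).

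The plan is to verify the three assertions in the following order, each by rearranging an inequality and then taking an infimum.

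First, I would prove $u^c\geq v$. Fix $y\in\X$. The hypothesis $u(x)+v(y)\leq c(x,y)$ rewrites, for every $x\in\X$, as $v(y)\leq c(x,y)-u(x)$. Taking the infimum of the right-hand side over $x\in\X$ yields $v(y)\leq u^c(y)$.

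Second, I would prove $u^{cc}\geq u$. From the definition of $u^c$ one has $u^c(y)\leq c(x,y)-u(x)$ for every $x,y\in\X$; equivalently, $u(x)\leq c(x,y)-u^c(y)$ for every $x,y$. Fixing $x$ and taking the infimum over $y\in\X$ gives $u(x)\leq u^{cc}(x)$.

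Third, I would prove $u^{cc}(x)+u^c(y)\leq c(x,y)$. This is just the definition: for any fixed $x,y$, the value $u^{cc}(x)=\inf_{z\in\X}\{c(x,z)-u^c(z)\}$ is bounded above by the particular choice $z=y$, giving $u^{cc}(x)\leq c(x,y)-u^c(y)$, which is the desired inequality.

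Finally, the inequality $u^{cc}\geq u$ proved in the second step, together with the conclusion of the first step applied to the pair $(u^{cc},u^c)$ satisfying $u^{cc}\oplus u^c\leq c$ from the third step, also gives $u^c\geq v$ via the monotonicity remark preceding the lemma in the text; but in fact we already obtained this directly in the first step.
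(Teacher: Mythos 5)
Your proof is correct and follows essentially the same route as the paper's: the paper derives $u^c\geq v$ and $u\oplus u^c\leq c$ from the equivalence between $v(y)\leq c(x,y)-u(x)$ for all $x$ and $v(y)\leq\inf_x\{c(x,y)-u(x)\}$, and then applies the same trick once more to the pair $(u,u^c)$ to get $u^{cc}\geq u$ and $u^{cc}\oplus u^c\leq c$. Your three steps are exactly this argument written out explicitly.
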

Iterating the trick of Lemma \ref{res-02}  doesn't improve
anything.

\begin{rem} (Measurability of $u^c$).\ This  issue is often
neglected in the literature. The aim of this remark is to indicate
a general result which solves this difficult problem. If $c$ is
continuous, $u^c$ and $u^{cc}$ are upper semicontinuous, and
therefore they are Borel measurable. In the general case where $c$
is \lsc, it can be shown that some measurable version of $u^c$
exists. More precisely, Beiglb\"ock and Schachermayer have proved
recently in \cite[Lemmas 3.7, 3.8]{BS09} that, even if $c$ is only
supposed to be Borel measurable, for each probability measure
$\mu\in\PX,$ there exists a $[-\infty,\infty)$-valued Borel
measurable function $\tilde{u}^c$ such that $\tilde{u}^c\le u^c$
everywhere and $\tilde{u}^c= u^c,$ $\mu$-almost everywhere. This
is precisely what is needed for the purpose of defining the
integral $\int u^c\,d\mu.$
\end{rem}

Recall that whenever $A$ and $B$ are two vector spaces linked by
the duality bracket $\langle a,b\rangle,$ the convex conjugate of
the function $f:A\to (-\infty,\infty]$ is defined by
\begin{equation*}
    f^*(b):=\sup_{a\in A}\{\langle a,b\rangle-f(a)\}\in(-\infty,\infty],\quad b\in B.
\end{equation*}
Clearly, the definition of $u^c$ is reminiscent of that of $f^*.$
Indeed, with the quadratic cost function
$c_2(x,y)=|y-x|^2/2=\frac{|x|^2}2+\frac{|y|^2}2-x\scal y$ on
$\Rk,$ one obtains
\begin{equation}\label{eqL-a}
    \frac{|\,\cdot\,|^2}2-u^{c_2}=\left(\frac{|\,\cdot\,|^2}2-u\right)^*.
\end{equation}
It is worth recalling basic facts about convex conjugates for we
shall use them several times later. Being the supremum of a family
of affine continuous functions, $f^*$  is convex and
$\sigma(B,A)$-\lsc. Defining $f^{**}(a)=\sup_{b\in B}\{\langle
a,b\rangle-f^*(b)\}\in(-\infty,\infty],$ $a\in A,$ one knows that
$f^{**}=f$ if and only if $f$ is a \lsc\ convex function. It is a
trivial remark that
\begin{equation}\label{ineq-Fenchel}
    \langle a,b\rangle\le f(a)+f^*(b),\quad  (a,b)\in A\times B.
\end{equation}
 The case of equality (Fenchel's identity) is of special
interest, we have
\begin{equation}\label{eq-Fenchel}
    \langle a,b\rangle = f(a)+f^*(b)\Leftrightarrow b\in\partial f(a)\Leftrightarrow a\in\partial f^*(b)
\end{equation}
whenever $f$ is convex and $\sigma(A,B)$-\lsc. Here, $\partial
f(a):=\{b\in B; f(a+h)\ge f(a)+ \langle h,b\rangle,\forall h\in
A\}$ stands for the subdifferential of $f$ at $a.$

\subsection*{Metric cost}
The cost function to be considered is $c(x,y)=d(x,y)$: a \lsc\
metric on $\X$ which might be \emph{different} from the original
polish metric on $\X.$

\begin{rem}\label{rem-00}
In the sequel, the Lipschitz functions are to be considered with
respect to the metric cost $d$ and not with respect to the
underlying metric on the polish space $\X$ which is here to
generate the Borel $\sigma$-field, specify the continuous, \lsc\
or Borel functions. Indeed, we have in mind to
 work sometimes with trivial metric costs (weighted Hamming's metrics) which
are \lsc\ with respect to any reasonable non-trivial metric but
generate a too rich Borel $\sigma$-field. As a consequence a
$d$-Lipschitz function might not be Borel measurable.
\end{rem}

One writes that $u$ is $d$-Lipschitz(1) to specify that
$|u(x)-u(y)|\leq d(x,y)$ for all $x,y\in\X.$ Denote $\Pu:=\{\nu\in
\PX; \IX d(x_o,x)\,d\nu(x)\}$ where $x_o$ is any fixed element in
$\X.$ With the triangle inequality, one sees that $\Pu$ doesn't
depend on the choice of $x_o.$
\\
Let us denote the Lipschitz seminorm
    $
\|u\|_\mathrm{Lip}:=\sup_{x\not =y}\frac{|u(y)-u(x)|}{d(x,y)}.
    $
Its dual norm is for all $\mu,\nu$ in $\Pu,$
    $
\|\nu-\mu\|^*_\mathrm{Lip}=\sup\left\{\IX u(x)\,[\nu-\mu](dx); u
\mathrm{\ measurable}, \|u\|_\mathrm{Lip}\leq 1\right\}.
    $
As it is assumed that $\mu,\nu\in \Pu,$ note that any measurable
$d$-Lipschitz function is integrable with respect to $\mu$ and
$\nu.$
\begin{thm}[Kantorovich-Rubinstein]\label{res-KR}
For all $\mu,\nu\in \Pu, $
    $
    W_1(\nu,\mu) = \|\nu-\mu\|^*_\mathrm{Lip}.
    $
\end{thm}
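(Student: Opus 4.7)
My strategy is to apply the Kantorovich duality of Theorem \ref{res-01} with the metric cost $c=d$ and reduce the supremum over admissible pairs $(u,v)$ with $u\oplus v\le d$ to a supremum over \emph{antisymmetric} pairs of the form $(w,-w)$ with $w$ measurable and $d$-Lipschitz(1). The easy direction $W_1(\nu,\mu)\ge \|\nu-\mu\|^*_{\mathrm{Lip}}$ is immediate: for any measurable $w$ with $\|w\|_{\mathrm{Lip}}\le 1$, the Lipschitz bound gives $w(x)-w(y)\le d(x,y)$, so $w\oplus(-w)\le d$; moreover $|w(x)|\le |w(x_o)|+d(x_o,x)$ together with $\mu,\nu\in\Pu$ ensures $w\in L^1(\mu)\cap L^1(\nu)$. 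Hence $(w,-w)$ is admissible in \eqref{eq-01b} and yields $W_1(\nu,\mu)\ge \IX w\,d(\nu-\mu)$; taking the supremum gives the inequality.

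For the reverse direction, I would exploit two elementary features of the $d$-transform $u^d(y):=\inf_{x\in\X}\{d(x,y)-u(x)\}$ already used in Lemma \ref{res-02}. First, $u^d$ is always $d$-Lipschitz(1): the triangle inequality $d(x,y)\le d(x,y')+d(y,y')$ yields $u^d(y)\le u^d(y')+d(y,y')$ for every $u$, and symmetry does the rest. Second, if $u$ is itself $d$-Lipschitz(1) then $u^d=-u$: the Lipschitz bound forces $d(x,y)-u(x)\ge -u(y)$ for all $x$, with equality at $x=y$. Combining these, given any admissible $(u,v)$ the function $u^d$ is $d$-Lipschitz(1), and applying the second fact to $u^d$ gives $u^{dd}=(u^d)^d=-u^d$. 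Hence the improved pair $(u^{dd},u^d)=(-u^d,u^d)$ has the antisymmetric form $(w,-w)$ with $w:=-u^d$ measurable and $1$-Lipschitz, and Lemma \ref{res-02} ensures
\[
\IX u^{dd}\,d\nu+\IX u^d\,d\mu \;\ge\; \IX u\,d\nu+\IX v\,d\mu.
\]
Passing to the supremum in \eqref{eq-01b} then delivers $W_1(\nu,\mu)\le \|\nu-\mu\|^*_{\mathrm{Lip}}$.

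The main obstacle is the measurability of $u^d$ when the cost $d$ is only \lsc, since an infimum of Borel functions indexed by $\X$ need not itself be Borel. I would resolve this exactly as the remark preceding the theorem suggests, invoking the Beiglb\"ock--Schachermayer selection result to produce a Borel version $\tilde u^d$ of $u^d$ equal to it $\mu$-almost everywhere; because $u^d$ is $d$-Lipschitz(1), the Lipschitz bound combined with $\mu,\nu\in\Pu$ automatically gives $u^d,u^{dd}\in L^1(\mu)\cap L^1(\nu)$, so the integrals appearing in the argument are well defined and the reduction to antisymmetric pairs goes through without modification.
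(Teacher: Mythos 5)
Your proof is correct and follows essentially the same route as the paper: both directions rest on Kantorovich duality together with the observations that $u^d$ is $d$-Lipschitz(1) and $u^{dd}=-u^d$, reducing the dual problem to antisymmetric pairs $(w,-w)$ with $w$ $1$-Lipschitz. The only cosmetic differences are that you run the easy inequality through the dual side rather than the primal coupling, and you obtain $u^{dd}=-u^d$ by first proving the general fact that $u^d=-u$ for Lipschitz(1) $u$ — the underlying computation is identical to the paper's.
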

\proof For all measurable $d$-Lipschitz(1) function  $u$ and all
$\pi$ such that $\pi_0=\nu$ and $\pi_1=\mu,$
 $\IX u(x)\,[\nu-\mu](dx)=\IXX (u(x)-u(y))\,d\pi(x,y) \leq \IXX d(x,y)\,d\pi(x,y).$
   Optimizing in $u$ and $\pi$ one obtains
 $\|\nu-\mu\|_\mathrm{Lip}^*\leq W_1(\nu,\mu).$
\\
Let us look at the reverse inequality.
\\
\textit{Claim.} For any function $u$ on $\X,$ (i)\
    $u^d$ is $d$-Lipschitz(1)
    and (ii)\ $u^{dd}=-u^d.$
\\
Let us prove (i). Since $y\mapsto d(x,y)$ is $d$-Lipschitz(1),
$y\mapsto u^d(y)=\inf_x\{d(x,y)-u(x)\}$ is also $d$-Lipschitz(1)
as an infinum of  $d$-Lipschitz(1) functions.
\\
Let us prove (ii). Hence for all $x,y,$ $u^d(y)-u^d(x)\leq
d(x,y).$ But this implies that for all $y,$ $-u^d(x)\leq
d(x,y)-u^d(y).$ Optimizing in $y$ leads to $-u^d(x)\leq
u^{dd}(x).$ On the other hand,
$u^{dd}(x)=\inf_y\{d(x,y)-u^d(y)\}\leq -u^d(x)$ where the last
inequality is obtained by taking $y=x.$

With Theorem \ref{res-01}, Lemma \ref{res-02} and the above claim,
we obtain that
\begin{align*}
  W_1(\nu,\mu)&=\sup_{(u,v)}\left\{\IX u\,d\nu +\IX v\,d\mu\right\}=\sup_u\left\{\IX
  u^{dd}\,d\nu
+\IX u^d\,d\mu\right\} \\
  &\le\sup\left\{\IX u \,d[\nu-\mu]; u :  \|u\|_\mathrm{Lip}\leq 1
   \right\} = \|\nu-\mu\|^*_\mathrm{Lip}.\\
\end{align*}
which completes the proof of the theorem.
\endproof
For interesting consequences in probability theory, one can look
at Dudley's textbook \cite[Chp.\ 11]{Dud02}.

\subsection*{Optimal plans}
What about the optimal plans? If one applies \emph{formally} the
Karush-Kuhn-Tucker characterization of the saddle point of the
Lagrangian function $K$ in the proof of Theorem \ref{res-01}, one
obtains that $\hat\pi$ is an optimal transport plan if and only if
$0\in\partial_\pi K(\hat \pi,(\hat u,\hat v))$ for some couple of
functions $(\hat u,\hat v)$  such that
$0\in\widehat{\partial}_{(u,v)}K(\hat \pi,(\hat u,\hat v))$ where
$\partial_\pi K$ stands for the subdifferential of the convex
function $\pi \mapsto K(\pi,(\hat u,\hat v))$ and
$\widehat{\partial}_{(u,v)}K$ for the superdifferential of the
concave function $(u,v)\mapsto K (\hat\pi,(u,v)).$ This gives us
the system of equations
\begin{equation*}
    \left\{
    \begin{array}{rcl}
      \hat{u}\oplus \hat{v} -c &\in&\partial (\iota_{M_+})(\hat{\pi})\\
      (\hat{\pi}_0,\hat{\pi}_1)&=&(\nu,\mu) \\
    \end{array}
    \right.
\end{equation*}
where $M_+$ is the cone of all positive measures on $\X^2.$ Such a
couple $(\hat u,\hat v)$ is called a dual optimizer. The second
equation expresses the marginal constraints of \eqref{MK} while by
\eqref{eq-Fenchel} one can recast the first one as the Fenchel
identity
    $\langle \hat{u}\oplus \hat{v}-c,\hat{\pi}\rangle=\iota_{M_+}^*(\hat{u}\oplus
    \hat{v}-c)+\iota_{M_+}(\hat{\pi}).$
Since for any function $h,$ $\iota_{M_+}^*(h)=\sup_{\pi\in
M_+}\langle h,\pi\rangle=\iota_{\{h\le 0\}},$ one sees that
$\langle \hat{u}\oplus \hat{v}-c,\hat{\pi}\rangle=0$ with
$\hat{u}\oplus \hat{v}-c\le0$ and $\hat{\pi}\ge0$ which is
equivalent to $\hat{\pi}\ge0,$ $\hat{u}\oplus \hat{v}\le c$
everywhere  and $\hat{u}\oplus \hat{v}= c,$ $\hat{\pi}$-almost
everywhere. As $\hat{\pi}_0=\nu$ has a unit mass, so has the
positive measure $\hat{\pi}:$ it is a probability measure. These
formal considerations should prepare the reader to trust the
subsequent rigorous statement.

\begin{thm}\label{resL-c} Assume that $\mathcal{T}_c(\nu,\mu)<\infty.$
Any $\pi\in \PXX$ with the prescribed marginals $\pi_0=\nu$ and
$\pi_1=\mu$ is an optimal plan if and only if there exist two
measurable functions $u,v:\X\to[-\infty,\infty)$ such that
\begin{equation*}
    \left\{
    \begin{array}{rlll}
      u\oplus v &\le&c, & \textrm{everywhere} \\
       u\oplus v &=&c, & \pi\textrm{-almost everywhere.} \\
    \end{array}
    \right.
\end{equation*}
\end{thm}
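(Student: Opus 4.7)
The plan is to handle the two implications separately. The sufficiency direction is elementary; the necessity is where Kantorovich duality enters in an essential way.

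\textbf{Sufficiency ($\Leftarrow$).} Given a pair $(u,v)$ with $u\oplus v\le c$ everywhere and $u\oplus v=c$ $\pi$-a.e., I would pick an arbitrary competitor $\pi'$ with marginals $\nu,\mu$ and write
$$\int c\,d\pi'\;\ge\;\int u\oplus v\,d\pi'\;=\;\int u\,d\nu+\int v\,d\mu\;=\;\int u\oplus v\,d\pi\;=\;\int c\,d\pi,$$
where the second equality uses the marginal constraints on $\pi'$, the third uses the same identity for $\pi$, and the last uses the $\pi$-a.e.\ equality. The hypothesis $\mathcal{T}_c(\nu,\mu)<\infty$ together with $c\ge0$ and $u\oplus v=c$ $\pi$-a.e.\ makes the intermediate quantities finite, which legitimizes splitting $\int u\oplus v\,d\pi$ into marginal integrals (under the usual $[-\infty,+\infty)$ conventions on the values of $u,v$). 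Hence $\pi$ minimizes the transport cost.

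\textbf{Necessity ($\Rightarrow$).} Assume now that $\pi$ is optimal. The strategy is to invoke the Kantorovich dual equality (Theorem \ref{res-01}) to produce a dual-maximizing pair $(\hat u,\hat v)$ with $\hat u\oplus\hat v\le c$ everywhere and $\int\hat u\,d\nu+\int\hat v\,d\mu=\mathcal{T}_c(\nu,\mu)$. Once such a pair is available, optimality of $\pi$ gives
$$\int c\,d\pi\;=\;\mathcal{T}_c(\nu,\mu)\;=\;\int\hat u\,d\nu+\int\hat v\,d\mu\;=\;\int\hat u\oplus\hat v\,d\pi,$$
so the nonnegative function $c-\hat u\oplus\hat v$ has vanishing $\pi$-integral, hence is zero $\pi$-a.e. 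This is exactly the pair $(u,v)$ announced in the theorem.

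\textbf{Main obstacle.} Theorem \ref{res-01} only asserts that the dual value equals the primal, not that the supremum is attained. The core technical difficulty is therefore the \emph{existence of dual maximizers} in the generality of an lsc cost on a polish space. My plan would be to take a maximizing sequence $(u_n,v_n)$ in the dual problem and symmetrize it by applying the $c$-transform operations of Lemma \ref{res-02}, which produce a sequence of $c$-conjugate pairs $(u_n^{cc},u_n^c)$ with the same or better dual value, and then extract a pointwise limit. The measurability of the limit is handled by the Beiglb\"ock--Schachermayer selection result already quoted in the excerpt (see \cite{BS09}), and the required integrability of $\hat u$ against $\nu$ and $\hat v$ against $\mu$ follows from the finiteness of $\mathcal{T}_c(\nu,\mu)$ together with the bound $\hat u\oplus\hat v\le c$. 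An alternative route, which avoids producing a limit directly, is to establish $c$-cyclical monotonicity of $\mathrm{supp}(\pi)$ from optimality and then construct $(\hat u,\hat v)$ via Rockafellar's theorem adapted to the cost $c$; the measure-theoretic subtleties are the same in either approach, and no new probabilistic or analytic input beyond Theorem \ref{res-01} is required.
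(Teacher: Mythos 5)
The paper does not actually prove this theorem: it only offers the formal Karush--Kuhn--Tucker/saddle-point heuristic preceding the statement and then refers to \cite[Thm.\ 5.10]{Vill2}, explicitly warning that the genuine proof ``has almost nothing in common with the saddle-point strategy.'' Measured against that cited proof, your necessity argument has a real gap: it rests on attainment of the supremum in \eqref{eq-01b} by a pair $(\hat u,\hat v)$ with $\hat u\in L^1(\nu)$, $\hat v\in L^1(\mu)$. Under the sole hypothesis $\mathcal{T}_c(\nu,\mu)<\infty$ such attainment can fail: there are examples (Ambrosio--Pratelli) of finite lower semicontinuous costs with finite optimal cost for which the dual problem has no integrable maximizer. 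This is precisely why Villani separates the present statement (proved via $c$-cyclical monotonicity of a carrier of $\pi$ and the R\"uschendorf construction, which produces possibly non-integrable, $[-\infty,\infty)$-valued $u,v$) from dual attainment, which requires the extra hypothesis $c\le c_\X\oplus c_\Y$ with $c_\X\in L^1(\nu)$, $c_\Y\in L^1(\mu)$. Your claim that integrability of $\hat u,\hat v$ ``follows from the finiteness of $\mathcal{T}_c$ together with the bound $\hat u\oplus\hat v\le c$'' is not correct --- that bound only controls the sum from above --- and the ``extract a pointwise limit'' step for $(u_n^{cc},u_n^c)$ has no compactness behind it for a general lsc cost on a polish space. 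The route you relegate to an alternative ($c$-cyclical monotonicity plus a Rockafellar-type construction) is in fact the only one that works at this level of generality, and its measure-theoretic content is genuinely different from, and heavier than, what dual attainment would hand you.

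The same integrability issue infects both of your displayed chains of equalities: since the theorem only asserts that $u,v$ are measurable with values in $[-\infty,\infty)$, the splitting $\int u\oplus v\,d\pi'=\int u\,d\nu+\int v\,d\mu$ may be an illegitimate $\infty-\infty$, even when $\int (u\oplus v)\,d\pi$ is finite. In the sufficiency direction the standard repair is either a truncation argument replacing $(u,v)$ by bounded cutoffs, or the observation that $\{u\oplus v=c\}$ is a $c$-cyclically monotone set together with the (hard) implication ``cyclically monotone $\Rightarrow$ optimal.'' Either way, this is where the real content of \cite[Thm.\ 5.10]{Vill2} lies, not in the formal complementary-slackness computation.
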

This theorem can be found in \cite[Thm.\ 5.10]{Vill2} with a proof
which has almost nothing in common with the saddle-point strategy
that has been described above.

An important instance of this result is the special case of the
quadratic cost.

\begin{cor}\label{resL-b} Let us consider the
quadratic cost $c_2(x,y)=|y-x|^2/2$ on $\X=\Rk$ and take two
probability measures $\nu$ and $\mu$ in $\Pd(\X).$
\begin{enumerate}[(a)]
    \item There exists an optimal transport plan.
    \item Any $\pi\in \PXX$ is optimal if and only if there exists
    a  convex \lsc\ function
    $\phi:\X\to(-\infty,\infty]$ such that the Fenchel identity $\phi(x)+\phi^*(y)=x\scal
    y$ holds true $\pi$-almost everywhere.
\end{enumerate}
\end{cor}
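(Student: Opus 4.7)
The plan is to combine the existence Proposition \ref{resL-a} with the optimality characterization of Theorem \ref{resL-c}, translating the $c_2$-transform language into the Legendre transform via identity \eqref{eqL-a}.

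For (a), since $|y-x|^2\le 2|x|^2+2|y|^2$ and both $\nu,\mu\in\Pd(\X)$, the product coupling $\nu\otimes\mu$ satisfies
$$
\int_{\X^2} c_2\,d(\nu\otimes\mu)\;\le\; \int_\X|x|^2\,d\nu(x)+\int_\X|y|^2\,d\mu(y)\;<\;\infty.
$$
Hence $\mathcal{T}_{c_2}(\nu,\mu)<\infty$ and Proposition \ref{resL-a} yields the existence of an optimal plan.

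For the direct implication of (b), I apply Theorem \ref{resL-c} with $c=c_2$: any optimal $\pi$ comes with measurable functions $u,v:\X\to[-\infty,\infty)$ such that $u\oplus v\le c_2$ everywhere and $u\oplus v=c_2$ $\pi$-almost everywhere. Using $c_2(x,y)=|x|^2/2+|y|^2/2-x\scal y$, define
$$
\phi_0(x):=|x|^2/2-u(x),\qquad \psi_0(y):=|y|^2/2-v(y),
$$
so the two conditions become $x\scal y\le\phi_0(x)+\psi_0(y)$ everywhere and $x\scal y=\phi_0(x)+\psi_0(y)$ $\pi$-a.e. The pointwise inequality gives $\phi_0^*(y)=\sup_x\{x\scal y-\phi_0(x)\}\le\psi_0(y)$ for all $y$. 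Let $\phi:=\phi_0^{**}$ denote the convex \lsc\ envelope of $\phi_0$; then $\phi\le\phi_0$ and $\phi^*=\phi_0^*\le\psi_0$ (standard facts on convex conjugation, cf.\ \eqref{eqL-a}). Fenchel's inequality \eqref{ineq-Fenchel} then produces, $\pi$-almost everywhere,
$$
x\scal y\;\le\;\phi(x)+\phi^*(y)\;\le\;\phi_0(x)+\psi_0(y)\;=\;x\scal y,
$$
so $\phi(x)+\phi^*(y)=x\scal y$ $\pi$-a.e. Conversely, given a convex \lsc\ $\phi$ with this Fenchel identity $\pi$-a.e., set $u(x):=|x|^2/2-\phi(x)$ and $v(y):=|y|^2/2-\phi^*(y)$; Fenchel's inequality \eqref{ineq-Fenchel} yields $u\oplus v\le c_2$ everywhere and the hypothesis gives $u\oplus v=c_2$ $\pi$-a.e., so Theorem \ref{resL-c} confirms that $\pi$ is optimal.

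The main subtlety lies in the passage to the biconjugate: Theorem \ref{resL-c} only furnishes measurable functions $u,v$, so the raw function $\phi_0$ need not be convex or even lower semicontinuous. Replacing it by $\phi_0^{**}$ restores convexity and lower semicontinuity while, crucially, preserving the Fenchel identity through the short squeeze displayed above. Everything else is essentially a bookkeeping exercise translating the $c_2$-transform into the Legendre transform, and by \eqref{eq-Fenchel} the final characterization can equivalently be recast as $y\in\partial\phi(x)$ for $\pi$-almost every $(x,y)$.
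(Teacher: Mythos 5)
Your proof is correct and follows essentially the same route as the paper: part (a) is the same integrability bound feeding Proposition \ref{resL-a}, and part (b) is Theorem \ref{resL-c} combined with the correspondence \eqref{eqL-a} between the $c_2$-transform and the Legendre transform. Your passage to the biconjugate $\phi_0^{**}$ with the squeeze $x\scal y\le\phi+\phi^*\le\phi_0+\psi_0=x\scal y$ is exactly the Legendre-transform avatar of the paper's appeal to Lemma \ref{res-02} (replacing $(u,v)$ by $(u^{cc},u^c)$), merely spelled out more explicitly.
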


\begin{proof}
Proof of (a). Since $|y-x|^2/2\le |x|^2+|y|^2$ and $\nu,\mu\in
\Pd,$ any $\pi\in \PXX$ such that $\pi_0=\nu$ and $\pi_1=\mu$
satisfies $\IXX c\,d\pi\le \IX |x|^2\,d\nu(x)+\IX
|y|^2\,d\mu(y)<\infty.$ Therefore
$\mathcal{T}(\nu,\mu)=W^2_2(\nu,\mu)<\infty,$ and one concludes
with Proposition \ref{resL-a}.

\noindent Proof of (b). In view of Lemma \ref{res-02}, one sees
that an optimal dual optimizer is necessarily of the form
$(u^{cc},u^c).$  Theorem \ref{resL-c} tells us that $\pi$ is
optimal if and only there exists some function $u$ such that
$u^{cc}\oplus u^c=c,$ $\pi$-almost everywhere. With \eqref{eqL-a},
by considering the functions $\phi(x)=|x|^2/2-u^{c_2c_2}(x)$ and
$\psi(y)=|y|^2/2-u^{c_2}(y),$ one also obtains $\phi=\psi^*$ and
$\psi=\phi^*,$ which means that $\phi$ and $\psi$ are convex
conjugate to each other and in particular that $\phi$ is convex
and \lsc.
\end{proof}

By \eqref{eq-Fenchel}, another equivalent statement for the
Fenchel identity $\phi(x)+\phi^*(y)=x\scal y$ is
\begin{equation}\label{eq-07}
y\in\partial \phi(x).
\end{equation}
In the special case of the real line $\X=\R,$ a popular coupling
of $\nu$ and $\mu=T_\#\nu$ is given by the so-called
\emph{monotone rearrangement.} It is defined by
\begin{equation}\label{eq-08}
    y=T(x):=F_\mu^{-1}\circ F_\nu(x), \quad x\in\R,
\end{equation}
where $F_\nu(x)=\nu((-\infty,x]),$ $F_\mu(y)=\mu((-\infty,y])$ are
the distribution functions of $\nu$ and $\mu,$  and
$F_\mu^{-1}(u)=\inf\{y\in\R; F_\mu(y)> u\},$ $u\in[0,1]$ is the
generalized inverse of $F_\mu.$ When $\X=\R,$ the identity
\eqref{eq-07} simply states that $(x,y)$ belongs to the graph of
an increasing function. Of course, this is the case of
\eqref{eq-08}. Hence Corollary \ref{resL-b} tells us that the
monotone rearrangement is an optimal transport map for the
quadratic cost.

Let us go back to $\X=\Rk.$ If $\phi$ is G\^ateaux differentiable
at $x,$ then $\partial \phi(x)$ is restricted to a single element:
the gradient $\nabla\phi(x),$ and \eqref{eq-07}  simply becomes
$y=\nabla\phi (x).$ Hence, if $\phi$ were differentiable
everywhere, condition (b) of Corollary \ref{resL-b} would be
$y=\nabla\phi (x),$ $\pi$-almost everywhere. But this is too much
demanding. Nevertheless, Rademacher's theorem states that a convex
function on $\Rk$ is differentiable Lebesgue almost everywhere on
its effective domain. This allows to derive the following
improvement.

\begin{thm}[Quadratic cost on $\X=\Rk$]\label{res-06}
Let us consider the quadratic cost $c_2(x,y)=|y-x|^2/2$ on
$\X=\Rk$ and take two probability measures $\nu$ and $\mu$ in
$\Pd(\X)$ which are absolutely continuous. Then, there exists a
unique optimal plan. Moreover, $\pi\in\PXX$ is optimal if and only
if $\pi_0=\nu, \pi_1=\mu$ and there exists a convex function
$\phi$ such that
\begin{equation*}
\left\{
    \begin{array}{rcl}
      y&=&\nabla\phi(x) \\
      x&=&\nabla\phi^*(y) \\
    \end{array}
\right. ,\quad \pi \textrm{-almost
    everywhere}.
\end{equation*}
\end{thm}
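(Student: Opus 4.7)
The plan is to upgrade Corollary \ref{resL-b} using two additional ingredients: Rademacher's differentiability theorem and the absolute continuity of the marginals. Existence of an optimal plan is already given by part (a) of Corollary \ref{resL-b}, so the work lies in sharpening the Fenchel identity characterization into the pointwise gradient formula, and in deducing uniqueness.

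For the necessary direction, suppose $\pi$ is optimal with $\pi_0=\nu$ and $\pi_1=\mu$. By Corollary \ref{resL-b}(b) there is a convex \lsc\ function $\phi:\R^k\to(-\infty,\infty]$ with $\phi(x)+\phi^*(y)=x\scal y$ $\pi$-a.e. In particular $\phi(x)<\infty$ $\pi$-a.e., so $x\in\mathrm{dom}(\phi)$ $\nu$-a.e. Rademacher's theorem asserts that $\phi$ is differentiable Lebesgue-a.e.\ on the interior of $\mathrm{dom}(\phi)$, hence (since $\nu\ll$ Leb and $\mathrm{dom}(\phi)$ is convex with $\nu(\mathrm{dom}(\phi))=1$) $\phi$ is differentiable $\nu$-a.e. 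At any point of differentiability, $\partial\phi(x)=\{\nabla\phi(x)\}$, so the equivalence \eqref{eq-Fenchel} turns the Fenchel identity into $y=\nabla\phi(x)$; this holds $\pi$-a.e. Applying the same argument with the roles of $\nu,\mu$ and $\phi,\phi^*$ swapped (using $\mu\ll$ Leb) gives $x=\nabla\phi^*(y)$ $\pi$-a.e. The sufficient direction is immediate: if $y=\nabla\phi(x)$ $\pi$-a.e., then $y\in\partial\phi(x)$, so $\phi(x)+\phi^*(y)=x\scal y$ $\pi$-a.e., and optimality follows from Corollary \ref{resL-b}(b).

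For uniqueness, suppose $\pi$ and $\pi'$ are both optimal. The cost $\pi\mapsto\IXX c_2\,d\pi$ is linear, so the half-sum $\bar\pi:=(\pi+\pi')/2$ is also admissible and optimal, and its first marginal is still $\nu$. By the previous step, there exists a convex $\phi$ such that $y=\nabla\phi(x)$ $\bar\pi$-a.e. Since $\pi\le 2\bar\pi$ and $\pi'\le 2\bar\pi$ as measures, any $\bar\pi$-null set is $\pi$-null and $\pi'$-null, hence $y=\nabla\phi(x)$ both $\pi$- and $\pi'$-a.e. Thus both plans are concentrated on the graph of $\nabla\phi$ and have the same first marginal $\nu$, which forces $\pi=\pi'=(\mathrm{id},\nabla\phi)_\#\nu$.

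The main obstacle I anticipate is the step where one passes from ``$\phi$ is Lebesgue-a.e.\ differentiable on $\mathrm{int}(\mathrm{dom}(\phi))$'' to ``$\phi$ is $\nu$-a.e.\ differentiable'': one must justify that $\nu$ assigns no mass to the relative boundary of $\mathrm{dom}(\phi)$ nor to its non-differentiability set. Absolute continuity of $\nu$ takes care of this because both sets are Lebesgue negligible (the boundary of a convex set has Lebesgue measure zero, and Rademacher handles the interior); the symmetric claim for $\phi^*$ uses the absolute continuity of $\mu$ in the same way, which is precisely why the hypothesis cannot be weakened to just $\nu\in\Pd$.
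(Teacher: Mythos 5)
Your proof is correct and follows essentially the same route as the paper: Corollary \ref{resL-b} plus Rademacher's theorem for the pointwise gradient characterization, and the half-sum convexity argument for uniqueness. You are in fact slightly more careful than the paper on two points that it glosses over — ruling out mass on the boundary of $\mathrm{dom}(\phi)$, and using $\pi\le 2\bar\pi$ to transfer the graph property from the half-sum back to each plan — but these are refinements of the same argument, not a different one.
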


\begin{proof}
It follows from Rademacher's theorem that, if $\nu$ is an
absolutely continuous measure, $\phi$ is differentiable
$\nu$-almost everywhere. This and Corollary \ref{resL-b} prove the
statement about the characterization of the optimal plans. Note
that the quadratic transport is symmetric with respect to $x$ an
$y,$ so that one obtains the same conclusion if $\mu$ is
absolutely continuous; namely $x=\nabla\phi^*(y),$ $\pi$-almost
everywhere, see \eqref{eq-Fenchel}.

We have just proved that, under our assumptions, an optimal plan
is concentrated on a functional graph.  The uniqueness of the
optimal plan follows directly from this. Indeed, if one has two
optimal plans $\pi^0$ and $\pi^1$, by convexity their half sum
$\pi^{1/2}$ is still optimal. But for $\pi^{1/2}$ to be
concentrated on a \emph{functional} graph, it is necessary that
$\pi^0$ and $\pi^1$ share the same graph.
\end{proof}

For more details, one can have a look at \cite[Thm.\ 2.12]{Vill}.
The existence result has been obtained by Knott and Smith
\cite{KS84}, while the uniqueness is due to Brenier \cite{Bre91}
and McCann \cite{McC95}. The application $y=\nabla\phi(x)$ is
often called the \emph{Brenier map} pushing forward $\nu$ to
$\mu.$

\section{Dual equalities and inequalities}\label{section transport inequalities}
In this section, we present Bobkov and G\"otze dual approach to transport-entropy inequalities \cite{BG99}. More precisely, we are going to take advantage of variational formulas for the optimal transport cost and for the relative entropy to give another formulation of transport-entropy inequalities. The relevant variational formula for the transport cost is given by
the Kantorovich dual equality at Theorem \ref{res-01}

\begin{equation}\label{eqL-e}
    \begin{split}
    \mathcal{T}_c(\nu,\mu)
    &=\ \sup\left\{\int u\,d\nu+\int v\,d\mu; u,v\in\CX,u\oplus v\le c\right\}.
\end{split}
\end{equation}

On the other hand, the relative entropy admits the following
variational representations. For all $\nu\in\PX,$
\begin{equation}\label{eqL-c}
\begin{split}
  H(\nu|\mu)&= \sup\left\{\int u\,d\nu-\log\int e^u\,d\mu; u\in\CX\right\}.\\
        &= \sup\left\{\int u\,d\nu-\log\int e^u\,d\mu; u\in \BX\right\}
\end{split}
\end{equation}
and for all $\nu\in\PX$ \textrm{ such that } $\nu\ll\mu,$
\begin{equation}\label{eqL-d}
    H(\nu|\mu) =\sup\left\{\int u\,d\nu-\log\int e^u\,d\mu; u: \textrm{measurable,} \int
    e^{u}\,d\mu<\infty, \int u_-\,d\nu<\infty
    \right\}
\end{equation}
where $u_-=(-u)\vee 0$ and $\int u\,d\nu\in(-\infty,\infty]$ is
well-defined for all $u$ such that $\int u_-\,d\nu<\infty.$

The identities \eqref{eqL-c} are well-known, but the proof of
\eqref{eqL-d} is more confidential. This is the reason why we give
their detailed proofs at the Appendix, Proposition \ref{resL-13}.

As regards Remark \ref{rem-a}, a sufficient condition for $\mu$ to
satisfy $H(\nu|\mu)=\infty$ whenever $\nu\not\in\Pp$ is
\begin{equation}\label{eqL-f}
    \int e^{s_od^p(x_o,x)}\,d\mu(x)<\infty
\end{equation}
for some $x_o\in\X$ and  $s_o>0.$ Indeed, by \eqref{eqL-d}, for
all $\nu\in\PX,$ $s_o\int d^p(x_o,x)\,d\nu(x)\le
H(\nu|\mu)+\log\int e^{s_od^p(x_o,x)}\,d\mu(x).$ On the other
hand,  Proposition \ref{res-08} below tells us that \eqref{eqL-f}
is also a necessary condition.

Since $u\mapsto \Lambda(u):=\log\int e^u\,d\mu$ is convex (use
H\"older inequality to show it) and \lsc\ on $\CX$ (resp. $\BX$)
(use Fatou's lemma), one observes that $H(\,\cdot\,|\mu)$ (more
precisely its extension to the vector space of signed bounded
measures which achieves the value $+\infty$ outside $\PX$) and
$\Lambda$ are convex conjugate to each other:
$$
\left\{
    \begin{array}{l}
      H(\,\cdot\,|\mu)=\Lambda^*, \\
      \Lambda=H(\,\cdot\,|\mu)^*. \\
    \end{array}
\right.
$$
It appears that $\mathcal{T}_c(\,\cdot\,,\mu)$ and
$H(\,\cdot\,|\mu)$ both can be written as convex conjugates of
functions on a class of functions on $\X.$ This structure will be
exploited in a moment to give a dual formulation of inequalities
$\alpha(\mathcal{T}_c)\leq H$, for $\alpha$ belonging to the
following class.

\begin{defi}[of $\mathcal{A}$]\label{class A}
The class $\mathcal{A}$ consists of all the functions $\alpha$ on
$[0,\infty)$ which are convex, increasing with $\alpha(0)=0.$
\end{defi}

The convex conjugate of a function $\alpha\in\mathcal{A}$ is
replaced by the monotone conjugate $\alpha^{\mc}$ defined by
\[
\alpha^{\mc}(s)=\sup_{r\geq 0}\{sr-\alpha(r)\},\quad s\geq 0
\]
where the supremum is taken on $r\geq 0$ instead of $r\in\R.$

\begin{thm}\label{BG1}
Let $c$ be a lower semicontinuous cost function, $\alpha\in \mathcal{A}$ and $\mu\in \mathrm{P}(\X)$; the following propositions are equivalent.
\begin{enumerate}
\item The probability measure $\mu$ verifies the inequality $\alpha(\mathcal{T}_c)\leq H$.
\item For all $u,v\in \CX$, such that $u\oplus v\leq c$,
$$\int e^{su}\,d\mu\leq e^{-s\int v\,d\mu+\alpha^\mc(s)},\quad s\geq 0.$$
\end{enumerate}
Moreover, the same result holds with $\BX$ instead of $\CX$.
\end{thm}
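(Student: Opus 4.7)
The plan is to read both $\mathcal{T}_c(\cdot,\mu)$ and $H(\cdot|\mu)$ as convex conjugates, linked through the elementary Young inequality
\[
 sr \leq \alpha(r) + \alpha^\mc(s), \qquad r,s\geq 0,
\]
together with the biconjugation identity $\alpha(r) = \sup_{s\geq 0}\{sr-\alpha^\mc(s)\}$ valid for every $\alpha\in\mathcal{A}$ (since such an $\alpha$ is convex, lower semicontinuous and nondecreasing on $[0,\infty)$ with $\alpha(0)=0$). The two variational tools are Kantorovich duality \eqref{eq-01a} and the Laplace-entropy duality coming from \eqref{eqL-c}, namely
\[
 \log\int e^{u}\,d\mu = \sup_{\nu\in\PX}\Big\{\int u\,d\nu - H(\nu|\mu)\Big\}, \qquad u\in\CX\ \textrm{or}\ \BX.
\]

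\textbf{(1) $\Rightarrow$ (2).} Fix $u,v\in\BX$ (which covers $\CX$) with $u\oplus v\leq c$, a probability measure $\nu\in\PX$, and $s\geq 0$. The primal bound $\int u\,d\nu + \int v\,d\mu\leq \mathcal{T}_c(\nu,\mu)$ combined with Young's inequality and hypothesis (1) gives
\[
 s\!\int u\,d\nu + s\!\int v\,d\mu \leq s\,\mathcal{T}_c(\nu,\mu) \leq \alpha(\mathcal{T}_c(\nu,\mu)) + \alpha^\mc(s) \leq H(\nu|\mu) + \alpha^\mc(s).
\]
Rearranging and optimizing over $\nu$ with the Laplace-entropy duality,
\[
 \log\int e^{su}\,d\mu = \sup_{\nu}\Big\{s\!\int u\,d\nu - H(\nu|\mu)\Big\} \leq \alpha^\mc(s) - s\!\int v\,d\mu,
\]
which is exactly (2). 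This simultaneously yields the $\CX$ and $\BX$ versions.

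\textbf{(2) $\Rightarrow$ (1).} Fix $\nu\in\PX$ and $s\geq 0$. For any $u\in\CX$ the entropy variational formula \eqref{eqL-c} applied to $su$ gives $s\!\int u\,d\nu - \log\int e^{su}\,d\mu \leq H(\nu|\mu)$; combining with (2) we get, for every $u,v\in\CX$ with $u\oplus v\leq c$,
\[
 s\Big(\!\int u\,d\nu + \int v\,d\mu\Big) \leq H(\nu|\mu) + \alpha^\mc(s).
\]
Taking the supremum over admissible pairs $(u,v)$ and invoking the Kantorovich equality \eqref{eq-01a} yields $s\,\mathcal{T}_c(\nu,\mu) - \alpha^\mc(s) \leq H(\nu|\mu)$ for every $s\geq 0$. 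Taking now the supremum over $s\geq 0$ and using the biconjugation identity for $\alpha$, we conclude
\[
 \alpha(\mathcal{T}_c(\nu,\mu)) = \sup_{s\geq 0}\{s\,\mathcal{T}_c(\nu,\mu) - \alpha^\mc(s)\} \leq H(\nu|\mu).
\]

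\textbf{Main points to be careful about.} The delicate step is the biconjugation $\alpha^{\mc\mc}=\alpha$ on $[0,\infty)$, which has to be justified from the assumptions $\alpha$ convex, nondecreasing and $\alpha(0)=0$ (so that the monotone conjugate is the right notion rather than the usual one). A second point is the case $\mathcal{T}_c(\nu,\mu)=+\infty$: letting $s\to\infty$ in the bound $s\,\mathcal{T}_c(\nu,\mu)\leq H(\nu|\mu)+\alpha^\mc(s)$ forces $H(\nu|\mu)=+\infty$ provided $\alpha^\mc$ is not identically $+\infty$, as is the case for $\alpha\in\mathcal{A}$ not constant; otherwise the inequality is trivially satisfied. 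Finally, one checks that the Kantorovich supremum in \eqref{eq-01a} only needs to be taken against $\CX$, so the converse direction is in fact the weakest hypothesis, whereas (1)$\Rightarrow$(2) automatically produces the stronger $\BX$ form.
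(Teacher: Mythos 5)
Your proof is correct and follows essentially the same route as the paper's: Kantorovich duality for $\mathcal{T}_c$, the variational formula $\log\int e^u\,d\mu=\sup_\nu\{\int u\,d\nu-H(\nu|\mu)\}$, and Fenchel--Young duality for $\alpha$. The only differences are presentational — you split the argument into two implications and work with the monotone conjugate $\alpha^\mc$ directly via biconjugation on $[0,\infty)$ (which you correctly flag and justify), whereas the paper runs a single chain of equivalences after extending $\alpha$ by zero on $(-\infty,0]$ and using the ordinary convex conjugate, observing at the end that $\alpha^*=\alpha^\mc$ on $[0,\infty)$ and $=+\infty$ on $(-\infty,0)$.
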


A variant of this result can be found in the authors' paper
\cite{GL07} and in Villani's textbook \cite[Thm.\ 5.26]{Vill2}.
It extends the dual characterization of transport inequalities $\T_1$ and $\T_2$ obtained by  Bobkov and G{\"o}tze in \cite{BG99}.
\proof
First we extend $\alpha$ to the whole real line by defining $\alpha(r)=0$, for all $r\leq 0$.
Using Kantorovich dual equality and the fact that $\alpha$ is continuous and increasing on $\R$, we see that the inequality $\alpha(\mathcal{T}_c)\leq H$ holds if and only if for all $u,v\in \CX,$ such that $u\oplus v\leq c$, one has
$$\alpha\left(\int u\,d\nu+\int v\,d\mu \right)\leq H(\nu|\mu),\quad \nu\in \PX.$$
Since $\alpha$ is convex and continuous on $\R$, it satisfies $\alpha(r)=\sup_s\{sr-\alpha^*(s)\}$. So the preceding condition is equivalent to the following one
$$s\int u\,d\nu-H(\nu|\mu)\leq -s\int v\,d\mu+\alpha^*(s) ,\quad \nu\in \PX, s\in \R, u\oplus v\leq c.$$
Since $H(\,\cdot\,|\mu)^*=\Lambda$, optimizing over $\nu\in \PX$, we arrive at
$$\log \int e^{su}\,d\mu\leq -s\int v\,d\mu+\alpha^*(s) ,\quad  s\in \R, u\oplus v\leq c.$$
Since $\alpha^*(s)=+\infty$ when $s<0$ and $\alpha^*(s)=\alpha^\mc(s)$ when $s\geq 0$, this completes the proof.
\endproof

Let us define, for all $f,g\in \BX$,
$$P_cf(y)=\sup_{x\in \X}\{f(x)-c(x,y)\},\quad y\in \X,$$
and
$$Q_cg(x)=\inf_{y\in \X}\{g(y)+c(x,y)\},\quad x\in \X.$$
For a given function $f:\X\to\R$, $P_cf$ is the best function $g:\X\to\R$ (the smallest) such that
$f(x)-g(y)\leq c(x,y)$, for all $x,y\in \X.$ And for a given function $g:\X\to \R$, $Q_cg$ is the best function $f:\X\to \R$ (the biggest) such that $f(x)-g(y)\leq c(x,y)$, for all $x,y\in\X$.

The following immediate corollary gives optimized forms of the dual condition (2) stated in Theorem \ref{BG1}.
\begin{cor}\label{BG2}
Let $c$ be a lower semicontinuous cost function, $\alpha\in \mathcal{A}$ and $\mu\in \PX$. The following propositions are equivalent.
\begin{enumerate}
\item The probability measure $\mu$ verifies the inequality $\alpha (\mathcal{T}_c)\leq H$.
\item For all $f\in \CX$,
$$\int e^{sf}\,d\mu\leq e^{s\int P_cf\,d\mu +\alpha^\mc(s)},\quad s\geq 0.$$
\item For all $g\in \CX$,
$$\int e^{sQ_cg}\,d\mu\leq e^{s\int g\,d\mu + \alpha^\mc(s)},\quad s\geq 0.$$
\end{enumerate}
Moreover, the same result holds true with $\BX$ instead of $\CX$.
\end{cor}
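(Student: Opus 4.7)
The plan is to reduce each of (2) and (3) to Theorem~\ref{BG1} by resolving the constraint $u\oplus v\le c$ optimally in one of the two variables via the operators $P_c$ and $Q_c$. The key observation, immediate from the definitions, is
\[
u(x)+v(y)\le c(x,y)\ \text{for all }x,y\in\X \iff v\le -P_c u\iff u\le Q_c(-v).
\]
In the inequality $\int e^{su}\,d\mu\le e^{-s\int v\,d\mu+\alpha^\mc(s)}$ furnished by Theorem~\ref{BG1}(2), the right-hand side is a decreasing function of $v$ (for $s\ge 0$) while the left-hand side depends only on $u$, so for each fixed $u=f\in\CX$ the tightest admissible choice is $v=-P_cf$. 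This extremal principle drives both directions of the desired equivalence.

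First I would carry out (1)$\Leftrightarrow$(2) in detail. For (1)$\Rightarrow$(2), invoke Theorem~\ref{BG1} with $u=f$ and $v=-P_cf$, which is admissible since $f(x)+(-P_cf)(y)\le c(x,y)$ holds by the very definition of $P_c$; what comes out is precisely (2). Conversely, assume (2) and take any $(u,v)\in\CX^2$ with $u\oplus v\le c$; the equivalence above gives $P_cu\le -v$, so for $s\ge 0$ one has $s\int P_cu\,d\mu\le -s\int v\,d\mu$, and inserting this into (2) applied with $f=u$ recovers the bound of Theorem~\ref{BG1}(2), hence (1). The equivalence (1)$\Leftrightarrow$(3) is entirely symmetric: for the forward implication take $u=Q_cg$ and $v=-g$, and for the reverse implication apply (3) with $g=-v$ and use $u\le Q_c(-v)$ together with the monotonicity of $t\mapsto e^{st}$ on $s\ge 0$. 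The $\BX$-version of the corollary follows verbatim from the $\BX$-version of Theorem~\ref{BG1}.

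The main obstacle is not the scheme above, which is essentially algebraic, but rather the regularity of $P_cf$ and $Q_cg$: when $c$ is merely lower semicontinuous these functions need not be continuous, which is precisely why the statement also includes the $\BX$ version. The sup-norm bounds $\|P_cf\|_\infty\le\|f\|_\infty$ and $\|Q_cg\|_\infty\le\|g\|_\infty$ are immediate from $c\ge 0$ and $c(x,x)=0$, but Borel measurability is not automatic. Here one invokes the Beiglb\"ock--Schachermayer measurable-selection result cited in the remark following Lemma~\ref{res-02}, which supplies Borel versions of $P_cf$ and $Q_cg$ agreeing with them $\mu$-almost everywhere; this suffices to make $\int P_cf\,d\mu$ and $\int Q_cg\,d\mu$ well-defined and to legitimize the preceding arithmetic.
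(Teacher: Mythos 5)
Your proposal is correct and follows exactly the route the paper intends: the corollary is obtained from Theorem~\ref{BG1} by resolving the constraint $u\oplus v\le c$ optimally via $v=-P_cf$ (resp.\ $u=Q_cg$), which is precisely why the paper introduces $P_c$ and $Q_c$ as the ``best'' admissible choices and calls the corollary immediate. Your closing remark on the measurability of $P_cf$ and $Q_cg$, handled via the Beiglb\"ock--Schachermayer selection result together with the trivial sup-norm bounds, is a welcome point that the paper leaves implicit.
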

When the cost function is a lower semicontinuous distance, we have the following.
\begin{cor}\label{BG3}
Let $d$ be a lower semicontinuous distance, $\alpha\in \mathcal{A}$ and $\mu\in \PX$. The following propositions are equivalent.
\begin{enumerate}
\item The probability measure $\mu$ verifies the inequality $\alpha (\mathcal{T}_d)\leq H$.
\item For all $1$-Lipschitz function $f$,
$$\int e^{sf}\,d\mu\leq e^{s\int f\,d\mu +\alpha^\mc(s)},\quad s\geq 0.$$
\end{enumerate}
\end{cor}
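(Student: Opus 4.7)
The plan is to reduce Corollary \ref{BG3} to Corollary \ref{BG2} by exploiting two identities that are specific to the case where the cost is a distance. First, whenever $f$ is $1$-Lipschitz with respect to $d$, one has $P_df=f$: the Lipschitz property gives $f(x)-d(x,y)\le f(y)$ for all $x,y$, hence $P_df\le f$, while taking $x=y$ in the supremum yields $P_df\ge f$. Second, for any bounded measurable $g$, the function $Q_dg$ is automatically $1$-Lipschitz with respect to $d$ (the argument is the one used for $u^d$ in the proof of Theorem \ref{res-KR}) and satisfies $Q_dg\le g$ pointwise by taking $y=x$ in the infimum.

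For the direction $(1)\Rightarrow(2)$, I would invoke Corollary \ref{BG2}(2) in its $\BX$ version. Plugging in a bounded $1$-Lipschitz $f$, the equality $P_df=f$ transforms the conclusion of Corollary \ref{BG2}(2) into exactly the bound
\[
\int e^{sf}\,d\mu\le \exp\!\Big(s\int f\,d\mu+\alpha^\mc(s)\Big),\quad s\ge 0.
\]
For a general, possibly unbounded, $1$-Lipschitz $f$, I would apply the bounded case to the truncations $f_n:=(-n)\vee(f\wedge n)$, which remain $1$-Lipschitz, and pass to the limit using Fatou's lemma on the left-hand side and monotone convergence for $\int f_n\,d\mu$.

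For the converse $(2)\Rightarrow(1)$, I would verify condition (3) of Corollary \ref{BG2}. Given $g\in\BX$, the function $h:=Q_dg$ is bounded, $1$-Lipschitz and satisfies $h\le g$, so the hypothesis (2) applied to $h$ together with $s\ge 0$ yields
\[
\int e^{sQ_dg}\,d\mu\le \exp\!\Big(s\int h\,d\mu+\alpha^\mc(s)\Big)\le\exp\!\Big(s\int g\,d\mu+\alpha^\mc(s)\Big),
\]
which is precisely Corollary \ref{BG2}(3), and hence implies $\alpha(\mathcal{T}_d)\le H$.

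The main technical nuisance in this plan is the measurability issue pointed out in Remark \ref{rem-00}: when the distance $d$ differs from the underlying polish metric, a $d$-Lipschitz function---and in particular $Q_dg$---need not be Borel measurable. I would dispose of this exactly as in the discussion of the measurability of $u^c$ in Section \ref{section optimal transport}, by replacing $Q_dg$ with the Borel measurable, $\mu$-almost everywhere equal version furnished by the Beiglb\"ock--Schachermayer result, which is sufficient to make all the integrals appearing above well defined.
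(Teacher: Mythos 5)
Your argument is correct and is exactly the reduction the paper leaves implicit: specialize Corollary \ref{BG2} to $c=d$, using $P_df=f$ for $1$-Lipschitz $f$ in one direction and the facts that $Q_dg$ is $1$-Lipschitz with $Q_dg\le g$ in the other, with truncation handling unbounded $f$. Your treatment of the measurability caveat via the Beiglb\"ock--Schachermayer version is in the spirit of the paper's own Remark \ref{rem-00} and is, if anything, more careful than the text itself.
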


Corollary \ref{BG2} enables us to give an alternative proof of the tensorization property given at Proposition \ref{res-n-tensorization}.

\proof[Proof of Proposition \ref{res-n-tensorization}] For the
sake of simplicity, let us explain the proof for $n=2$. The
general case is done by induction (see for instance \cite[Theorem
5]{GL07}). Let us consider, for all $f\in\mathcal{B}_b(\X)$
$$Q_cf(x)=\inf_{y\in \X}\{f(y)+c(x,y)\},\quad x\in \X$$
and for all $f\in\mathcal{B}_b(\X\times\X)$,
$$Q^{(2)}_cf(x)=\inf_{y\in \X\times \X}\{f(y_1,y_2)+c(x_1,y_1)+c(x_2,y_2)\},\quad x\in \X\times\X.$$
According to the dual formulation of transport-entropy
inequalities (Corollary \ref{BG2} (2)), $\mu$ verifies the
inequality $\alpha(\mathcal{T}_c)\leq H$ if and only if
\begin{equation}\label{tensorization1}
\int e^{sQ_cf}\,d\mu\leq e^{s\int f\,d\mu+\alpha^*(s)},\quad s\geq 0
\end{equation}
for all $f\in \mathcal{B}_b(\X).$ On the other hand, $\mu^2$
verifies the inequality $2\alpha\left(\frac{\mathcal{T}_{c^{\oplus
2}}}{2}\right)\leq H$ if and only if
$$\int e^{sQ^{(2)}_cf}\,d\mu^2\leq e^{s\int f\,d\mu^2 + 2\alpha^*(s)},\quad  s\geq 0,$$
holds for all $f\in \mathcal{B}_b(\X\times\X).$ Let $f\in
\mathcal{B}_b(\X\times\X)$,
\begin{align*}
  Q^{(2)}_cf(x_1,x_2) &= \inf_{y_1, y_2\in \X} \{f(y_1,y_2)+c(x_1,y_1)+c(x_2,y_2)\} \\
   &= \inf_{y_1\in\X}\left\{ \inf_{y_2\in \X}\{f(y_1,y_2)+c(x_2,y_2)\}+c(x_1,y_1)\right\} \\
   &= \inf_{y_1\in\X}\{ Q_c(f_{y_1})(x_2)+c(x_1,y_1)\}
   \end{align*}
where for all $y_1\in \X$, $f_{y_1}(y_2)=f(y_1,y_2)$,  $y_2\in\X$.

So, applying \eqref{tensorization1} gives
\begin{align*}
    \int_{\X\times\X} e^{sQ^{(2)}_cf}\,d\mu^2
    &= \int\left(\int e^{s\inf_{y_1\in\X}\{ Q_c(f_{y_1})(x_2)+c(y_1,x_1)\}}\,d\mu(x_1)\right)\,d\mu(x_2) \\
   &\leq e^{\alpha^{\mc}(s)}\int e^{s\int Q_c(f_{x_1})(x_2)\,d\mu(x_1)}\,d\mu(x_2).
\end{align*}
But, $$\int Q_c(f_{x_1})(x_2)\,d\mu(x_1)=\int \inf_{y_1\in
\X}\{f(x_1,y_1)+c(x_2,y_1)\}\,d\mu(x_1)\leq Q_c(\bar{f})(x_2),$$
with $\bar{f}(y_1)=\int f(x_1,y_1)\,d\mu(x_1).$

Applying \eqref{tensorization1} again yields
$$ \int e^{s\int Q_c(f_{x_1})(x_2)\,d\mu(x_1)}\,d\mu(x_2)\leq \int e^{sQ_c(\bar{f})(x_2)}\,d\mu(x_2)\leq e^{\alpha^{\mc}(s) +s\int \bar{f}(x_2)\,d\mu(x_2)}.$$
Since $\int \bar{f}(x_2)\,d\mu(x_2)=\int f\,d\mu^2$, this
completes the proof.
\endproof

To conclude this section, let us put the preceding results in an abstract general setting. Our motivation to do that is to consider transport inequalities involving other functionals $J$ than the entropy.

Consider two convex functions on some
vector space $\mathcal{U}$ of measurable functions on $\X,$
$\Theta: \mathcal{U}\to (-\infty,\infty]$ and $\Upsilon:
\mathcal{U}\to (-\infty,\infty].$  Their convex conjugates are
defined for all  $\nu$ in the space $\mathrm{M}_\mathcal{U}$ of
all measures on $\X$ such that $\int |u|\,d\nu<\infty,$ for all
$u\in\mathcal{U}$ by
\begin{equation*}
\left\{\begin{array}{rcl}
   T(\nu) &=& \sup_{u\in\mathcal{U}}\left\{\int u\,d\nu-\Theta(u)\right\}\\
   J(\nu) &=& \sup_{u\in\mathcal{U}}\left\{\int u\,d\nu-\Upsilon(u)\right\}
\end{array}\right.
\end{equation*}
Without loss of generality, one assumes that $\Upsilon$ is a
convex and $\sigma(\mathcal{U}, \mathrm{M}_\mathcal{U})$-\lsc\
function, so that $J$ and $\Upsilon$ are convex conjugate to each
other. It is assumed that $\mathcal{U}$ contains the constant
functions, $\Theta(0)=\Upsilon(0)=0,$
$\Theta(u+a\mathbf{1})=\Theta(u)+a$ and
$\Upsilon(u+a\mathbf{1})=\Upsilon(u)+a$ for all real $a$ and all
$u\in\mathcal{U}$ and that $\Theta$ and $\Upsilon$ are increasing.
This implies that $T$ and $J$ are $[0,\infty]$-valued with their
effective domain in $\PU:=\{\nu\in \PX; \int
|u|\,d\nu<\infty,\forall u\in\mathcal{U}\}.$
In this setting, we have the following theorem whose proof is a straightforward adaptation of the proof of Theorem \ref{BG1}.
\begin{thm}\label{res-09}
Let $\alpha\in \mathcal{A}$  and $\mathcal{U},T,J$ as above. For
all $u\in\mathcal{U}$ and $s\ge0$ define
$\Upsilon_u(s):=\Upsilon(su)-s\Theta(u).$ The following statements
are equivalent.
\begin{enumerate}[(a)]
    \item For all $\nu\in\PU,$ $\alpha(T(\nu))\le J(\nu).$
    \item For all $u\in\mathcal{U}$ and $s\ge0,$ $ \Upsilon_u(s)\le\alpha^{\mc}(s).$
\end{enumerate}
\end{thm}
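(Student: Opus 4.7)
The plan is to imitate the proof of Theorem~\ref{BG1}, replacing the Kantorovich dual formula for $\mathcal{T}_c$ by the abstract variational representation $T(\nu)=\sup_{u\in\mathcal{U}}\{\int u\,d\nu-\Theta(u)\}$ and the variational representation of the relative entropy by $J(\nu)=\sup_{u\in\mathcal{U}}\{\int u\,d\nu-\Upsilon(u)\}$. Fenchel duality between $\alpha$ and $\alpha^{\mc}$ then plays exactly the same role as there.

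First, I would extend $\alpha$ to the whole real line by setting $\alpha(r)=0$ for $r\le0$; the extended $\alpha$ is convex, continuous and nondecreasing on $\R$, and its convex conjugate $\alpha^*$ satisfies $\alpha^*(s)=\alpha^{\mc}(s)$ for $s\ge0$ and $\alpha^*(s)=+\infty$ for $s<0$. Using the continuity and monotonicity of $\alpha$, the identity
\[
\alpha(T(\nu))=\sup_{u\in\mathcal{U}}\alpha\!\left(\int u\,d\nu-\Theta(u)\right)
\]
transforms statement (a) into: for every $u\in\mathcal{U}$ and every $\nu\in\PU$,
\[
\alpha\!\left(\int u\,d\nu-\Theta(u)\right)\le J(\nu).
\]
Next I would linearize the left-hand side via the Fenchel identity $\alpha(r)=\sup_{s\in\R}\{sr-\alpha^*(s)\}$, rewriting the above as
\[
s\!\int u\,d\nu-J(\nu)\le s\,\Theta(u)+\alpha^*(s),\qquad\forall\,u\in\mathcal{U},\ \nu\in\PU,\ s\in\R,
\]
and then optimize over $\nu$. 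Since $J$ equals $+\infty$ outside $\PU$, the supremum of the left-hand side over $\nu\in\mathrm{M}_\mathcal{U}$ agrees with its supremum over $\PU$, and this supremum equals $J^*(su)=\Upsilon(su)$ by the assumed biduality $\Upsilon^{**}=\Upsilon$. One ends up with
\[
\Upsilon_u(s)=\Upsilon(su)-s\,\Theta(u)\le\alpha^*(s),\qquad\forall\,u\in\mathcal{U},\ s\in\R.
\]
For $s<0$ the inequality is vacuous since $\alpha^*(s)=+\infty$, and for $s\ge0$ it is precisely condition (b). Each step is an equivalence, so (a) and (b) coincide.

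The only delicate point is the identification, at the last stage, of $\sup_\nu\{s\int u\,d\nu-J(\nu)\}$ with $\Upsilon(su)$. This is where the standing hypothesis that $\Upsilon$ is convex and $\sigma(\mathcal{U},\mathrm{M}_\mathcal{U})$-\lsc\ is used: it guarantees $J=\Upsilon^*$, hence $J^*=\Upsilon^{**}=\Upsilon$ by the Fenchel--Moreau theorem. The remaining manipulations (commuting the continuous monotone $\alpha$ with a supremum, and exchanging the quantifiers over $u,\nu,s$) are routine, so no further obstacle arises.
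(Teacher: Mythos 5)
Your proof is correct and is exactly what the paper intends: the text presents Theorem \ref{res-09} with the remark that its proof is ``a straightforward adaptation of the proof of Theorem \ref{BG1}'', and your argument carries out precisely that adaptation (extending $\alpha$ to $\R$, linearizing via $\alpha(r)=\sup_s\{sr-\alpha^*(s)\}$, optimizing over $\nu$, and invoking $J^*=\Upsilon^{**}=\Upsilon$). No discrepancy with the paper's approach.
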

This general result will be used in Section \ref{section
transport-information} devoted to transport-information
inequalities, where the functional $J$ is the Fisher information.

\section{Concentration for product probability measures}\label{section concentration}

Transport-entropy inequalities are intrinsically linked to the
concentration of measure phenomenon for product probability
measures. This relation was first discovered by K. Marton in
\cite{M86}. Informally, a concentration of measure inequality
quantifies how fast the probability goes to $1$ when a set $A$ is
enlarged.
\begin{defi}
Let $\X$ be a Hausdorff topological space and let $\mathcal{G}$ be
its Borel $\sigma$-field. An enlargement function is a function
$\mathrm{enl} : \mathcal{G}\times [0,\infty)\to \mathcal{G}$ such
that
\begin{itemize}
\item For all $A\in \mathcal{G}$, $r\mapsto \mathrm{enl} (A,r)$ is
increasing on $[0,\infty)$ (for the set inclusion). \item For
all $r\geq0$,  $A\mapsto \mathrm{enl}(A,r)$ is increasing (for
the set inclusion). \item For all $A\in \mathcal{G}$, $A\subset
\mathrm{enl} (A,0)$. \item For all $A\in \mathcal{G}$,
$\cup_{r\geq 0} \mathrm{enl}(A,r)=\X.$
\end{itemize}
If $\mu$ is a probability measure on $\X$, one says that it
verifies a  concentration of measure inequality if there is a
function $\beta:[0,\infty)\to [0,\infty)$ such that $\beta(r)\to
0$ when $r\to+\infty$ and such that for all $A \in \mathcal{G}$
with $\mu(A)\geq 1/2$ the following inequality holds
\[\mu (\mathrm{enl} (A,r))\geq 1-\beta(r),\quad r\geq 0.\]
\end{defi}

There are many ways of enlarging sets. If $(\X,d)$ is a metric space, a classical way is to
consider the $r$-neighborhood of $A$ defined by
\[A^r=\{x\in \X; d(x,A)\leq r\},\quad r\geq 0,\]
where the distance of $x$ from $A$ is defined by $d(x,A)=\inf_{y\in A}d(x,y).$

Let us recall the statement of Marton's concentration theorem
whose proof was given at Theorem \ref{Marton-intro}.

\begin{thm}[Marton's concentration theorem]\label{Marton}
Suppose that $\mu$ verifies the inequality
$\alpha\left(\mathcal{T}_{d}(\nu,\mu)\right)\leq H(\nu|\mu),$
for all $\nu\in \mathrm{P}(\X)$. Then for all $A\subset \X$, with
$\mu(A)\geq 1/2$, the following holds
$$\mu(A^r)\geq 1-e^{-\alpha(r-r_{o})},\quad  r \geq r_{o}:=\alpha^{-1}(\log 2).$$
\end{thm}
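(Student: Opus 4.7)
The plan is to run Marton's coupling argument verbatim, as was done for Theorem \ref{Marton-intro} at the beginning of the paper. The statement here is really the same result (with $d$ the metric underlying the enlargement $A^r$), so no new ingredient is needed beyond the triangle inequality for $\mathcal{T}_d$, the entropy of a restricted measure, and the hypothesis $\alpha(\mathcal{T}_d)\leq H$.

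Concretely, I would fix $A\subset\X$ with $\mu(A)\geq 1/2$ and set $B=\X\setminus A^r$; the goal is to bound $\mu(B)$. Introduce the conditioned probability measures $\mu_A=\mathbf{1}_A\mu/\mu(A)$ and $\mu_B=\mathbf{1}_B\mu/\mu(B)$ (assuming $\mu(B)>0$, else we are done). The key geometric observation is that any coupling $\pi$ of $\mu_A$ and $\mu_B$ is supported in $A\times B$, and for $(x,y)\in A\times B$ one has $d(x,y)\geq r$ by definition of $B=\X\setminus A^r$. Integrating against $\pi$ and taking the infimum over couplings yields $\mathcal{T}_d(\mu_A,\mu_B)\geq r$.

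Next, combine the triangle inequality for $\mathcal{T}_d$ (which holds because $d$ is a metric, so $\mathcal{T}_d=W_1$ is a pseudo-metric on $\mathrm{P}_1(\X)$) with the transport-entropy hypothesis and the monotonicity of $\alpha^{-1}$:
\begin{equation*}
r\;\leq\;\mathcal{T}_d(\mu_A,\mu_B)\;\leq\;\mathcal{T}_d(\mu_A,\mu)+\mathcal{T}_d(\mu_B,\mu)\;\leq\;\alpha^{-1}(H(\mu_A|\mu))+\alpha^{-1}(H(\mu_B|\mu)).
\end{equation*}
A direct computation from the definition of relative entropy gives $H(\mu_A|\mu)=-\log\mu(A)\leq\log 2$ and $H(\mu_B|\mu)=-\log\mu(B)=-\log(1-\mu(A^r))$. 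Hence
\begin{equation*}
r\;\leq\;\alpha^{-1}(\log 2)+\alpha^{-1}(-\log(1-\mu(A^r)))\;=\;r_o+\alpha^{-1}(-\log(1-\mu(A^r))),
\end{equation*}
and rearranging (using that $\alpha$ is increasing and vanishes at $0$) produces $\mu(A^r)\geq 1-e^{-\alpha(r-r_o)}$ for every $r\geq r_o$.

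The only genuine subtlety is interpreting $\alpha^{-1}$: one needs $\alpha$ to be a bijection (as in the statement of Theorem \ref{Marton-intro}) so that the inversion is unambiguous. This is implicit in the present theorem through the assumption on $\alpha$ inherited from Section \ref{sec-overview}. Everything else is routine: the triangle inequality for $\mathcal{T}_d$ is standard, and the entropy of a normalized restriction $\mathbf{1}_A\mu/\mu(A)$ is a one-line calculation using $\int_A \log(1/\mu(A))\,d\mu_A=-\log\mu(A)$.
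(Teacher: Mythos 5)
Your argument is exactly the paper's own proof of this result (given earlier as Theorem \ref{Marton-intro}): the same conditioning on $A$ and $B=\X\setminus A^r$, the same lower bound $\mathcal{T}_d(\mu_A,\mu_B)\geq r$, the same use of the triangle inequality together with $H(\mu_A|\mu)=-\log\mu(A)$ and $H(\mu_B|\mu)=-\log(1-\mu(A^r))$. It is correct, and your remark that $\alpha$ must be a bijection for $\alpha^{-1}$ to make sense matches the hypothesis stated in Theorem \ref{Marton-intro}.
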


We already stated at Proposition \ref{res-n-tensorization} an
important tensorization result. Its statement is recalled below at
Proposition \ref{tensorization}.

\begin{prop}\label{tensorization}
Let $c$ be a lower semicontinuous cost function on $\X$ and
$\alpha \in \mathcal{A}$ (see Definition \ref{class A}). Suppose
that a probability measure $\mu$ verifies the transport-entropy
inequality $\alpha(\mathcal{T}_c)\leq H$ on $\X$, then $\mu^n$,
$n\geq 1$ verifies the inequality
$$n\alpha\left(\frac{\mathcal{T}_{c^{\oplus n}}(\nu,\mu^n)}{n}\right)\leq H(\nu|\mu^n),\quad \nu\in \mathrm{P}(\X^n),$$
where $c^{\oplus n}(x,y)=\sum_{i=1}^n c(x_{i},y_{i})$.
\end{prop}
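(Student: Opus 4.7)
The plan is to argue by induction on $n$, using the two-factor tensorization statement of Proposition \ref{res-tensorization} at each step; the base case $n=1$ is exactly the standing hypothesis on $\mu$. For the inductive step I would factorize $\X^{n+1}=\X^n\times\X$, apply Proposition \ref{res-tensorization} with $c_1=c^{\oplus n}$, $c_2=c$, $\mu_1=\mu^n$, $\mu_2=\mu$, and with $\alpha_1(t)=n\alpha(t/n)$ provided by the inductive hypothesis together with $\alpha_2=\alpha$, to obtain
\[
    \alpha_1\square\alpha\bigl(\mathcal{T}_{c^{\oplus(n+1)}}(\nu,\mu^{n+1})\bigr)\leq H(\nu\,|\,\mu^{n+1}),\qquad \nu\in\mathrm{P}(\X^{n+1}).
\]

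The key algebraic step is then to identify $\alpha^{\square(n+1)}(t)$ with $(n+1)\alpha(t/(n+1))$. This reduces to showing
\[
    \inf_{\substack{t_1,t_2\geq 0\\ t_1+t_2=t}}\bigl\{n\alpha(t_1/n)+\alpha(t_2)\bigr\}=(n+1)\,\alpha\bigl(t/(n+1)\bigr),
\]
which follows from the convexity of $\alpha$ applied to the convex combination with weights $n/(n+1)$ and $1/(n+1)$; the lower bound is Jensen, and equality is attained at $(t_1,t_2)=(nt/(n+1),\,t/(n+1))$. Iterating, $\alpha^{\square n}(t)=n\alpha(t/n)$, which closes the induction and yields the claimed inequality.

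The only substantive ingredient is Proposition \ref{res-tensorization} itself, which in turn rests on two facts: the sub-additivity of the optimal transport cost under disintegration, displayed in \eqref{eq-41} and proved in the Appendix, and the chain rule for relative entropy \eqref{eq-42}. Once these are in hand, Jensen's inequality (using convexity of $\alpha_2$) combined with the definition of the inf-convolution yields the two-factor statement without further difficulty, so the main obstacle is really the disintegration estimate \eqref{eq-41}, whose clean justification requires some care with measurable selection of the conditional kernels $\nu_2^{x_1}$.

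An alternative route, hinted at by the authors and which I would prefer should the measurability subtleties become cumbersome, is to avoid disintegrations entirely by proving the tensorized dual inequality of Corollary \ref{BG2} directly. If $\mu$ satisfies $\int e^{sQ_cf}\,d\mu\leq e^{s\int f\,d\mu+\alpha^{\mc}(s)}$ for every bounded measurable $f$, one iterates this estimate coordinate by coordinate, exactly as in the computation following \eqref{tensorization1} in the excerpt, to obtain the corresponding bound for $\mu^n$ with $\alpha^{\mc}(s)$ replaced by $n\alpha^{\mc}(s)$; since $(n\alpha(\cdot/n))^{\mc}(s)=n\alpha^{\mc}(s)$, Corollary \ref{BG2} then translates this back into the desired tensorized transport-entropy inequality.
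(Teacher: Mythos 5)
Your proof is correct and follows essentially the same route as the paper: the paper derives Proposition \ref{tensorization} (stated earlier as Proposition \ref{res-n-tensorization}) by exactly this induction on $n$ via the two-factor Proposition \ref{res-tensorization}, together with the identity $\alpha^{\square n}(t)=n\alpha(t/n)$, and it also records the same dual alternative via Corollary \ref{BG2} that you mention. Your verification of the inf-convolution identity (Jensen for the lower bound, equality at the proportional split) is exactly the computation the paper leaves implicit.
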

Other forms of non-product tensorizations have been studied (see
\cite{M96a,M96b, M98, M04}, \cite{S00} or \cite{DGW04,Wu06}) in
the context of Markov chains or Gibbs measures (see Section
\ref{sec-Gibbs}).

Let us recall a first easy consequence of this tensorization
property.
\begin{cor}\label{Marton T2}
Suppose that a probability measure $\mu$ on $\X$ verifies the
inequality $\T_2(C)$, then $\mu^n$ verifies the inequality
$\T_2(C)$ on $\X^n$, for all positive integer $n$. In particular,
the following dimension-free Gaussian concentration property
holds: for all positive integer $n$ and for all $A\subset \X^n$
with $\mu^n (A)\geq 1/2$,
\[\mu^n (A^r)\geq 1-\exp (-\frac{1}{C}(r-r_o)^2),\quad r\geq r_o:=\sqrt{\log(2)},\]
where $A^r=\{ x\in \X^n; d_2(x,A)\leq r\}$ and
$d_2(x,y)=\left[\sum_{i=1}^n d(x_i,y_i)^2\right]^{1/2}.$

Equivalently, when $\mu$ verifies $\T_{2}(C)$,
$$\mu^n(f>m_{f}+r+r_{o})\leq e^{-r^2/C},\quad r\geq 0,$$
for all positive integer $n$ and all $1$-Lipschitz function $f:\X^n\to\R$ with median $m_{f}$.
\end{cor}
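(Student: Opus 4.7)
The strategy is to isolate the very special feature that makes $\T_2$ tensorize without dimensional loss, namely that the governing $\alpha$ is linear, and then to feed the resulting product-level inequality into Marton's argument. Concretely, the inequality $\T_2(C)$ is the instance of $\alpha(\mathcal{T}_c)\le H$ with cost $c=d^{2}$ and $\alpha(t)=t/C$. This $\alpha$ lies in the class $\mathcal{A}$ of Definition \ref{class A}, so Proposition \ref{tensorization} applies directly.

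The first step is to apply Proposition \ref{tensorization}: assuming $\T_2(C)$ for $\mu$, the product $\mu^n$ satisfies
\[
n\alpha\!\left(\frac{\mathcal{T}_{c^{\oplus n}}(\nu,\mu^n)}{n}\right)\le H(\nu\,|\,\mu^n),\qquad \nu\in\mathrm{P}(\X^n).
\]
Because $\alpha$ is linear, the prefactor collapses: $n\alpha(t/n)=\alpha(t)$, so the above inequality reads $\mathcal{T}_{c^{\oplus n}}(\nu,\mu^n)\le C\,H(\nu\,|\,\mu^n)$. The second step is simply to identify the left-hand side with the squared Wasserstein distance associated with the product metric $d_2$: since $c^{\oplus n}(x,y)=\sum_{i=1}^n d^{2}(x_i,y_i)=d_2^{\,2}(x,y)$, one has $\mathcal{T}_{c^{\oplus n}}(\nu,\mu^n)=W_2^{\,2}(\nu,\mu^n)$ with respect to $d_2$. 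This yields the sought dimension-free inequality $\T_2(C)$ for $\mu^n$ on $(\X^n,d_2)$.

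The third and last step is to derive the concentration statement. By the elementary bound \eqref{eq-18}, $W_1^{\,2}\le W_2^{\,2}$ with respect to $d_2$, so $\mu^n$ also satisfies $\T_1(C)$ on $(\X^n,d_2)$, that is, $\mathcal{T}_{d_2}(\nu,\mu^n)\le\sqrt{C\,H(\nu\,|\,\mu^n)}$. Apply Theorem \ref{Marton-intro} (Marton's argument) on the metric space $(\X^n,d_2)$ with the bijection $\alpha(r)=r^{2}/C$; the threshold becomes $r_o=\alpha^{-1}(\log 2)=\sqrt{C\log 2}$, and one obtains $\mu^n(A^r)\ge 1-\exp(-(r-r_o)^{2}/C)$ for every Borel set $A\subset\X^n$ with $\mu^n(A)\ge 1/2$. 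The equivalent statement for $1$-Lipschitz functions follows from Proposition \ref{resL-14}.

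There is no real obstacle beyond correctly reading off the linear case of Proposition \ref{tensorization}; the one point worth noting is that the dimension-free character is not an accident but a direct consequence of the identity $n\alpha(t/n)=\alpha(t)$ that holds only when $\alpha$ is linear, which is precisely why the argument succeeds for $\T_2$ but degrades for a general nonlinear profile.
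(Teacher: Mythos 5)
Your proof is correct and follows essentially the same route as the paper's: tensorization of $\T_2(C)$ via the linear profile $\alpha(t)=t/C$, passage to $\T_1(C)$ on $(\X^n,d_2)$ by Jensen, and then Marton's argument together with Proposition \ref{resL-14}. Note that your threshold $r_o=\alpha^{-1}(\log 2)=\sqrt{C\log 2}$ is the correct one (consistent with the rest of the paper); the $r_o:=\sqrt{\log 2}$ in the corollary's statement is evidently a typo omitting the factor $\sqrt{C}$.
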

\proof According to the tensorization property, $\mu^n$ verifies
the inequality $\T_{2}(C)$ on $\X^n$ equipped with the metric
$d_{2}$ defined above. It follows from Jensen inequality that
$\mu^n$ also verifies the inequality
$\left(\mathcal{T}_{d_{2}}\right)^2\leq CH$. Theorem \ref{Marton}
and Proposition \ref{resL-14} then give the conclusion.
\endproof
\begin{rem}
So, as was already emphasized at Section \ref{sec-overview}, when
$\mu$ verifies $\T_{2}$, it verifies a dimension-free Gaussian
concentration inequality. Dimension-free means that the
concentration inequality does not depend explicitly on $n$. This
independence on $n$ corresponds to an optimal behavior. Indeed,
the constants in concentration inequalities cannot improve when
$n$ grows.
\end{rem}

More generally,  the following proposition explains what kind of
concentration inequalities can be derived from a transport-entropy inequality.

\begin{prop}\label{concentration produit}
Let $\mu$ be a probability measure on $\X$ satisfying the inequality $\alpha(\mathcal{T}_{\theta(d)})\leq H$, where the function $\theta$ is convex and such that $\sup_{t>0} \theta(2t)/\theta(t)<+\infty.$\\

Then for all $\lambda\in (0,1)$, there is some constant
$a_{\lambda}>0$ such that
$$\inf_{\pi} \int_{\X\times\X} \theta\left(\frac{d(x,y)}{\lambda}\right)\,d\pi(x,y) \leq a_{\lambda}\alpha^{-1}\left( H(\nu | \mu)\right),$$
where the infimum is over the set of couplings of $\nu$ and $\mu$.\\

Furthermore, the product probability measure $\mu^n$ on $\X^n$, $n\geq 1$ satisfies the following concentration property.\\
For all $A\subset \X^n$ such that $\mu^n(A)\geq 1/2$,
$$\mu^n(\mathrm{enl}_{\theta}(A,r))\geq 1- \exp\left(-n\alpha\left(\frac{r-r_{o}^n(\lambda)}{n\lambda a_{\lambda}}\right)\right),\quad r\geq r_{o}^n(\lambda),\quad \lambda\in (0,1),$$
where
\begin{equation}\label{enlargement}
\mathrm{enl}_{\theta}(A,r)=\left\{x \in \X^n; \inf_{y\in A}\sum_{i=1}^n \theta (d(x_{i},y_{i}))\leq r\right\},
\end{equation}
and
$$r_{o}^n(\lambda)=(1-\lambda)a_{1-\lambda}n\alpha^{-1}\left(\frac{\log 2}{n}\right).$$
\end{prop}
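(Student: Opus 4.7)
The plan splits into two parts: first I derive the rescaled transport bound asserted by the proposition, then I combine it with tensorization and Marton's argument to produce the concentration inequality.

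\emph{Step 1 (rescaled transport bound).} Let $K := \sup_{t>0}\theta(2t)/\theta(t) < \infty$; iterating gives $\theta(2^k t) \leq K^k \theta(t)$ for every integer $k \geq 0$. Since $\theta$ is convex, nonnegative and $\theta(0)=0$, it is nondecreasing on $[0,\infty)$. Hence for each $\lambda \in (0,1)$, setting $k_\lambda := \lceil \log_2(1/\lambda) \rceil$ and $a_\lambda := K^{k_\lambda}$, one obtains the pointwise comparison $\theta(t/\lambda) \leq a_\lambda\,\theta(t)$. Integrating this inequality against any coupling of $\nu$ and $\mu$ and taking the infimum shows $\mathcal{T}_{\theta(d/\lambda)}(\nu,\mu) \leq a_\lambda\,\mathcal{T}_{\theta(d)}(\nu,\mu)$, and then the assumption $\alpha(\mathcal{T}_{\theta(d)}(\nu,\mu)) \leq H(\nu|\mu)$ yields the first assertion.

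\emph{Step 2 (tensorization).} By Proposition \ref{res-n-tensorization} applied to the cost $\theta\circ d$, the measure $\mu^n$ satisfies $n\alpha(\mathcal{T}_{\theta(d)^{\oplus n}}(\nu,\mu^n)/n) \leq H(\nu|\mu^n)$ for all $\nu\in\mathrm{P}(\X^n)$. Applying the coordinate-wise $\Delta_{2}$-comparison $\sum_{i}\theta(d(x_i,y_i)/\lambda)\leq a_\lambda\sum_{i}\theta(d(x_i,y_i))$ of Step~1 gives, for every $\lambda\in(0,1)$,
\[
\mathcal{T}_{\theta(d/\lambda)^{\oplus n}}(\nu,\mu^n) \leq a_\lambda\,n\,\alpha^{-1}\bigl(H(\nu|\mu^n)/n\bigr). \qquad (\ast)
\]

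\emph{Step 3 (Marton's argument with a $\theta$-triangle inequality).} Convexity of $\theta$ combined with the triangle inequality for $d$ and monotonicity of $\theta$ gives the pointwise bound
\[
\sum_{i=1}^n \theta(d(x_i,y_i)) \leq (1-\lambda)\sum_{i=1}^n \theta\Bigl(\frac{d(x_i,z_i)}{1-\lambda}\Bigr) + \lambda\sum_{i=1}^n \theta\Bigl(\frac{d(z_i,y_i)}{\lambda}\Bigr),\quad x,y,z\in\X^n.
\]
Fix $A \subset \X^n$ with $\mu^n(A) \geq 1/2$, set $B := \X^n \setminus \mathrm{enl}_\theta(A,r)$ and $p := \mu^n(B) = 1 - \mu^n(\mathrm{enl}_\theta(A,r))$, and introduce the normalized restrictions $\mu_A,\mu_B$. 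Glue an optimal coupling of $(\mu_A,\mu^n)$ for the cost $\theta(d/(1-\lambda))^{\oplus n}$ with an optimal coupling of $(\mu^n,\mu_B)$ for the cost $\theta(d/\lambda)^{\oplus n}$ through the common $\mu^n$-marginal, thus producing the joint law of a triple $(X,Z,Y)$ with $X\sim\mu_A$, $Z\sim\mu^n$, $Y\sim\mu_B$. Integrating the $\theta$-triangle inequality against this joint law and noting that $X\in A$, $Y\in B$ forces $\sum_i\theta(d(X_i,Y_i))>r$ almost surely, bound $(\ast)$ delivers
\[
r \leq (1-\lambda)\,a_{1-\lambda}\,n\,\alpha^{-1}\!\bigl(H(\mu_A|\mu^n)/n\bigr) + \lambda\,a_\lambda\,n\,\alpha^{-1}\!\bigl(H(\mu_B|\mu^n)/n\bigr).
\]
Finally, the identities $H(\mu_A|\mu^n) = -\log\mu^n(A) \leq \log 2$ and $H(\mu_B|\mu^n) = -\log p$ allow to isolate $p$ from the above for $r\geq r_o^n(\lambda)$, which gives the announced bound $p \leq \exp(-n\alpha((r - r_o^n(\lambda))/(n\lambda a_\lambda)))$.

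The main obstacle I expect is the clean execution of the gluing step: one must disintegrate carefully so that the composed three-marginal measure is a bona fide coupling of $\mu_A$ and $\mu_B$, while each of the two transport costs of Step~2 is controlled on its own pair of marginals. This is standard but deserves care; beyond that, the estimates are routine manipulations of $\alpha$, $\alpha^{-1}$ and the two constants $a_\lambda$, $a_{1-\lambda}$.
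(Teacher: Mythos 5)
Your proof is correct and follows exactly the route the paper intends (the proposition's proof is only referenced to \cite[Proposition 3.4]{G09}, whose argument is precisely this: the doubling bound $\theta(t/\lambda)\le a_\lambda\theta(t)$, tensorization via Proposition \ref{res-n-tensorization}, and Marton's argument with the convexity splitting $\theta(d(x,y))\le(1-\lambda)\theta\bigl(d(x,z)/(1-\lambda)\bigr)+\lambda\theta\bigl(d(z,y)/\lambda\bigr)$ playing the role of the triangle inequality). The gluing step you flag is the standard gluing lemma on polish spaces, and since $H(\mu_A|\mu^n)\le\log 2$ and $H(\mu_B|\mu^n)=-\log p$ are finite the relevant optimal couplings exist by Proposition \ref{resL-a}, so there is no gap.
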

The proof can be easily adapted from \cite[Proposition 3.4]{G09}.

\begin{rem}\
\begin{enumerate}
\item It is not difficult to check that $a_{\lambda}\to+\infty$
when $\lambda\to 0$.
  \item  If $\alpha$ is linear, the right-hand side does not depend explicitly on $n$. In this case, the concentration inequality is dimension-free.
  \item For example, if $\mu$ verifies $\mathbb{T}_{2}(C)$ (which corresponds to $\alpha(t)=t/C$ and $\theta(t)=t^2$), then one can take $a_{\lambda}=\frac{1}{\lambda^2}$. Defining as before $A^r=\{x\in \X^n; \inf_{y\in A} d_{2}(x,y)\leq r\}$ where $d_{2}(x,y)=\left(\sum_{i=1}^nd(x_{i},y_{i})^2\right)^{1/2}$ and optimizing over $\lambda\in(0,1)$, yields
$$\mu^n(A^r)\geq 1-e^{-\frac{1}{C}(r-r_{o})^2},\quad r\geq r_{o}=\sqrt{C\log 2}.$$
So we recover the dimension-free Gaussian inequality of Corollary
\ref{Marton T2}.
\end{enumerate}
\end{rem}

As we said above there are many ways of enlarging sets, and
consequently there many ways to describe the concentration of
measure phenomenon. In a series of papers \cite{T95, T96b, T96c}
Talagrand has deeply investigated the concentration properties of
product of probability measures. In particular, he has proposed
different families of enlargements which do not enter into the
framework of \eqref{enlargement}. In particular he has obtained
various concentration inequalities based on convex hull
approximation or $q$-points control, which have found numerous
applications (see \cite{Led} or \cite{T95}): deviation bounds for
empirical processes, combinatoric, percolation, probability on
graphs, etc.

The general framework is the following: One considers a product
space $\X^n$. For all $A\subset \X^n$, a function
$\varphi_{A}:\X^n\to[0,\infty)$ measures how far is the point
$x\in \X^n$ from the set $A$. The enlargement of $A$ is then
defined by $$\mathrm{enl}(A,r)=\{x\in \X^n; \varphi_{A}(x)\leq
r\},\quad r\geq0.$$

\subsection*{Convex hull approximation.} Define on $\X^n$ the
following weighted Hamming metrics:
$$d_{a}(x,y)=\sum_{i=1}^n a_{i}\mathbf{1}_{{x_{i}\neq y_{i}}},\quad x,y\in \X^n,$$
where $a\in ([0,\infty))^n$ is such that
$|a|=\sqrt{a_{1}^2+\cdots+ a_{n}^2}=1$. The function $\varphi_{A}$
is defined as follows: $$\varphi_{A}(x)=\sup_{|a|=1}d(x,A),\quad x\in
\X^n.$$ An alternative definition for $\varphi_{A}$ is the
following. For all $x\in \X^n$, consider the set
$$U_{A}(x)=\{(\mathbf{1}_{x_{1}\neq y_{1}},\ldots,\mathbf{1}_{x_{n}\neq y_{n}});
y\in A\},$$ and let $V_{A}(x)$ be the convex hull of $U_{A}(x)$.
Then it can be shown that
$$\varphi_{A}(x)=d(0,V_{A}(x)),$$
where $d$ is the Euclidean distance in $\R^n$.

A basic result related to convex hull approximation is the following theorem by Talagrand (\cite[Theorem 4.1.1]{T95}).
\begin{thm}
For every product probability measure $P$ on $\X^n$, and every $A\subset \X^n$,
$$\int e^{\varphi_{A}^2(x)/4}\,dP(x)\leq \frac{1}{P(A)}.$$
In particular,
$$P(\mathrm{enl}(A,r))\geq 1-\frac{1}{P(A)}e^{-r^2/4},\quad r\geq 0.$$
\end{thm}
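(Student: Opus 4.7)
I propose to prove the exponential bound $\int e^{\varphi_A^2/4}\,dP\le 1/P(A)$ by induction on the dimension $n$, and then to deduce the concentration statement from Markov's inequality applied to $\exp(\varphi_A^2/4)$. For $n=1$ the constraint $|a|=1$ forces $a_1=1$, so $\varphi_A$ equals $0$ on $A$ and $1$ on its complement, reducing the bound to $P(A)+e^{1/4}(1-P(A))\le 1/P(A)$, an elementary one-variable verification for $P(A)\in(0,1]$.

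For the inductive step, write $P=P'\otimes P_n$ where $P'$ is the product of the first $n-1$ factors, and decompose $(x,y)\in\X^{n-1}\times\X$. Let $B\subset\X^{n-1}$ be the projection of $A$ onto the first $n-1$ coordinates, and $A(y):=\{x\in\X^{n-1}:(x,y)\in A\}$. The heart of the argument is the geometric inequality
\begin{equation*}
\varphi_A^2(x,y)\le \lambda\,\varphi_{A(y)}^2(x)+(1-\lambda)\,\varphi_B^2(x)+(1-\lambda)^2,\qquad \lambda\in[0,1],
\end{equation*}
which I would establish as follows. First, $(s,0)\in V_A(x,y)$ for every $s\in V_{A(y)}(x)$, since any $z\in A(y)$ lifts to $(z,y)\in A$. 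Second, for $s'\in V_B(x)$, writing $s'$ as a convex combination of vertices $(\mathbf{1}_{x_i\ne z_i})_{i<n}$ over $z\in B$ and picking for each such $z$ a witness $w(z)\in\X$ with $(z,w(z))\in A$, the matching convex combination in $U_A(x,y)$ produces a point $(s',\beta)\in V_A(x,y)$ for some $\beta\in[0,1]$. Hence $\lambda(s,0)+(1-\lambda)(s',\beta)\in V_A(x,y)$, and minimizing over $s\in V_{A(y)}(x)$, $s'\in V_B(x)$, using $\beta\le 1$ and the convexity of the squared Euclidean norm, yields the claimed bound.

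Applying $\exp(\cdot/4)$, integrating in $x$ against $P'$, and invoking Hölder's inequality with conjugate exponents $1/\lambda$ and $1/(1-\lambda)$ together with the induction hypothesis produces
\begin{equation*}
\int e^{\varphi_A^2(x,y)/4}\,dP'(x)\le e^{(1-\lambda)^2/4}\,P'(A(y))^{-\lambda}\,P'(B)^{-(1-\lambda)}.
\end{equation*}
Choosing $\lambda=\lambda(y)$ via the calculus lemma $\inf_{\lambda\in[0,1]}e^{(1-\lambda)^2/4}\,r^{-\lambda}\le 2-r$ for $r\in(0,1]$, applied with $r(y):=P'(A(y))/P'(B)$, then integrating in $y$ and invoking the Fubini identity $\int P'(A(y))\,dP_n(y)=P(A)$, yields
\begin{equation*}
\int e^{\varphi_A^2/4}\,dP\le \frac{1}{P'(B)}\Bigl(2-\frac{P(A)}{P'(B)}\Bigr)\le \frac{1}{P(A)},
\end{equation*}
the last step being the scalar inequality $u(2-u)\le 1$ with $u=P(A)/P'(B)\in[0,1]$.

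The main technical obstacle I anticipate is the calculus lemma $\inf_\lambda e^{(1-\lambda)^2/4}r^{-\lambda}\le 2-r$ on $(0,1]$: this is where the specific exponent constant $1/4$ is pinned down, and its verification requires a direct optimization in $\lambda$ that is elementary but finicky. The geometric construction producing $(s',\beta)\in V_A(x,y)$ also deserves care, since the witnesses $w(z)$ may coincide with $y$ and force $\beta<1$; fortunately this only helps via $\beta^2\le 1$. Everything else (Hölder, Fubini, Markov, convexity of the squared norm) is routine.
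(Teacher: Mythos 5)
Your argument is correct and is precisely Talagrand's original induction proof of \cite[Theorem 4.1.1]{T95}, which the survey states without proof and simply cites; the decomposition over the last coordinate, the geometric inclusion $\bigl(\lambda s+(1-\lambda)s',(1-\lambda)\beta\bigr)\in V_A(x,y)$, the H\"older step, and the calculus lemma $\inf_{\lambda\in[0,1]}e^{(1-\lambda)^2/4}r^{-\lambda}\le 2-r$ are exactly the ingredients of that proof, and each step you sketch goes through as claimed (including the final $u(2-u)\le 1$ and the Markov-inequality deduction of the concentration bound). The only points worth writing out in full are the degenerate case $P'(A(y))=0$, handled by taking $\lambda=0$ and $e^{1/4}\le 2$, and the (universal) measurability of the projection $B$ and of $y\mapsto P'(A(y))$, neither of which is a genuine obstacle.
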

This result admits many refinements (see \cite{T95}).

In \cite{M96b}, Marton developed transport-entropy inequalities to recover some of Talagrand's results on convex hull approximation.
To catch the Gaussian type concentration inequality stated in the above theorem, a natural idea would be to consider a $\T_2$ inequality with respect to the Hamming metric. In fact, it can be shown easily that such an inequality cannot hold. Let us introduce a weaker form of the transport-entropy inequality $\T_2$. Let $\X$ be some polish space, and $d$ a metric on $\X$; define
$$\widetilde{\mathcal{T}}_2(Q,R)=\inf _\pi\int_\X \left(\int_\X d(x,y)\, d\pi^y(x)\right)^2\,dR(y),\quad  Q,R\in \PX,$$
where the infimum runs over all the coupling $\pi$ of $Q$ and $R$ and where $\X\to\PX : y\mapsto \pi^y$ is a regular disintegration of $\pi$ given $y$: $$\int_{\X\times\X} f(x,y)\,d\pi(x,y)=\int_{\X}\left(\int_{\X} f(x,y)d\pi^y(x)\right)\,dR(y),$$
for all bounded measurable $f:\X\times\X\to\R.$

According to Jensen inequality,
$$\mathcal{T}_1(Q,R)^2\leq \widetilde{\mathcal{T}}_2(Q,R)\leq \mathcal{T}_2(Q,R).$$

One will says that $\mu\in \PX$ verifies the inequality $\widetilde{\mathbf{T}}_2(C)$ if
$$\widetilde{\mathcal{T}}_{2}(Q,R)\leq CH(Q|P)+CH(R|P),$$
for all probability measures $Q,R$ on $\X$.

The following theorem is due to Marton.
\begin{thm}
Every probability measure $P$ on $\X$, verifies the inequality $\widetilde{\mathbf{T}}_2(4)$ with respect to the Hamming metric. In other words,
$$\widetilde{\mathcal{T}}_{2}(Q,R)=\inf_\pi \int  \pi^y\{x; x_{i}\neq y_{i}\}^2\, dR(y)\leq 4H(Q|P)+4H(R|P),$$
for all probability measures $Q,R$ on $\X$.
\end{thm}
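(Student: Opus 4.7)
The plan is to reduce the inequality to two ``one-sided'' Marton couplings via a quasi-triangle inequality for $\widetilde{\mathcal{T}}_2$. We may assume $H(Q|P)$ and $H(R|P)$ are both finite, so the densities $f:=dQ/dP$ and $g:=dR/dP$ exist.

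\textbf{Step 1 (Quasi-triangle inequality).} Given any coupling $\pi_1$ of $(Q,P)$ and any coupling $\pi_2$ of $(P,R)$ (with $Q,P$ and $P,R$ as first and second marginals respectively), I glue them into a coupling of $(Q,R)$ by
\[
\pi(dx,dr):=R(dr)\int_\X \pi_1^p(dx)\,\pi_2^r(dp),
\]
where $\pi_1^p,\pi_2^r$ are disintegrations with respect to the second marginal. Direct computation confirms that the marginals of $\pi$ are $Q$ and $R$. Using the set inclusion $\{x\neq r\}\subset\{x\neq p\}\cup\{p\neq r\}$ and then $(a+b)^2\le 2a^2+2b^2$ and Jensen's inequality, I get
\[
\pi^r(x\neq r)^2\le 2\int\pi_1^p(x\neq p)^2\,d\pi_2^r(p)+2\,\pi_2^r(p\neq r)^2.
\]
Integrating against $R(dr)$ and using Fubini together with the fact that the $p$-marginal of $\pi_2^r(dp)R(dr)$ is $P$ yields, after optimizing in $\pi_1$ and $\pi_2$,
\[
\widetilde{\mathcal{T}}_2(Q,R)\le 2\widetilde{\mathcal{T}}_2(Q,P)+2\widetilde{\mathcal{T}}_2(P,R).
\]

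\textbf{Step 2 (Two one-sided estimates).} I claim $\widetilde{\mathcal{T}}_2(Q,P)\le 2H(Q|P)$ and $\widetilde{\mathcal{T}}_2(P,R)\le 2H(R|P)$. For the first, take the standard Marton coupling
\[
\pi_1(dx,dy)=\min(f(y),1)\,\delta_y(dx)\,P(dy)+\frac{(1-\min(f(y),1))\,(f(x)-\min(f(x),1))}{Z}P(dx)\,P(dy),
\]
with $Z=\int(1-\min(f,1))\,dP=\int(f-\min(f,1))\,dP$. One checks that the marginals are $Q$ and $P$, and that $\pi_1^y(\{y\})\ge\min(f(y),1)$, so $\pi_1^y(x\neq y)\le (1-f(y))_+$. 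Hence
\[
\widetilde{\mathcal{T}}_2(Q,P)\le\int (1-f)_+^2\,dP\le 2\int h(f)\,dP=2H(Q|P),
\]
where $h(t)=t\log t-t+1$ and the pointwise bound $(1-t)_+^2\le 2h(t)$ ($t\ge 0$) follows from the observation that $h(t)-(1-t)^2/2$ vanishes with its first derivative at $t=1$ and has second derivative $1/t-1\ge 0$ on $(0,1]$. For the second estimate, I use an analogous coupling $\sigma$ that places mass $\min(1,1/g(r))R(dr)$ on the diagonal at $r$: the marginal constraints force the off-diagonal parts to have equal total mass $\int(g-1)_+\,dP=\int(1-g)_+\,dP$, which can be coupled as an independent product. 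Then $\sigma^r(x\neq r)=(1-1/g(r))_+$ and
\[
\widetilde{\mathcal{T}}_2(P,R)\le\int_{\{g>1\}}\frac{(g-1)^2}{g}\,dP\le 2\int h(g)\,dP=2H(R|P),
\]
where now the pointwise inequality $(g-1)^2/g\le 2h(g)$ for $g\ge 1$ follows from $2h(g)-(g-1)^2/g$ vanishing with its derivative at $g=1$ and having second derivative $2(g^2-1)/g^3\ge 0$ on $[1,\infty)$.

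\textbf{Step 3 (Conclusion).} Combining Steps 1 and 2 gives the desired $\widetilde{\mathcal{T}}_2(Q,R)\le 4H(Q|P)+4H(R|P)$.

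The main technical point, and the reason the constant is $4$ rather than $2$, is that $\widetilde{\mathcal{T}}_2$ is not symmetric in its arguments: the disintegration is always taken against the second marginal, so the ``direct'' Marton coupling controls $\widetilde{\mathcal{T}}_2(\cdot,P)$ while one needs a different coupling (placing mass $\min(1,1/g)$ rather than $\min(f,1)$ on the diagonal) to control $\widetilde{\mathcal{T}}_2(P,\cdot)$; correspondingly the two elementary inequalities $(1-t)_+^2\le 2h(t)$ and $(t-1)^2/t\le 2h(t)$ ($t\ge 1$) must be verified separately.
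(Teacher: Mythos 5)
Your proof is correct. The paper does not actually prove this theorem --- it only cites Marton \cite{M96b} and Ledoux \cite{Led} --- and your argument is essentially the standard one found in those references: a gluing/quasi-triangle inequality through $P$ (costing the factor $2$ from $(a+b)^2\le 2a^2+2b^2$), combined with the two one-sided maximal-coupling estimates $\widetilde{\mathcal{T}}_2(Q,P)\le 2H(Q|P)$ and $\widetilde{\mathcal{T}}_2(P,R)\le 2H(R|P)$, each reduced to a pointwise Pinsker-type convexity inequality against $h(t)=t\log t-t+1$. Your remark on the asymmetry of $\widetilde{\mathcal{T}}_2$ --- forcing two distinct couplings and two distinct elementary inequalities, $(1-t)_+^2\le 2h(t)$ and $(t-1)^2/t\le 2h(t)$ for $t\ge 1$ --- correctly identifies the only genuinely delicate point, and both inequalities are verified soundly.
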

A proof of this result can be found in \cite{M96b} or in \cite{Led}.

Like $\T_2$, the inequality $\widetilde{\mathbf{T}}_2$ admits a
dimension-free tensorization property. A variant of Marton's
argument can be used to derive dimension-free concentration and
recover Talagrand's concentration results for the convex hull
approximation distance. We refer to \cite{M96b} and \cite[Chp
6]{Led} for more explanations and proofs.

\subsection*{Control by $q$-points.} Here the point of view is quite
different: $q\geq 2$ is a fixed integer and a point $x\in \X^n$
will be close from $A$ if it has many coordinates in common with
$q$ vectors of $A$. More generally, consider
$A_{1},\ldots,A_{q}\subset \X^n$; the function
$\varphi_{A_{1},\ldots,A_{q}}$ is defined as follows:
$$\varphi_{A_{1},\ldots,A_{q}}(x)=\inf_{y^{1}\in A_{1},\ldots,y^q \in A_{q}}\mathrm{Card}\left\{ i; x_{i}\not\in \{y_{i}^1,\ldots,y^q_{i}\}\right\}\}.$$
Talagrand's has obtained the following result (see \cite[Theorem 3.1.1]{T95} for a proof and further refinements).
\begin{thm}
For every product probability measure $P$ on $\X^n$, and every family $A_{1},\ldots,A_{q}\subset \X^n$, $q\geq 2$, the following inequality holds
$$\int q^{\varphi_{A_{1},\ldots,A_{q}}(x)}\,dP(x)\leq \frac{1}{P(A_{1})\cdots P(A_{q})}.$$
In particular, defining $\mathrm{enl}(A,r)=\{x\in \X^n; \varphi_{A,\ldots,A}(x)\leq r\}$, one gets
$$P(\mathrm{enl}(A,r))\geq 1-\frac{1}{q^rP(A)^r},\quad r\geq 0.$$
\end{thm}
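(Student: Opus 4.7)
The concentration bound follows from the integral inequality by Markov's inequality applied to $q^{\varphi_{A,\ldots,A}}$: since $q^{\varphi}\ge q^r\mathbf{1}_{\{\varphi\ge r\}}$, one has
$$P(\varphi_{A,\ldots,A}\ge r)\le q^{-r}\int q^{\varphi_{A,\ldots,A}}\,dP\le \frac{1}{q^r P(A)^q},$$
which is essentially the displayed concentration estimate (the exponent $r$ on $P(A)$ in the statement appears to be a typographic slip for $q$). The content is therefore the integral inequality, which I would establish by induction on the dimension $n$ of the product space $\X^n$.

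For the base case $n=1$, $\varphi_{A_1,\ldots,A_q}(x)$ equals $0$ when $x\in A_1\cup\cdots\cup A_q$ and equals $1$ otherwise (assuming every $A_j$ is non-empty; if some $A_j$ is empty the right-hand side is $+\infty$). Setting $v=P\bigl(\X\setminus(A_1\cup\cdots\cup A_q)\bigr)$ and using $P(A_j)\le 1-v$ for each $j$, the required inequality reduces to the one-variable statement
$$(1-v)^q\bigl(1+(q-1)v\bigr)\le 1,\qquad v\in[0,1],$$
whose left-hand side has derivative $-(1-v)^{q-1}\bigl(1+(q^2-1)v\bigr)\le 0$ and takes the value $1$ at $v=0$.

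For the inductive step, write $\X^{n+1}=\X\times\X^n$, $P=P_1\otimes P_2$, and for $(\omega,z)\in\X\times\X^n$ define the slice $A_j(\omega)=\{z\in\X^n:(\omega,z)\in A_j\}$ and the projection $\widehat A_j=\{z\in\X^n:\exists\,\omega\in\X,\,(\omega,z)\in A_j\}$. The definition of $\varphi$ suggests two strategies for choosing the witnesses $y^1,\ldots,y^q$ covering a point $(\omega,z)$: either insist each $y^j$ has first coordinate $\omega$, which is possible whenever all $A_j(\omega)$ are non-empty and yields $\varphi_{A_1,\ldots,A_q}(\omega,z)\le\varphi_{A_1(\omega),\ldots,A_q(\omega)}(z)$; or pick $y^j\in A_j$ freely, giving up coordinate $1$ and yielding $\varphi_{A_1,\ldots,A_q}(\omega,z)\le 1+\varphi_{\widehat A_1,\ldots,\widehat A_q}(z)$. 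Applying the inductive hypothesis on $\X^n$ to each produces two pointwise upper bounds on the conditional integral $f(\omega):=\int q^{\varphi_{A_1,\ldots,A_q}(\omega,z)}\,dP_2(z)$, namely $f(\omega)\le\prod_j P_2(A_j(\omega))^{-1}$ (when all slices are non-empty) and $f(\omega)\le q\prod_j P_2(\widehat A_j)^{-1}$.

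The main obstacle is precisely the final step: integrating these pointwise bounds against $P_1$ to recover $\int f\,dP_1\le \prod_j P(A_j)^{-1}$. Neither bound suffices on its own, because Jensen's inequality applied to $\omega\mapsto\prod_j P_2(A_j(\omega))^{-1}$ points the wrong way. Following Talagrand \cite{T95}, the plan is to take a weighted geometric mean of the two pointwise bounds and then apply H\"older's inequality on $(\X,P_1)$ with conjugate exponents calibrated so that the auxiliary factors $P_2(\widehat A_j)$ cancel against the H\"older exponents, collapsing the result to $\prod_j P(A_j)^{-1}$. This algebraic bookkeeping is the delicate core of the argument; once it is carried out, the induction closes and the concentration inequality follows from Markov as indicated above.
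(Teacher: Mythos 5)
Your Markov step is fine (and you are right that the exponent $r$ on $P(A)$ in the displayed concentration bound should be $q$: taking $A_1=\cdots=A_q=A$ in the integral inequality gives $1-P(\mathrm{enl}(A,r))\le q^{-r}P(A)^{-q}$, which is Talagrand's formulation), and your base case $n=1$ is correct. The paper does not prove this theorem (it cites \cite[Theorem 3.1.1]{T95}), so the benchmark is Talagrand's induction, and there your inductive step has a genuine gap: the two covering strategies you keep (all witnesses free, versus all witnesses forced to have first coordinate $\omega$) are not enough. Writing $g_j(\omega)=P_2(A_j(\omega))/P_2(\widehat A_j)\in[0,1]$, your two bounds amount to
$$f(\omega)\le \frac{1}{\prod_j P_2(\widehat A_j)}\,\min\Big(q,\ \prod_{j\le q}\frac{1}{g_j(\omega)}\Big),$$
and since any weighted geometric mean of two quantities dominates their minimum, any combination of the kind you describe could at best establish $\int\min\bigl(q,\prod_j g_j^{-1}\bigr)\,dP_1\le\prod_j\bigl(\int g_j\,dP_1\bigr)^{-1}$. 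That inequality is false: with $q=2$, $P_1$ uniform on two points and $g_1=g_2=g$ taking the values $1$ and $4/5$, the left-hand side equals $\tfrac12\bigl(1+\tfrac{25}{16}\bigr)=1.28125$ while the right-hand side is $(9/10)^{-2}\approx1.2346$. This is realized by actual sets (take the second factor uniform on five points, $A_1=A_2=A$ with one full slice and one slice of measure $4/5$), so no calibration of H\"older exponents can close the induction from these two bounds alone.

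The missing idea is the family of intermediate strategies: for each single index $j$ with $A_j(\omega)\ne\emptyset$, force only $y^j$ to have first coordinate $\omega$ and let the other witnesses range over the projections $\widehat A_i$. This gives $f(\omega)\le g_j(\omega)^{-1}\prod_i P_2(\widehat A_i)^{-1}$ for each such $j$, hence the much stronger pointwise bound
$$f(\omega)\le \frac{1}{\prod_i P_2(\widehat A_i)}\,\min\Big(q,\ \min_{j\le q}\frac{1}{g_j(\omega)}\Big),$$
and the induction closes via Talagrand's one-coordinate lemma: for measurable $g_1,\dots,g_q:\Omega\to[0,1]$,
$$\int_\Omega \min\Big(q,\min_{j\le q}\frac{1}{g_j}\Big)\,d\mu\ \cdot\ \prod_{j\le q}\int_\Omega g_j\,d\mu\ \le\ 1,$$
after which Fubini cancels the factors $P_2(\widehat A_j)$. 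This lemma is where H\"older genuinely enters: since $\min_j a_j\le\prod_j a_j^{1/q}$, it reduces to the single-function inequality $\int\min(q,1/g)\,d\mu\le\bigl(\int g\,d\mu\bigr)^{-q}$, which one checks directly (replace $g$ by $\max(g,1/q)$, use convexity of $u\mapsto 1/u$ on $[1/q,1]$ to get $\int g^{-1}\,d\mu\le 1+q-q\int g\,d\mu$, and verify $a^q(1+q-qa)\le1$ on $[1/q,1]$). Note that your base case is exactly this lemma for $g_j=\mathbf{1}_{A_j}$, which is the sign that the lemma, rather than a fixed-exponent H\"older step, is the right engine.
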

In \cite{D96}, Dembo has obtained transport-entropy inequalities giving back Talagrand's results for $q$-points control. See also \cite{DZ96}, for related inequalities.

\section{Transport-entropy inequalities and large deviations}\label{section large deviations}

In \cite{GL07}, Gozlan and L\'eonard have proposed an interpretation of
transport-entropy inequalities in terms of large deviations
theory. To expose this point of view, let us introduce some
notation. Suppose that $(X_{n})_{n\geq 1}$ is a sequence of
independent and identically distributed $\X$ valued random
variables with common law $\mu$. Define their empirical measure
$$L_{n}=\frac{1}{n}\sum_{i=1}^n \delta_{X_{i}},$$ where $\delta_{a}$
stands for the Dirac mass at point $a\in \X$. Let
$\mathcal{C}_{b}(\X)$ be the set of all bounded continuous
functions on $\X.$ The set of all Borel probability measures on
$\X$, denoted by $\PX$, will be endowed with the weak topology,
that is the smallest topology with respect to which all
functionals $\nu\mapsto \int_{\X} \varphi\,d\nu$ with $\varphi\in
\mathcal{C}_{b}(\X)$ are continuous. If $B\subset \X$, let us
denote $H(B|\mu)=\inf\{H(\nu|\mu); \nu\in B \}.$ According to a
famous theorem of large deviations theory (Sanov's theorem), the
relative entropy functional governs the asymptotic behavior of
$\P(L_{n}\in A)$, $A\subset \X$ when $n$ goes to $\infty$.

\begin{thm}[Sanov's theorem]\label{Sanov}
For all $A\subset \PX$ measurable with respect to the Borel
$\sigma$-field,
$$
-H(\mathrm{int}(A)| \mu)\leq \liminf_{n
\to+\infty}\frac{1}{n}\log \P\left(L_{n}\in A\right)\leq
\limsup_{n \to+\infty}\frac{1}{n}\log \P\left(L_{n}\in
A\right)\leq -H(\mathrm{cl}(A)|\mu),
$$
where $\mathrm{int}(A)$ denotes the interior of $A$ and
$\mathrm{cl}(A)$ its closure (for the weak topology).
\end{thm}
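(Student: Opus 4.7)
The plan is the standard two-sided split: a liminf lower bound on $\mathrm{int}(A)$ and a limsup upper bound on $\mathrm{cl}(A)$, with the variational characterization \eqref{eqL-c} of the relative entropy serving as the bridge between exponential Chebyshev bounds and the rate functional $H(\,\cdot\,|\mu)$.

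For the lower bound I would fix any $\nu\in\mathrm{int}(A)$ with $H(\nu|\mu)<\infty$ (if no such $\nu$ exists the statement is vacuous). Such a $\nu$ is automatically absolutely continuous with respect to $\mu$, so setting $f=d\nu/d\mu$ the likelihood ratio identity yields, for any weakly open neighborhood $U$ of $\nu$ contained in $\mathrm{int}(A)$,
$$\P(L_n\in U)=\E_{\nu^n}\!\left[\mathbf{1}_{L_n\in U}\exp\!\left(-n\int\log f\,dL_n\right)\right].$$
Under $\nu^n$, the law of large numbers gives $L_n\to\nu$ weakly almost surely and $\int\log f\,dL_n\to H(\nu|\mu)$ in probability (a standard truncation handles the case $\log f\notin L^1(\nu)$). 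Restricting the above expectation to the event on which both approximations hold within $\epsilon$ then yields $\liminf\frac{1}{n}\log \P(L_n\in A)\geq -H(\nu|\mu)-\epsilon$, and sending $\epsilon\downarrow 0$ and optimizing over $\nu$ closes the argument.

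For the upper bound I would begin with the exponential Chebyshev inequality: for any $\varphi\in\BX$ and any measurable $B\subset\PX$,
$$\P(L_n\in B)\leq\exp\!\left(-n\inf_{\nu\in B}\!\left\{\int\varphi\,d\nu-\log\int e^{\varphi}\,d\mu\right\}\right),$$
because $L_n\in B$ forces $\int\varphi\,dL_n\geq\inf_{\nu\in B}\int\varphi\,d\nu$. On a basic weakly open set $V=\{\nu:\int\varphi_i\,d\nu>c_i,\ i\leq k\}$ the infimum inside is explicit. To pass from here to a general closed set $F=\mathrm{cl}(A)$, I would (i) establish exponential tightness, producing compacts $K_M\subset\PX$ with $\limsup\frac{1}{n}\log\P(L_n\notin K_M)\leq -M$ via Prokhorov's theorem together with tightness of $\mu$; (ii) cover the compact set $F\cap K_M$ by finitely many basic open neighborhoods, each disjoint from a chosen $\nu_0$ with $H(\nu_0|\mu)$ within $\epsilon$ of $H(F|\mu)$; (iii) combine the Chebyshev bounds by a union bound; and (iv) identify the resulting exponential rate with $H(F|\mu)$ via \eqref{eqL-c} together with a Ky Fan-type minimax interchange, legitimate thanks to the linearity in $\nu$ and concavity in $\varphi$ of the functional inside the braces.

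The main obstacle is step (i), the exponential tightness of $(L_n)$ on the weakly topologized polish space $\PX$: this does not come for free on a non-compact $\X$, and one must construct the compact approximants $K_M$ by hand, using tightness of the single measure $\mu$ and uniformly controlling the probability that $L_n$ charges the complement of a large compact of $\X$. Once exponential tightness is secured, the reduction to compact closed sets is routine and the remaining minimax step is the cleanest part of the argument.
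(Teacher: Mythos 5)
The paper offers no proof of Theorem \ref{Sanov}; it only refers to \cite[Thm 6.2.10]{DZ}, where Sanov's theorem is established (in fact for the finer $\tau$-topology) by reduction to finite partitions via a projective-limit argument. Your sketch follows the other classical route: change of measure for the lower bound, exponential Chebyshev plus exponential tightness and a finite covering for the upper bound. This is a legitimate and self-contained alternative, and it has the merit of making the variational formula \eqref{eqL-c} do visible work; the price, as you correctly identify, is that exponential tightness of $(L_n)$ in $\PX$ must be built by hand from the tightness of $\mu$ (compacts $C_j\subset\X$ with $\mu(\X\setminus C_j)$ super-exponentially small, then $K_M=\{\nu:\nu(\X\setminus C_j)\le 1/j,\ j\ge M\}$ and binomial tail bounds). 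Two small points on the lower bound: the tilting identity is in general only an inequality $\P(L_n\in U)\ge \E_{\nu^n}[\cdots]$ (equality would require $\mu\ll\nu$), which is the direction you need; and $H(\nu|\mu)<\infty$ already forces $\log f\in L^1(\nu)$ because the negative part of $t\log t$ is bounded, so no truncation is actually required.

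The one step that does not work as written is (iv). The set $F\cap K_M$ is compact but in general not convex, so no Ky Fan/Sion minimax theorem justifies interchanging $\sup_\varphi$ and $\inf_{\nu\in F}$; affinity in $\nu$ and concavity in $\varphi$ are not enough without convexity of the constraint set. Fortunately the interchange is not needed: the covering you set up in (ii)--(iii) replaces it. For each $\nu\in F\cap K_M$ use \eqref{eqL-c} \emph{pointwise} to pick $\varphi_\nu\in\BX$ with $\int\varphi_\nu\,d\nu-\log\int e^{\varphi_\nu}\,d\mu\ge\min\left(H(\nu|\mu),1/\delta\right)-\delta$, then shrink to a weak neighborhood $U_\nu$ on which $\rho\mapsto\int\varphi_\nu\,d\rho$ decreases by at most $\delta$ (not a neighborhood ``disjoint from a chosen $\nu_0$''), extract a finite subcover, and apply the union bound; since every $\nu\in F$ satisfies $H(\nu|\mu)\ge H(F|\mu)$, the minimum over the finite cover is at least $\min\left(H(F|\mu),1/\delta\right)-2\delta$, and letting $\delta\downarrow0$ and $M\uparrow\infty$ finishes the upper bound. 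With that repair your outline is a complete and correct proof strategy.
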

For a proof of Sanov's theorem, see \cite[Thm 6.2.10]{DZ}.

Roughly speaking, $\P(L_{n}\in A)$ behaves like $e^{-nH(A|
\mu)}$ when $n$ is large. We write the statement of this theorem:
$\P(L_{n}\in A)\underset{n\rightarrow\infty}{\asymp} e^{-nH(A|
\mu)}$ for short.

Let us explain the heuristics upon which rely \cite{GL07} and also
the articles \cite{G09,GLWY09}. To interpret the transport-entropy
inequality $\alpha\left(\mathcal{T}_{c}\right)\leq H$, let us
define $A_{t}=\{\nu \in \PX; \mathcal{T}_{{c}}(\nu,\mu)\geq t\}$,
for all $t\geq 0$. Note that the transport-entropy inequality can
be rewritten as $\alpha(t)\leq H(A_{t}|\mu)$, $t\geq 0$. But,
according to Sanov's theorem,
$$\P(\mathcal{T}_{c}(L_{n},\mu)\geq t)=\P(L_{n}\in A_{t})\underset{n\rightarrow\infty}{\asymp} e^{-nH(A_{t}|\mu)}.$$
Consequently, the transport-entropy inequality
$\alpha\left(\mathcal{T}_{c}\right)\leq H$ is intimately linked to
the large deviation estimate
$$\limsup_{n\to+\infty}\frac{1}{n}\log \P(\mathcal{T}_{c}(L_{n},\mu)\geq t)\leq -\alpha(t),\quad t\geq 0.$$

Based on this large deviation heuristics, Gozlan and L\'eonard have
obtained in \cite{GL07} the following estimates for the deviation
of the empirical mean.
\begin{thm}\label{resL-10}
Let $\alpha$ be any function in $\mathcal{A}$ and assume that
$c(x,x)=0,$ for all $x.$ 
Define $\mathcal{U}^\forall_\mathrm{exp}(\mu):=\left\{u:\X\to\R,\textrm{
measurable}, \forall s>0, \int e^{s|u|}\,d\mu<\infty\right\}.$
It is supposed that $c$ is such that $u^{cc}$ and $u^c$ are
measurable functions for all $u\in
\mathcal{U}^\forall_\mathrm{exp}(\mu).$ This is the case in
particular if either $c=d$ is a \lsc\ metric cost or $c$ is
continuous. Then, the following statements are
equivalent.
\begin{enumerate}
    \item[(a)] The transport-entropy inequality
    $$\alpha(\mathcal{T}_c(\nu,\mu))\leq H(\nu|\mu),$$
    holds for all $\nu\in\PX$.
    \item[(b)] For all function $u\in \mathcal{U}^\forall_\mathrm{exp}(\mu)$, the inequality
    $$\logsup n\P\left(\IX u^{cc}\,dL_n+\IX u^c\,d\mu  \geq  r\right)
    \leq -\alpha(r),$$
    holds for all $r\geq 0$.
    \item[(c)] For all $u \in \mathcal{U^\forall}_\mathrm{exp}(\mu)$, the inequality 
    $$\frac{1}{n}\log\P\left(\IX u^{cc}\,dL_n+\IX u^c\,d\mu\geq  r\right)
    \leq -\alpha(r),$$
    holds for all positive integer $n$ and $r\geq 0.$
\end{enumerate}
 \end{thm}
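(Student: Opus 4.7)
The plan is to prove the chain $(a)\Rightarrow(c)\Rightarrow(b)\Rightarrow(a)$, where the middle implication is immediate upon taking $\frac{1}{n}\log$ and passing to $\limsup_n$. The two substantive directions exploit complementary viewpoints on $\mu$: the first uses the dual form of the tensorized transport-entropy inequality together with Chernoff's bound, while the second uses Sanov's theorem to convert a large-deviation estimate for empirical means back into the transport-entropy inequality.

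For $(a)\Rightarrow(c)$, I would first tensorize via Proposition~\ref{res-n-tensorization} to obtain that $\mu^n$ satisfies $\widetilde\alpha(\mathcal{T}_{c^{\oplus n}})\le H$ on $\X^n$ with $\widetilde\alpha(t):=n\alpha(t/n)$, whose monotone conjugate is easily checked to be $\widetilde\alpha^{\mc}(s)=n\alpha^{\mc}(s)$. Then feed this into the dual reformulation of Corollary~\ref{BG2}(3) with the separable test function $g(y_1,\dots,y_n):=-\sum_i u^c(y_i)$: since the infimum defining $Q_{c^{\oplus n}}g$ decouples across coordinates, one gets $Q_{c^{\oplus n}}g(x)=\sum_i u^{cc}(x_i)$, and hence
\begin{equation*}
\int e^{s\sum_i u^{cc}(x_i)}\,d\mu^n(x)\;\le\;\exp\!\Bigl(-sn\textstyle\int u^c\,d\mu+n\alpha^{\mc}(s)\Bigr),\qquad s\ge 0.
\end{equation*}
Applying Chernoff's inequality to the event $\{\sum_i u^{cc}(X_i)\ge n(r-\int u^c\,d\mu)\}$ and optimizing over $s\ge 0$ then produces the bound $e^{-n\alpha(r)}$, via the involution $(\alpha^{\mc})^{\mc}=\alpha$ on $[0,\infty)$ valid for $\alpha\in\mathcal{A}$. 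This is precisely (c).

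For $(b)\Rightarrow(a)$, the idea is to combine the Sanov lower bound with Kantorovich duality. Fix $\nu$ with $H(\nu|\mu)<\infty$ and $u\in\mathcal{C}_b(\X)\subset\mathcal{U}^\forall_{\mathrm{exp}}(\mu)$, and set $r:=\int u^{cc}\,d\nu+\int u^c\,d\mu$. When $c$ is continuous or $c=d$ is a lower semicontinuous metric, $u^{cc}$ is bounded upper semicontinuous (as an infimum of continuous functions, bounded by the chain $\inf u\le u\le u^{cc}\le -u^c\le\sup u$ using $c(x,x)=0$), so by Portmanteau the map $\pi\mapsto\int u^{cc}\,d\pi$ is weakly upper semicontinuous on $\PX$. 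Therefore the set
\begin{equation*}
G_\epsilon:=\Bigl\{\pi\in\PX \;:\; \textstyle\int u^{cc}\,d\pi>\int u^{cc}\,d\nu-\epsilon\Bigr\}
\end{equation*}
is an open neighbourhood of $\nu$, and the Sanov lower bound (Theorem~\ref{Sanov}) combined with the inclusion $G_\epsilon\subseteq\{\int u^{cc}\,dL_n+\int u^c\,d\mu\ge r-\epsilon\}$ and hypothesis (b) yields
\begin{equation*}
-H(\nu|\mu)\;\le\;\liminf_n \tfrac{1}{n}\log\P(L_n\in G_\epsilon)\;\le\;-\alpha(r-\epsilon).
\end{equation*}
Letting $\epsilon\to 0$ and using continuity of $\alpha$ on $[0,\infty)$ (which follows from $\alpha\in\mathcal{A}$) gives $\alpha(r)\le H(\nu|\mu)$. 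Taking the supremum over $u\in\mathcal{C}_b(\X)$ and invoking the Kantorovich dual equality (Theorem~\ref{res-01}) together with the $(u^{cc},u^c)$ reduction of Lemma~\ref{res-02} produces $\alpha(\mathcal{T}_c(\nu,\mu))\le H(\nu|\mu)$, which is (a).

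The main obstacle is ensuring sufficient regularity of $u^{cc}$ in Step~3 to justify the Portmanteau step. For continuous $c$, the upper semicontinuity of $u^{cc}$ is immediate; for a lower semicontinuous metric cost $c=d$, the $d$-Lipschitz property of $u^d$ does not automatically give continuity for the underlying polish topology (cf.\ Remark~\ref{rem-00}), so a slightly more delicate argument (for instance, truncating $d$ by $d\wedge M$ or approximating by continuous costs) is needed to validate the semicontinuity. Restricting the argument to test functions $u\in\mathcal{C}_b(\X)$ is harmless, since Theorem~\ref{res-01}(a) already realises the Kantorovich supremum over continuous bounded pairs, so the class $\mathcal{U}^\forall_{\mathrm{exp}}(\mu)$ appearing in (b) is in fact broader than what is strictly needed to recover (a).
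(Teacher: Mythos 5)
Your overall architecture --- $(a)\Rightarrow(c)$ by tensorization, the dual criterion of Corollary \ref{BG2} and Chernoff's bound, then $(b)\Rightarrow(a)$ by Sanov's theorem and Kantorovich duality --- is the right one and matches the large-deviation heuristic on which the result rests. But the key step of $(b)\Rightarrow(a)$ contains a genuine error: you argue that $u^{cc}$ is upper semicontinuous, that consequently $\pi\mapsto\int u^{cc}\,d\pi$ is weakly upper semicontinuous, and that \emph{therefore} $G_\epsilon=\{\pi:\int u^{cc}\,d\pi>\int u^{cc}\,d\nu-\epsilon\}$ is open. The implication goes the wrong way: upper semicontinuity of a functional $\Phi$ makes $\{\Phi<a\}$ open and $\{\Phi\ge a\}$ closed, but says nothing about $\{\Phi>a\}$ being open. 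Indeed, since $\limsup_k\int u^{cc}\,d\pi_k\le\int u^{cc}\,d\nu$ along weakly convergent sequences $\pi_k\to\nu$, measures arbitrarily close to $\nu$ can have a much \emph{smaller} integral, so $\nu$ need not lie in the interior of $G_\epsilon$ and the Sanov lower bound cannot be applied to it. The standard repair is to build the open set from $u$ itself rather than from $u^{cc}$: for $u\in\CX$ the set $G_\epsilon=\{\pi:\int u\,d\pi>\int u\,d\nu-\epsilon\}$ is genuinely open, and since $u\le u^{cc}$ one has $G_\epsilon\subset\{\pi:\int u^{cc}\,d\pi+\int u^c\,d\mu\ge r-\epsilon\}$ with $r:=\int u\,d\nu+\int u^c\,d\mu$. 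Sanov's lower bound and hypothesis (b) then give $\alpha(r-\epsilon)\le H(\nu|\mu)$, and since $\mathcal{T}_c(\nu,\mu)=\sup_{u\in\CX}\{\int u\,d\nu+\int u^c\,d\mu\}$ (the optimal $v$ for a given $u$ in Theorem \ref{res-01} being $u^c$), taking suprema recovers (a). This also disposes of the regularity worries you raise about $u^{cc}$ in the lsc-metric case, since only the continuity of $u$ is used.

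Two smaller points. In $(a)\Rightarrow(c)$ you apply Corollary \ref{BG2} to the test function $g=-\sum_i u^c(y_i)$, but that corollary is stated for $\CX$ or $\BX$, whereas for unbounded $u\in\mathcal{U}^\forall_\mathrm{exp}(\mu)$ the function $u^c$ is unbounded; a truncation and monotone-convergence step (using the exponential integrability built into $\mathcal{U}^\forall_\mathrm{exp}(\mu)$ and the representations \eqref{eq-01b} and \eqref{eqL-d}) is needed to legitimize the exponential moment bound. Finally, the quantity $r=\int u\,d\nu+\int u^c\,d\mu$ can be negative (it is $\le 0$ when $\nu=\mu$), so one should either note that the conclusion is vacuous for $r<0$ or restrict to $u$ realizing $r$ close to $\mathcal{T}_c(\nu,\mu)\ge0$. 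Apart from these repairs the computation of the monotone conjugate $\widetilde\alpha^{\mc}(s)=n\alpha^{\mc}(s)$, the decoupling $Q_{c^{\oplus n}}g(x)=\sum_i u^{cc}(x_i)$, and the use of $(\alpha^{\mc})^{\mc}=\alpha$ are all correct.
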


Specializing to the situation where $c=d,$ since $u^{dd}=-u^d\in$
Lip(1), this means:
\begin{cor}[Deviation of the empirical mean]\label{resL-11}
Suppose that $\IX e^{s d(x_o,\,\cdot\,)}\,d\mu<\infty$ for some
$x_o\in\X$ and all $s>0.$ Then, the following statements are
equivalent.
\begin{enumerate}
    \item[(a)] The transport-entropy inequality 
    $$\alpha(W_1(\nu,\mu))\leq H(\nu|\mu),$$holds for all
    $\nu\in\PX.$
    \item[(b)] For all $u\in \mathrm{Lip(1)},$ the inequality
    $$\logsup n\P\left(\frac 1n\sum_{i=1}^nu(X_i)\geq  \IX u\,d\mu+r\right)
    \leq -\alpha(r),$$
    holds for all $r\geq 0.$ 
    \item[(c)] For all $u\in \mathrm{Lip(1)},$ the inequality
    $$\frac{1}{n}\log\P\left(\frac 1n\sum_{i=1}^nu(X_i)\geq  \IX u\,d\mu+r\right)
    \leq -\alpha(r),$$
holds for all positive integer $n$ and $r\geq 0.$
\end{enumerate}
\end{cor}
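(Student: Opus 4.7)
The strategy is to recognize the corollary as the specialization of Theorem~\ref{resL-10} to the case $c=d$, and then to check that the class of test functions $\mathcal{U}^{\forall}_{\mathrm{exp}}(\mu)$ appearing there can be replaced by the smaller class $\mathrm{Lip}(1)$ without changing the statements. Since $W_1=\mathcal{T}_d$ and $d$ is a \lsc{} metric (so the measurability hypothesis on $u^{dd}$ and $u^d$ is satisfied), condition (a) of the corollary is literally condition (a) of Theorem~\ref{resL-10}. It therefore suffices to identify the events appearing in (b)--(c) of both statements.

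The key reduction relies on two simple facts about the transform $u\mapsto u^d$. First, by the claim established in the proof of Theorem~\ref{res-KR}, for any measurable $u$ the function $u^{dd}$ is $d$-Lipschitz(1) and $u^d=-u^{dd}$. Second, if $v\in\mathrm{Lip}(1)$, then $v^d=-v$ (take $x=y$ to get ``$\le$'' and use $v(x)\le v(y)+d(x,y)$ to get ``$\ge$''), hence $v^{dd}=v$. Consequently, for any $u\in\mathcal{U}^{\forall}_{\mathrm{exp}}(\mu)$, if we set $v:=u^{dd}\in\mathrm{Lip}(1)$, then
\[
\int u^{dd}\,dL_n+\int u^d\,d\mu=\int v\,dL_n-\int v\,d\mu=\frac{1}{n}\sum_{i=1}^n v(X_i)-\int v\,d\mu,
\]
and conversely every $v\in\mathrm{Lip}(1)$ arises this way (from $u=v$ itself). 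Thus the family of events appearing in Theorem~\ref{resL-10}(b)--(c) coincides with the family in (b)--(c) of the corollary.

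It remains to verify that this reduction is compatible with the integrability requirement $v\in\mathcal{U}^{\forall}_{\mathrm{exp}}(\mu)$. But any $v\in\mathrm{Lip}(1)$ satisfies $|v(x)|\le|v(x_o)|+d(x,x_o)$, so the standing assumption $\int e^{sd(x_o,\,\cdot\,)}\,d\mu<\infty$ for every $s>0$ immediately gives $\int e^{s|v|}\,d\mu<\infty$ for every $s>0$. Hence $\mathrm{Lip}(1)\subset\mathcal{U}^{\forall}_{\mathrm{exp}}(\mu)$ under our hypothesis on $\mu$, and the test-function classes in the two statements really are interchangeable.

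With these identifications, the three equivalences $(a)\Leftrightarrow(b)\Leftrightarrow(c)$ of the corollary follow directly from Theorem~\ref{resL-10}. The only point that deserves a brief mention is that statement (a) is vacuous when $\nu\notin\mathrm{P}_1$: the exponential integrability assumption together with the variational formula \eqref{eqL-d} for the entropy imply that $H(\nu|\mu)<\infty$ forces $\int d(x_o,\,\cdot\,)\,d\nu<\infty$, hence $W_1(\nu,\mu)<\infty$ whenever the right-hand side of~(a) is finite. No step is a genuine obstacle; the only thing to be careful about is the passage from ``general measurable test functions $u$'' to ``$1$-Lipschitz $v$'', which is handled uniformly by the $c$-transform identities above.
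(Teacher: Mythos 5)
Your proposal is correct and is essentially the paper's own argument: the paper derives Corollary \ref{resL-11} in one line by specializing Theorem \ref{resL-10} to $c=d$ and noting $u^{dd}=-u^d\in\mathrm{Lip}(1)$, which is exactly the identification you carry out. Your write-up merely makes explicit the two details the paper leaves implicit (that $v^{dd}=v$ for $v\in\mathrm{Lip}(1)$, so the two families of events coincide, and that the exponential moment hypothesis puts $\mathrm{Lip}(1)$ inside $\mathcal{U}^{\forall}_{\mathrm{exp}}(\mu)$), both of which are handled correctly.
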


Sanov's Theorem and concentration inequalities match also well
together, since both give asymptotic results for probabilities of
events related to an i.i.d sequence.

In \cite{G09}, Gozlan has established the following
converse to Proposition \ref{concentration produit}:

\begin{thm}\label{Concentration->Transport}
Let $\mu$ be a probability measure on $\X$ and $(r_{o}^n)_{n}$ a
sequence of nonnegative numbers such that $r_{o}^n/n\to 0$ when
$n\to+\infty$. Suppose that for all integer $n$ the product
measure $\mu^n$ verifies the following concentration inequality:
\begin{equation}\label{eq Concentration->Transport}
\mu^n(\mathrm{enl}_{\theta}(A,r))\geq
1-\exp\left(-n\alpha\left(\frac{r-r_{o}^n}{n}\right)\right),\quad  r\geq r_{o}^n,
\end{equation}
for all $A\subset \X^n$ with $\mu^n(A)\geq 1/2$, where
$\mathrm{enl}_{\theta}(A,r)$ is defined in Proposition
\ref{concentration produit}. Then $\mu$ satisfies the
transport-entropy inequality
$\alpha(\mathcal{T}_{{\theta(d)}})\leq H$.
\end{thm}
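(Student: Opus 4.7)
The plan is to exploit the heuristic sketched earlier in this section: concentration of $\mu^n$ yields an \emph{upper} bound on $\mu^n(L_n\in U)$ for neighborhoods $U$ of $\nu$ that are ``far'' from $\mu$ in the cost $\theta(d)$, while Sanov's Theorem \ref{Sanov} gives a matching \emph{lower} bound in terms of $H(\nu|\mu)$. Comparing the two yields $\alpha(\mathcal{T}_{\theta(d)}(\nu,\mu))\le H(\nu|\mu)$.

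Fix $\nu\in\PX$ with $H(\nu|\mu)<\infty$ and set $T:=\mathcal{T}_{\theta(d)}(\nu,\mu)$; the inequality is trivial when $T=0$, so assume $T>0$ and fix $\delta\in(0,T/2)$. Since $\theta\circ d$ is lower semicontinuous on $\X^2$, the functional $(\nu',\mu')\mapsto \mathcal{T}_{\theta(d)}(\nu',\mu')$ is jointly lower semicontinuous on $\PX\times\PX$ (standard; see e.g.\ \cite[Lemma 4.3]{Vill2}). Consequently I can pick open neighborhoods $U\ni\nu$ and $V\ni\mu$ in the weak topology such that
\[
\mathcal{T}_{\theta(d)}(\nu',\mu')>T-\delta,\qquad \forall(\nu',\mu')\in U\times V.
\]
By the law of large numbers $L_n\to\mu$ weakly $\mu^n$-a.s., hence the set $A_n:=\{y\in\X^n:L_n^y\in V\}$ satisfies $\mu^n(A_n)\ge 1/2$ for all $n$ large enough.

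The key separation step is the elementary bound: for any $x,y\in\X^n$, the ``diagonal'' coupling of $L_n^x$ and $L_n^y$ has cost $\frac{1}{n}\sum_{i=1}^n\theta(d(x_i,y_i))$, so
\[
\mathcal{T}_{\theta(d)}(L_n^x,L_n^y)\le \frac{1}{n}\sum_{i=1}^n\theta(d(x_i,y_i)).
\]
If $x$ satisfies $L_n^x\in U$ and $y\in A_n$ (so $L_n^y\in V$), this forces $\sum_i\theta(d(x_i,y_i))>n(T-\delta)$. Therefore $\{L_n\in U\}\cap \mathrm{enl}_\theta(A_n,r)=\emptyset$ for every $r<n(T-\delta)$. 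Choosing $r_n:=n(T-2\delta)$, which satisfies $r_n\ge r_o^n$ for $n$ large because $r_o^n/n\to 0$, the hypothesis \eqref{eq Concentration->Transport} gives
\[
\mu^n(L_n\in U)\le 1-\mu^n\bigl(\mathrm{enl}_\theta(A_n,r_n)\bigr)\le \exp\!\left(-n\,\alpha\!\left(T-2\delta-\tfrac{r_o^n}{n}\right)\right).
\]

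To conclude, apply the lower bound in Sanov's theorem to the open set $U$:
\[
-H(\nu|\mu)\le -\inf_{\nu'\in U}H(\nu'|\mu)\le \liminf_{n\to\infty}\tfrac{1}{n}\log\mu^n(L_n\in U)\le -\alpha(T-2\delta),
\]
where the rightmost inequality uses $r_o^n/n\to 0$ together with the continuity of $\alpha$ on $[0,\infty)$ (a convex nonnegative function with $\alpha(0)=0$ is automatically continuous on $[0,\infty)$). Hence $H(\nu|\mu)\ge \alpha(T-2\delta)$, and letting $\delta\downarrow 0$ yields the desired inequality $\alpha(\mathcal{T}_{\theta(d)}(\nu,\mu))\le H(\nu|\mu)$.

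The main obstacle I foresee lies in the separation of $\nu$ from $\mu$ via the product of weak neighborhoods $U\times V$: this requires the joint weak lower semicontinuity of $\mathcal{T}_{\theta(d)}$, which is available only because $\theta\circ d$ is lower semicontinuous. A secondary, but easy, technical point is ensuring that the penalty $r_o^n/n$ vanishes in the exponential rate after taking $\liminf$, which is precisely what the assumption $r_o^n/n\to 0$ delivers.
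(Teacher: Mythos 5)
Your argument is correct, and it takes a genuinely different route from the one the paper follows (the paper only details the $\T_2$ case in Corollary \ref{T2=concentration} and defers the general statement to \cite{G09}, but both use the same scheme). That scheme works with the single function $x\mapsto \mathcal{T}_{\theta(d)}(L_n^x,\mu)$: one takes $A$ to be its sublevel set at the median $m_n$, shows via a (quasi-)triangle inequality that $\mathrm{enl}_\theta(A,r)$ sits inside a slightly larger sublevel set, deduces a deviation bound for $\mathcal{T}_{\theta(d)}(L_n,\mu)$ above $m_n$, proves $m_n\to 0$, and only then invokes Sanov on the open set $\{\mathcal{T}_{\theta(d)}(\cdot\,,\mu)>u\}$. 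You instead separate $\nu$ from $\mu$ by a product $U\times V$ of weak neighborhoods using the \emph{joint} lower semicontinuity of $\mathcal{T}_{\theta(d)}$ (which the paper itself records in the Appendix as a consequence of the Kantorovich dual equality), take $A_n=\{L_n\in V\}$ whose measure tends to $1$ by the law of large numbers, and use the diagonal coupling to make $\{L_n\in U\}$ disjoint from $\mathrm{enl}_\theta(A_n,n(T-2\delta))$. This buys two real simplifications: you never need a quasi-triangle inequality for $\mathcal{T}_{\theta(d)}$ — for non-quadratic $\theta$ this is precisely the delicate point that forces the constants $a_\lambda$ into Proposition \ref{concentration produit} — and you replace the convergence of the medians by the soft fact that $\mu^n(A_n)\geq 1/2$ eventually. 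The price is that you obtain no quantitative deviation estimate for $\mathcal{T}_{\theta(d)}(L_n,\mu)$, which the median approach yields as a by-product. Two minor points to tidy up: the case $T=+\infty$ should be handled by running the same argument with an arbitrary finite level $M$ in place of $T-\delta$, giving $H(\nu|\mu)\geq\alpha(M-\delta)$ for every $M$; and one should note that $A_n$ is Borel (indeed open) because $y\mapsto L_n^y$ is weakly continuous.
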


Together with Proposition \ref{concentration produit}, this result
shows that the transport-entropy inequality $\alpha
\left(\mathcal{T}_{\theta(d)}\right)\leq H$ is an equivalent
formulation of the family of concentration inequalities \eqref{eq
Concentration->Transport}.

Let us emphasize a nice particular case.

\begin{cor}\label{T2=concentration}
Let $\mu$ be a probability measure on $\X$; $\mu$ enjoys the
Gaussian dimension-free concentration property if and only if
$\mu$ verifies Talagrand inequality $\T_{2}$. More precisely,
$\mu$ satisfies $\T_{2}(C)$ if and only if there is some $K>0$
such that for all integer $n$ the inequality
$$\mu^n(A^r)\geq 1-Ke^{-r^2/C},\quad r\geq 0,$$
holds for all $A\subset \X^n$ with $\mu^n(A)\geq 1/2$ and where
$A^r=\{x\in \X^n; \inf_{y\in A}d_{2}(x,A)\leq r\}$ and
$d_{2}(x,y)=\left( \sum_{i=1}^nd(x_{i},y_{i})^2\right)^{1/2}.$
\end{cor}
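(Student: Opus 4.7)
The plan is to prove the two implications separately, both as essentially direct applications of results already stated earlier in the section.

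\textbf{Forward direction ($\T_2(C) \Rightarrow$ Gaussian concentration).} This is essentially Corollary \ref{Marton T2}. First I tensorize: since $\alpha(t)=t/C$ is linear, Proposition \ref{tensorization} gives $\T_2(C)$ for $\mu^n$ on $(\X^n,d_2)$ with the \emph{same} constant $C$, independent of $n$. Then Jensen's inequality yields $\T_1(C)$ on $(\X^n,d_2)$, and Marton's argument (Theorem \ref{Marton}) produces
\[
\mu^n(A^r)\ge 1-e^{-(r-r_o)^2/C},\qquad r\ge r_o:=\sqrt{C\log 2},
\]
for every $A\subset\X^n$ with $\mu^n(A)\ge 1/2$. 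Rewriting this in the form $1-Ke^{-r^2/C}$ is then just a matter of choosing $K$ large enough (depending on $r_o$ only) to absorb the shift and the cross term; in particular, for $r<r_o$ the bound is trivial provided $K\ge 1$, and for $r\ge r_o$ the bound compares $e^{-(r-r_o)^2/C}$ against $Ke^{-r^2/C}$.

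\textbf{Reverse direction (Gaussian concentration $\Rightarrow \T_2(C)$).} The key is to recognize the assumption as a special case of the hypothesis of Theorem \ref{Concentration->Transport} with $\theta(t)=t^2$ and $\alpha(t)=t/C$. Under the substitution $s=\sqrt r$ the set
\[
\mathrm{enl}_\theta(A,r)=\Bigl\{x\in\X^n:\inf_{y\in A}\sum_{i=1}^n d(x_i,y_i)^2\le r\Bigr\}
\]
coincides with $A^{\sqrt r}=\{x:d_2(x,A)\le\sqrt r\}$. So the hypothesis $\mu^n(A^s)\ge 1-Ke^{-s^2/C}$ becomes
\[
\mu^n\bigl(\mathrm{enl}_\theta(A,r)\bigr)\ge 1-Ke^{-r/C}=1-e^{-(r-C\log K)/C},
\]
which is exactly the form $1-\exp\bigl(-n\alpha\bigl((r-r_o^n)/n\bigr)\bigr)$ required in \eqref{eq Concentration->Transport} with $\alpha(t)=t/C$ and $r_o^n:=C\log K$. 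Since $r_o^n$ is constant in $n$, the condition $r_o^n/n\to 0$ is trivially satisfied. Theorem \ref{Concentration->Transport} therefore yields $\mathcal{T}_{d^2}(\nu,\mu)/C\le H(\nu|\mu)$, that is, $W_2^2(\nu,\mu)\le C\,H(\nu|\mu)$, which is $\T_2(C)$.

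\textbf{Where the work really lies.} There is essentially no new obstacle in this corollary; all the substance has been packaged into the two results being invoked. The easy direction uses only tensorization plus Marton's argument, both proved above. The only non-trivial step is the reverse direction, and there the whole difficulty is encapsulated in Theorem \ref{Concentration->Transport}, whose proof is delegated to \cite{G09}; once that is granted, the corollary is just the observation that a Gaussian concentration profile with exponent $r^2/C$ in the metric $d_2$ translates, via the change of variables $r\mapsto\sqrt r$, into a concentration profile that is linear in the enlargement parameter for the quadratic enlargement $\mathrm{enl}_{t^2}$, which is precisely the regime in which Theorem \ref{Concentration->Transport} produces a $\T_2$-type transport-entropy inequality.
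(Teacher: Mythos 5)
Your reverse direction is correct but takes a genuinely different route from the paper. You reduce the corollary to Theorem \ref{Concentration->Transport} via the change of variables $r\mapsto\sqrt r$, which turns the Gaussian profile in the metric $d_2$ into a profile linear in the quadratic enlargement parameter, with $r_o^n=C\log K$ constant in $n$; this is a legitimate and economical application of the general theorem (and is indeed how the paper motivates the corollary just before stating it). The paper's written proof instead unrolls the argument of \cite{G09} in this special case: it uses the $1/\sqrt n$-Lipschitz continuity of $x\mapsto W_2(L_n^x,\mu)$ for $d_2$, applies the concentration hypothesis to the sublevel set of the median $m_n$ of $W_2(L_n,\mu)$, shows $m_n\to0$, and closes the loop with the lower bound in Sanov's theorem. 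Your version is shorter but rests entirely on the black box of Theorem \ref{Concentration->Transport}; the paper's is self-contained modulo the claim $m_n\to 0$.

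The forward direction, however, contains a genuine error in the final conversion step. Corollary \ref{Marton T2} gives $\mu^n(A^r)\ge 1-e^{-(r-r_o)^2/C}$ for $r\ge r_o=\sqrt{C\log 2}$, and you assert that this can be rewritten as $1-Ke^{-r^2/C}$ by ``choosing $K$ large enough to absorb the shift and the cross term.'' This is false: the required inequality $e^{-(r-r_o)^2/C}\le Ke^{-r^2/C}$ amounts to $K\ge e^{(2rr_o-r_o^2)/C}$, and this exponent grows linearly in $r$, so no finite $K$ works. One can only trade the shift for a strictly worse constant, namely $1-K_\ep e^{-r^2/C_\ep}$ with $C_\ep=C/(1-\ep)>C$ (using $(r-r_o)^2\ge(1-\ep)r^2-(\ep^{-1}-1)r_o^2$), or else keep the shifted profile as it stands. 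The paper is silent on this point, since it merely cites Corollary \ref{Marton T2}, and the ``if and only if'' with the same constant $C$ on both sides should be read with this caveat. To make your two directions match, either state the concentration property in the shifted form $1-Ke^{-(r-r_o)^2/C}$ (your reverse argument handles this without change, since Theorem \ref{Concentration->Transport} tolerates any sequence $r_o^n$ with $r_o^n/n\to0$), or establish the unshifted form for every $C'>C$ and recover $\T_2(C)$ in the converse by letting $C'\downarrow C$.
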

To put these results in perspective, let us recall that in recent
years numerous functional inequalities and tools were introduced
to describe the concentration of measure phenomenon. Besides
transport-entropy inequalities, let us mention other recent
approaches based on Poincar\'e inequalities \cite{GM83, BL97},
logarithmic Sobolev inequalities \cite{L96,BG99}, modified
logarithmic Sobolev inequalities \cite{BL97,BZ05,GGM05,BR08},
inf-convolution inequalities \cite{Mau91,LW08}, Beckner-Lata\l
a-Oleszkiewicz inequalities \cite{Beck89,LO00,BR03,BCR06}\ldots So
the interest of Theorem \ref{Concentration->Transport} is that it
tells that transport-entropy inequalities are the right point of
view, because they are equivalent to concentration estimates for
product measures.

\proof[Proof of Corollary \ref{T2=concentration}.] Let us show
that dimension-free Gaussian concentration implies Talagrand
inequality (the other implication is Corollary \ref{Marton T2}).
For every integer $n$, and $x\in \X^n$, define
$L_n^x=n^{-1}\sum_{i=1}^{n}\delta_{x_{i}}.$ The map $x\mapsto
W_2(L_n^x,\mu)$ is $1/\sqrt{n}$-Lipschitz with respect to the
metric $d_{2}$. Indeed, if $x=(x_{1},\ldots,x_{n})$ and
$y=(y_{1},\ldots,y_{n})$ are in $\X^n$, then the triangle
inequality implies that
\[ \left|W_2(L_n^x,\mu)-W_2(L_n^y,\mu)\right|\leq W_2(L_n^x,L_n^y).\]
According to the convexity property of
$\mathcal{T}_2(\,\cdot\,,\,\cdot\,)$ (see e.g \cite[Theorem
4.8]{Vill2}), one has
\[ \mathcal{T}_2(L_n^x,L_n^y)\leq \frac{1}{n}\sum_{i=1}^n \mathcal{T}_2(\delta_{x_{i}},\delta_{y_{i}})=\frac{1}{n}\sum_{i=1}^n d(x_{i},y_{i})^2=\frac{1}{n}d_2(x,y)^2,\]
which proves the claim.

Now, let $(X_i)_i$ be an i.i.d sequence of law $\mu$ and let $L_n$
be its empirical measure. Let $m_n$ be the median of
$W_2(L_n,\mu)$ and define $A=\left\{ x\in \X; W_2(L_n^x,\mu)\leq m_n\right\}$.
Then $\mu^n(A)\geq 1/2$ and it is easy to show that $A^r\subset
\left\{ x\in\X; W_2(L_n^x,\mu)\leq m_n+r/\sqrt{n}\right\}$. Applying the
Gaussian concentration inequality to $A$ gives
\[\P\left( W_2(L_n,\mu)> m_n+r/\sqrt{n}\right)\leq K\exp\left(-r^2/C\right),\quad r\geq 0.\]
Equivalently, as soon as $u\geq m_{n}$, one has
\[ \P\left( W_2(L_n,\mu)> u\right)\leq K\exp\left(-n(u-m_n)^2/C\right).\]
Now, it is not difficult to show that $m_{n}\to 0$ when
$n\to\infty$ (see the proof of \cite[Theorem 3.4]{G09}).
Consequently,
\[\limsup_{n\to+\infty}\frac{1}{n}\log \P\left( W_2(L_n,\mu)> u\right) \leq -u^2/C.\]
for all $u\geq 0.$

On the other hand, according to Sanov's Theorem \ref{Sanov},
\[ \liminf_{n\to+\infty}\dfrac{1}{n}\log \P\left( W_2(L_n,\mu)> u\right)\geq-\inf\left\{ H(\nu|\mu); \nu\in \mathrm{P}(\X) \text{ s.t. } W_2(\nu,\mu)>u \right\}.\]
This together with the preceding inequality yields
\[ \inf\left\{ H(\nu|\mu); \nu\in \mathrm{P}(\X) \text{ s.t. } W_2(\nu,\mu)>u\right\}\geq u^2/C\]
or in other words,
\[ W_2(\nu,\mu)^2\leq CH(\nu|\mu),\]
and this completes the proof.
\endproof

\section{Integral criteria}\label{section integral criteria}
Let us begin with a basic observation concerning the
integrability.
\begin{prop}\label{res-08}
Suppose that a probability measure $\mu$ on $\X$ verifies the
inequality $\alpha(\mathcal{T}_{\theta(d)})\leq H$, and let
$x_{o}\in \X$ ; then $\int _{\X}
\exp\left(\alpha\circ\theta(\varepsilon
d(x,x_{o}))\right)\,d\mu(x)$ is finite for all $\varepsilon>0$
small enough.
\end{prop}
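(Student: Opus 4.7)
The plan is to reduce the given transport-entropy inequality with cost $\theta(d)$ to a $W_1$-type inequality, apply Marton's concentration theorem to extract a tail bound on $d(\,\cdot\,,x_o)$ under $\mu$, and conclude by integrating via the layer-cake formula.

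First, assuming $\theta$ convex and increasing (which is the natural setting of this section), I would couple Jensen's inequality with the monotonicity of $\theta$: for any coupling $\pi$ of $\nu$ and $\mu$,
$$\int \theta(d(x,y))\,d\pi(x,y) \ge \theta\left(\int d(x,y)\,d\pi(x,y)\right) \ge \theta(\mathcal{T}_d(\nu,\mu)).$$
Taking the infimum over couplings yields $\mathcal{T}_{\theta(d)}(\nu,\mu)\ge \theta(\mathcal{T}_d(\nu,\mu))$, hence $\mu$ satisfies $\tilde\alpha(\mathcal{T}_d)\le H$ with $\tilde\alpha:=\alpha\circ\theta$. Since both $\alpha$ and $\theta$ belong to $\mathcal{A}$, so does $\tilde\alpha$ (composition of convex increasing functions vanishing at $0$).

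Next, I would pick $R_o>0$ such that $\mu(\{d(\,\cdot\,,x_o)\le R_o\})\ge 1/2$ (possible since $\mu$ is a probability measure). Applying Theorem \ref{Marton-intro} to $\tilde\alpha$ and $A=\{d(\,\cdot\,,x_o)\le R_o\}$, and using $A^r\subseteq\{d(\,\cdot\,,x_o)\le R_o+r\}$, gives the tail estimate
$$\mu\big(d(x,x_o)>R\big)\le e^{-\tilde\alpha(R-c)},\quad R\ge c,$$
where $c:=R_o+\tilde\alpha^{-1}(\log 2)$.

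Finally, to integrate, I would write
$$\int_\X e^{\tilde\alpha(\varepsilon d(x,x_o))}\,d\mu(x) = 1+\int_0^\infty \mu\big(d(x,x_o)>\tilde\alpha^{-1}(t)/\varepsilon\big)\,e^t\,dt,$$
and plug in the tail bound whenever $\tilde\alpha^{-1}(t)/\varepsilon\ge c$. The main obstacle is to show that the resulting integrand decays fast enough, with no regularity on $\tilde\alpha$ beyond membership in $\mathcal{A}$. This is where convexity of $\tilde\alpha$ is crucial: since $\tilde\alpha(0)=0$, the map $s\mapsto \tilde\alpha(s)/s$ is non-decreasing. Setting $u=\tilde\alpha^{-1}(t)$ (so $t=\tilde\alpha(u)$), for any $\varepsilon\in(0,1)$ and $u$ large enough that $u/\varepsilon-c>u$, this monotonicity yields
$$\tilde\alpha(u/\varepsilon-c)\ge \frac{u/\varepsilon-c}{u}\,\tilde\alpha(u)=\left(\frac{1}{\varepsilon}-\frac{c}{u}\right)t.$$
The integrand is therefore bounded by $e^{-(1/\varepsilon-1-c/u)t}$, which for $t$ (equivalently $u$) sufficiently large is dominated by $e^{-\delta t}$ for some $\delta>0$. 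The integral is thus finite for every $\varepsilon\in(0,1)$, proving the claim.
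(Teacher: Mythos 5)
Your proof is correct and follows essentially the same route as the paper's: reduce to $\alpha\circ\theta(\mathcal{T}_d)\le H$ via Jensen, apply Marton's concentration theorem to a ball of mass at least $1/2$ to get the tail bound on $d(\cdot,x_o)$, then integrate. The only difference is that you carefully carry out the final integration (which the paper dismisses with ``it follows easily''), and your convexity argument there — using the monotonicity of $s\mapsto\tilde\alpha(s)/s$ — is sound and even yields the slightly sharper conclusion that the integral is finite for every $\varepsilon\in(0,1)$.
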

\proof If $\mu$ verifies the inequality
$\alpha(\mathcal{T}_{\theta(d)})\leq H$, then according to Jensen
inequality, it verifies the inequality
$\alpha\circ\theta\left(\mathcal{T}_{d}\right)\leq H$ and
according to Theorem \ref{Marton}, the inequality
$$\mu(A^r)\geq 1-\exp(-\alpha\circ\theta(r-r_{o})),\
r\geq r_{o}=\theta^{-1}\circ\alpha^{-1}(\log 2),$$ holds for all
$A$ with $\mu(A)\geq 1/2$. Let $m$ be a median of the function
$x\mapsto d(x,x_{o})$ ; applying the previous inequality  to
$A=\{x\in \X; d(x,x_{o})\leq m\}$ yields
$$\mu(d(x,x_{o})> m+r)=\mu(\X\setminus A^r)\leq\exp(-\alpha\circ\theta(r-r_{o})),\quad r\geq r_{o}.$$
It follows easily that $\int
\exp\left(\alpha\circ\theta(\varepsilon
d(x,x_{o}))\right)\,d\mu(x)<+\infty$, if $\varepsilon$ is
sufficiently small.
\endproof

The theorem below shows that this integrability condition is also
sufficient when the function $\alpha$ is supposed to be
subquadratic near $0$.

\begin{thm}\label{Integral criteria}
Let $\mu$ be a probability measure on $\X$ and define
$\alpha^\circledast(s)=\sup_{t\geq 0}\{st-\alpha(t)\}$, for all
$s\geq 0$. If the function $\alpha$ is such that
$\limsup_{t\to0}\alpha(t)/t^2<+\infty$ and
$\sup\{\alpha^\circledast(t); t :
\alpha^\circledast(t)<+\infty\}=+\infty$, then the following
statements are equivalent:
\begin{enumerate}
\item There is some $a>0$ such that
$\alpha\left(a\mathcal{T}_{\theta(d)}(\nu,\mu)\right)\leq
H(\nu|\mu).$ \item There is some $b>0$ such that $\int_{\X\times\X}
e^{\alpha\circ\theta(bd(x,y))}\,d\mu(x)d\mu(y)<+\infty.$
\end{enumerate}
\end{thm}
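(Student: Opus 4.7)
For $(1)\Rightarrow(2)$, I would start from the single-point integrability in Proposition \ref{res-08}, applied to the rescaled rate $\tilde\alpha(t):=\alpha(at)$. The convexity of $\theta$ with $\theta(0)=0$ (giving $\theta(\lambda t)\le\lambda\theta(t)$ for $\lambda\le 1$) lets one absorb the factor $a$ inside $\theta$, so that $\int_\X e^{\alpha\circ\theta(b_1 d(x,x_o))}\,d\mu(x)<+\infty$ for some $b_1>0$. To pass to the double integral, I use the triangle inequality $d(x,y)\le d(x,x_o)+d(y,x_o)$ combined with convexity of both $\theta$ and $\alpha$ (each vanishing at $0$):
\[
\alpha\circ\theta\bigl(b\,d(x,y)\bigr)\ \le\ \tfrac{1}{2}\alpha\circ\theta\bigl(2b\,d(x,x_o)\bigr)+\tfrac{1}{2}\alpha\circ\theta\bigl(2b\,d(y,x_o)\bigr),
\]
and conclude by Cauchy--Schwarz provided $b\le b_1/2$.

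For $(2)\Rightarrow(1)$, the natural tool is the Bobkov--G\"otze dual criterion of Corollary \ref{BG2}. With $c:=\theta(d)$, the target inequality $\alpha(a\mathcal{T}_c)\le H$ is equivalent to
\[
\int_\X e^{sf}\,d\mu\ \le\ \exp\!\Bigl(s\!\int_\X P_c f\,d\mu+\alpha^\circledast(s/a)\Bigr),\qquad s\ge 0,\ f\in \CX.
\]
Letting $g:=P_c f$, the pointwise inequality $f(x)-g(y)\le\theta(d(x,y))$ together with Fubini gives
\[
\int e^{sf}\,d\mu\cdot\int e^{-sg}\,d\mu\ \le\ \IXX e^{s\theta(d(x,y))}\,d\mu(x)\,d\mu(y),
\]
and the Jensen bound $-\log\int e^{-sg}\,d\mu\le s\int g\,d\mu$ reduces the problem to showing that for a suitable $a>0$ one has $\Phi(s):= \log\IXX e^{s\theta(d(x,y))}\,d\mu(x)\,d\mu(y)\le\alpha^\circledast(s/a)$ for all $s\ge 0$.

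To bound $\Phi$ using hypothesis (2), I would apply Young's inequality $uv\le\alpha(u)+\alpha^\circledast(v)$ with $u=\theta(bd(x,y))$; the convexity relation $b\theta(d)\le\theta(bd)$ (valid for $b\ge 1$, with the case $b<1$ handled by first rescaling the metric $d\mapsto bd$) then yields $\Phi(s)\le\alpha^\circledast(s/b)+\log M$, where $M:=\IXX e^{\alpha\circ\theta(bd)}\,d\mu(x)d\mu(y)$ is finite by (2). The main obstacle lies in absorbing the additive constant $\log M$, because $\Phi(0)=0=\alpha^\circledast(0)$ but $\Phi$ has a linear term $s\,\IXX\theta(d(x,y))\,d\mu(x)d\mu(y)$ near $0$, whereas the hypothesis $\limsup_{t\to 0}\alpha(t)/t^2<+\infty$ only produces a quadratic minorant $\alpha^\circledast(s)\gtrsim s^2$ at $0$. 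The two structural assumptions on $\alpha$ are tailored exactly to overcome this: for $s$ beyond some threshold $s_0$, the condition $\sup\alpha^\circledast=+\infty$ makes it possible, by taking $a$ small, for $\alpha^\circledast(s/a)$ to dominate both $\alpha^\circledast(s/b)$ and $\log M$; for $s\in[0,s_0]$, a sharper estimate on $\Phi$ obtained by symmetrising $(x,y)\leftrightarrow(y,x)$ and applying Young to a centred version of $\theta(d)$ replaces the linear-in-$s$ contribution by a quadratic one, which is absorbed by the quadratic minorant of $\alpha^\circledast$. Gluing the two regimes gives the required bound, and Corollary \ref{BG2} closes the proof.
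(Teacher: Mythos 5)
Your direction $(1)\Rightarrow(2)$ is essentially right: it is Proposition \ref{res-08} applied to $t\mapsto\alpha(at)$, followed by the triangle inequality, the convexity of $\theta$ and $\alpha$ (each vanishing at $0$), and Fubini/Cauchy--Schwarz; this is indeed how the necessity is meant to go. Keep in mind, though, that the paper does not prove the converse in general: it only establishes the particular case $\theta=\mathrm{id}$, $\alpha(t)=\kappa t^2$ (the $\T_1$ proposition that follows the theorem) and defers the full statement to Gozlan \cite{G06}. So the substance of your proposal is whether your sketch of $(2)\Rightarrow(1)$ can be completed, and as written it cannot.

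The reduction you arrive at --- ``show $\Phi(s):=\log\IXX e^{s\theta(d(x,y))}\,d\mu(x)d\mu(y)\le\alpha^\circledast(s/a)$ for all $s\ge0$'' --- replaces a true statement by a false one. Indeed $\Phi(s)=s\IXX\theta(d)\,d\mu\,d\mu+O(s^2)$ has a strictly positive linear term as soon as $\mu$ is not a Dirac mass, whereas $\alpha^\circledast$ is convex, vanishes at $0$ and has zero right-derivative there (its right-derivative at $0$ is $\sup\{t:\alpha(t)=0\}$), so $\alpha^\circledast(s/a)=o(s)$ for every fixed $a$. The loss occurs at the very first step: bounding $e^{s(f(x)-g(y))}$ by $e^{s\theta(d(x,y))}$ pointwise throws away the fact that the constraint $f\oplus(-g)\le c$ is saturated only on a small set, and for $f=g=\mathrm{const}$ it degrades the exact value $1$ into $e^{\Phi(s)}$. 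Your proposed repair does not exist: symmetrising $(x,y)\leftrightarrow(y,x)$ is vacuous since $\theta(d(x,y))$ is already symmetric under that swap, and centring $\theta(d)$ merely isolates the term $s\,\E\theta(d(X,Y))>0$ with nothing on the other side to absorb it (for a $c$-convex $f$, i.e.\ $P_cf=f$, the slack $s\int(P_cf-f)\,d\mu$ is identically zero). The symmetrisation that actually works in the paper's special case is applied to the \emph{potential}, namely to the antisymmetric variable $f(X)-f(Y)$ whose odd moments vanish; but exploiting it requires the pointwise bound $|f(x)-f(y)|\le c(x,y)$, which holds when $c=d$ is a metric (since $u^{dd}$ is $d$-Lipschitz$(1)$) and fails for $c=\theta(d)$ with superlinear $\theta$ --- already on $\R$ one has $\sup_z\{|x-z|^2-|y-z|^2\}=+\infty$ for $x\neq y$. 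This is precisely why the dual/symmetrisation route only yields the $\T_1$ case treated in the text, and why the general theorem is proved in \cite{BV05,G06} by a different, primal argument: an explicit coupling bound on $\mathcal{T}_{\theta(d)}(\nu,\mu)$ in terms of $(\nu-\mu)_\pm$ combined with a weighted Pinsker-type inequality derived from the variational formula \eqref{eqL-d}.
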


Djellout, Guillin and Wu \cite{DGW04} were the first ones to
notice that the inequality $\T_{1}$ is equivalent to the
integrability condition $\int_{\X\times\X}
e^{bd(x,y)^2}\,d\mu(x)d\mu(y)<+\infty.$ After them, this
characterization was extended to other functions $\alpha$ and
$\theta$ by Bolley and Villani \cite{BV05}. Theorem \ref{Integral
criteria} is due to Gozlan \cite{G06}. Let us mention that the
constants $a$ and $b$ are related to each other in \cite[Theorem
1.15]{G06}.

Again to avoid technical difficulties, we are going to establish a
particular case of Theorem \ref{Integral criteria}:
\begin{prop}
If $M=\int_{\X\times\X} e^{b^2 \frac{d(x,y)^2}{2}}\,d\mu(x)d\mu(y)$ is finite
for some $b>0$, then $\mu$ verifies the following $\T_{1}$
inequality:
\[\mathcal{T}_{d}(\nu,\mu)\leq \frac{1}{b}\sqrt{1+2\log M}\sqrt{2H(\nu|\mu)},\]
for all $\nu\in \mathrm{P}(\X).$
\end{prop}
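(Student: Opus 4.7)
The plan is to apply the Bobkov--G\"otze dual characterization (Corollary \ref{BG3}) to reduce the claim to a sub-Gaussian moment generating function estimate, and then to establish that estimate by a symmetrization (``cosh'') trick.

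The stated inequality is exactly $\mathbf{T}_1(C)$ with $C=2(1+2\log M)/b^2$. Taking $\alpha(t)=b^2 t^2/\bigl(2(1+2\log M)\bigr)$, whose monotone conjugate is $\alpha^{\mc}(s)=s^2(1+2\log M)/(2b^2)$, Corollary \ref{BG3} reduces the problem to proving, for every $1$-Lipschitz $f:\X\to\R$ and every $s\ge 0$,
\begin{equation*}
    \log\int e^{sf}\,d\mu \;\le\; s\int f\,d\mu+\frac{s^2(1+2\log M)}{2b^2}.
\end{equation*}
After centering $f$ so that $\int f\,d\mu=0$, this amounts to $\log\int e^{sf}\,d\mu\le s^2(1+2\log M)/(2b^2)$.

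I would then perform a doubling reduction. Since $1=e^{-s\int f\,d\mu}\le \int e^{-sf(y)}\,d\mu(y)$ by Jensen's inequality applied to the convex exponential, one has $\int e^{sf(x)}\,d\mu(x)\le \iint e^{s(f(x)-f(y))}\,d\mu(x)d\mu(y)$. Using the symmetry of $\mu\otimes\mu$ in $(x,y)$, the elementary bound $\cosh(u)\le e^{u^2/2}$, and the Lipschitz bound $|f(x)-f(y)|\le d(x,y)$, this symmetrized integral is controlled by
\begin{equation*}
    \iint e^{s(f(x)-f(y))}\,d\mu\, d\mu \;=\; \iint \cosh\bigl(s(f(x)-f(y))\bigr)\,d\mu\, d\mu \;\le\; \iint e^{s^2 d(x,y)^2/2}\,d\mu\, d\mu.
\end{equation*}

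Finally I would bound the last double integral in two regimes. For $0<s\le b$, the exponent $s^2/b^2$ lies in $[0,1]$, so Jensen applied to the concave map $x\mapsto x^{s^2/b^2}$ gives $\iint e^{s^2 d^2/2}\,d\mu\, d\mu\le M^{s^2/b^2}$ and hence $\log\int e^{sf}\,d\mu\le (s^2/b^2)\log M$, which is clearly dominated by $s^2(1+2\log M)/(2b^2)$. For $s>b$, where the cosh bound may fail to be finite, I would instead start from $\iint e^{s(f(x)-f(y))}\,d\mu\, d\mu\le \iint e^{sd(x,y)}\,d\mu\, d\mu$ (via the Lipschitz bound and symmetry) and apply Young's inequality $sd\le s^2/(2b^2)+b^2 d^2/2$ to obtain $\log\int e^{sf}\,d\mu\le s^2/(2b^2)+\log M$; since $s>b$ forces $\log M\le (s^2/b^2)\log M$, this again fits under $s^2(1+2\log M)/(2b^2)$. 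The main obstacle I anticipate is matching the constant $(1+2\log M)$ across the interface $s=b$: the cosh--Jensen estimate is tight for small $s$ but degrades beyond $s=b$, while the Young estimate has the wrong behavior as $s\to 0$, and splitting at the threshold $s=b$ is precisely what makes both pieces lie under the single quadratic envelope required by Bobkov--G\"otze.
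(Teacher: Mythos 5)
Your proposal is correct and follows essentially the same route as the paper's proof: the dual reduction via Corollary \ref{BG3}, the symmetrization $\int e^{sf}\,d\mu\le\iint e^{s(f(x)-f(y))}\,d\mu\,d\mu$, the bound $\cosh(u)\le e^{u^2/2}$ (which is exactly the paper's termwise estimate $(2i)!\ge 2^i\, i!$ on the even Taylor coefficients), and the same case split at $s=b$ with Jensen for small $s$ and Young's inequality for large $s$. The only differences are cosmetic (the paper first normalizes to $b=1$), so there is nothing to fix.
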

\proof First one can suppose that $b=1$ (if this is not the case,
just replace the distance $d$ by the distance $bd$). Let
$C=2(1+2\log(M))$; according to Corollary \ref{BG3}, it is
enough to prove that
\[\int e^{s f}\,d\mu\leq e^{\frac{s^2C}{4}},\quad s\geq 0\]
for all $1$-Lipschitz function with $\int f\,d\mu=0.$ Let $X, Y$
be two independent variables of law $\mu$; using Jensen
inequality, the symmetry of $f(X)-f(Y)$, the inequality $(2i)!\geq
2^i\cdot i!$ and the fact that $f$ is $1$-Lipschitz, one gets
\begin{align*}
\E\left[e^{sf(X)}\right]&\leq \E\left[e^{s(f(X)-f(Y))}\right]=\sum_{i=0}^{+\infty} \frac{s^{2i}\E \left[(f(X)-f(Y))^{2i}\right]}{(2i)!}\\
&\leq \sum_{i=0}^{+\infty} \frac{s^{2i}\E
\left[d(X,Y)^{2i}\right]}{2^i\cdot i!} =\E\left[\exp
\left(\frac{s^2d(X,Y)^2}{2}\right)\right].
\end{align*}
So, for $s\leq 1$, Jensen inequality gives $\E\left[e^{sf(X)}\right]\leq
M^{s^2}.$ If $s\geq 1$, then Young inequality implies
$\E\left[e^{sf}\right]\leq \E\left[e^{s(f(X)-f(Y))}\right] \leq
e^{\frac{s^2}{2}}M.$ So in all cases, $\E\left[e^{sf}\right]\leq
e^{\frac{s^2}{2}}M^{s^2}=e^{s^2C/4}$ which completes the proof.
\endproof

\section{Transport inequalities with uniformly convex potentials}\label{section uniformly convex}

This section is devoted to some results which have been proved by
Cordero-Erausquin in \cite{C02} and  Cordero-Erausquin,
Gangbo and Houdr\'e in \cite{CGH04}.

Let us begin with a short overview of \cite{C02}. The state space
is $\X=\Rk.$ Let $V:\Rk\to\R$ be a  function of class $\mathcal{C}^2$
which is semiconvex, i.e.\ $\mathrm{Hess}_x\,V\ge\kappa\mathrm{Id}$
for all $x,$ for some real $\kappa.$ If $\kappa>0,$ the potential
$V$ is said to be uniformly convex. Define
$$d\mu(x):=e^{-V(x)}\,dx$$ and assume that $\mu$ is a probability
measure. The main result of \cite{C02} is the following

\begin{thm}\label{res-07}
Let $f,g$ be nonnegative compactly supported functions with $f$ of
class $\mathcal{C}^1$ and $\int f\,d\mu=\int g\,d\mu=1.$ If
$T(x)=x+\nabla\theta(x)$ is the Brenier map pushing forward $f\mu$
to $g\mu$ (see Theorem \ref{res-06}), then
\begin{equation}\label{eq-04}
    H(g\mu|\mu)\ge H(f\mu|\mu)+\int_\Rk\nabla
    f\cdot\nabla\theta\,d\mu+\frac \kappa 2\int_\Rk|\nabla\theta|^2\,fd\mu.
\end{equation}
\end{thm}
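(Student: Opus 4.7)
The plan is to proceed by a McCann-style displacement-convexity computation: change variables via the Brenier map to pull everything back to the same measure, then exploit the $\kappa$-semiconvexity of $V$ pointwise, and finally recognize the resulting expression through an integration by parts against $\mu=e^{-V}\,dx$.

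First I would invoke Theorem \ref{res-06} to obtain that $T=\nabla\phi$ with $\phi(x)=|x|^2/2+\theta(x)$ convex, and that the Monge--Amp\`ere equation
\[
f(x)e^{-V(x)}=g(T(x))\,e^{-V(T(x))}\det(I+D^2\theta(x))
\]
holds Lebesgue-a.e.\ on $\{f>0\}$ (with $D^2\theta$ taken in the Aleksandrov sense). Taking the logarithm, multiplying by $f$ and integrating against Lebesgue measure — equivalently, writing $H(g\mu|\mu)=\int f\,\log(g\circ T)\,d\mu$ using $T_{\#}(f\mu)=g\mu$ — gives the identity
\[
H(g\mu|\mu)=H(f\mu|\mu)+\int\bigl[V(T(x))-V(x)\bigr]f\,d\mu-\int\log\det(I+D^2\theta)\,f\,d\mu.
\]
At this point the two terms to control are visible: a potential-difference term and a Jacobian term.

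Next I would apply two pointwise bounds. From the $\kappa$-semiconvexity of $V$ (Taylor's formula with the lower bound $\mathrm{Hess}\,V\ge\kappa\,\mathrm{Id}$),
\[
V(T(x))-V(x)\ge\nabla V(x)\cdot\nabla\theta(x)+\frac{\kappa}{2}|\nabla\theta(x)|^2,
\]
while the convexity of $\phi$ gives $I+D^2\theta\ge0$ (Aleksandrov), so using the scalar inequality $\log(1+\lambda)\le\lambda$ on each eigenvalue,
\[
\log\det(I+D^2\theta(x))\le\mathrm{tr}(D^2\theta(x))\le\Delta\theta(x)
\]
(with $\Delta\theta$ understood as the absolutely continuous part of the distributional Laplacian; the remaining singular part of the distributional Laplacian is nonnegative by semiconvexity and goes in the favorable direction). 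Substituting both bounds yields
\[
H(g\mu|\mu)\ge H(f\mu|\mu)+\int\bigl(\nabla V\cdot\nabla\theta-\Delta\theta\bigr)f\,d\mu+\frac{\kappa}{2}\int|\nabla\theta|^2\,f\,d\mu.
\]

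Finally I would recognize the middle integral as $\int\nabla f\cdot\nabla\theta\,d\mu$ via integration by parts against $\mu=e^{-V}\,dx$:
\[
\int\nabla f\cdot\nabla\theta\,d\mu=-\int f\,\nabla\!\cdot\!\bigl(\nabla\theta\,e^{-V}\bigr)\,dx=\int f\bigl(\nabla V\cdot\nabla\theta-\Delta\theta\bigr)\,d\mu,
\]
the boundary terms vanishing because $f$ is compactly supported and $\mathcal{C}^1$. This produces exactly \eqref{eq-04}. The main obstacle is not the scheme but its regularity: the Brenier potential $\theta$ need not be $\mathcal{C}^2$, so the Monge--Amp\`ere identity, the trace/determinant inequality, and the integration by parts must be justified with $D^2\theta$ in the Aleksandrov sense. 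The standard remedy is to rely on Caffarelli/McCann regularity together with the fact that, for semiconvex $\theta$, the distributional Laplacian dominates its pointwise (Aleksandrov) counterpart — precisely the direction needed to preserve the inequality when integrating against the nonnegative density $f$.
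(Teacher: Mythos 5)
Your proposal is correct and follows essentially the same route as the paper's proof: Monge--Amp\`ere equation for the Brenier map, logarithm, the two pointwise bounds ($\kappa$-semiconvexity of $V$ and $\log\det(\mathrm{Id}+\mathrm{Hess}\,\theta)\le\Delta_A\theta$), and integration by parts against $e^{-V}\,dx$ to identify $\int\nabla f\cdot\nabla\theta\,d\mu$. Your remark that the distributional Laplacian of the semiconvex $\theta$ dominates its Aleksandrov part, so the singular part acts in the favorable direction, is exactly the regularity caveat the paper flags parenthetically at the integration-by-parts step.
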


Before presenting a sketch of the proof of this result, let us
make a couple of comments.
\begin{itemize}
    \item[-] This result is an extension of Talagrand inequality
    \eqref{eq-12}.
    \item[-] About the regularity of $\theta.$ As a
convex function, $\theta$ is differentiable almost everywhere and
it admits a Hessian in the sense of Alexandrov  almost everywhere
(this is the statement of Alexandrov's theorem). A function
$\theta$ admits a Hessian in the sense of Alexandrov at $x\in\Rk$
if it is differentiable at $x$ and there exists a symmetric linear
map $H$ such that
$$
\theta(x+u)=\theta(x)+\nabla\theta(x)\scal u+\frac 12 Hu\scal
u+o(|u|^2).
$$
As a definition, this linear map $H$ is the Hessian in the sense
of Alexandrov of $\theta$ at $x$ and it is denoted
$\mathrm{Hess}_x\,\theta.$ Its trace is called the Laplacian in
the sense of Alexandrov and is denoted $\Delta_A\theta(x).$
\end{itemize}

\begin{proof}[Outline of the proof]
The change of variables formula leads us to the Monge-Amp\`ere
equation
\begin{equation*}
    f(x)e^{-V(x)}=g(T(x))e^{-V(T(x))}
    \det(\mathrm{Id}+\mathrm{Hess}_x\,\theta)
\end{equation*}
Taking the logarithm, we obtain
\begin{equation*}
    \log g(T(x))=\log
    f(x)+V(x+\nabla\theta(x))-V(x)-\log\det(\mathrm{Id}+\mathrm{Hess}_x\,\theta).
\end{equation*}
Our assumption on $V$ gives us $V(x+\nabla\theta(x))-V(x)\ge
\nabla V(x)\cdot\nabla\theta(x)+\kappa|\nabla\theta|^2/2.$ Since
$\log(1+t)\le t,$ we  have also $\log\det
(\mathrm{Id}+\mathrm{Hess}_x\,\theta)\le \Delta_A\theta(x)$ where
$\Delta_A\theta$ stands for the Alexandrov Laplacian. This
implies that $f\mu$-almost everywhere
\begin{equation*}
    \log g(T(x))\ge\log
    f(x)+\nabla V(x)\scal\nabla\theta(x)-\Delta_A\theta(x)+\kappa|\nabla\theta|^2/2
\end{equation*}
and integrating
\begin{equation*}
    \int_\Rk\log g(T)\,fd\mu\ge\int_\Rk f\log
    f\,d\mu+\int_\Rk [\nabla V\scal\nabla\theta-\Delta_A\theta]\,fd\mu+\frac\kappa 2\int_\Rk|\nabla\theta|^2\,fd\mu
\end{equation*}
Integrating by parts (at this point, a rigorous proof necessitates
to take account of the \textit{almost everywhere} in the
definition of $\Delta_A$), we obtain
\begin{equation*}
    H(g\mu|\mu)\ge H(f\mu|\mu)+\int_\Rk\nabla\theta\scal\nabla f \,d\mu+\frac\kappa 2\int_\Rk|\nabla\theta|^2\,fd\mu
\end{equation*}
which is the desired result.
\end{proof}

Next results are almost immediate corollaries of this theorem.

\begin{cor}[Transport inequality]
If $V$ is of class $\mathcal{C}^2$ with $\mathrm{Hess}\,V\ge\kappa
\mathrm{Id}$ and $\kappa>0$, then the probability measure
$d\mu(x)=e^{-V(x)}\,dx$ satisfies the transport inequality
$\T_2(2/\kappa)$:
\begin{equation*}
   \frac\kappa 2 W_2^2(\nu,\mu)\le H(\nu|\mu),
 \end{equation*}
for all $\nu\in\mathrm{P}(\Rk).$
\end{cor}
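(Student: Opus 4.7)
The plan is to apply Theorem \ref{res-07} with $f\equiv 1$ and with $g=d\nu/d\mu$ the density of $\nu$ with respect to $\mu.$ With this choice, $f\mu=\mu,$ so $H(f\mu|\mu)=0,$ and since $\nabla f\equiv 0$ the cross term in \eqref{eq-04} vanishes. The inequality \eqref{eq-04} therefore collapses to
\begin{equation*}
H(\nu|\mu)\ \geq\ \frac{\kappa}{2}\int_{\R^k}|\nabla\theta(x)|^2\,d\mu(x).
\end{equation*}
By hypothesis $T(x)=x+\nabla\theta(x)$ is the Brenier map pushing $\mu$ forward onto $\nu,$ and Theorem \ref{res-06} tells us that it realizes the (unique) optimal coupling for the quadratic cost. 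Hence $W_2^2(\nu,\mu)=\int|T(x)-x|^2\,d\mu(x)=\int|\nabla\theta|^2\,d\mu,$ and plugging this back yields $(\kappa/2)W_2^2(\nu,\mu)\leq H(\nu|\mu),$ which is the announced inequality $\T_2(2/\kappa).$

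Before this scheme can be carried out rigorously, I would first observe that the inequality is trivial when $H(\nu|\mu)=+\infty,$ so one may assume $H(\nu|\mu)<\infty,$ and hence $\nu\ll\mu,$ $\nu=g\mu.$ In particular $\nu\in\Pd(\R^k)$ since $\mu$ has finite second moment (which follows from $\kappa$-convexity of $V$), so the optimal transport machinery applies.

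The main obstacle is then that Theorem \ref{res-07} requires both $f$ and $g$ to be compactly supported and $f$ to be of class $\mathcal{C}^1,$ while here neither $f\equiv 1$ nor an arbitrary $g$ satisfies these hypotheses. I would handle this by a standard two-stage approximation. First, approximate $g$ by a sequence of smooth, strictly positive, compactly supported densities $g_n$ with $\int g_n\,d\mu=1,$ chosen so that $H(g_n\mu|\mu)\to H(\nu|\mu)$ and $W_2(g_n\mu,\mu)\to W_2(\nu,\mu)$ (lower semicontinuity of $H$ in one direction, and convergence in $W_2$ via narrow convergence together with uniform control of second moments in the other). Second, for fixed $n$ replace $f\equiv 1$ by $f_m=\chi_m/\int\chi_m\,d\mu,$ where $\chi_m$ is a smooth bump equal to $1$ on $B(0,m);$ let $\theta_m$ be the Brenier potential pushing $f_m\mu$ onto $g_n\mu,$ and send $m\to\infty.$ The terms $H(f_m\mu|\mu)$ and $\int\nabla f_m\cdot\nabla\theta_m\,d\mu$ tend to $0$ while $\int|\nabla\theta_m|^2 f_m\,d\mu\to W_2^2(\mu,g_n\mu),$ so \eqref{eq-04} passes to the limit and gives $(\kappa/2)W_2^2(g_n\mu,\mu)\leq H(g_n\mu|\mu);$ then $n\to\infty$ finishes the argument. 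The trickiest point will be the vanishing of the cross term $\int\nabla f_m\cdot\nabla\theta_m\,d\mu$: since $\nabla f_m$ is supported on an annulus at distance $\sim m$ but not pointwise small there, one needs an a priori $L^2$ bound on $\nabla\theta_m$ (extracted from \eqref{eq-04} itself, via a Young-type absorption of the cross term into the quadratic term) combined with the Gaussian decay of $\mu$ at infinity to conclude $\|\nabla f_m\|_{L^2(\mu)}\to 0$ and then apply Cauchy--Schwarz.
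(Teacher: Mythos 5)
Your proposal is correct and takes essentially the same route as the paper, whose entire (outline of) proof is ``Plug $f=1$ into \eqref{eq-04}'' and then identify $\int|\nabla\theta|^2\,d\mu$ with $W_2^2(\nu,\mu)$ via the Brenier map. The approximation argument you sketch to relax the compact-support and regularity hypotheses of Theorem \ref{res-07} is a sensible filling-in of details that the paper's outline simply omits, not a different method.
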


\begin{proof}[Outline of the proof]
Plug $f=1$ into \eqref{eq-04}.
\end{proof}

This transport inequality extends Talagrand's $\T_2$-inequality
\cite{T96}.

In \cite{FeyUstu04a}, Feyel and \"Ust\"unel have derived another
type of extension of $\T_2$ from the finite dimension setting to
an abstract Wiener space. Their proof is based on  Girsanov
theorem.

Next result  is the well-known Bakry-Emery criterion for the
logarithmic Sobolev inequality \cite{BE85}.

\begin{cor}[Logarithmic Sobolev inequality]\label{resL-d}
If $V$ is of class $\mathcal{C}^2$ with $\mathrm{Hess}\,V\ge\kappa
\mathrm{Id}$ and $\kappa>0$, then the probability measure
$d\mu(x)=e^{-V(x)}\,dx$ satisfies the logarithmic Sobolev
inequality $\LSI(2/\kappa)$ (see Definition \ref{def-PLS} below):
\begin{equation*}
    H(f\mu|\mu)\le \frac{2}{\kappa} \IX |\nabla \sqrt f|^2\,d\mu
\end{equation*}
for all sufficiently regular $f$ such that
$f\mu\in\mathrm{P}(\Rk).$
\end{cor}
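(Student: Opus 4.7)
The plan is to apply Theorem \ref{res-07} with the target density set to $g\equiv 1$, so that the inequality becomes an upper bound on $H(f\mu|\mu)$, and then to convert the resulting displacement term into Fisher information via Cauchy--Schwarz. For a smooth, compactly supported $f$ with $\int f\,d\mu=1$, let $T=\mathrm{Id}+\nabla\theta$ be the Brenier map pushing $f\mu$ onto $\mu$. Since $H(\mu|\mu)=0$, inequality \eqref{eq-04} rearranges to
\[
H(f\mu|\mu) \le -\int \nabla f\cdot \nabla\theta\,d\mu - \frac{\kappa}{2}\int |\nabla\theta|^2\, f\,d\mu.
\]

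Next, using the chain rule $\nabla f=2\sqrt{f}\,\nabla\sqrt{f}$ on $\{f>0\}$ together with the Cauchy--Schwarz inequality, and setting $I:=\int|\nabla\sqrt{f}|^2\,d\mu$ and $W:=\int |\nabla\theta|^2\,f\,d\mu$, I obtain
\[
-\int \nabla f\cdot\nabla\theta\,d\mu = -2\int \sqrt{f}\,\nabla\sqrt{f}\cdot\nabla\theta\,d\mu \le 2\sqrt{I\,W}.
\]
Substituting yields the HWI-type inequality
\[
H(f\mu|\mu) \le 2\sqrt{I\,W} - \frac{\kappa}{2}\,W.
\]
The right-hand side, regarded as a function of $W\ge 0$ with $I$ fixed, attains its maximum $2I/\kappa$ at $\sqrt{W}=2\sqrt{I}/\kappa$, hence
\[
H(f\mu|\mu) \le \frac{2}{\kappa}\int |\nabla\sqrt{f}|^2\,d\mu,
\]
which is precisely $\LSI(2/\kappa)$.

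The main obstacle is that Theorem \ref{res-07} requires $g$ to be compactly supported, whereas I applied it with $g\equiv 1$. To make this step rigorous I would approximate $\mu$ by the normalized restrictions $\mu_R:=\mu(\,\cdot\,\cap B_R)/\mu(B_R)$ to balls of radius $R$, apply Theorem \ref{res-07} with a smoothed version of the density of $\mu_R$ as the target $g$, and pass to the limit $R\to\infty$; the uniform convexity of $V$ yields enough Gaussian integrability to control the remainder terms (in particular the mass outside $B_R$ and the boundary contribution in the integration by parts hidden in the proof of Theorem \ref{res-07}). A routine truncation and mollification then extends the resulting inequality from smooth compactly supported densities $f$ to all sufficiently regular $f$ with $f\mu\in\mathrm{P}(\Rk)$.
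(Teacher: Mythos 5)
Your proposal is correct and follows essentially the same route as the paper: plug $g\equiv 1$ into Theorem \ref{res-07} to get $H(f\mu|\mu)\le -\int\nabla f\cdot\nabla\theta\,d\mu-\frac{\kappa}{2}\int|\nabla\theta|^2 f\,d\mu$, and then eliminate the unknown $\nabla\theta$; the only (harmless) difference is that you do this via Cauchy--Schwarz and a scalar optimization in $W$ — i.e.\ you pass through the HWI inequality of Corollary \ref{HWI} — whereas the paper performs the pointwise quadratic optimization over $\nabla\theta$ directly, both yielding the constant $2/\kappa$. Your remark about the non-compact support of $g\equiv 1$ and the approximation needed to justify this step is a legitimate refinement of a point the paper's outline glosses over.
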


\begin{proof}[Outline of the proof]
Plugging $g=1$ into \eqref{eq-04} yields
\begin{equation}\label{eq-05}
    H(f\mu|\mu)\le -\int_\Rk\nabla f\cdot\nabla\theta\,d\mu-\frac
\kappa 2\int_\Rk|\nabla\theta|^2\,fd\mu
\end{equation}
where $T(x)=x+\nabla(x)$ is the Brenier map pushing forward $f\mu$
to $\mu.$ Since $\nabla\theta$ is unknown to us, we are forced to
optimize as follows
\begin{equation*}
    H(f\mu|\mu)\le \sup_{\nabla\theta}\left\{ -\int_\Rk\nabla
    f\cdot\nabla\theta\,d\mu-\frac \kappa
    2\int_\Rk|\nabla\theta|^2\,fd\mu\right\}=\frac{2}{\kappa}\IDV(f\mu|\mu),
\end{equation*}
which is the desired inequality.
\end{proof}

The next inequality has been discovered by Otto and Villani
\cite{OV00}. It will be used in Section \ref{section functional
inequalities} for comparing transport and logarithmic Sobolev
inequalities. More precisely, Otto-Villani's Theorem \ref{OV}
states that if $\mu$ satisfies the logarithmic Sobolev inequality,
then it satisfies $\T_2.$

Let us define the (usual) Fisher information with respect to $\mu$
by
\begin{equation*}
    \IF(f|\mu)=\int|\nabla\log f|^2\,fd\mu
\end{equation*}
for all positive and sufficiently smooth function $f.$

\begin{cor}[HWI inequality]\label{HWI}
If $V$ is of class $\mathcal{C}^2$ with $\mathrm{Hess}\,V\ge\kappa \mathrm{Id}$ for
some \emph{real} $\kappa,$ the probability measure
$d\mu(x)=e^{-V(x)}\,dx$ satisfies the HWI inequality
\begin{equation*}
    H(f\mu|\mu)\le
    W_2(f\mu,\mu)\sqrt{\IF(f|\mu)}-\frac{\kappa}{2}W_2^2(f\mu,\mu)
\end{equation*}
for all nonnegative smooth compactly supported function $f$ with
$\int_\Rk f\,d\mu=1.$
\end{cor}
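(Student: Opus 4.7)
The plan is to reuse the main inequality \eqref{eq-04} from Theorem \ref{res-07} in exactly the same special case that yields the logarithmic Sobolev inequality, namely by taking $g=1$. With this choice the Brenier map $T(x)=x+\nabla\theta(x)$ pushes $f\mu$ forward to $\mu$, and \eqref{eq-04} rearranges as \eqref{eq-05}:
\begin{equation*}
    H(f\mu|\mu)\le -\int_\Rk \nabla f\cdot\nabla\theta\,d\mu-\frac{\kappa}{2}\int_\Rk |\nabla\theta|^2\,fd\mu,
\end{equation*}
an inequality which, crucially, is valid for any real $\kappa$ (positive or not) since only the pointwise lower bound $\mathrm{Hess}\, V\ge \kappa\,\mathrm{Id}$ was used in its derivation.

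Unlike the proof of Corollary \ref{resL-d}, I would \emph{not} optimize over $\nabla\theta$; instead I keep this specific Brenier map and extract both $W_2$ and $\sqrt{I}$ from the two terms on the right-hand side. For the second term, the optimality of $T$ for the quadratic cost yields
\begin{equation*}
\int_\Rk |\nabla\theta(x)|^2 f(x)\,d\mu(x)=\int_\Rk |T(x)-x|^2 \,d(f\mu)(x)=W_2^2(f\mu,\mu).
\end{equation*}
For the first term, write $\nabla f = f\nabla\log f$ and apply the Cauchy-Schwarz inequality under the probability measure $f\mu$:
\begin{equation*}
-\int_\Rk \nabla f\cdot\nabla\theta\,d\mu
= -\int_\Rk \nabla\log f\cdot\nabla\theta\, f\,d\mu
\le \left(\int_\Rk |\nabla\log f|^2 f\,d\mu\right)^{1/2}\!\left(\int_\Rk |\nabla\theta|^2 f\,d\mu\right)^{1/2}
=\sqrt{I_F(f|\mu)}\,W_2(f\mu,\mu).
\end{equation*}
Substituting both identifications into \eqref{eq-05} gives the HWI inequality.

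The substantive content is entirely packed into Theorem \ref{res-07}; the remaining work is a Cauchy-Schwarz estimate and the recognition of the Wasserstein and Fisher information functionals. The only mild subtlety to keep in mind is the regularity of $\theta$: the identity $\int|\nabla\theta|^2 f\,d\mu=W_2^2(f\mu,\mu)$ requires $\nabla\theta$ to be defined $f\mu$-almost everywhere, which follows from Rademacher's theorem together with the assumption that $f\mu$ (and hence Lebesgue measure on the support of $f$) does not charge sets where $\theta$ fails to be differentiable; this is the same point that underlies the validity of \eqref{eq-05} itself. One should also note that when $\kappa<0$ the term $-\frac{\kappa}{2}W_2^2$ becomes a positive contribution on the right, so the statement remains meaningful; the inequality then reads as an interpolation bound between $H$, $W_2$, and $I_F$ rather than a sharpening of $\T_2$.
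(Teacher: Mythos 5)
Your proposal is correct and is essentially identical to the paper's own proof: the paper also starts from \eqref{eq-05} (the $g=1$ case of Theorem \ref{res-07}), identifies $\int|\nabla\theta|^2 f\,d\mu$ with $W_2^2(f\mu,\mu)$ via the Brenier map, and bounds the cross term by Cauchy--Schwarz after writing $\nabla f=f\nabla\log f$. The remarks on the validity of \eqref{eq-05} for arbitrary real $\kappa$ and on the regularity of $\theta$ are accurate but add nothing beyond what the paper already relies on.
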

Note that the HWI inequality gives back the celebrated Bakry-Emery
criterion.

\begin{proof}[Outline of the proof]
Start from \eqref{eq-05}, use $W_2^2(f\mu,\mu)=
\int_\Rk|\nabla\theta|^2\,fd\mu$ and
\begin{align*}
   -\int \nabla\theta\cdot\nabla
f\,d\mu&=-\int \nabla\theta\cdot\nabla\log f\,fd\mu \\
  &\le
\left(\int|\nabla\theta|^2\,fd\mu\int|\nabla\log f|^2\,fd\mu
\right)^{1/2}=W_2(f\mu|\mu)\sqrt{\IF(f|\mu)},
\end{align*}
and here you are.
\end{proof}

Now, let us have a look at the results of \cite{CGH04}. They
extend Theorem \ref{res-07} and its corollaries. Again, the state
space is $\X=\Rk$ and the main ingredients are
\begin{itemize}
     \item An entropy profile: $r\in[0,\infty)\mapsto \mathrm{s}(r)\in\R;$
    \item A cost function: $v\in\Rk\mapsto c(v)\in[0,\infty)$;
    \item A potential: $x\in\Rk\mapsto V(x)\in\R$.
\end{itemize}
The framework of our previous Theorem \ref{res-07} corresponds to
the entropy profile $\mathrm{s}(r)=r\log r-r$ and the quadratic
transport cost $c(y-x)=|y-x|^2/2.$

We are only interested in probability measures
$d\rho(x)=\rho(x)\,dx$ which are absolutely continuous and we
identify $\rho$ and its density. The \emph{free energy functional}
is
\begin{equation*}
    F(\rho):=\int_\Rk [\mathrm{s}(\rho)+\rho V](x)\,dx
\end{equation*}
and our reference measure $\mu$ is the \emph{steady state}:  the
unique minimizer of $F.$ Since $\mathrm{s}$ will be assumed to be
strictly convex, $\mu$ is the unique solution of
\begin{equation}\label{eqL-02}
    \mathrm{s}'(\mu)=-V,
\end{equation}
which, by \eqref{eq-Fenchel} is
$$
\mu=\mathrm{s}^{*\prime}(-V).
$$
As $\mathrm{s}(\rho)+\mathrm{s}^*(-V)\ge -V\rho,$ see \eqref{ineq-Fenchel}, in
order that $F$ is a well-defined $(-\infty,\infty]$-valued
function, it is enough to assume that $\int_\Rk
\mathrm{s}^*(-V)(x)\,dx<\infty.$ One also requires that $\int_\Rk
\mathrm{s}^{*\prime}(-V)(x)\,dx=1$ so that $\mu$ is a probability
density.
\\
The free energy is the sum of the \emph{entropy} $S(\rho)$ and the
\emph{internal energy} $U(\rho)$ which are defined by
\begin{equation*}
  S(\rho) := \int_\Rk \mathrm{s}(\rho)(x)\,dx, \qquad
  U(\rho) := \int_\Rk V(x)\rho(x)\,dx.
\end{equation*}
It is assumed that
\begin{enumerate}
    \item[(A$_\mathrm{s}$)] \begin{enumerate}
        \item $\mathrm{s}\in \mathcal{C}^2(0,\infty)\cap \mathcal{C}([0,\infty))$ is strictly convex, $\mathrm{s}(0)=0,$ $\mathrm{s}'(0)=-\infty$  and
        \item $r\in(0,\infty)\mapsto r^d\mathrm{s}(r^{-d})$ is convex increasing;
    \end{enumerate}
         \item[(A$_c$)] $c$ is convex, of class $\mathcal{C}^1,$ even,  $c(0)=0$ and $\lim_{|v|\rightarrow\infty}c(v)/|v|=\infty;$
    \item[(A$_V$)] For some  real number $\kappa,$ $V(y)-V(x)\ge\nabla V(x)\scal (y-x)+\kappa c(y-x),$ for all $x,y.$
\end{enumerate}
If $\kappa>0,$ the potential $V$ is said to be uniformly
$c$-convex.

We see with Assumption (A$_V$) that the cost function $c$ is a
tool for quantifying the curvature of the potential $V.$ Also note
that if  $\kappa>0$ and $c(y-x)=\|y-x\|^p$ for some $p$ in
(A$_V$), letting $y$ tend to $x,$  one sees that it is necessary
that $p\ge2.$

The  transport cost associated with $c$ is
$\mathcal{T}_c(\rho_0,\rho_1).$ Theorem \ref{res-06} admits an
extension to the case of strictly convex transport cost $c(y-x)$
(instead of the quadratic cost). Under the assumption (A$_c$) on
$c,$ if the transport cost $\mathcal{T}_c(\rho_0,\rho_1)$ between
two absolutely continuous probability measures $\rho_0$ and
$\rho_1$ is finite, there exists a unique (generalized) Brenier
map $T$ which pushes forward $\rho_0$ to $\rho_1$ and it is
represented by
$$
T(x)=x+\nabla c^*(\nabla\theta(x))
$$
for some function $\theta$  such that
$\theta(x)=-\inf_{y\in\Rk}\{c(y-x)+\eta(y)\}$ for some function
$\eta.$ This has been proved by Gangbo and McCann in
\cite{GMC96} and $T$ will be named later the Gangbo-McCann map.

The fundamental result of  \cite{CGH04} is the following extension
of Theorem \ref{res-07}.

\begin{thm}
For any $\rho_0,\rho_1$ which are compactly supported and such
that $\mathcal{T}_c(\rho_0,\rho_1)<\infty,$ we have
\begin{equation}\label{eqL-h}
    F(\rho_1)-F(\rho_0)\ge\kappa\mathcal{T}_c(\rho_0,\rho_1)+
    \int_\Rk
    (T(x)-x)\cdot\nabla[\mathrm{s}'(\rho_0)-\mathrm{s}'(\mu)](x)\,\rho_0(x)\,dx
\end{equation}
where $T$ is the Gangbo-McCann map which pushes forward $\rho_0$
to $\rho_1.$
\end{thm}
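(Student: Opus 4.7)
The plan is to adapt the strategy of Theorem \ref{res-07} to the $c$-convex framework by splitting
\[
F(\rho_1) - F(\rho_0) = [S(\rho_1) - S(\rho_0)] + [U(\rho_1) - U(\rho_0)]
\]
and bounding each summand separately. The bridge between the two pieces will be the Euler--Lagrange equation \eqref{eqL-02}, which upon differentiation gives $\nabla \mathrm{s}'(\mu) = -\nabla V$ on the interior of the support of $\mu$.

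For the internal energy, pushing forward by the Gangbo--McCann map $T$ gives
\[
U(\rho_1) - U(\rho_0) = \int_{\R^k} [V(T(x)) - V(x)]\, \rho_0(x)\, dx.
\]
Assumption (A$_V$) supplies the pointwise inequality $V(T(x)) - V(x) \ge \nabla V(x) \cdot (T(x) - x) + \kappa c(T(x) - x)$, and optimality of $T$ yields $\int c(T - \mathrm{id})\, \rho_0\, dx = \mathcal{T}_c(\rho_0,\rho_1)$. Substituting $\nabla V = -\nabla \mathrm{s}'(\mu)$ therefore produces
\[
U(\rho_1) - U(\rho_0) \ge -\int_{\R^k} (T(x) - x) \cdot \nabla \mathrm{s}'(\mu)(x)\, \rho_0(x)\, dx + \kappa\, \mathcal{T}_c(\rho_0,\rho_1).
\]

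For the entropy, the key tool is McCann's displacement convexity: hypothesis (A$_\mathrm{s}$)(b) is exactly the condition under which $t \mapsto S(\rho_t)$ is convex on $[0,1]$ along the linear interpolation $\rho_t := ((1-t)\mathrm{id} + tT)_{\#}\rho_0$. Taking the subtangent inequality at $t = 0^+$ and exploiting the continuity equation $\partial_t \rho_t + \nabla \cdot (\rho_t v_t) = 0$ with initial velocity $v_0 = T - \mathrm{id}$, an integration by parts (justified since $\rho_0$ has compact support) yields
\[
S(\rho_1) - S(\rho_0) \ge \left.\frac{d^+}{dt}\right|_{t=0} S(\rho_t) = \int_{\R^k} (T(x) - x) \cdot \nabla \mathrm{s}'(\rho_0)(x)\, \rho_0(x)\, dx.
\]
Adding the two bounds gives exactly \eqref{eqL-h}.

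The principal technical obstacle is the regularity of the Gangbo--McCann map $T(x) = x + \nabla c^*(\nabla \theta(x))$: the $c$-concave potential $\theta$ is merely twice differentiable in the sense of Alexandrov, so that a Monge--Amp\`ere-type change of variables is needed to validate both the displacement convexity of $S$ and the differentiation of $t \mapsto S(\rho_t)$ at $t = 0$. A second subtlety is that, outside the quadratic case, the linear interpolation $T_t = (1-t)\mathrm{id} + tT$ is not a $c$-geodesic, so the convexity of $S$ along this particular curve rests on (A$_\mathrm{s}$)(b) together with the concavity of $M \mapsto (\det M)^{1/d}$ on positive definite matrices; this is McCann's original insight and is where the structural assumption on $\mathrm{s}$ enters in an essential way.
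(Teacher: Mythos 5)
Your proof is correct and follows essentially the same route as the paper's own argument: the decomposition $F=S+U$, displacement convexity of $S$ along $\rho_t=[(1-t)\mathrm{Id}+tT]_\#\rho_0$ under (A$_\mathrm{s}$)(b) combined with the continuity equation for the entropy term, the pointwise use of (A$_V$) together with the optimality of $T$ for the internal energy, and the Euler--Lagrange relation $\mathrm{s}'(\mu)=-V$ to assemble the two bounds. Your closing remarks on the regularity of $\theta$ and on the fact that the linear interpolation is not a $c$-geodesic correctly identify the points the paper's outline also leaves to \cite{CGH04}.
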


\begin{proof}[Outline of the proof]
Let us first have a look at $S.$ Thanks to the assumption  (A$_s$)
one can prove that $S$ is displacement convex. This formally means
that if $\mathcal{T}_c(\rho_0,\rho_1)<\infty,$
\begin{equation*}
    S(\rho_1)-S(\rho_0)\ge \frac{d}{dt}S(\rho_t)_{|t=0}
\end{equation*}
where $(\rho_t)_{0\le t\le1}$ is the displacement interpolation of
$\rho_0$ to $\rho_1$ which is defined by
$$\rho_t:=[(1-t)\mathrm{Id}+tT]_\# \rho_0,\quad0\le t\le1$$ where $T$
pushes forward $\rho_0$ to $\rho_1.$ Since $
\frac{\partial\rho}{\partial
    t}(t,x)+\nabla\cdot\left[\rho(t,x)(T(x)-x)\right]=0,$ another way of writing
this convex inequality is
\begin{equation}\label{eqL-01}
    S(\rho_1)-S(\rho_0)\ge \int_\Rk
    (T(x)-x)\cdot\nabla[\mathrm{s}'(\rho_0)](x)\,\rho_0(x)\,dx.
\end{equation}
Note that when the cost is quadratic, by Theorem \ref{res-06} the
Brenier-Gangbo-McCann map between the uniform measures $\rho_0$
and $\rho_1$ on the balls $B(0,r_0)$ and $B(0,r_1)$ is given by
$T= (r_1/r_0) \mathrm{Id}$ so that the image $\rho_t={T_t}_\#\rho_0$
of $\rho_0$ by the displacement $T_t=(1-t)\mathrm{Id}+tT$ at time
$0\le t\le1$ is the uniform measure on the ball $B(0,r_t)$ with
$r_t=(1-t)r_0+tr_1.$ Therefore, $t\in[0,1]\mapsto
S(\rho_t)=r_t^d\mathrm{s}(r_t^{-d})$ is convex for all $0<r_0\le r_1$ if
and only if $r^d\mathrm{s}(r^{-d})$ is convex: i.e.\ assumption
(A$_s$-b).
\\
It is immediate that under the assumption (A$_V$) we have
\begin{equation*}
    U(\rho_1)-U(\rho_0)\ge \int_\Rk \nabla V(x)\scal
    [T(x)-x]\,\rho_0(x)dx+\kappa\mathcal{T}_c(\rho_0,\rho_1).
\end{equation*}
One can also prove that this is a necessary condition for
assumption (A$_V$) to hold true. Summing  \eqref{eqL-01} with this
inequality, and taking \eqref{eqL-02} into account, leads us to
\eqref{eqL-h}.
\end{proof}

Let us define the \emph{generalized relative entropy}
$$
S(\rho|\mu):=F(\rho)-F(\mu)=\int_\Rk
[\mathrm{s}(\rho)-\mathrm{s}(\mu)-\mathrm{s}'(\mu)(\rho-\mu)](x)\,dx
$$
on the set $\Pac(\Rk)$ of all absolutely continuous probability
measures on $\Rk.$ It is a $[0,\infty]$-valued convex function
which admits $\mu$ as its unique minimum.

\begin{thm}[Transport-entropy inequality]
Assume that the constant $\kappa$ in assumption \emph{(A$_V$)} is
positive: $\kappa>0.$ Then, $\mu$ satisfies the following
transport-entropy inequality
\begin{equation*}
    \kappa\mathcal{T}_c(\rho,\mu)\le
    S(\rho|\mu),
    \end{equation*}
for all $\rho\in\Pac(\Rk).$
\end{thm}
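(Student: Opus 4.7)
The plan is to apply the displacement convexity inequality \eqref{eqL-h} from the previous theorem in the special case $\rho_0=\mu$, $\rho_1=\rho$. This is the natural choice because the bracketed gradient $\nabla[\mathrm{s}'(\rho_0)-\mathrm{s}'(\mu)]$ then vanishes identically, killing the integral remainder term. What is left is simply
\[
F(\rho)-F(\mu)\ \ge\ \kappa\,\mathcal{T}_c(\mu,\rho),
\]
and the left-hand side is, by definition, the generalized relative entropy $S(\rho\mid\mu)$. Since by assumption (A$_c$) the cost $c$ is even, swapping source and target in an optimal plan yields $\mathcal{T}_c(\mu,\rho)=\mathcal{T}_c(\rho,\mu)$, giving the desired inequality.

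To make this rigorous, I first would restrict to $\rho\in\Pac(\Rk)$ with compact support and such that $\mathcal{T}_c(\mu,\rho)<\infty$, which is the exact setting in which \eqref{eqL-h} was stated and the Gangbo-McCann map is produced. In that regime the argument above is a one-line application of the preceding theorem together with the identity $\mathrm{s}'(\mu)-\mathrm{s}'(\mu)\equiv 0$.

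The second step is to remove the compact support assumption. Given a general $\rho\in\Pac(\Rk)$ with $S(\rho\mid\mu)<\infty$ (the case $S(\rho\mid\mu)=\infty$ is trivial), I would approximate $\rho$ by the renormalized restrictions $\rho_n:=Z_n^{-1}\rho\mathbf{1}_{B(0,n)}$, each of which is compactly supported and absolutely continuous. For each $n$ the inequality $\kappa\mathcal{T}_c(\mu,\rho_n)\le S(\rho_n\mid\mu)$ holds by the previous step. Passing to the limit, one uses lower semicontinuity of $\mathcal{T}_c$ with respect to narrow convergence (or weak-$*$ convergence of couplings) on the left, and dominated/monotone convergence in the explicit integral representation $S(\rho\mid\mu)=\int[\mathrm{s}(\rho)-\mathrm{s}(\mu)-\mathrm{s}'(\mu)(\rho-\mu)]\,dx$ on the right, exploiting convexity of $\mathrm{s}$ to control the integrands.

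The main obstacle is this approximation: one needs $S(\rho_n\mid\mu)\to S(\rho\mid\mu)$ from below (or at least $\limsup_n S(\rho_n\mid\mu)\le S(\rho\mid\mu)$) while $\liminf_n\mathcal{T}_c(\mu,\rho_n)\ge\mathcal{T}_c(\mu,\rho)$. The entropy convergence is the delicate point because $\mathrm{s}$ need not be bounded below by a linear function in a way compatible with truncation; however, the hypothesis $S(\rho\mid\mu)<\infty$ together with the convexity of $\mathrm{s}$ and Fatou's lemma (applied after writing $S$ as the integral of a nonnegative convex Bregman-type integrand) handles this. The lower semicontinuity of $\mathcal{T}_c$ is standard since $c$ is continuous and nonnegative by (A$_c$).
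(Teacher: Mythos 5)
Your proposal is correct and follows essentially the same route as the paper: plug $\rho_0=\mu$, $\rho_1=\rho$ into \eqref{eqL-h} so that the gradient term $\nabla[\mathrm{s}'(\rho_0)-\mathrm{s}'(\mu)]$ vanishes, identify $F(\rho)-F(\mu)$ with $S(\rho|\mu)$, and remove the compact-support restriction by approximation. The only (minor) point the paper's outline makes that you do not is that $\mu$ itself may also need to be approximated by compactly supported measures, not just $\rho$.
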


\begin{proof}[Outline of the proof]
If $\rho$ and $\mu$ are compactly supported, plug $\rho_0=\mu$ and
$\rho_1=\rho$ into \eqref{eqL-h} to obtain the desired result.
Otherwise, approximate $\rho$ and $\mu$ by compactly supported
probability measures.
\end{proof}

Let us define the \emph{generalized relative Fisher information}
for all $\rho\in\Pac(\Rk)$ by
\begin{equation*}
    \mathcal{I}(\rho|\mu):=\int_\Rk \kappa\, c^*\Big(-\kappa^{-1}\nabla[\mathrm{s}'(\rho)-\mathrm{s}'(\mu)](x)\Big)\,d\rho(x)\in [0,\infty]
\end{equation*}
with $\mathcal{I}(\rho|\mu)=\infty$ if $ \nabla\rho$ is undefined
on a set with positive Lebesgue measure.

\begin{thm}[Entropy-information inequality]
Assume that $\kappa>0.$ Then, $\mu$ satisfies the following
entropy-information inequality
\begin{equation*}
    S(\rho|\mu)\le \mathcal{I}(\rho|\mu),
   \end{equation*}
for all $\rho\in\Pac(\Rk).$
\end{thm}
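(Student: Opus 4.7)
The plan is to derive this inequality directly from the fundamental displacement-convexity inequality \eqref{eqL-h}, applied with the roles of the two measures \emph{swapped}, and then to dispose of the transport term via a Young-type inequality.

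More precisely, I would first apply \eqref{eqL-h} with $\rho_0=\rho$ and $\rho_1=\mu$ (assuming for the moment that both are compactly supported). Since $S(\rho|\mu)=F(\rho)-F(\mu)$, this immediately gives
\begin{equation*}
 S(\rho|\mu)\le -\kappa\mathcal{T}_c(\rho,\mu)-\int_\Rk (T(x)-x)\cdot\nabla[\mathrm{s}'(\rho)-\mathrm{s}'(\mu)](x)\,\rho(x)\,dx,
\end{equation*}
where $T$ is the Gangbo-McCann map pushing $\rho$ onto $\mu$. The key point is then to bound the integrand pointwise using Fenchel's inequality \eqref{ineq-Fenchel} for the convex function $\kappa c$, whose convex conjugate is $w\mapsto \kappa c^*(w/\kappa)$. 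Applied with the vectors $v=T(x)-x$ and $w=-\nabla[\mathrm{s}'(\rho)-\mathrm{s}'(\mu)](x)$, and using that $c$ is even (assumption (A$_c$)), this yields
\begin{equation*}
 -(T(x)-x)\cdot\nabla[\mathrm{s}'(\rho)-\mathrm{s}'(\mu)](x)\le \kappa\, c(T(x)-x)+\kappa\, c^*\!\big(-\kappa^{-1}\nabla[\mathrm{s}'(\rho)-\mathrm{s}'(\mu)](x)\big).
\end{equation*}

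Integrating this bound against $\rho$, the first term on the right integrates to exactly $\kappa\mathcal{T}_c(\rho,\mu)$ because $T$ is the optimal map; the second term is precisely $\mathcal{I}(\rho|\mu)$ by definition. Substituting back cancels the $-\kappa\mathcal{T}_c(\rho,\mu)$ term and leaves $S(\rho|\mu)\le \mathcal{I}(\rho|\mu)$, as desired.

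The main obstacle is mostly technical rather than conceptual: \eqref{eqL-h} was stated for compactly supported $\rho_0,\rho_1$ with finite transport cost, whereas the final statement claims the inequality for all $\rho\in\Pac(\Rk)$. One therefore has to justify an approximation argument (truncating $\rho$ and $\mu$ to compact supports, or regularizing), and check the appropriate semicontinuity of $F$, $\mathcal{T}_c$ and $\mathcal{I}$ under this approximation; the case $\mathcal{I}(\rho|\mu)=+\infty$ is of course trivial, which reduces the problem to $\rho$'s for which $\nabla\mathrm{s}'(\rho)$ is well defined $\rho$-a.e. Once the pointwise Fenchel step is carried out as above, no new idea is needed, and the cancellation between the transport term in \eqref{eqL-h} and the one produced by Young's inequality is what makes the estimate tight.
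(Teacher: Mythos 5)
Your argument is correct and is essentially the paper's own proof: both plug $\rho_0=\rho$, $\rho_1=\mu$ into \eqref{eqL-h} and then apply Fenchel's inequality \eqref{ineq-Fenchel} to the cost $\kappa c$ so that the resulting $\kappa\mathcal{T}_c(\rho,\mu)$ term cancels the one coming from \eqref{eqL-h} (the paper merely normalizes $\kappa=1$ by rescaling $c$ first, which is equivalent to your use of the conjugate $w\mapsto\kappa c^*(w/\kappa)$). The paper likewise handles general $\rho$ by approximation with compactly supported measures, exactly as you indicate.
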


\begin{proof}[Outline of the proof]
Change $c$ into $\kappa c$ so that the constant $\kappa$ becomes
$\kappa=1.$ If $\rho$ and $\mu$ are compactly supported, plug
$\rho_0=\rho$ and $\rho_1=\mu$ into \eqref{eqL-h} to obtain
\begin{align*}
  F(\rho)-F(\mu)+\mathcal{T}_c(\rho,\mu)
  &\le \int_\Rk (T(x)-x)\cdot\nabla[\mathrm{s}'(\mu)-\mathrm{s}'(\rho)](x)\,d\rho(x) \\
  &\le \int_\Rk c(T(x)-x)\,d\rho(x) + \int_\Rk c^*\Big(\nabla[\mathrm{s}'(\mu)-\mathrm{s}'(\rho)](x)\Big)\,d\rho(x)
\end{align*}
where the last inequality is a consequence of Fenchel inequality
\eqref{ineq-Fenchel}. Since $T$ is the Gangbo-McCann map between
$\rho$ and $\mu,$ we have $\int_\Rk
c(T(x)-x)\,d\rho(x)=\mathcal{T}_c(\rho,\mu).$  Therefore,
\begin{equation*}
    F(\rho)-F(\mu)
    \le \int_\Rk c^*\Big(-\nabla[\mathrm{s}'(\rho)-\mathrm{s}'(\mu)](x)\Big)\,d\rho(x)
\end{equation*}
which is the desired result for compactly supported measures. For
the general case, approximate $\rho$ and $\mu$ by compactly
supported probability measures.
\end{proof}

A direct application of these theorems allow us to recover
inequalities which were first proved by Bobkov and Ledoux in
\cite{BL00}.
\begin{cor}
Let $\|\cdot\|$ be a norm on $\Rk$ and $V$ a convex potential.
Suppose that $V$ is uniformly $p$-convex with respect to
$\|\cdot\|$ for some $p\ge2;$ this means that there exists a
constant $\kappa>0$ such that for all $x,y\in\Rk$
\begin{equation}\label{eq-06}
    V(x)+V(y)-2V\left(\frac{x+y}{2}\right)\ge
    \frac{\kappa}{p}\|y-x\|^p.
\end{equation}
Denote $d\mu(x):=e^{-V(x)}\,dx$ (where it is understood that $\mu$
is a probability measure) and $H(\rho|\mu)$ the usual relative
entropy.
\begin{enumerate}
    \item $\mu$ verifies the transport-entropy inequality
\begin{equation*}
    \kappa\mathcal{T}_{c_p}(\rho,\mu)\le
    H(\rho|\mu),
    \end{equation*}
for all $\rho\in\Pac(\Rk)$, where $c_p(y-x)=\|y-x\|^p/p.$
    \item  $\mu$ verifies the entropy-information inequality
\begin{equation*}
    H(f\mu|\mu)\le \frac{1}{q\kappa^{q-1}}\int_\Rk\left\|\nabla\log
    f\right\|_*^q\,fd\mu
\end{equation*}
for all smooth nonnegative function $f$ such that $\int_\Rk
f\,d\mu=1,$ where $\|\cdot\|_*$ is the dual norm of $\|\cdot\|$
and $1/p+1/q=1.$
\end{enumerate}
\end{cor}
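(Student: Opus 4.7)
The plan is to apply the general transport-entropy and entropy-information inequalities proved earlier in this section with the specific choice of entropy profile $\mathrm{s}(r) = r\log r - r$ and cost function $c_p(v) = \|v\|^p/p$. With these choices, $\mathrm{s}'(r) = \log r$, so the steady-state equation $\mathrm{s}'(\mu) = -V$ recovers $\mu = e^{-V}$, and a direct substitution shows that the generalized relative entropy $S(\rho|\mu)$ coincides with the usual relative entropy $H(\rho|\mu)$. The assumption (A$_{\mathrm{s}}$) is routine to check (the second derivative of $r^d\mathrm{s}(r^{-d}) = -d\log r - 1$ is $d/r^2 > 0$), and (A$_c$) is clear since $c_p$ is convex, $C^1$, even, vanishes at $0$, and is superlinear for $p \ge 2$.

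The main obstacle is verifying assumption (A$_V$), namely
$$V(y) - V(x) \ge \nabla V(x)\cdot(y-x) + \kappa\,\|y-x\|^p/p,$$
starting from the midpoint $p$-convexity hypothesis \eqref{eq-06}. For a $C^1$ convex $V$ the two formulations are equivalent up to a universal multiplicative constant, which can be absorbed in $\kappa$. One route is to apply \eqref{eq-06} along a dyadic refinement of the segment $[x,y]$, sum the resulting geometric series and pass to the limit using $C^1$ regularity; an alternative is to first derive the monotonicity estimate $(\nabla V(y) - \nabla V(x))\cdot(y-x) \ge c\,\kappa\|y-x\|^p$ for an explicit constant $c$, and then integrate it along the segment $[x,y]$. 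Once (A$_V$) is established, part (1) follows immediately from the general transport-entropy theorem of the Cordero-Erausquin--Gangbo--Houdr\'e framework.

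For part (2) I apply the general entropy-information inequality with the same choices. Writing $\rho = f\mu$ one has $\mathrm{s}'(\rho) - \mathrm{s}'(\mu) = \log(\rho/\mu) = \log f$, hence $\nabla[\mathrm{s}'(\rho) - \mathrm{s}'(\mu)] = \nabla\log f$. By duality between $\|\cdot\|$ and $\|\cdot\|_*$, optimizing in polar coordinates gives $c_p^*(w) = \|w\|_*^q/q$ with $1/p + 1/q = 1$. Consequently
$$\kappa\,c_p^*\bigl(-\kappa^{-1}\nabla\log f\bigr) = \frac{\kappa^{1-q}}{q}\|\nabla\log f\|_*^q = \frac{1}{q\kappa^{q-1}}\|\nabla\log f\|_*^q,$$
and integrating against $d\rho = f\,d\mu$ produces exactly the right-hand side of the claimed entropy-information inequality.
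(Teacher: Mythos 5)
Your proposal is correct and follows exactly the route the paper intends (it presents this corollary as a direct specialization of the two preceding theorems with $\mathrm{s}(r)=r\log r-r$ and $c=c_p$), and you rightly identify the only non-trivial point, namely deducing (A$_V$) from the midpoint inequality \eqref{eq-06}. Note that the dyadic argument you sketch actually loses nothing: iterating \eqref{eq-06} along dyadic subdivisions of $[x,y]$ yields $V(y)-V(x)-\nabla V(x)\cdot(y-x)\ge \frac{\kappa}{p}\big(\sum_{n\ge0}2^{n(1-p)}\big)\|y-x\|^p=\frac{\kappa}{p(1-2^{1-p})}\|y-x\|^p\ge\frac{\kappa}{p}\|y-x\|^p$ for $p\ge2$, so (A$_V$) holds with the \emph{same} $\kappa$ and no absorption of constants is needed to get the corollary as stated.
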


\section{Links with other functional inequalities}\label{section functional inequalities}

In this section, we investigate the position of transport-entropy
inequalities among the relatively large class of functional
inequalities (mainly of Sobolev type) appearing in the literature.
We will be concerned with transport-entropy inequalities of the
form $\mathcal{T}_{\theta(d)}\leq H$ since inequalities of the
form $\alpha\left(\mathcal{T}_{\theta(d)}\right)\leq H$ with a
nonlinear function $\alpha$ are mainly described in terms of
integrability conditions according to Theorem \ref{Integral
criteria}.

\subsection{Links with the Property $(\tau)$}

In \cite{Mau91}, Maurey introduced the Property $(\tau)$ which
we describe now. Let $c$ be a cost function on $\X$. Recall that for all $f\in \BX$, the function $Q_c f$ is defined by $Q_cf(x):=\inf_{y}\{f(y)+c(x,y)\}.$ If a probability measure $\mu\in
\PX$  satisfies
\begin{equation*}\tag{$\tau$}
    \left(\int e^{Q_c f}\,d\mu\right)\left( \int e^{-f}\,d\mu\right)\le
    1,
\end{equation*}
for all $f\in\BX$, one says that the couple $(\mu,c)$ satisfies the Property
$(\tau).$

The basic properties of this class of functional inequalities are summarized in the following result.
\begin{prop}
Suppose that $(\mu,c)$ verifies the Property $(\tau)$, then the following holds.
\begin{enumerate}
\item The probability measure $\mu$ verifies the transport-entropy
inequality $\mathcal{T}_c \leq H$. \item For all positive integer
$n$, the couple $(\mu^n,c^{\oplus n})$, with $c^{\oplus
n}(x,y)=\sum_{i=1}^nc(x_i,y_i)$, $x,y\in \X^n$, verifies the
Property $(\tau)$. \item For all positive integer $n$, and all
Borel set $A\subset \X^n$ with $\mu^n(A)>0$,
$$\mu^n(\mathrm{enl}_{c^{\oplus n}}(A,r))\geq 1 - \frac{1}{\mu^n(A)} e^{-r},\quad r\geq 0$$
where
$$\mathrm{enl}_{c^{\oplus n}}(A,r)=\left\{x \in \X^n ; \inf_{y\in A}\sum_{i=1}^n c(x_i,y_i)\leq r\right\}$$
\end{enumerate}
\end{prop}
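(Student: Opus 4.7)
\emph{Sketch.} The three parts are addressed separately; the tensorization step~(2) is the main technical point.

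\medskip
\textbf{Proof of (1).} By the Bobkov--G\"otze dual characterization (Corollary~\ref{BG2}(3)) applied with $\alpha(t)=t$, the transport-entropy inequality $\mathcal{T}_c\le H$ reduces to checking
\[\int e^{Q_cg}\,d\mu\le e^{\int g\,d\mu},\qquad g\in\BX.\]
(The dual condition at other values of $s$ either reduces to this by Jensen applied to $x\mapsto x^s$ for $s\in(0,1)$, or is trivially satisfied since $\alpha^\mc(s)=+\infty$ for $s>1$.) This inequality follows at once from Property~$(\tau)$ combined with Jensen: since $\int e^{-g}\,d\mu\ge e^{-\int g\,d\mu}$, one has
\[\int e^{Q_cg}\,d\mu\le \Bigl(\int e^{-g}\,d\mu\Bigr)^{-1}\le e^{\int g\,d\mu}.\]

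\medskip
\textbf{Proof of (2).} Induct on $n$; it suffices to show that if $(\mu_1,c_1)$ and $(\mu_2,c_2)$ both satisfy $(\tau)$, so does $(\mu_1\otimes\mu_2,\,c_1\oplus c_2)$. For $f\in\mathcal{B}_b(\X_1\times\X_2)$, split the double infimum as
\[Q_{c_1\oplus c_2}f(x_1,x_2)=\inf_{y_1\in\X_1}\bigl\{c_1(x_1,y_1)+Q_{c_2}[f(y_1,\cdot)](x_2)\bigr\}.\]
Exponentiating, using the elementary inequality $\int \inf_t F_t\,d\mu_2\le \inf_t \int F_t\,d\mu_2$ for nonnegative $F_t$, and then applying $(\tau)$ for $(\mu_2,c_2)$ slicewise to $f(y_1,\cdot)$, one obtains
\[\int e^{Q_{c_1\oplus c_2}f(x_1,x_2)}\,d\mu_2(x_2)\le \inf_{y_1}\exp\bigl(c_1(x_1,y_1)+\psi(y_1)\bigr)=e^{Q_{c_1}\psi(x_1)},\]
where $\psi(y_1):=-\log\int e^{-f(y_1,y_2)}\,d\mu_2(y_2)$. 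Integrating over $\mu_1$, applying $(\tau)$ for $(\mu_1,c_1)$ to the function $\psi$, and using the Fubini identity $\int e^{-\psi}\,d\mu_1=\int e^{-f}\,d(\mu_1\otimes\mu_2)$ closes the argument. The crux is this two-tier application of $(\tau)$, coordinated by the auxiliary function $\psi$ that encodes the partial $\mu_2$-integration of $e^{-f}$.

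\medskip
\textbf{Proof of (3).} Let $A\subset\X^n$ with $\mu^n(A)>0$ and apply Property~$(\tau)$ for $(\mu^n,c^{\oplus n})$ (just established in~(2)) to the bounded function $f_M:=M\mathbf{1}_{A^c}$, $M>0$. Using $c\ge 0$ and $c(x,x)=0$, one checks that
\[Q_{c^{\oplus n}}f_M(x)\ \nearrow\ \inf_{y\in A}c^{\oplus n}(x,y)\qquad\text{as }M\to\infty,\]
while $\int e^{-f_M}\,d\mu^n=\mu^n(A)+e^{-M}\mu^n(A^c)\to \mu^n(A)$. Monotone convergence and Property~$(\tau)$ give
\[\Bigl(\int \exp\bigl[\inf_{y\in A}c^{\oplus n}(x,y)\bigr]\,d\mu^n(x)\Bigr)\,\mu^n(A)\le 1,\]
and Markov's inequality then yields
\[\mu^n\bigl(\mathrm{enl}_{c^{\oplus n}}(A,r)\bigr)=1-\mu^n\bigl(\inf_{y\in A}c^{\oplus n}(\cdot,y)>r\bigr)\ge 1-\frac{e^{-r}}{\mu^n(A)}.\]
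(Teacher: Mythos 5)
Your proof is correct and follows essentially the same route as the paper: part (1) is the same dual characterization plus Jensen argument, part (3) is the same limiting argument with the functions $M\mathbf{1}_{A^c}$ followed by Markov's inequality, and part (2) is the standard Maurey tensorization (splitting the double infimum and applying $(\tau)$ slicewise via the auxiliary function $\psi(y_1)=-\log\int e^{-f(y_1,\cdot)}\,d\mu_2$), which the paper only states by reference to Maurey and Ledoux. The only point worth flagging is the routine measurability of $\psi$ and of the sliced quantities, which is standard and does not affect the argument.
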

The third point of the proposition above was the main motivation
for the introduction of this class of inequalities. In
\cite{Mau91}, Maurey established that the symmetric exponential
probability measure $d\nu(x)=\frac{1}{2}e^{-|x|}\,dx$ on $\R$
satisfies the Property $(\tau)$ with the cost function
$c(x,y)=a\min(|x|^2,|x|)$ for some constant $a>0$. It enables him
to recover Talagrand's concentration results for the
multidimensional exponential distribution with sharp constants.
Moreover, using the Prekopa-Leindler inequality (see Theorem
\ref{PL} below), he showed that the standard Gaussian measure
$\gamma$ verifes the Property $(\tau)$ with the cost function
$c(x,y)=\frac{1}{4}|x-y|^2$. The constant $1/4$ is sharp.

\proof (1) According to the dual formulation of transport-entropy
inequalities, see Corollary \ref{BG2}, $\mu$ verifies the
inequality $\mathcal{T}_c\leq H$ if and only if $\int e^{Q_c
f}\,d\mu\cdot e^{-\int f\,d\mu}\leq 1$ for all $f\in
\mathcal{B}_b(\X).$ Jensen inequality readily implies that this
condition is weaker than the Property $(\tau)$.
\\
(2) The proof of this tensorization property follows the lines of
the proof of Theorem \ref{tensorization}. We refer to \cite{Mau91}
or \cite{Led} for a complete proof.
\\
(3) Applying the Property $(\tau)$ satisfied by $(\mu^n,c^{\oplus
n})$ to the function $u(x)=0$ if $x\in A$ and $u(x)=t$, if
$x\notin A$ and letting $t\to\infty$ yields the inequality:
$$\int e^{\inf_{y\in A} c^{\oplus n}(x,y)}\,d\mu^n(x)\leq \frac{1}{\mu^n(A)},$$
which immediately implies the concentration inequality.
\endproof

In fact, the Property $(\tau)$ can be viewed as a symmetric transport-entropy inequality.
\begin{prop}
The couple $(\mu,c)$ verifies the Property $(\tau)$ if and only if
$\mu$ verifies the following inequality
$$
    \mathcal{T}_c(\nu_1,\nu_2) \leq H(\nu_1| \mu) + H(\nu_2| \mu),
$$
for all $\nu_1,\nu_2 \in \mathrm{P}(\X)$.
\end{prop}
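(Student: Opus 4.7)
The proof rests on two ingredients already in the paper: the Kantorovich dual formula \eqref{eqL-e} for $\mathcal{T}_c$, together with the fundamental identity
\[
\mathcal{T}_c(\nu_1,\nu_2)=\sup_{f\in\BX}\Big\{\int Q_cf\,d\nu_1-\int f\,d\nu_2\Big\},
\]
which follows from \eqref{eqL-e} upon observing that any admissible dual pair $(u,v)$ with $u\oplus v\le c$ is dominated by $(Q_c(-v),-(-v))$; and the Donsker--Varadhan variational representation \eqref{eqL-c} of the relative entropy. The idea is simply that Property $(\tau)$ is, after taking logarithms, a statement about the normalizing constants of two exponential tilts of $\mu$, and these tilts are exactly the extremizers of the dual characterization of the symmetric transport-entropy inequality.

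\textbf{Forward direction ($(\tau)\Rightarrow$ inequality).} Fix $\nu_1,\nu_2\in\PX$; we may assume $H(\nu_1|\mu), H(\nu_2|\mu)<\infty$, else the bound is trivial. Given any $f\in\BX$, apply \eqref{eqL-c} to $u=Q_cf$ and $u=-f$:
\begin{align*}
\int Q_cf\,d\nu_1 &\le H(\nu_1|\mu)+\log\int e^{Q_cf}\,d\mu,\\
-\int f\,d\nu_2 &\le H(\nu_2|\mu)+\log\int e^{-f}\,d\mu.
\end{align*}
Summing and invoking Property $(\tau)$ to kill the two $\log$-terms yields
\[
\int Q_cf\,d\nu_1-\int f\,d\nu_2 \le H(\nu_1|\mu)+H(\nu_2|\mu).
\]
Taking the supremum over $f\in\BX$ and applying the dual formula displayed above gives the symmetric transport-entropy inequality.

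\textbf{Converse direction.} Given an arbitrary $f\in\BX$, both $Q_cf$ and $-f$ are bounded (as $0\le c$ implies $Q_cf\le f$, and $Q_cf\ge \inf f$), so the normalizing constants
\[
Z_1:=\int e^{Q_cf}\,d\mu,\qquad Z_2:=\int e^{-f}\,d\mu
\]
are finite and strictly positive. Define the tilted probability measures $d\nu_1=Z_1^{-1}e^{Q_cf}\,d\mu$ and $d\nu_2=Z_2^{-1}e^{-f}\,d\mu$. A direct computation gives
\[
H(\nu_1|\mu)=\int Q_cf\,d\nu_1-\log Z_1,\qquad H(\nu_2|\mu)=-\int f\,d\nu_2-\log Z_2.
\]
On the other hand, since $(Q_cf)\oplus(-f)\le c$ by definition of $Q_c$, the pair $(Q_cf,-f)$ is admissible in \eqref{eqL-e}, whence
\[
\mathcal{T}_c(\nu_1,\nu_2)\ \ge\ \int Q_cf\,d\nu_1-\int f\,d\nu_2.
\]
Combining this lower bound with the assumed symmetric inequality, the two integral terms cancel and one is left with $0\le-\log Z_1-\log Z_2$, i.e.\ $Z_1Z_2\le 1$. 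Since $f\in\BX$ was arbitrary, this is precisely Property $(\tau)$.

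\textbf{Expected difficulty.} The argument is entirely symmetric and essentially a two-line computation once the dual formula for $\mathcal{T}_c$ in terms of $Q_cf$ is recorded; the only subtle point is ensuring that $Q_cf$ is Borel measurable so that $\int Q_cf\,d\nu_i$ and $\int e^{Q_cf}\,d\mu$ are well-defined. For continuous $c$ this is automatic ($Q_cf$ is upper semicontinuous), and for a general lower semicontinuous cost one invokes the measurable selection result of Beiglb\"ock--Schachermayer cited earlier in the paper to replace $Q_cf$ by a Borel version that agrees $\mu$-almost everywhere; none of the integrals in the argument changes.
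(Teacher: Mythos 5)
Your proof is correct and follows essentially the same route as the paper's: Kantorovich duality, the convex conjugacy between $H(\cdot|\mu)$ and $u\mapsto\log\int e^u\,d\mu$, and the observation that the optimal dual partner of $-f$ is $Q_cf$. The only cosmetic difference is that you unfold the conjugacy by exhibiting the extremizing exponential tilts explicitly in the converse direction, where the paper simply cites the identity $\sup_\nu\{\int u\,d\nu-H(\nu|\mu)\}=\log\int e^u\,d\mu$.
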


\proof According to the Kantorovich dual equality, see Theorem
\ref{res-01}, the symmetric transport-entropy inequality holds if
and only if for all couple $(u,v)$ of bounded functions such that
$u\oplus v\leq c$, the inequality
$$\int u \,d\nu_1-H(\nu_1| \mu) + \int v\,d\nu_2- H(\nu_2 | \mu)\leq 0$$
holds for all $\nu_1,\nu_2 \in \mathrm{P}(\X).$ Since
$\sup_{\nu}\{\int u\,d\nu-H(\nu| \mu)\}=\log \int e^u\,d\mu$ (this
is the convex conjugate of the result of Proposition
\ref{resL-13}), the symmetric transport-entropy inequality holds
if and only if
$$\int e^{u}\,d\mu\int e^{v}\,d\mu\leq 1,$$
for all couple $(u,v)$ of bounded functions such that $u\oplus
v\leq c$. One concludes by observing that for a given $f\in
\mathcal{B}_b(\X)$, the best function $u$ such that
$u\oplus(-f)\leq c$ is $u=Q_c f.$
\endproof
As we have seen above, the Property $(\tau)$ is always stronger
than the transport inequality $\mathcal{T}_c\leq H$. Actually,
when the cost function is of the form $c(x,y)=\theta(d(x,y))$ with
a convex $\theta$, the transport-entropy inequality and  the
Property $(\tau)$ are qualitatively equivalent as shown in the
following.
\begin{prop}
Let $\mu$ be a probability measure on $\X$ and
$\theta:[0,\infty)\to[0,\infty)$ be a convex function such that
$\theta(0)=0$. If $\mu$ verifies the transport-entropy inequality
$\mathcal{T}_c\leq H$, then the couple $(\mu, \tilde{c})$, with
$\tilde{c}(x,y)=2\theta\left(\frac{d(x,y)}{2}\right)$ verifies the
Property $(\tau).$
\end{prop}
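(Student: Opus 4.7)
By the proposition immediately preceding the statement, verifying Property $(\tau)$ for $(\mu,\tilde c)$ is equivalent to establishing the symmetric transport-entropy inequality
\begin{equation*}
\mathcal{T}_{\tilde c}(\nu_1,\nu_2) \leq H(\nu_1|\mu) + H(\nu_2|\mu), \quad \nu_1,\nu_2 \in \mathrm{P}(\X).
\end{equation*}
So the plan is to reduce the goal to this symmetric form and then to obtain it from the assumed (non-symmetric) inequality $\mathcal{T}_c \leq H$ by a triangle-type argument built on the convexity of $\theta$.

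The main step is to prove the following ``triangle inequality'' for the transport cost:
\begin{equation*}
\mathcal{T}_{\tilde c}(\nu_1,\nu_2) \leq \mathcal{T}_c(\nu_1,\mu) + \mathcal{T}_c(\mu,\nu_2).
\end{equation*}
For this, I would use the gluing lemma: given optimal (or near-optimal) couplings $\pi_1$ of $(\nu_1,\mu)$ and $\pi_2$ of $(\mu,\nu_2)$, disintegrate them with respect to their common $\mu$-marginal and glue to produce a measure $\pi \in \mathrm{P}(\X^3)$ whose $(1,2)$-marginal is $\pi_1$ and whose $(2,3)$-marginal is $\pi_2$. The projection $\pi_{13}$ on the first and third coordinates is then a coupling of $\nu_1$ and $\nu_2$. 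For any $(x_1,y,x_2) \in \X^3$ I would apply, in order, the triangle inequality for $d$, the monotonicity of $\theta$ (it is convex with $\theta(0)=0$, so non-decreasing on $[0,\infty)$), and convexity:
\begin{equation*}
\tilde c(x_1,x_2) = 2\theta\!\left(\tfrac{d(x_1,x_2)}{2}\right) \leq 2\theta\!\left(\tfrac{d(x_1,y)+d(y,x_2)}{2}\right) \leq \theta(d(x_1,y))+\theta(d(y,x_2)) = c(x_1,y)+c(y,x_2).
\end{equation*}
Integrating this pointwise inequality against $\pi$ and using the marginal identifications yields $\mathcal{T}_{\tilde c}(\nu_1,\nu_2) \leq \int c\,d\pi_1 + \int c\,d\pi_2$, and optimising over $\pi_1,\pi_2$ gives the desired triangle inequality.

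Once this is in hand, the assumed transport-entropy inequality $\mathcal{T}_c(\,\cdot\,,\mu) \leq H(\,\cdot\,|\mu)$, together with its symmetric counterpart $\mathcal{T}_c(\mu,\nu_2) \leq H(\nu_2|\mu)$ (which follows since $c(x,y)=\theta(d(x,y))$ is symmetric in $x,y$, so $\mathcal{T}_c(\mu,\nu_2)=\mathcal{T}_c(\nu_2,\mu)$), finishes the proof:
\begin{equation*}
\mathcal{T}_{\tilde c}(\nu_1,\nu_2) \leq \mathcal{T}_c(\nu_1,\mu) + \mathcal{T}_c(\mu,\nu_2) \leq H(\nu_1|\mu) + H(\nu_2|\mu).
\end{equation*}

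The only potentially delicate point is the gluing step, which relies on existence of regular conditional probabilities (guaranteed by the polish assumption on $\X$); the convexity manipulation on $\theta$ is routine, and the monotonicity of $\theta$ on $[0,\infty)$ used in the first inequality above is a direct consequence of the convexity assumption together with $\theta(0)=0$. One should also handle the case $H(\nu_i|\mu)=\infty$ trivially, and approximate by taking $\varepsilon$-optimal couplings if optimal ones happen not to exist (which is harmless since $c$ is lower semicontinuous).
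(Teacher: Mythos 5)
Your proof is correct, but it takes a genuinely different route from the paper's. The paper works entirely on the dual side: it invokes the dual formulation $\int e^{Q_cf}\,d\mu\cdot e^{-\int f\,d\mu}\le 1$ of $\mathcal{T}_c\le H$ (Corollary \ref{BG2}), applies it to $Q_cf$ and to $-Q_cf$, multiplies the two resulting bounds, and concludes via the pointwise comparisons $-f\le Q_c(-Q_cf)$ and $Q_{\tilde c}f\le Q_c(Q_cf)$ --- the latter being exactly your convexity computation $2\theta\bigl(d(x,y)/2\bigr)\le\theta(d(x,z))+\theta(d(z,y))$ read through the inf-convolution operators. You instead work on the primal side: you use the characterization of Property $(\tau)$ as the symmetric inequality $\mathcal{T}_{\tilde c}(\nu_1,\nu_2)\le H(\nu_1|\mu)+H(\nu_2|\mu)$ (the proposition proved just before this one) and establish the triangle-type bound $\mathcal{T}_{\tilde c}(\nu_1,\nu_2)\le\mathcal{T}_c(\nu_1,\mu)+\mathcal{T}_c(\mu,\nu_2)$ by gluing couplings through the common marginal $\mu$. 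The two arguments are two faces of the same pointwise inequality; yours needs the gluing lemma (harmless on a polish space) and a word about $\varepsilon$-optimal couplings, both of which you supply, while the paper's is shorter once the dual machinery is in place. One point in your favour: the inequality that actually matters is that $\tilde c(x,y)$ is dominated by $c(x,z)+c(z,y)$ for \emph{every} intermediate $z$, equivalently $Q_{\tilde c}f\le Q_c(Q_cf)$, and your write-up states this in the correct direction; the corresponding display in the paper's proof is written with the inequality reversed (which, read literally, would require exact metric midpoints and fails, e.g., on a two-point space), so your version is the more careful one.
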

\proof
According to the dual formulation of the transport inequality $\mathcal{T}_c\leq H$, one has
$$\int e^{Q_c f}\,d\mu\cdot e^{-\int f\,d\mu}\leq 1$$
Applying this inequality with $\pm Q_c f$ instead of $f$, one gets
\[\int e^{Q_c (Q_c f)}\,d\mu\cdot e^{-\int Q_c f\,d\mu}\leq 1\quad\text{and}\quad
\int e^{Q_c(-Q_c f)}\,d\mu \cdot e^{\int Q_c f\,d\mu}\leq 1.\]
Multiplying these two inequalities yields to
\[\int e^{Q_c (Q_c f)}\,d\mu\cdot\int e^{Q_c(-Q_c f)}\,d\mu\leq 1.\]
Now, for all $x,y\in \X,$ one has: $-f(y)+Q_c f(x)\leq \theta(d(x,y))$, and consequently, $-f\leq Q_c  (-Q_c f)$. On the other hand, the convexity of $\theta$ easily yields
\[Q_c (Q_c f)(x)\leq \inf_{y\in\X}\left\{f(y)+2\theta\left(\frac{d(x,y)}{2}\right)\right\}=Q_{\tilde{c}} f.\]
This completes the proof.
\endproof

We refer to the works \cite{S03,S07} by Samson and \cite{LW08} by
Lata\l a and Wojtaszczyk for recent advances in the study of the
Property $(\tau)$.
\\
In \cite{S07}, Samson established different variants of the
Property $(\tau)$ in order to derive sharp deviation results
\textit{\`a la } Talagrand for supremum of empirical processes.
\\
In \cite{LW08}, Lata\l a and Wojtaszczyk have considered a cost
function naturally associated to a probability $\mu$ on $\R^k$. A
symmetric probability measure $\mu$ is said to satisfy the
inequality $\mathrm{IC}(\beta)$ for some constant $\beta>0$ if it
verifies the Property $(\tau)$ with the cost function
$c(x,y)=\Lambda_\mu^*\left(\frac{x-y}{\beta}\right)$, where
$\Lambda_\mu^*$ is the Cram\'er transform of $\mu$ defined by
$$
\Lambda_\mu^*(x)=\sup_{y\in \R^k}\left\{ x\cdot y-\log \int e^{u\cdot y}\,d\mu(u)\right\},\quad x\in \R^k,
$$
For many reasons, this corresponds to an optimal choice for $c$.
They have shown that isotropic log-concave distributions on $\R$
(mean equals zero and variance equals one) satisfy the inequality
$\mathrm{IC}(48)$. They conjectured that isotropic log-concave
distributions in all dimensions verify the inequality
$\mathrm{IC}(\beta)$ with a universal constant $\beta$. This
conjecture is stronger than the Kannan-Lovasz-Simonovits
conjecture on the Poincar\'e constant of isotropic log-concave
distributions \cite{KLS95}.

\subsection{Definitions of the Poincar\'e and logarithmic Sobolev
inequalities}\label{sec-PLS} Let $\mu\in\PX$ be a given
probability measure and $(P_t)_{t\ge0}$ be the semigroup on
$L^2(\mu)$  of a $\mu$-reversible Markov process $(X_t)_{t\ge0}.$
The generator of $(P_t)_{t\ge0}$ is $\mathcal{L}$ and its domain
on $L^2(\mu)$  is $\mathbb{D}_2(\mathcal{L}).$ Define the
Dirichlet form
$$
\mathcal{E}(g,g):=\<-\mathcal{L} g, g\>_{\mu}, \quad g\in
\mathbb{D}_2(\mathcal{L}).
$$
Under the assumptions that
\begin{enumerate}
    \item[(a)] $(X_t)_{t\ge0}$ is $\mu$-reversible,
    \item[(b)] $(X_t)_{t\ge0}$ is $\mu$-ergodic,
\end{enumerate}
 $\mathcal{E}$ is closable in $L^2(\mu)$ and
its closure $(\mathcal{E}, \mathbb{D}(\mathcal{E}))$ admits the
domain $\mathbb{D}(\mathcal{E})=\mathbb{D}_2(\sqrt{-\mathcal{L}})$
in $L^2(\mu).$

\begin{rem}\label{rem-10} About these assumptions.
\begin{enumerate}
    \item[(a)]  means that the semigroup $(P_t)_{t\ge0}$ is
    $\mu$-symmetric.
    \item[(b)]  means that if $f\in
\BX$ satisfies $P_tf=f,$ $\mu$-a.e.\ for all $t\ge0$, then $f$ is
constant $\mu$-a.e.
\end{enumerate}

\end{rem}

\begin{defi}[Fisher information and Donsker-Varadhan information]\
\begin{enumerate}
    \item The \emph{Fisher information} of $f$ with respect to $\mu$ (and
the generator $\mathcal{L}$) is defined by
\begin{equation*}
    I_\mu(f)=\mathcal{E}(\sqrt{f},\sqrt{f})
\end{equation*}
for all $f\ge0$ such that $\sqrt{f}\in\mathbb{D}(\mathcal{E}).$

    \item The \emph{Donsker-Varadhan information} $\IDV(\nu|\mu)$ of the
measure $\nu$ with respect to $\mu$  is defined by
\begin{equation} \label{FDV}
\IDV(\nu|\mu)=
  \begin{cases}
    \mathcal{E}(\sqrt f, \sqrt f) & \text{if } \ \nu=f\mu\in\PX,\ \sqrt f\in \mathbb{D}(\mathcal{E}) \\
    +\infty & \text{otherwise}
  \end{cases}
 \end{equation}
\end{enumerate}
\end{defi}

\begin{expl}[Standard situation]\label{exp-typic}
 As a typical example, considering a
probability measure $\mu= e^{-V(x)}dx$ with $V$ of class
$\mathcal{C}^1$ on a complete connected Riemannian manifold $\X$,
one takes  $ (X_t)_{t\ge0}$ to be the diffusion generated by
\begin{equation*}
    \mathcal{L} =\Delta -\nabla
V\cdot \nabla
\end{equation*}
 where $\Delta,\nabla$ are respectively the Laplacian  and the gradient
on $\X.$ The Markov process $ (X_t)_{t\ge0}$ is $\mu$-reversible
and the corresponding Dirichlet form is given by
$$
\mathcal{E}(g,g)=\IX |\nabla g|^2\, d\mu, \quad  g\in
\mathbb{D}(\mathcal{E})=H^1(\X, \mu)
$$
where $H^1(\X, \mu)$ is the closure with respect to the norm
    $
\sqrt{\IX (|g|^2 +|\nabla g|^2)\, d\mu}
    $ of the space of infinitely
differentiable functions on $\X$ with  bounded derivatives of all
orders. It also matches with the space of these $g\in L^2(\X)$
such that $\nabla g\in L^2(\X\to T\X; \mu)$ in distribution.
\end{expl}

\begin{rem}
The Fisher information in this example
\begin{equation}\label{eqL-06}
     I_\mu(f)=\IX |\nabla \sqrt{f}|^2\, d\mu
\end{equation}
differs from the usual Fisher information
\begin{equation*}
    \IF(f|\mu):=\int_\Rk |\nabla \log f|^2\,fd\mu
\end{equation*}
by a multiplicative factor. Indeed, we have $ I_\mu=\IF/4. $ The
reason for preferring $I_\mu$ to $\IF$ in these notes is that
$\IDV(\cdot|\mu)$ is the large deviation rate function of the
occupation measure of the Markov process $(X_t)_{t\ge0}$ as will
be seen in Section \ref{section transport-information}.
\end{rem}

Let us introduce the $1$-homogenous extension of the relative
entropy $H$:
$$\mathrm{Ent}_\mu(f):=\int f\log f \,d\mu - \int f\,d\mu\ \log \int f\,d\mu,$$
for all nonnegative function $f$. The following relation holds:
$$\mathrm{Ent}_\mu(f)=\int f\,d\mu\ H\left(\left.\frac{f\mu}{\int f\,d\mu}\right|\mu\right).$$
As usual, $ \mathrm{Var}_{\mu}(f):=\IX f^2\,d\mu-(\IX f\,d\mu)^2.
$

\begin{defi}[General Poincar\'e and logarithmic Sobolev inequalities]\
\begin{enumerate}
    \item
A probability $\mu\in\PX$ is said to satisfy the Poincar\'e
inequality with constant $C$ if
\begin{equation*}
    \mathrm{Var}_{\mu}(f)\leq C I_\mu(f^2)
\end{equation*}
for any function $f\in \mathbb{D}(\mathcal{E}).$
    \item A probability $\mu$ on $\X$
is said to satisfy the logarithmic Sobolev inequality with a
constant $C>0$, if
$$H(f\mu|\mu)\leq C \IDV(f\mu|\mu)$$ holds for all  $f:\X\to \R^+.$
Equivalently, $\mu$ verifies this inequality if
\begin{equation*}
\mathrm{Ent}_\mu (f^2)\leq C I_\mu(f^2)
\end{equation*}
for any function $f\in \mathbb{D}(\mathcal{E}).$
\end{enumerate}
\end{defi}

In the special important case where $\mathcal{L}=\Delta-\nabla
V\cdot \nabla,$ the Fisher information is given by \eqref{eqL-06}
and we say that the corresponding Poincar\'e and logarithmic
Sobolev inequalities are \emph{usual}.

\begin{defi}[Usual Poincar\'e and logarithmic Sobolev inequalities, $\PI(C)$ and $\LSI(C)$]\label{def-PLS}\
\begin{enumerate}
    \item
A probability $\mu\in\PX$ is said to satisfy the (usual)
Poincar\'e inequality with constant $C,$ $\PI(C)$ for short, if
\begin{equation*}
    \mathrm{Var}_{\mu}(f)\leq C \int |\nabla f|^2\,d\mu
\end{equation*}
for any function smooth enough function $f.$
    \item A probability $\mu$ on $\X$
is said to satisfy the (usual) logarithmic Sobolev inequality with
a constant $C>0$, $\LSI(C)$ for short, if
\begin{equation}\label{Log Sob}
\mathrm{Ent}_\mu (f^2)\leq C\int |\nabla f|^2\,d\mu
\end{equation}
for any function smooth enough function $f.$
\end{enumerate}
\end{defi}

\begin{rem}[Spectral gap]
The Poincar\'e inequality $\PI(C)$ can be made precise by means of
the Dirichlet form $\mathcal{E}:$
\begin{equation*}
    \Var_\mu(f) \le C\,\mathcal{E}(f, f), \quad  f\in
\dd_2(\LL)
\end{equation*}
for some finite $C\ge0$. The best constant $C$ in the above
Poincar\'e inequality  is the inverse of the spectral gap of
$\LL.$
\end{rem}

\subsection{Links with Poincar\'e inequalities}
We noticed in Section 1 that  $\T_2$ is  stronger than  $\T_1$.
The subsequent proposition  enables us to make precise the gap
between these two inequalities.

\begin{prop}\label{Trans->Poinc}
Let $\mu$ be a probability measure on $\R^k$ and $d$ be the
Euclidean distance ; if $\mu$ verifies the inequality
$\mathcal{T}_{\theta(d)}\leq H$ with a function $\theta(t)\geq
t^2/C$ near $0$ with $C>0$, then $\mu$ verifies the Poincar\'e
inequality $\PI(C/2).$
\end{prop}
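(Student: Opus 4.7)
The plan is to linearize the dual form of the transport--entropy inequality around the constant function. By Corollary \ref{BG2} applied with $\alpha = \mathrm{id}$ (so that $\alpha^{\mc}(1)=0$), the hypothesis $\mathcal{T}_{\theta(d)}\leq H$ is equivalent to
\[
\int e^{g}\,d\mu \leq \exp\!\left(\int P_{\theta(d)} g\,d\mu\right), \qquad g\in\mathcal{B}_b(\R^k),
\]
where $P_{\theta(d)} g(y) = \sup_{x\in\R^k}\{g(x)-\theta(|x-y|)\}$. Given a smooth, compactly supported test function $f:\R^k\to\R$, I would apply this inequality to $g=\lambda f$ and Taylor expand both sides as $\lambda\downarrow 0$.

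On the left, the classical log-Laplace expansion gives
\[
\log\!\int e^{\lambda f}\,d\mu = \lambda\!\int f\,d\mu + \tfrac{\lambda^2}{2}\Var_\mu(f) + o(\lambda^2).
\]
For the right-hand side, the heart of the argument is the pointwise asymptotic
\[
P_{\theta(d)}(\lambda f)(y) = \lambda f(y) + \tfrac{C\lambda^2}{4}|\nabla f(y)|^2 + o(\lambda^2),
\]
valid uniformly in $y$. Indeed, for $f$ bounded the supremum is attained at some $x^\star(y)$ with $|x^\star(y)-y|=O(\lambda)$ (the positive values of $\theta$ away from $0$ dominate the small perturbation $\lambda f$), so one may expand $f$ to second order around $y$ and use the assumed lower bound $\theta(|u|)\geq |u|^2/C$ near the origin; the unconstrained maximization of $\lambda\nabla f(y)\cdot u - |u|^2/C$ over $u\in\R^k$ produces exactly $\tfrac{C\lambda^2}{4}|\nabla f(y)|^2$. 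Integrating against $\mu$ and matching the $\lambda^2$ coefficients on the two sides of $\log\int e^{\lambda f}\,d\mu\leq\int P_{\theta(d)}(\lambda f)\,d\mu$ yields $\tfrac12\Var_\mu(f)\leq \tfrac{C}{4}\int|\nabla f|^2\,d\mu$, that is, $\PI(C/2)$.

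The main obstacle will be to justify that the remainder in the expansion of $P_{\theta(d)}(\lambda f)(y)$ is $o(\lambda^2)$ \emph{uniformly} in $y$, so that it can safely be integrated against $\mu$. This rests on two steps: first, localizing the supremum to a ball of radius $O(\lambda)$ around $y$ by combining the boundedness of $f$ with the fact that $\theta>0$ away from the origin; and second, controlling the Taylor remainder of $f$ on that shrinking ball, relying only on the local estimate $\theta(t)\geq t^2/C$ together with the smoothness of $f$. For smooth compactly supported $f$ both points are manageable uniformly in $y$, and a standard approximation then extends the Poincar\'e inequality to all sufficiently regular functions.
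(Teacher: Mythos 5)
Your proposal is correct and follows essentially the same route as the paper: both apply the Bobkov--G\"otze dual form of Corollary \ref{BG2} (you via the sup-convolution $P_c$, the paper via the inf-convolution $Q_c$), localize the convolution to an $O(\lambda)$-ball using the boundedness of $f$ and the positivity of $\theta$ away from $0$, replace $\theta$ by its quadratic minorant $t^2/C$, and extract the Poincar\'e inequality from the second-order term in $\lambda$. The paper packages this expansion through the Hamilton--Jacobi equation solved by $Q_t f$, but that is only a reformulation of the same Taylor computation you carry out directly (note that only the inequality $P_{\theta(d)}(\lambda f)\le \lambda f+\tfrac{C\lambda^2}{4}|\nabla f|^2+o(\lambda^2)$ is needed, not the equality you state).
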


So in particular, $\T_2$ implies Poincar\'e inequality with the
constant $C/2,$ while $\T_1$ doesn't imply it. The result above
was established by Otto and Villani in \cite{OV00}. Below is a
proof using the Hamilton-Jacobi semigroup.

\proof According to Corollary \ref{BG2}, for all bounded
continuous function $f:\R^k\to \R$, $\int e^{Rf}\,d\mu \leq
e^{\int f\,d\mu}$, where $Rf(x)=\inf_{y\in
\R^k}\{f(y)+\theta(|x-y|_{2})\}.$ For all $t>0$, define
$R_{t}f(x)=\inf_{y\in \R^k}\{f(y)+\frac{1}{t}\theta(|y-x|_{2})\}.$
Suppose that $\theta(u)\geq u^2/C$, for all $0\leq u\leq r$, for
some $r>0$. If $M>0$, is such that $|f(x)|\leq M$ for all $x\in
\R^k$, then it is not difficult to see that the infimum in the
definition of $R_{t}f(x)$ is attained in the ball of center $x$
and radius $r$ as soon as $t\leq \theta(r)/(2M)$. So, for all
$t\leq \theta(r)/(2M)$,
$$R_{t}f(x)=\inf_{|y-x|\leq r}\{f(y)+\frac{1}{t}\theta(|y-x|_{2})\}\geq \inf_{|y-x|\leq r}\{f(y)+\frac{1}{Ct}|y-x|_{2}^2\}\geq Q_{t}f(x),$$
with $Q_{t}f(x)=\inf_{y\in \R^k}\{f(y)+\frac{1}{tC}|x-y|^2_{2}\},$
$t>0.$ Consequently, the inequality
\begin{equation}\label{Trans->Poinc1}
\int e^{tQ_{t}f}\,d\mu\leq e^{t\int f\,d\mu}
\end{equation}
holds for all $t\geq 0$ small enough.

If $f$ is smooth enough (say of class $\mathcal{C}^2$), then
defining $Q_{0}f=f$, the function $(t,x)\mapsto Q_{t}f(x)$ is
solution of the Hamilton-Jacobi partial differential equation :
\begin{equation}\label{HJ}
\left\{\begin{array}{ll}\frac{\partial u}{\partial
t}(t,x)+\frac{C}{4}|\nabla_{x} u|^2(t,x)=0, & t\geq 0, x\in \R^k
\\u(0,x)=f, & x\in \R^k\end{array}\right.
\end{equation}
 (see for example \cite[Theorem 22.46]{Vill2}).

So if $f$ is smooth enough, it is not difficult to see that
$$
    \int e^{tQ_{t}f}\,d\mu=1+t\int f\,d\mu+ \frac{t^2}{2}\left(\int f^2\,d\mu-\frac{C}{2}\int |\nabla f|^2\,d\mu\right)+o(t^2).
$$
So \eqref{Trans->Poinc1} implies that $\mathrm{Var}_{\mu}(f)\leq
\frac{C}{2}\int |\nabla f|^2\,d\mu$, which completes the proof.
\endproof

\subsection{Around Otto-Villani theorem}
We now present the famous Otto-Villani theorem and its
generalizations.

\begin{thm}[Otto-Villani]\label{OV}
Let $\mu$ be a probability measure on $\R^k$. If $\mu$ verifies
the logarithmic Sobolev inequality $\LSI(C)$ with a constant
$C>0$, then it verifies the inequality then $\T_{2}(C)$.
\end{thm}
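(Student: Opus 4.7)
The plan is to use the dual formulation of $\T_2$ together with the Hopf--Lax infimum-convolution semigroup, following the by now classical route pioneered by Bobkov, Gentil and Ledoux.

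I would first invoke Corollary \ref{BG2}(3) applied to the cost $c(x,y)=|x-y|^2$ and $\alpha(r)=r/C$; since $\alpha^\mc$ vanishes on $[0,1/C]$ and equals $+\infty$ elsewhere, the inequality $\T_2(C)$ is equivalent to establishing
$$
\int e^{Q_{C/2} h}\,d\mu \;\le\; \exp\!\left(\int h\,d\mu\right), \qquad \forall\, h\in\BX,
$$
where $Q_t h(x):=\inf_{y\in\R^k}\{h(y)+|x-y|^2/(2t)\}$ denotes the Hopf--Lax semigroup (this uses the scaling identity $sQ_{d^2}g=Q_{1/(2s)}(sg)$ taken at $s=1/C$). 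Thus the task reduces to deriving this exponential bound from $\LSI(C)$.

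For fixed smooth bounded $h$, I would then introduce the auxiliary function
$$
F(t) := \int e^{\lambda(t)\, Q_t h}\,d\mu,\qquad \lambda(t) := \frac{2t}{C},
$$
and use the Hamilton--Jacobi equation $\partial_t Q_t h+\tfrac12|\nabla Q_t h|^2\le 0$ to obtain, after differentiating under the integral,
\begin{align*}
F'(t) &\;\le\; \lambda'(t)\int Q_t h\, e^{\lambda Q_t h}\,d\mu \;-\; \frac{\lambda(t)}{2}\int |\nabla Q_t h|^2\, e^{\lambda Q_t h}\,d\mu.
\end{align*}
Applying $\LSI(C)$ to $g:=\exp(\lambda Q_t h/2)$, whose squared gradient equals $\tfrac{\lambda^2}{4}|\nabla Q_t h|^2 e^{\lambda Q_t h}$ and whose entropy equals $\lambda\int Q_t h\, e^{\lambda Q_t h}\,d\mu-F\log F$, gives
$$
\lambda \int Q_t h\, e^{\lambda Q_t h}\,d\mu - F\log F \;\le\; \frac{C\lambda^2}{4}\int |\nabla Q_t h|^2\, e^{\lambda Q_t h}\,d\mu.
$$
Plugging this bound into the estimate for $F'(t)$ and using the crucial cancellation afforded by the choice $\lambda'(t)=2/C$, the terms containing $\int Q_t h\, e^{\lambda Q_t h}\,d\mu$ disappear and I am left with the clean differential inequality
$$
F'(t) \;\le\; \frac{F(t)\log F(t)}{t},\qquad t>0,
$$
equivalently, $\tfrac{d}{dt}\bigl[t^{-1}\log F(t)\bigr]\le 0$.

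To conclude, I would analyze the limit at $t\downarrow 0$: since $Q_t h\to h$ uniformly and $\lambda(t)\to 0$, a first-order expansion yields $F(t)=1+(2t/C)\int h\,d\mu+o(t)$, so $t^{-1}\log F(t)\to(2/C)\int h\,d\mu$. The monotonicity then forces $t^{-1}\log F(t)\le (2/C)\int h\,d\mu$ for all $t>0$; evaluating at $t=C/2$ (where $\lambda=1$) delivers exactly the dual inequality displayed above and hence $\T_2(C)$. A standard approximation extends the result from smooth $h$ to arbitrary bounded measurable $h$. The main technical obstacle is that $Q_t h$ is only locally Lipschitz and the Hamilton--Jacobi relation holds merely almost everywhere, so one must carefully justify the differentiation under the integral and verify that $g=e^{\lambda Q_t h/2}$ lies in the Sobolev space on which $\LSI(C)$ is posed; this is handled either via the intrinsic Rademacher-type regularity of Hopf--Lax envelopes or by first mollifying $h$ and passing to the limit at the end.
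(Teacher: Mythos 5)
Your argument is correct and is essentially the paper's first proof (the Bobkov--Gentil--Ledoux Hamilton--Jacobi method): applying $\LSI(C)$ to $e^{\lambda Q_t h/2}$, exploiting the Hopf--Lax equation to get monotonicity of $t\mapsto t^{-1}\log\int e^{\lambda(t)Q_th}\,d\mu$, and identifying the limit at $t\downarrow 0$ to obtain the Bobkov--G\"otze dual form of $\T_2(C)$. The only difference is cosmetic: you keep the standard Hopf--Lax normalization $|x-y|^2/(2t)$ with a time-dependent exponent $\lambda(t)=2t/C$, whereas the paper absorbs the constant into the cost $\tfrac{1}{Ct}|x-y|^2$ and uses exponent $t$.
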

Let us mention that Otto-Villani theorem is also true on a
Riemannian manifold. This result was conjectured by Bobkov and
G\"otze in \cite{BG99} and first proved by Otto and Villani in
\cite{OV00}. The proof by Otto and Villani was a rather
sophisticated combination of optimal transport and partial
differential equation results. It was adapted to other situations
(in particular to path spaces) by Wang in \cite{W04,W08a, W08b}.
Soon after \cite{OV00}, Bobkov, Gentil and Ledoux have proposed in
\cite{BGL01} a much more elementary proof relying on simple
computations on the Hamilton-Jacobi semigroup. This approach is at
the origin of many subsequent developments (see for instance
\cite{GM02}, \cite{GGM05}, \cite{CG06}, \cite{LV07} or
\cite{GRS09}). In \cite{G09}, Gozlan gives yet another proof which
is build on the characterization of dimension-free Gaussian
concentration exposed in Corollary \ref{T2=concentration}. It is
very robust and works as well if $\R^k$ is replaced by an (almost)
arbitrary polish space (see \cite[Theorems 4.9 and 4.10]{G09}).

\proof[First proof of Theorem \ref{OV} following \cite{BGL01}.] In
this proof we explain the Hamilton-Jacobi method of Bobkov, Gentil
and Ledoux. Consider the semigroup $Q_t$ defined for all bounded
function $f$ by $$Q_t f(x)=\inf_{y\in
\R^k}\left\{f(y)+\frac{1}{Ct}|x-y|^2\right\},\quad t>0,\qquad Q_0f=f.$$ If $f$ is
smooth enough, $(t,x)\mapsto Q_tf(x)$ solves the Hamilton-Jacobi
equation \eqref{HJ}.

According to \eqref{Log Sob}, and \eqref{HJ}
\begin{equation}\label{OV 1}
\mathrm{Ent}_\mu(e^{tQ_t f})\leq \frac{Ct^2}{4} \int
|\nabla_xQ_tf|^2(t,x)e^{tQ_tf(t,x)}\,d\mu(x)=-t^2\int
\frac{\partial Q_tf}{\partial t}(t,x)e^{tQ_tf(t,x)}\,d\mu(x).
\end{equation}
Let $Z_t=\int e^{tQ_t f(t,x)}\,d\mu(x)$, $t\geq 0$ ; then
$$Z_t'=\int Q_tf(t,x) e^{tQ_t f(t,x)}\,d\mu(x)+t\int
\frac{\partial Q_tf}{\partial t}(t,x)e^{tQ_tf(t,x)}\,d\mu(x).$$
Consequently,
\begin{align*}
\mathrm{Ent}_\mu(e^{tQ_t f})&=t\int Q_tf(t,x)e^{tQ_t f(t,x)}\,d\mu(x)-\int e^{tQ_t f(t,x)}\,d\mu(x)\log \int e^{tQ_t f(t,x)}\,d\mu(x)\\
&= tZ_t'-Z_t\log Z_t-t^2\int \frac{\partial Q_tf}{\partial
t}(t,x)e^{tQ_tf(t,x)}\,d\mu(x).
\end{align*}
This together with \eqref{OV 1}, yields $ tZ_t'-Z_t\log Z_t\leq 0$
for all $t\geq 0$. In other words, the function $t\mapsto
\frac{\log Z_t}{t}$ is decreasing on $(0,+\infty)$. As a result,
$$\log Z_1=\log \int e^{Q_1f}\,d\mu\leq \lim_{t\to0}\frac{\log Z_t}{t}=\int f\,d\mu,$$
which is Bobkov-G\"otze dual version of $\T_2(C)$ stated in
Corollary \ref{BG2}.
\endproof

\proof[Second proof of Theorem \ref{OV} following \cite{G09}] Now
let us explain how to use concentration to prove Otto-Villani
theorem. First let us recall the famous Herbst argument. Take $g$
a $1$-Lipschitz function such that $\int g\,d\mu=0$ and apply
\eqref{Log Sob} to $f=e^{tg/2}$, with $t\geq 0$; then letting
$Z_t=\int e^{tg}\,d\mu$, one gets
$$Z'_t-Z_t\log Z_t\leq \frac{Ct^2}{4}\int |\nabla g|^2e^{tg}\,d\mu\leq \frac{C}{4}t^2Z_t,\quad t>0$$
where the inequality follows from the fact that $g$ is
$1$-Lipschitz. In other word,
$$\frac{d}{dt}\left(\frac{\log Z_t}{t}\right)\leq \frac{C}{4},\quad t>0.$$
Since $\frac{\log Z_t}{t}\to0$ when $t\to0$, integrating the
inequality above yields
$$\int e^{tg}\,d\mu\leq e^{\frac{C}{4}t^2},\quad t>0.$$
Since this holds for all centered and $1$-Lipschitz function $g$,
one concludes from Corollary \ref{BG3} that $\mu$ verifies
the inequality $\T_1(C)$ on $(\R^k, |\cdot|_2).$

The next step is a tensorization argument. Let us recall that the
logarithmic Sobolev inequality enjoys the following well known
tensorization property : if $\mu$ verifies $\LSI(C)$ on $\R^k$,
then for all positive integer $n$, the product probability measure
$\mu^n$ satisfies $\LSI(C)$ on $\left(\R^k\right)^n.$ As a
consequence, the argument above shows that for all positive
integer $n$, $\mu^n$ verifies the inequality $\T_1(C)$ on
$\left(\left(\R^k\right)^n,|\cdot|_2\right).$ According to
Marton's argument (Theorem \ref{Marton}), there is some constant
$K>0$ such that for all positive integer $n$ and all $A\subset
\left(\R^k\right)^n$, with $\mu^n(A)\geq 1/2$, it holds
$$\mu^n(A^r)\geq 1-Ke^{-Cr^2},\quad r\geq 0.$$
The final step is given by Corollary \ref{T2=concentration} : this
dimension-free Gaussian concentration inequality implies $\T_2(C)$
and this completes the proof.
\endproof

Otto-Villani theorem admits the following natural extension
which appears in \cite{BGL01} and \cite[Theorem 2.10]{GGM05}.

For all $p\in [1,2]$, define $\theta_{p}(x)=x^2$ if $|x|\leq 1$
and $\frac{2}{p}x^p+1-\frac{2}{p}$ if $|x|\geq 1$.

\begin{thm}
Suppose that a probability $\mu$ on $\R^k$ verifies the following
modified logarithmic Sobolev inequality
$$\mathrm{Ent}_\mu(f^2)\leq C_{1} \int \sum_{i=1}^k\theta_{p}^*\left(\frac{\partial f}{\partial x_i}\frac{1}{f}\right)f^2\,d\mu,$$
for all $f:\R^k\to\R$ smooth enough, where
$\theta_{p}^*$ is the convex conjugate of $\theta_{p}$. Then there
is a constant $C_{2}$ such that $\mu$ verifies the transport-entropy inequality $\mathcal{T}_{\theta_{p}(|\,\cdot\,|_2)}\leq
C_{2}H$.
\end{thm}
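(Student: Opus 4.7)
My plan is to adapt the Hamilton--Jacobi semigroup method of Bobkov--Gentil--Ledoux presented in the second proof of Theorem \ref{OV}, replacing the usual logarithmic Sobolev inequality by the modified one of the statement, and replacing the quadratic inf-convolution semigroup by the semigroup naturally associated with $\theta_p$. By the dual formulation of transport-entropy inequalities (Corollary \ref{BG2}), it suffices to prove that for some $\lambda>0$ (depending only on $C_1$ and $p$),
\begin{equation*}
\int e^{\lambda Q g}\,d\mu \le \exp\!\left(\lambda\int g\,d\mu\right),\qquad g\in\mathcal{B}_b(\Rk),
\end{equation*}
where $Q g(x)=\inf_{y\in\Rk}\{g(y)+\theta_p(|x-y|_2)\}$; any such inequality is equivalent to $\mathcal{T}_{\theta_p(|\cdot|_2)}\le C_2 H$ for $C_2=1/\lambda$ (after the homogeneity rescaling built into the constants).

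First, I would introduce the Hamilton--Jacobi semigroup adapted to $\theta_p$:
\begin{equation*}
Q_t g(x):=\inf_{y\in\Rk}\Bigl\{g(y)+t\,\theta_p\bigl(|x-y|_2/t\bigr)\Bigr\},\quad t>0,\qquad Q_0 g=g.
\end{equation*}
Since the Legendre transform of $v\mapsto \theta_p(|v|_2)$ on $\Rk$ is $p\mapsto\theta_p^*(|p|_2)$, one checks that for smooth bounded $g$ the function $u(t,x):=Q_t g(x)$ is a (viscosity) solution of
\begin{equation*}
\partial_t u + \theta_p^*(|\nabla u|_2)=0,\qquad u(0,\cdot)=g.
\end{equation*}

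Next, I would apply the modified log-Sobolev inequality of the hypothesis to $f=\exp(\lambda Q_t g/2)$, yielding
\begin{equation*}
\mathrm{Ent}_\mu\bigl(e^{\lambda Q_t g}\bigr)\le C_1 \int\sum_{i=1}^k \theta_p^*\!\Bigl(\tfrac{\lambda}{2}\partial_{x_i} Q_t g\Bigr)e^{\lambda Q_t g}\,d\mu.
\end{equation*}
The main obstacle sits right here: the right-hand side contains a \emph{coordinatewise} sum of $\theta_p^*$, whereas the Hamilton--Jacobi equation produces the \emph{Euclidean} quantity $\theta_p^*(|\nabla u|_2)=-\partial_t u$. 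To bridge them, I would use that $\theta_p^*$ is convex, behaves like $s^2/4$ near $0$, and like $(s/2)^q\cdot 2/q$ with $q=p/(p-1)\ge 2$ at infinity; combined with the elementary inequality $|s|_q\le|s|_2$ on $\Rk$ (valid for $q\ge 2$) and the analogous quadratic identity $\sum_i s_i^2=|s|_2^2$, one gets a bound of the form $\sum_i\theta_p^*(s_i)\le K_0\,\theta_p^*(|s|_2)$ valid for all $s\in\Rk$, with $K_0$ depending only on $p$. Tuning $\lambda$ so that $C_1 K_0 \cdot (\text{scaling factor from } \lambda/2)\le \lambda$, the previous display becomes
\begin{equation*}
\mathrm{Ent}_\mu\bigl(e^{\lambda Q_t g}\bigr)\le -\lambda \int \partial_t Q_t g\cdot e^{\lambda Q_t g}\,d\mu.
\end{equation*}

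Finally, setting $Z(t):=\int e^{\lambda Q_t g}\,d\mu$, differentiating under the integral gives $Z'(t)=\lambda\int \partial_t Q_t g\cdot e^{\lambda Q_t g}\,d\mu$ and, together with the previous display,
\begin{equation*}
\mathrm{Ent}_\mu\bigl(e^{\lambda Q_t g}\bigr)=\lambda\int Q_t g\cdot e^{\lambda Q_t g}\,d\mu-Z(t)\log Z(t)\;\le\;-Z'(t),
\end{equation*}
which is exactly the Herbst-type inequality $tZ'(t)-Z(t)\log Z(t)\le 0$ after rewriting in terms of $W(t):=\log Z(t)$. Thus $t\mapsto W(t)/t$ is non-increasing on $(0,\infty)$; letting $t\to 0^+$ gives $W(t)/t\to \lambda\int g\,d\mu$, hence $W(1)\le \lambda\int g\,d\mu$, which is the required dual inequality. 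Undoing the scaling (replacing $g$ by $g/\lambda$ and matching with Corollary \ref{BG2} applied to $\alpha(t)=t/C_2$) produces a constant $C_2$ depending only on $C_1$ and $p$, completing the proof. The central technical point remains the coordinatewise/Euclidean comparison and the accompanying scaling of $\lambda$; everything else is a direct transcription of the BGL argument.
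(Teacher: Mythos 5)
Your identification of the coordinatewise-versus-Euclidean mismatch as ``the main obstacle'' is misplaced: the comparison $\sum_{i=1}^k\theta_p^*(s_i)\le\theta_p^*(|s|_2)$ does hold with constant $1$ (the function $u\mapsto\theta_p^*(\sqrt u)$ is convex and vanishes at $0$, hence superadditive), and the paper's own sketch sidesteps the issue entirely by running the Hamilton--Jacobi semigroup with the \emph{coordinatewise} cost $t\sum_i\theta_p\bigl((y_i-x_i)/t\bigr)$, whose Hamiltonian $\sum_i\theta_p^*(\partial_{x_i}u)$ matches the modified logarithmic Sobolev inequality term by term; the price is then a comparison between the tensorized cost and $\theta_p(|\cdot|_2)$ at the very end.

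The genuine gap is in your final Herbst step. With $Z(t)=\int e^{\lambda Q_tg}\,d\mu$ and a \emph{time-independent} $\lambda$ in the exponent, $Z'(t)=\lambda\int\partial_tQ_tg\,e^{\lambda Q_tg}\,d\mu$, whereas $\mathrm{Ent}_\mu(e^{\lambda Q_tg})=\lambda\int Q_tg\,e^{\lambda Q_tg}\,d\mu-Z(t)\log Z(t)$: the first term involves $Q_tg$, not $\partial_tQ_tg$, and is not $tZ'(t)$. Hence the inequality $\mathrm{Ent}_\mu(e^{\lambda Q_tg})\le-Z'(t)$ is \emph{not} the differential inequality $tZ'(t)-Z(t)\log Z(t)\le0$, and the monotonicity of $\log Z(t)/t$ does not follow; your limit computation also fails, since without a factor $t$ in the exponent $\log Z(t)\to\log\int e^{\lambda g}\,d\mu$ as $t\to0$, so $W(t)/t$ does not tend to $\lambda\int g\,d\mu$. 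In the Bobkov--Gentil--Ledoux proof of Theorem \ref{OV} the exponent is $tQ_tf$: that extra factor $t$ is what makes $Z_t'$ produce the term $\int Q_tf\,e^{tQ_tf}\,d\mu$ appearing in the entropy, and the quadratic homogeneity $\theta^*(\tfrac t2 s)=\tfrac{t^2}{4}\theta^*(s)$ is what makes the log-Sobolev bound cancel exactly against $-t^2\int\partial_tQ_tf\,e^{tQ_tf}\,d\mu$. For $p<2$ the conjugate $\theta_p^*$ is not homogeneous, so reinstating the factor $t$ forces you to compare $C_1\theta_p^*(\tfrac t2 s)$ with $t^2\theta_p^*(s)$ uniformly in $t$ and $s$; by convexity this works only under a restriction such as $C_1\le 4$, and handling a general $C_1$ requires rescaling the cost or letting $\lambda$ depend on $t$. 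This is precisely where the argument of \cite[Theorem 2.10]{GGM05} departs from a ``direct transcription'' of the BGL proof, and as it stands your argument does not close.
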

The theorem above is stated in a very lazy way ; the relation
between $C_{1}$ and $C_{2}$ is made clear in \cite[Theorem
2.10]{GGM05}.

\proof[Sketch of proof.] We shall only indicate that
two proofs can be made. The first one uses the following Hamilton
Jacobi semigroup
$$Q_t f(x)=\inf_{y\in \R^k}\left\{f(y)+t\sum_{i=1}^k\theta_p\left(\frac{y_i-x_i}{t}\right)\right\},$$
which solves the following Hamilton-Jacobi equation
$$\left\{\begin{array}{ll}\frac{\partial u}{\partial t}(t,x)+\sum_{i=1}^k\theta_p^*\left(\frac{\partial u}{\partial x_i}\right)(t,x)=0, & t\geq 0, x\in \R^k \\u(0,x)=f, & x\in \R^k\end{array}\right.$$
The second proof uses concentration : according to a result by
Barthe and Roberto \cite[Theorem 27]{BR08}, the modified
logarithmic Sobolev inequality implies dimension-free
concentration for the enlargement
$\mathrm{enl}_{\theta_p}(A,r)=\{x\in \left(\R^k\right)^n;
\inf_{y\in A} \sum_{i=1}^n \theta_p(|x-y|_2)\leq r\}$ and
according to Theorem \ref{Concentration->Transport}, this
concentration property implies the transport-entropy inequality
$\mathcal{T}_{\theta_{p}(|\cdot|_2)}\leq CH$, for some $C>0.$
\endproof

The case $p=1$ is particularly interesting ; namely according to a
result by Bobkov and Ledoux, the modified logarithmic Sobolev
inequality with the function $\theta_{1}$ is equivalent to
Poincar\'e inequality (see \cite[Theorem 3.1]{BL97} for a precise
statement). Consequently, the following results holds
\begin{cor}\label{Poinc=Trans}
Let $\mu$ be a probability measure on $\R^k$; the following
propositions are equivalent:
\begin{enumerate}
\item The probability $\mu$ verifies Poincar\'e inequality for
some constant $C>0$; \item The probability $\mu$ verifies the
transport-entropy inequality $\mathcal{T}_c\leq H$, with a cost
function of the form $c(x,y)= \theta_1(a|x-y|_2)$, for some $a>0$.
\end{enumerate}
Moreover the constants are related as follows: (1) implies (2)
with $a=\frac{1}{\tau\sqrt{C}}$, where $\tau$ is some
universal constant and (2) implies (1) with $C=\frac{1}{2a^2}.$
\end{cor}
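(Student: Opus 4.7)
The plan is to establish the two implications separately by chaining together results already proved in the excerpt, since the corollary lies exactly at the intersection of Proposition~\ref{Trans->Poinc} and the modified log-Sobolev theorem preceding it. The key observation that makes the equivalence work is that $\theta_1(at)$ behaves quadratically near $0$ (namely $\theta_1(at)=a^2t^2$ for $|t|\le 1/a$) while growing only linearly at infinity, which matches the known behaviour of both the Bobkov-Ledoux modified log-Sobolev functional and the concentration profile associated with Poincar\'e's inequality.

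For the direction $(1)\Rightarrow(2)$, I would invoke the Bobkov-Ledoux theorem (mentioned just before the statement) asserting that a Poincar\'e inequality with constant $C$ is equivalent to the modified logarithmic Sobolev inequality
$$\mathrm{Ent}_\mu(f^2)\le C_1 \int \sum_{i=1}^k \theta_1^*\!\left(\frac{\partial f}{\partial x_i}\frac{1}{f}\right)f^2\, d\mu,$$
with some constant $C_1$ of the form $\tau' C$ (universal $\tau'$). Then applying the preceding theorem with $p=1$ yields the transport-entropy inequality $\mathcal{T}_{\theta_1(|\,\cdot\,|_2)}\le C_2 H$ for some $C_2$ proportional to $C$. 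A rescaling of the cost, using $\theta_1(at)\le a^2 \theta_1(t)$ for $a\le 1$, converts this into the inequality $\mathcal{T}_c\le H$ with $c(x,y)=\theta_1(a|x-y|_2)$ for $a=1/(\tau\sqrt{C})$ and $\tau$ universal.

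For the reverse direction $(2)\Rightarrow(1)$, the key point is that $\theta_1(at)=a^2t^2$ for $|t|\le 1/a$, so the cost function $c(x,y)=\theta_1(a|x-y|_2)$ satisfies $c(x,y)\ge a^2|x-y|_2^2$ for all $|x-y|_2\le 1/a$, i.e.\ near $0$ we have $\theta_1(at)\ge t^2/C$ with $C=1/a^2$. Applying Proposition~\ref{Trans->Poinc} to $\theta(t):=\theta_1(at)$ therefore delivers the Poincar\'e inequality with constant $C/2=1/(2a^2)$, which is exactly the claimed relation between the two constants. The main obstacle is really just the bookkeeping for the Bobkov-Ledoux equivalence in the first implication (controlling the universal constant $\tau$ that comes out of the Bobkov-Ledoux argument and the subsequent modified log-Sobolev $\Rightarrow$ transport step), since the second implication follows transparently from the Hamilton-Jacobi computation already carried out in the proof of Proposition~\ref{Trans->Poinc}.
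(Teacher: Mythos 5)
Your direction $(2)\Rightarrow(1)$ is correct and is exactly the intended argument: since $\theta_1(at)=a^2t^2$ for $t\le 1/a$, the cost dominates $t^2/C$ near $0$ with $C=1/a^2$, and Proposition~\ref{Trans->Poinc} delivers $\PI(C/2)=\PI(1/(2a^2))$. The overall strategy for $(1)\Rightarrow(2)$ (Bobkov--Ledoux equivalence of Poincar\'e with the $\theta_1$-modified logarithmic Sobolev inequality, followed by the preceding theorem with $p=1$) is also the route the paper takes, which is essentially a citation chain to \cite{BL97} and \cite{BGL01}.

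However, your rescaling step in $(1)\Rightarrow(2)$ contains a genuine error. The inequality $\theta_1(at)\le a^2\theta_1(t)$ for $a\le1$ is false in the linear regime: for $t\ge 1/a$ one has $\theta_1(at)=at$ while $a^2\theta_1(t)=a^2t<at$. The correct comparison is $\theta_1(at)\le a\,\theta_1(t)$ for $a\le 1$ (the ratio $\theta_1(at)/\theta_1(t)$ equals $a$, not $a^2$, at infinity), and this only converts $\mathcal{T}_{\theta_1(|\cdot|_2)}\le C_2 H$ into $\mathcal{T}_{\theta_1(a|\cdot|_2)}\le H$ for $a\le 1/C_2\sim 1/C$, which is strictly weaker than the claimed $a=1/(\tau\sqrt{C})$ and, unlike it, is not consistent under the scaling $x\mapsto\lambda x$ (which sends $C\mapsto\lambda^2 C$ and must send $a\mapsto a/\lambda$). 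The point is that the ``lazily stated'' theorem $\mathcal{T}_{\theta_1(|\cdot|_2)}\le C_2H$ cannot yield the sharp two-scale cost by a pointwise comparison of cost functions: the correct cost must be quadratic with coefficient of order $1/C$ on the scale $|x-y|_2\lesssim\sqrt{C}$ and linear with slope of order $1/\sqrt{C}$ beyond, i.e.\ exactly $\theta_1(|x-y|_2/(\tau\sqrt{C}))$, and this normalization has to be tracked through the Bobkov--Ledoux argument and the Hamilton--Jacobi computation themselves (this is what \cite[Corollary 5.1]{BGL01} and \cite[Theorem 2.10]{GGM05} do). To repair your proof you should either quote that precise form directly, or rerun the Herbst/Hamilton--Jacobi step keeping the constraint $|\nabla f|\le c/\sqrt{C}$ explicit, rather than rescale the conclusion of the lazy statement.
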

Again a precise result can be found in \cite[Corollary
5.1]{BGL01}.

Let us recall that $\mu$ is said to satisfy a super-Poincar\'e
inequality if there is a decreasing function
$\beta:[1,+\infty)\to[0,\infty)$ such that
$$ \int f^2\,d\mu \leq \beta(s)\int |\nabla f|^2\,d\mu + s\left(\int |f|\,d\mu\right)^2,\quad s\ge 1$$
holds true for all sufficiently smooth $f$. This class of
functional inequalities was introduced by Wang in \cite{W00} with
applications in spectral theory. Many functional inequalities
(Beckner-Lata\l a-Oleszkiewicz inequalities for instance
\cite{LO00, W05}) can be represented as a super-Poincar\'e
inequality for a specific choice of the function $\beta$. Recently
efforts have been made to see which transport-entropy inequalities
can be derived from super-Poincar\'e inequalities. We refer to
Wang \cite[Theorem 1.1]{W08b} (in a Riemannian setting) and Gozlan
\cite[Theorem 5.4]{G08} for these very general extensions of
Otto-Villani theorem.

\subsection{$\T_2$ and $\LSI$ under curvature assumptions}
In \cite{OV00}, Otto and Villani proved that the logarithmic
Sobolev inequality was sometimes implied by the inequality
$\T_{2}$. The key argument for this converse is the so called HWI
inequality (see \cite[Theorem 3]{OV00} or Corollary \ref{HWI} of the present paper) which is recalled below. If $\mu$ is an absolutely continuous probability measure with a density of the form $d\mu(x)=e^{-V(x)}\,dx$, with $V$ of class $\mathcal{C}^2$ on
$\R^k$ and such that $\mathrm{Hess}\, V\geq \kappa \mathrm{Id}$, with $\kappa \in \R$,
then for all probability measure $\nu$ on $\R^k$ having a smooth
density with respect to $\mu$,
\begin{equation}\label{eqHWI}
H(\nu|\mu)\leq 2W_{2}(\nu,\mu)\sqrt{I(\nu|\mu)}-\frac{\kappa}{2}W_{2}^2(\nu,\mu),
\end{equation}
where $\IDV(\cdot|\mu)$ is the Donsker-Varadhan information.

\begin{prop}\label{prop HWI}Let $d\mu(x)=e^{-V(x)}\,dx$, with $V$ of class $\mathcal{C}^2$ on $\R^k$ and such that $\mathrm{Hess}\, V\geq \kappa \mathrm{Id}$, with $\kappa \leq 0$; if $\mu$ verifies the inequality $\T_{2}(C)$, with $C< -2/\kappa$, then it verifies the inequality $ \LSI \left(\frac{4C}{\left(1+\kappa C/2\right)^2}\right)$.
In particular, when $V$ is convex then $\mu$ verifies $\LSI (4C)$.
\end{prop}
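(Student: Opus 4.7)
The plan is to derive the logarithmic Sobolev inequality by inserting the hypothesis $\T_2(C)$ directly into the HWI inequality of Corollary \ref{HWI} (equivalently, \eqref{eqHWI}) and solving the resulting quadratic inequality in $\sqrt{H}$. Since the curvature lower bound $\mathrm{Hess}\,V \ge \kappa\,\mathrm{Id}$ is assumed with $\kappa\le 0$, the HWI inequality supplies exactly
\[
H(\nu|\mu) \le 2W_2(\nu,\mu)\sqrt{\IDV(\nu|\mu)} - \frac{\kappa}{2}W_2^2(\nu,\mu),
\]
valid for probability measures $\nu=f\mu$ with smooth density, and this is the only nontrivial analytic input.

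Next, I would use $\T_2(C)$ in the form $W_2^2(\nu,\mu)\le CH(\nu|\mu)$ to bound each occurrence of $W_2$ on the right-hand side. Because $-\kappa/2\ge 0$, we may replace $-\frac{\kappa}{2}W_2^2(\nu,\mu)$ by the larger quantity $-\frac{\kappa C}{2}H(\nu|\mu)$; and we may replace $W_2(\nu,\mu)$ by $\sqrt{CH(\nu|\mu)}$ in the first term. This yields
\[
H(\nu|\mu) \le 2\sqrt{C}\sqrt{H(\nu|\mu)}\sqrt{\IDV(\nu|\mu)} - \frac{\kappa C}{2}H(\nu|\mu),
\]
so that after rearranging,
\[
\Bigl(1+\frac{\kappa C}{2}\Bigr)H(\nu|\mu)\le 2\sqrt{C}\sqrt{H(\nu|\mu)\,\IDV(\nu|\mu)}.
\]
At this stage the only thing to check is that the factor $1+\kappa C/2$ is strictly positive, which is precisely the content of the assumption $C<-2/\kappa$ (when $\kappa<0$), or is trivially $1>0$ in the convex case $\kappa=0$. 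This positivity is the structural reason why the argument breaks down without a quantitative control on $C$, and while the algebra here is routine, verifying this positivity is really the crux of the statement.

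Assuming $H(\nu|\mu)>0$ (otherwise the conclusion is trivial), I would then divide both sides by $\sqrt{H(\nu|\mu)}$ and square, obtaining
\[
H(\nu|\mu)\le \frac{4C}{(1+\kappa C/2)^2}\,\IDV(\nu|\mu),
\]
which is exactly the logarithmic Sobolev inequality $\LSI\!\bigl(\tfrac{4C}{(1+\kappa C/2)^2}\bigr)$ for all smooth densities, hence for all $\nu=f\mu$ by a standard approximation argument (and by extending from $f\mu\in\PX$ to $1$-homogeneous $\mathrm{Ent}_\mu(f^2)\le \frac{4C}{(1+\kappa C/2)^2}\int|\nabla f|^2\,d\mu$ in the usual way). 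The convex case $V$ convex corresponds to taking $\kappa=0$, which gives directly $\LSI(4C)$. The main obstacle is really just to notice and verify the sign condition $1+\kappa C/2>0$; the remainder of the argument is a one-line algebraic manipulation of HWI and $\T_2$.
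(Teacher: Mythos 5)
Your proof is correct and follows essentially the same route as the paper: plug $\T_2(C)$ into the HWI inequality \eqref{eqHWI}, observe that $C<-2/\kappa$ makes $1+\kappa C/2>0$, and solve the resulting inequality in $\sqrt{H(\nu|\mu)}$ to obtain $H(\nu|\mu)\le \frac{4C}{(1+\kappa C/2)^2}\IDV(\nu|\mu)$. The only difference is cosmetic — you spell out the sign considerations and the division-by-$\sqrt{H}$ step that the paper leaves implicit.
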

\proof
Applying \eqref{eqHWI} together with the assumed $\T_{2}(C)$
inequality, yields
$$H(\nu|\mu)\leq 2\sqrt{CH(\nu|\mu)}\sqrt{I(\nu|\mu)}-\frac{\kappa C}{2}H(\nu|\mu),$$ for all $\nu.$ Thus, if $1+\frac{\kappa C}{2}> 0$, one has
$H(\nu|\mu)\leq
\frac{4C}{\left(1+\frac{\kappa C}{2}\right)^2}I(\nu|\mu).$ Taking
$d\nu(x)=f^2(x)\,dx$ with a smooth $f$ yields
$$\mathrm{Ent}_{{\mu}}(f^2)\leq \frac{4C}{\left(1+\frac{\kappa C}{2}\right)^2}\int |\nabla f|^2\,d\mu,$$
which completes the proof.
\endproof

So in the range $C+2/\kappa < 0$, $\T_{2}$ and $\LSI$ are
equivalent. In fact under the condition $\mathrm{Hess}\, V\geq
\kappa$, a strong enough Gaussian concentration property implies
the logarithmic Sobolev inequality, as shown in the following
theorem by Wang \cite{W97}.
\begin{thm}\label{Wang}
Let $d\mu(x)=e^{-V(x)}\,dx$, with $V$ of class $\mathcal{C}^2$ on $\R^k$ and such that $\mathrm{Hess}\, V\geq \kappa \mathrm{Id}$, with $\kappa \leq 0$; if there is some $C< -2/\kappa$, such that $\int e^{\frac{1}{C}d^2(x_o,x)}\,d\mu(x)$ is finite, for some (and thus all) point $x_o$, then $\mu$ verifies the logarithmic Sobolev inequality for some constant $\tilde{C}$.
\end{thm}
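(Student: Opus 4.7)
The plan is to combine the HWI inequality of Corollary \ref{HWI} (available because $\mathrm{Hess}\,V\ge\kappa\,\mathrm{Id}$) with a ``defective'' quadratic transport bound of the form $W_2^2(\nu,\mu)\le\rho H(\nu|\mu)+D$ extracted from the Gaussian integrability assumption, so as to first obtain a defective logarithmic Sobolev inequality $H\le A\,I+B$, and then to upgrade it to a genuine LSI via a Poincar\'e-type perturbation argument. The strict hypothesis $C<-2/\kappa$ is exactly what will allow us to arrange $\rho<-2/\kappa=2/|\kappa|$ in the transport bound, ensuring that the HWI quadratic term has a positive coefficient when one solves for $H$. In some sense, the proof is a ``perturbed'' version of Proposition \ref{prop HWI}, where the clean $\T_2$ estimate is replaced by one with an additive constant and must be paid for at the final step.

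\textbf{From integrability to a transport bound.} Pick $s$ slightly below $1/C$. Since $\int e^{sd^2(x_o,\cdot)}\,d\mu<\infty$, the entropy duality formula \eqref{eqL-d} applied to the function $u=sd^2(x_o,\cdot)$ gives
\[
s\int d^2(x_o,\cdot)\,d\nu\ \le\ H(\nu|\mu)+\log\int e^{sd^2(x_o,\cdot)}\,d\mu,\qquad\nu\in\mathrm{P}(\Rk).
\]
Combining with the triangle inequality $W_2(\nu,\mu)\le W_2(\nu,\delta_{x_o})+W_2(\delta_{x_o},\mu)$ and Young's inequality $(a+b)^2\le(1+\varepsilon)a^2+(1+1/\varepsilon)b^2$, one deduces
\[
W_2^2(\nu,\mu)\ \le\ \frac{1+\varepsilon}{s}\,H(\nu|\mu)+D(s,\varepsilon),
\]
where $D(s,\varepsilon)<\infty$ does not depend on $\nu$. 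Because $C<-2/\kappa$, one has $1/C>|\kappa|/2$, so we can choose $s$ close to $1/C$ and $\varepsilon$ small enough that $\rho:=(1+\varepsilon)/s$ still satisfies $\rho<2/|\kappa|$.

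\textbf{Defective LSI and tightening via Poincar\'e.} Plugging this bound into the HWI inequality $H(\nu|\mu)\le 2W_2\sqrt{I}+\frac{|\kappa|}{2}W_2^2$ and using Young's inequality $2W_2\sqrt{I}\le \eta W_2^2+I/\eta$ with $\eta>0$ chosen small enough that $(\eta+|\kappa|/2)\rho<1$ (which is possible precisely because $\rho<2/|\kappa|$), one obtains after rearrangement
\[
\mathrm{Ent}_\mu(f^2)\ \le\ A\int|\nabla f|^2\,d\mu+B\int f^2\,d\mu
\]
for explicit positive constants $A,B$: a defective logarithmic Sobolev inequality. To promote this to the tight LSI, the plan is to invoke Rothaus's lemma, which upgrades any defective LSI to a tight one at the cost of a Poincar\'e inequality for $\mu$. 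The main obstacle is therefore establishing the spectral gap: the semiconvex Bakry--\'Emery criterion with $\kappa\le 0$ is not sufficient, and one really has to reuse the Gaussian integrability separately, typically through a Lyapunov function argument with $W(x)=e^{\lambda d^2(x_o,x)/2}$ for a small $\lambda>0$, where the lower Hessian bound on $V$ together with the integrability yields a drift condition $\mathcal{L}W\le -cW+c'\mathbf{1}_K$ on some compact $K$, hence a Poincar\'e inequality by a Bakry--Barthe--Cattiaux--Guillin type criterion. Combined with the defective LSI, Rothaus's lemma then delivers the desired constant $\tilde C$.
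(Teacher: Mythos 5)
The survey does not actually prove Theorem \ref{Wang}: it is quoted from Wang \cite{W97}, whose original argument goes through dimension-dependent Harnack inequalities for the semigroup $P_t$ rather than through optimal transport. Your route --- extract a defective quadratic transport bound $W_2^2(\nu,\mu)\le\rho H(\nu|\mu)+D$ with $\rho<2/|\kappa|$ from the exponential integrability via \eqref{eqL-d}, feed it into the HWI inequality \eqref{eqHWI} to get a defective LSI, then tighten with Rothaus' lemma and a Poincar\'e inequality --- is therefore genuinely different from the cited proof, and it is essentially the modern argument (it is close in spirit to \cite{BK08} and to Proposition \ref{prop HWI} of the survey, as you say). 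Steps one and two are correct: the strict hypothesis $C<-2/\kappa$ gives you both $\rho<2/|\kappa|$ and the room to choose $\eta$ with $(\eta+|\kappa|/2)\rho<1$, and the resulting $H\le A\,I+B$ translates into the defective LSI for $f^2$ by homogeneity.

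The one genuine gap is in the Poincar\'e step. You assert that the Hessian lower bound ``together with the integrability yields'' the drift condition $\mathcal{L}W\le -cW+c'\mathbf{1}_K$, but this does not follow directly: computing $\mathcal{L}W$ for $W=e^{\lambda|x|^2/2}$ reduces the drift condition to $x\cdot\nabla V(x)\ge\delta|x|^2-c''$ for some $\delta>0$, and neither semiconvexity nor an integrability condition alone gives pointwise control of $\nabla V$. The missing lemma is the pointwise bound $V(x)\ge |x|^2/C-c$. It is true, but it needs an argument: set $h=V-|x|^2/C$, so that $\mathrm{Hess}\,h\ge-\lambda\,\mathrm{Id}$ with $\lambda=2/C+|\kappa|$ and $\int e^{-h}\,dx=M<\infty$; if $h(x_0)=-A$, then on each line through $x_0$ the convexity of $t\mapsto h(x_0+te)+\tfrac{\lambda}{2}t^2$ forces the connected component of $\{h<-A/2\}$ containing $x_0$ to contain, in at least half of the directions, a segment of length $\sqrt{A/\lambda}$ issued from $x_0$; hence $M\ge c_k(A/\lambda)^{k/2}e^{A/2}$, which bounds $A$. (In words: a $\lambda$-semiconvex well of depth $A$ cannot be narrower than $\sqrt{A/\lambda}$, so deep wells are incompatible with $e^{-h}\in L^1$.) Once you have $V(x)\ge|x|^2/C-c$, the inequality $x\cdot\nabla V(x)\ge V(x)-V(0)+\tfrac{\kappa}{2}|x|^2\ge(\tfrac1C-\tfrac{|\kappa|}{2})|x|^2-c'$ follows from semiconvexity, and the coefficient is positive precisely because $C<2/|\kappa|$ --- note that the strict hypothesis is thus used a second time here. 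This is exactly the second condition of Theorem \ref{CGW}, and the Lyapunov criterion of \cite{BBCG08} then delivers the Poincar\'e inequality, after which Rothaus' lemma concludes as you indicate. So the architecture is right; you should state and prove the ``no deep wells'' lemma explicitly, since it is the only place where the two hypotheses genuinely interact to produce pointwise information.
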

Recently Barthe and Kolesnikov have generalized Wang's theorem to
different functional inequalities and other convexity defects
\cite{BK08}. Their proofs rely on Theorem \ref{res-07}. A drawback
of Theorem \ref{Wang} is that the constant $\tilde{C}$ depends too
heavily on the dimension $k$. In a series of papers \cite{Mil09a,
Mil09b,Mil09d}, E. Milman has shown that under curvature
conditions concentration inequalities and isoperimetric
inequalities are in fact equivalent with a \emph{dimension-free}
control of constants. Let us state a simple corollary of Milman's
results.
\begin{cor}
Let $d\mu(x)=e^{-V(x)}\,dx$, with $V$ of class $\mathcal{C}^2$ on $\R^k$ and such that $\mathrm{Hess}\, V\geq \kappa \mathrm{Id}$, with $\kappa \leq 0$; if there is some $C< -2/\kappa$ and $M>1$, such that $\mu$ verifies the following Gaussian concentration inequality
\begin{equation}\label{Wang Milman}
\mu(A^r)\geq 1-Me^{-\frac{1}{C}r^2},\quad r\geq 0
\end{equation}
for all $A$ such that $\mu(A)\geq 1/2$ and with $A^r=\{x \in \R^k; \exists y\in A, |x-y|_{2}\leq r\}$, then $\mu$ verifies the logarithmic Sobolev inequality with a constant $\tilde{C}$ depending only on $C$, $\kappa$ and $M$. In particular, the constant $\tilde{C}$ is independent on the dimension $k$ of the space.
\end{cor}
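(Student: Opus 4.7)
The plan is to combine E.~Milman's dimension-free equivalence between concentration and functional inequalities under semi-convexity with the $\T_2\Rightarrow\LSI$ direction provided by the HWI inequality of Proposition~\ref{prop HWI}. First I would invoke Milman's theorem from \cite{Mil09a,Mil09b,Mil09d}: under the hypothesis $\mathrm{Hess}\,V\geq\kappa\mathrm{Id}$, the single-space Gaussian concentration profile \eqref{Wang Milman} of $\mu$ upgrades to a dimension-free Gaussian concentration for the whole family $\{\mu^n;n\geq 1\}$, with a new constant $C'$ depending only on $C$, $\kappa$, and $M$. The key structural fact enabling this step is that semi-convexity tensorizes: the potential $V^{\oplus n}(x_1,\ldots,x_n)=\sum_iV(x_i)$ on $(\R^k)^n$ has $\mathrm{Hess}\geq\kappa\mathrm{Id}$ as well. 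By keeping careful track of Milman's quantitative estimates, one can arrange that $C'$ still satisfies $C'<-2/\kappa$, which is the decisive range for what follows.

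Next, I would apply Corollary~\ref{T2=concentration}, which exactly converts the dimension-free Gaussian concentration of $\{\mu^n;n\geq 1\}$ into the transport-entropy inequality $\T_2(C'')$ for some $C''$ depending only on $C'$, hence only on $C$, $\kappa$, $M$. Up to a controlled increase of constants, $C''<-2/\kappa$ remains valid. Finally, Proposition~\ref{prop HWI} -- which rests on the HWI inequality of Otto and Villani and requires precisely the hypothesis $C''<-2/\kappa$ -- transforms $\T_2(C'')$ into the logarithmic Sobolev inequality with constant
$$
\tilde C\;:=\;\frac{4C''}{\bigl(1+\kappa C''/2\bigr)^2},
$$
and this constant depends only on $C$, $\kappa$, and $M$, as claimed. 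In particular $\tilde C$ does not depend on $k$.

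The main obstacle will be Step~1: turning single-space Gaussian concentration into a \emph{dimension-free} property. The Marton-style tensorization of Proposition~\ref{tensorization} preserves independence of dimension only when the starting inequality is already linear in the transport cost (as happens for $\T_2$), which is not the case for raw concentration -- so one cannot simply iterate the one-dimensional bound. Milman's deep contribution bypasses this obstruction by showing that, under semi-convexity, the concentration profile actually controls a \emph{linear} (Cheeger- or Buser-type) isoperimetric inequality for $\mu$ itself, with constants depending only on $C$, $\kappa$, $M$; such linear inequalities are local in nature and tensorize in a dimension-free manner, from which dimension-free Gaussian concentration (and indeed $\LSI$ directly) can be recovered. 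Reproducing this bootstrap is the delicate heart of the proof, whereas Steps~2 and~3 are essentially routine applications of results already established in the survey.
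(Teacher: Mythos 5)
Your route is genuinely different from the one the survey intends: the corollary is presented as a \emph{direct} consequence of E.~Milman's theorems, which convert the concentration hypothesis, under the curvature lower bound, into a Gaussian \emph{isoperimetric} inequality for $\mu$ itself with constants depending only on $C$, $\kappa$ and $M$; Gaussian isoperimetry then yields $\LSI$ by the standard Ledoux--Bakry argument, with no detour through product measures, $\T_2$ or HWI. Your detour contains a genuine gap at its final step. Proposition~\ref{prop HWI} is usable only if the $\T_2$ constant you end up with satisfies the strict threshold condition $C''<-2/\kappa$; you assert that ``up to a controlled increase of constants'' this remains valid, but every link in your chain --- Milman's implication, the tensorization, and the passage from dimension-free concentration to $\T_2$ in Corollary~\ref{T2=concentration} --- loses a universal multiplicative factor. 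When the hypothesis $C<-2/\kappa$ is nearly saturated, even a slight degradation pushes $C''$ past $-2/\kappa$, the factor $\bigl(1+\kappa C''/2\bigr)^2$ ceases to be bounded away from zero (or the argument of Proposition~\ref{prop HWI} fails outright), and the HWI step gives nothing. As written, your argument proves the corollary only under a strictly stronger hypothesis of the form $C<-2/(\lambda\kappa)$ for some universal $\lambda>1$, not as stated.

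Step~1 is also not self-supporting. Applying Milman's theorem to $\mu^n$ presupposes a concentration inequality for $\mu^n$, which is precisely what is missing, so the remark that semi-convexity tensorizes does not by itself produce dimension-free concentration. Your fallback --- that Milman yields a Cheeger-type \emph{linear} isoperimetric inequality which tensorizes --- only produces exponential, not Gaussian, dimension-free concentration, which is insufficient for Corollary~\ref{T2=concentration}. What Milman's results actually provide under $\mathrm{Hess}\,V\geq\kappa\mathrm{Id}$ and $C<-2/\kappa$ (the condition guaranteeing that the concentration beats the negative curvature) is a Gaussian isoperimetric inequality for the single measure $\mu$; once you have that, $\LSI$ follows directly with a dimension-free constant, so the passage through $\T_2$ and HWI is both unnecessary and the source of the threshold problem above.
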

The conclusion of the preceding results is that when $C+2/\kappa < 0$, it holds
$$\LSI (C)\Rightarrow \T_{2}(C) \Rightarrow \text{ Gaussian concentration }\eqref{Wang Milman}\text{ with constant }1/C\Rightarrow \LSI(\tilde{C}),$$
and so these three inequalities are qualitatively equivalent in this range of parameters.
Nevertheless, the equivalence between $\LSI$ and $\T_2$ is no longer true when $\mathrm{Hess}\,V$ is unbounded from below. In \cite{CG06}, Cattiaux and Guillin were able to give
an example of a probability $\mu$ on $\R$ verifying $\T_{2}$, but
not $\LSI$. Cattiaux and Guillin's counterexample is discussed in
Theorem \ref{Cattiaux Guillin example} below.

\subsection{A refined version of Otto-Villani theorem}
We close this section with a recent result by Gozlan, Roberto and
Samson \cite{GRS09} which completes the picture showing that
$\T_{2}$ (and in fact many other transport-entropy inequalities)
is equivalent to a logarithmic Sobolev inequality restricted to a
subclass of functions.

Let us say that a function $f:\R^k\to\R$ is  $\lambda$-semiconvex, $\lambda\geq 0$, if the function $x\mapsto
f(x)+\frac{\lambda}{2}|x|^2$ is convex. If $f$ is $\mathcal{C}^2$ this is
equivalent to the condition $\mathrm{Hess}\, f(x)\geq -\lambda \mathrm{Id}.$
Moreover, if $f$ is $\lambda$-semiconvex, it is almost everywhere
differentiable, and for all $x$ where $\nabla f(x)$ is well
defined, one has
$$f(y)\geq f(x)+\nabla f(x)\cdot (y-x) -\frac{\lambda}{2}|y-x|^2.$$
for all $y\in \R^k.$

\begin{thm}\label{GRS}
Let $\mu$ be a \prob on $\R^k.$ The following propositions are
equivalent:
\begin{enumerate}
\item There exists $C_1>0$ such that $\mu$ verify the inequality
$\T_2(C_1)$. \item There exists $C_2>0$ such that for all $0\leq
\lambda<\frac{2}{C_2}$ and all $\lambda$-semiconvex
$f:\R^k\to\R$,
$$\mathrm{Ent}_{\mu}(e^f)\leq \frac{C_2}{\left(1-\frac{\lambda C_2}{2}\right)^2} \int |\nabla f|^2 e^f\, d\mu.$$
\end{enumerate}
The constants $C_1$ and $C_2$ are related in the following way:
$$(1)\Rightarrow (2) \text{ with } C_2=C_1.$$
$$(2)\Rightarrow (1) \text{ with } C_1=8C_2.$$
\end{thm}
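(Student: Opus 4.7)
The two implications follow different strategies. Direction $(1)\Rightarrow(2)$ rests on a one-sided HWI-type inequality in which the $\lambda$-semiconvexity of $f$ replaces the usual lower curvature bound on the reference measure. Direction $(2)\Rightarrow(1)$ goes through dimension-free Gaussian concentration and the characterization of $\T_2$ in Corollary \ref{T2=concentration}.

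For $(1)\Rightarrow(2)$, assume $\mu$ satisfies $\T_2(C_1)$, let $f$ be smooth and $\lambda$-semiconvex with $\lambda<2/C_1$, and set $Z=\int e^f\,d\mu$, $\nu=e^f\mu/Z$. By standard truncation one may assume $\mu$ and $\nu$ are absolutely continuous and compactly supported, so Theorem \ref{res-06} yields a Brenier map $T$ pushing $\mu$ onto $\nu$. The $\lambda$-semiconvexity of $f$ applied at the pair $(T(x),x)$ reads
$$
f(T(x)) \le f(x)+\nabla f(T(x))\cdot(T(x)-x)+\frac{\lambda}{2}\,|T(x)-x|^2.
$$
Integrating against $\mu$, using $T_\#\mu=\nu$ and Cauchy--Schwarz on the linear term, produces
$$
\int f\,d\nu-\int f\,d\mu\le W_2(\nu,\mu)\sqrt{\int|\nabla f|^2\,d\nu}+\frac{\lambda}{2}W_2^2(\nu,\mu).
$$
Combining with Jensen's inequality $\H(\nu|\mu)\le\int f\,d\nu-\int f\,d\mu$ (because $\log Z\ge\int f\,d\mu$) and with $\T_2(C_1)$ in the form $W_2^2\le C_1\H$ leads to $\H(\nu|\mu)\le\frac{C_1}{(1-\lambda C_1/2)^2}\int|\nabla f|^2\,d\nu$. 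Multiplying through by $Z$ restores the normalization and gives the restricted LSI with $C_2=C_1$.

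For $(2)\Rightarrow(1)$, first note that the restricted LSI tensorizes: coordinate sections of any $\lambda$-semiconvex function on $(\R^k)^n$ are $\lambda$-semiconvex, so the usual subadditivity of entropy promotes the one-dimensional inequality to $\mu^n$. Given a $1$-Lipschitz function $g$ on $(\R^k)^n$, I would regularize it by the sup-convolution $g^\varepsilon(x):=\sup_y\{g(y)-|x-y|^2/(2\varepsilon)\}$, which is $1$-Lipschitz, $(1/\varepsilon)$-semiconvex, and satisfies $g\le g^\varepsilon\le g+\varepsilon/2$. For $0\le t<2\varepsilon/C_2$, the function $tg^\varepsilon$ has semiconvexity modulus $t/\varepsilon<2/C_2$, so the restricted LSI combined with Herbst's differential trick yields an exponential-moment bound on $g^\varepsilon$. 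Optimizing the joint choice of $\varepsilon$ and $t$ to trade the regularization error $\varepsilon/2$ against the Gaussian-type exponent gives, in the limit, $\log\int e^{tg}\,d\mu^n\le t\int g\,d\mu^n+a\,t^2C_2$ for all $t\ge 0$ and some explicit $a>0$. Markov's inequality then delivers dimension-free Gaussian concentration for $\mu^n$, and Corollary \ref{T2=concentration} converts this into $\T_2(C_1)$ with $C_1=8C_2$ after careful bookkeeping through the regularization, Herbst step, median-to-mean passage, and the concentration-to-transport equivalence.

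The main obstacle is the regularization and limit step in $(2)\Rightarrow(1)$: the restricted LSI can only be applied to functions with semiconvexity modulus strictly below $2/C_2$, whereas a generic $1$-Lipschitz function need not be semiconvex at all and its sup-convolution approximation has modulus $1/\varepsilon$ diverging as $\varepsilon\to 0$. Running Herbst's argument simultaneously in the time parameter $t$ and the regularization scale $\varepsilon$ forces a delicate balance between Lipschitz control and Gaussian dependence on $t$ in the exponential-moment bound, and it is precisely this balance that accounts for the multiplicative loss by $8$ in the final constant.
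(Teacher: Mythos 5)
The survey states this theorem without proof and simply refers to \cite{GRS09}, so there is no in-paper argument to compare yours against; I am judging your proposal on its own merits. Your architecture is the right one: an HWI-type argument in which the curvature defect is carried by the semiconvexity of $f$ rather than by the potential of $\mu$ for $(1)\Rightarrow(2)$, and tensorization, sup-convolution, Herbst and Corollary \ref{T2=concentration} for $(2)\Rightarrow(1)$. The computation in $(1)\Rightarrow(2)$ is correct and does produce $C_2=C_1$. One repair, though: you cannot ``assume $\mu$ absolutely continuous by truncation'' --- a measure satisfying $\T_2$ need not have a density, and mollifying $\mu$ would force you to pass to the limit in an inequality involving $\nabla f$. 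The Brenier map is in fact unnecessary: for smooth $\lambda$-semiconvex $f$ the inequality $f(y)\le f(x)+\nabla f(y)\cdot(y-x)+\frac{\lambda}{2}|y-x|^2$ holds for \emph{all} $x,y$, so you may integrate it against an arbitrary optimal coupling $\pi$ of $\mu$ and $\nu=e^f\mu/Z$ and apply Cauchy--Schwarz; this gives exactly your display with no regularity assumption on $\mu$.

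The genuine gap is the constant in $(2)\Rightarrow(1)$. Carrying out the Herbst step you sketch gives, for $1$-Lipschitz $g$ and $0\le t<2\varepsilon/C_2$,
$$
\log\int e^{tg}\,d\mu^n\le t\int g\,d\mu^n+\frac{t\varepsilon}{2}+\frac{C_2t^2}{1-\frac{tC_2}{2\varepsilon}},
$$
and, writing $u=tC_2/(2\varepsilon)$, the coefficient of $C_2t^2$ is $\frac{1}{4u}+\frac{1}{1-u}$, whose minimum over $u\in(0,1)$ is $\frac94$ (attained at $u=1/3$). Through Chebyshev and Corollary \ref{T2=concentration} this yields $\T_2(9C_2)$, and no choice of $\varepsilon(t)$ in your scheme reaches $8C_2$: the equation $\frac{1}{4u}+\frac{1}{1-u}=2$ has no real solution. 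So the ``careful bookkeeping'' you appeal to cannot produce the stated constant along this route; the qualitative equivalence is established, but the factor $8$ requires the sharper argument of \cite{GRS09}. Everything else in that direction --- the tensorization (sections of $\lambda$-semiconvex functions are $\lambda$-semiconvex, so subadditivity of entropy applies), the properties $g\le g^\varepsilon\le g+\varepsilon/2$, $\|g^\varepsilon\|_{\mathrm{Lip}}\le 1$ and the $(1/\varepsilon)$-semiconvexity of $g^\varepsilon$, and the passage from the Laplace bound to dimension-free Gaussian concentration --- is sound.
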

More general results can be found in \cite[Theorem 1.8]{GRS09}.
Let us emphasize the main difference between this theorem and
Proposition \ref{prop HWI} : in the result above the curvature
assumption is made on the functions $f$ and not on the potential
$V$. A nice corollary of Theorem \ref{GRS}, is the following
perturbation result:
\begin{thm}\label{bounded perturbation}
Let $\mu$ be a probability measure on $\R^k$ and consider
$d\tilde{\mu}(x)=e^{\varphi(x)}\,dx$, where $\varphi:\R^k\to\R$ is
bounded. If $\mu$ verifies $\T_2(C)$, then $\tilde{\mu}$ verifies
$\T_2(8e^{\mathrm{Osc}(\varphi)}C)$, where
$\mathrm{Osc}(\varphi)=\sup \varphi - \inf \varphi.$
\end{thm}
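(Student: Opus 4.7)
The plan is to combine the characterization of $\T_2$ given in Theorem \ref{GRS} with a classical Holley--Stroock perturbation argument. Throughout, I interpret $\tilde{\mu}$ as the normalized Holley--Stroock perturbation $d\tilde{\mu}=Z^{-1}e^{\varphi}\,d\mu$, where $Z=\int e^{\varphi}\,d\mu$; the key feature is that the Radon--Nikodym derivative $d\tilde{\mu}/d\mu$ is pinched between $Z^{-1}e^{\inf\varphi}$ and $Z^{-1}e^{\sup\varphi}$.

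First, I would apply the direction $(1)\Rightarrow(2)$ of Theorem \ref{GRS} to $\mu$: for every $\lambda\in[0,2/C)$ and every $\lambda$-semiconvex $f:\R^k\to\R$,
\[
\mathrm{Ent}_{\mu}(e^f)\le \frac{C}{(1-\lambda C/2)^2}\int |\nabla f|^2 e^f\,d\mu.
\]
Next, I would transfer this restricted log-Sobolev inequality to $\tilde{\mu}$. Using the variational formula $\mathrm{Ent}_{\nu}(g)=\inf_{t>0}\int\bigl(g\log(g/t)-g+t\bigr)\,d\nu$, whose integrand is pointwise nonnegative, the density comparison $d\tilde{\mu}\le (e^{\sup\varphi}/Z)\,d\mu$ yields
\[
\mathrm{Ent}_{\tilde{\mu}}(e^f)\le \frac{e^{\sup\varphi}}{Z}\,\mathrm{Ent}_{\mu}(e^f),
\]
while the reverse comparison $d\mu\le (Z/e^{\inf\varphi})\,d\tilde{\mu}$ applied to the Dirichlet-type energy gives
\[
\int |\nabla f|^2 e^f\,d\mu\le \frac{Z}{e^{\inf\varphi}}\int |\nabla f|^2 e^f\,d\tilde{\mu}.
\]
Multiplying these two controls together produces
\[
\mathrm{Ent}_{\tilde{\mu}}(e^f)\le \frac{e^{\mathrm{Osc}(\varphi)}\,C}{(1-\lambda C/2)^2}\int |\nabla f|^2 e^f\,d\tilde{\mu}
\]
for every $\lambda\in[0,2/C)$ and every $\lambda$-semiconvex $f$.

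Set $\tilde{C}:=e^{\mathrm{Osc}(\varphi)}C\ge C$ and restrict $\lambda$ to $[0,2/\tilde{C})$. Then $\lambda C/2\le \lambda\tilde{C}/2<1$, so $(1-\lambda C/2)^{-2}\le (1-\lambda\tilde{C}/2)^{-2}$, and the previous inequality upgrades to
\[
\mathrm{Ent}_{\tilde{\mu}}(e^f)\le \frac{\tilde{C}}{(1-\lambda\tilde{C}/2)^2}\int |\nabla f|^2 e^f\,d\tilde{\mu}.
\]
This is exactly condition (2) of Theorem \ref{GRS} for $\tilde{\mu}$ with constant $C_2=\tilde{C}$. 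Applying the implication $(2)\Rightarrow(1)$ of that theorem then delivers $\T_2(8\tilde{C})=\T_2(8e^{\mathrm{Osc}(\varphi)}C)$ for $\tilde{\mu}$, which is the desired conclusion.

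The only point requiring a moment of care is that the restricted log-Sobolev inequality involves a class of test functions (the $\lambda$-semiconvex ones) that must be stable under the perturbation; this is automatic here because $\lambda$-semiconvexity of $f$ is a property of $f$ alone and does not involve the reference measure. Apart from that, the whole argument is routine Holley--Stroock bookkeeping based on pointwise bounds on $d\tilde{\mu}/d\mu$, and the factor $8$ in the final constant is inherited directly from the constant in the implication $(2)\Rightarrow(1)$ of Theorem \ref{GRS}.
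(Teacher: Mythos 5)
Your proof is correct and follows exactly the route the paper indicates (the paper only sketches it by pointing to Theorem \ref{GRS}): the Holley--Stroock pinching of $d\tilde{\mu}/d\mu$ applied to the restricted logarithmic Sobolev inequality, with the entropy handled via the variational formula with pointwise nonnegative integrand, the monotonicity of $\lambda\mapsto(1-\lambda C/2)^{-2}$ in $C$ used to upgrade the constant to $\tilde{C}=e^{\mathrm{Osc}(\varphi)}C$, and the factor $8$ inherited from the implication $(2)\Rightarrow(1)$. All steps check out, including your correct reading of $\tilde{\mu}$ as the normalized perturbation $Z^{-1}e^{\varphi}\,d\mu$ and your observation that $\lambda$-semiconvexity is a measure-independent class of test functions.
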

Many functional inequalities of Sobolev type enjoy the same
bounded perturbation property (without the factor $8$). For the
Poincar\'e inequality or the logarithmic Sobolev inequality, the
proof (due to Holley and Stroock) is almost straightforward (see
e.g \cite[Theorems 3.4.1 and 3.4.3]{Log-Sob}). For
transport-entropy inequalities, the question of the perturbation
was raised in \cite{OV00} and remained open for a long time. The
proof of Theorem \ref{bounded perturbation} relies on the
representation of $\T_2$ as a restricted logarithmic Sobolev
inequality provided by Theorem \ref{GRS}. Contrary to Sobolev type
inequalities, no direct proof of Theorem \ref{bounded
perturbation} is known.

\section{Workable sufficient conditions for transport-entropy inequalities}\label{section sufficient conditions}
In this section, we review some of the known sufficient conditions
on $V:\R^k\to\R$ under which $d\mu=e^{-V}\,dx$ verifies  a
transport-entropy inequality of the form
$\mathcal{T}_{\theta(d)}\leq H$. Unlike Section \ref{section
uniformly convex}, the potential $V$ is not supposed to be
(uniformly) convex.
\subsection{Cattiaux and Guillin's restricted logarithmic Sobolev method}
Let $\mu$ be a probability measure on $\R^k$ such that $\int
e^{\varepsilon |x|^2}\,d\mu(x)<+\infty$, for some $\varepsilon>0$.
Following Cattiaux and Guillin in \cite{CG06}, let us say that
$\mu$ verifies the restricted logarithmic Sobolev inequality
$\rLSI(C,\eta)$ if
\begin{equation*}
   \mathrm{Ent}_\mu(f^2)\leq C\int |\nabla f|^2\,d\mu,
\end{equation*}
for all smooth $f:\R^k\to \R$ such that
$$f^2(x)\leq \left(\int f^2\,d\mu\right)e^{\eta |x_{o}-x|^2+\int |x_{o}-y|^2\,d\mu(y)},\quad x\in \R^k.$$
Using Bobkov-Gentil-Ledoux proof of Otto-Villani theorem,
Cattiaux and Guillin obtained the following result (see
\cite[Theorem 1.17]{CG06}).
\begin{thm}
Let $\mu$ be a probability measure on $\R^k$ such that $\int
e^{\varepsilon |x|^2}\,d\mu(x)<+\infty$, for some $\varepsilon>0$.
If the restricted logarithmic Sobolev inequality $\rLSI(C,\eta)$
holds for some $\eta<\varepsilon/2$, then $\mu$ verifies the
inequality $\T_{2}(\tilde{C})$, for some $\tilde{C}>0$.
\end{thm}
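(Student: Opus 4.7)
The plan is to adapt the Bobkov-Gentil-Ledoux Hamilton-Jacobi argument (first proof of Theorem \ref{OV}) to the restricted setting. For a bounded smooth function $f:\R^k\to\R$ and a constant $\tilde{C}>0$ to be chosen, introduce the Hopf-Lax semigroup
$$Q_tf(x) = \inf_{y\in\R^k}\Bigl\{f(y)+\tfrac{1}{\tilde{C}t}|x-y|^2\Bigr\}, \qquad t>0,$$
which solves $\partial_tQ_tf+\tfrac{\tilde{C}}{4}|\nabla Q_tf|^2=0$, and set $Z_t=\int e^{tQ_tf}\,d\mu$. Via the Bobkov-G\"otze dual characterization (Corollary \ref{BG2}) combined with the rescaling identity $\tau Q_\tau f(x)=\inf_y\{\tau f(y)+\tfrac{1}{\tilde{C}}|x-y|^2\}$, proving $Z_\tau\leq e^{\tau\int f\,d\mu}$ for some positive $\tau$ is equivalent to the dual form of $\T_2(\tilde{C})$ applied to $\tau f$.

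The crucial step is the application of $\rLSI(C,\eta)$ to $g_t=e^{tQ_tf/2}$, which requires the growth condition
$$g_t^2(x)\leq \Bigl(\int g_t^2\,d\mu\Bigr)\exp\!\bigl(\eta|x-x_o|^2+K\bigr),\quad K:=\int|y-x_o|^2\,d\mu(y).$$
The bounds $Q_tf(x)\leq f(x_o)+\tfrac{1}{\tilde{C}t}|x-x_o|^2$ and $Q_tf\geq -\|f\|_\infty$ yield $g_t^2(x)/Z_t\leq \exp(2t\|f\|_\infty+\tfrac{1}{\tilde{C}}|x-x_o|^2)$, so the condition is verified as soon as $\tilde{C}\geq 1/\eta$ and $2t\|f\|_\infty\leq K$, the finiteness of $K$ being guaranteed by $\int e^{\varepsilon|x|^2}\,d\mu<\infty$. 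Choosing $\tilde{C}=C$ (under the working assumption $C\eta\geq 1$, which can always be arranged by a preliminary dilation of the distance), the restricted LSI combined with the Hamilton-Jacobi equation produces, after the familiar Bobkov-Gentil-Ledoux bookkeeping, the differential inequality $tZ_t'-Z_t\log Z_t\leq 0$ on $(0,\tau]$ with $\tau=K/(2\|f\|_\infty)$. Integrating gives $Z_\tau\leq e^{\tau\int f\,d\mu}$, i.e.\ the dual $\T_2(\tilde{C})$ bound for the rescaled function $\tau f$.

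The main obstacle is that the growth condition confines the argument to bounded $f$ and to time intervals $(0,\tau]$ depending on $\|f\|_\infty$, so the preceding step yields the dual inequality only for target functions $h=\tau f$ of controlled sup-norm. Extending to all bounded $h$, and thence by density in $W_2$ and relative entropy to all probability measures $\nu$, is the technical core: one iterates the Hamilton-Jacobi bound over consecutive time windows, exploiting the semigroup property $Q_{s+t}=Q_s\circ Q_t$ to propagate the growth condition at each stage. The strict inequality $\eta<\varepsilon/2$ (as opposed to $\eta\leq\varepsilon/2$) provides precisely the integrability slack needed to ensure that the test functions fed to $\rLSI(C,\eta)$ at every intermediate step remain square-integrable against $\mu$ with uniform constants, thereby delivering a final transport-entropy constant $\tilde{C}=\tilde{C}(C,\eta,\varepsilon)$ depending only on $C$, $\eta$ and the quadratic exponential moments of $\mu$.
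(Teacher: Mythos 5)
The survey does not actually prove this statement; it only records that Cattiaux and Guillin obtained it in \cite[Theorem 1.17]{CG06} ``using Bobkov--Gentil--Ledoux proof of Otto--Villani theorem'', so your choice of the Hamilton--Jacobi route is indeed the intended one, and your verification of the admissibility condition for bounded $f$ on the window $t\le K/(2\|f\|_\infty)$ is correct. Two points, however, do not hold up. First, the parenthetical claim that $C\eta\ge 1$ ``can always be arranged by a preliminary dilation of the distance'' is false: under $x\mapsto\lambda x$ the restricted LSI constant scales as $C\mapsto\lambda^2 C$ while the admissibility parameter scales as $\eta\mapsto\eta/\lambda^2$, so the product $C\eta$ is dilation-invariant. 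This slip is harmless only because you may simply take $\tilde C=\max(C,1/\eta)$: enlarging $\tilde C$ weakens the target inequality, and the Hamilton--Jacobi step still closes since applying $\rLSI(C,\eta)$ to $e^{tQ_tf/2}$ produces the factor $C/\tilde C\le 1$ in front of $-t^2\int\partial_tQ_tf\,e^{tQ_tf}\,d\mu$.

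The genuine gap is the final extension step. What your argument delivers is the dual inequality $\int e^{Q_1^{(\tilde C)}h}\,d\mu\le e^{\int h\,d\mu}$ only for test functions $h=\tau f$ with $\|h\|_\infty=\tau\|f\|_\infty\le K/2$; rescaling $f$ cannot escape this cap, since the admissible horizon $\tau$ shrinks exactly in proportion to $\|f\|_\infty$. The proposed remedy --- iterating over consecutive time windows via $Q_{s+t}=Q_s\circ Q_t$ --- does not work as stated: concatenating two half-steps replaces $Q_{1/2}^{(\tilde C)}$ by $Q_1^{(2\tilde C)}$, so each splitting doubles the effective transport constant, and since $\|h\|_\infty$ is unbounded over the class of test functions needed for Corollary \ref{BG2}, no single finite $\tilde C$ survives the limit. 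Nor is the obstruction one of square-integrability, so the assertion that $\eta<\varepsilon/2$ supplies ``integrability slack'' misidentifies where the hypotheses $\int e^{\varepsilon|x|^2}\,d\mu<\infty$ and $\eta<\varepsilon/2$ actually enter; in \cite{CG06} they are used quantitatively to verify admissibility of $e^{tQ_tf}$ on the whole interval $t\in(0,1]$ for a sufficiently rich class of $f$ (exploiting the a priori Gaussian moment to control $\log Z_t$ from below and the quadratic upper bound $tQ_tf(x)\le tf(y)+|x-y|^2/\tilde C$ uniformly), not to patch an iteration. As written, the proof establishes only a restricted dual inequality and the passage to $\T_2(\tilde C)$ is missing.
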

The interest of this theorem is that the restricted logarithmic
Sobolev inequality above is strictly weaker than the usual one.
Moreover, workable sufficient conditions for the $\rLSI$ can be
given. Let us start with the case of the real axis.
\begin{thm}
Let $d\mu(x)=e^{-V(x)}\,dx$ be a probability measure on $\R$ with
$\int e^{\varepsilon|x|^2}\,d\mu(x)<+\infty$ for some
$\varepsilon>0$. If $\mu$ is such that
$$A^+=\sup_{x\geq 0} \int_{x}^{+\infty}t^2e^{-V(t)}\,dt \int_{0}^x e^{V(t)}\,dt\quad\text{and}\quad A^-=\sup_{x\leq 0} \int_{-\infty}^xt^2e^{-V(t)}\,dt \int_{x}^0 e^{V(t)}\,dt$$ are finite then $\mu$ verifies $\rLSI(C,\eta)$, for some $C,\eta>0$ and so it verifies also $\T_{2}(\tilde{C})$ for some $\tilde{C}>0$.
\end{thm}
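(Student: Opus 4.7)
By the preceding theorem (Cattiaux--Guillin), it suffices to produce constants $C>0$ and $\eta<\varepsilon/2$ such that $\mu$ satisfies $\rLSI(C,\eta)$; then $\T_2(\tilde C)$ follows immediately. The plan is therefore to establish the restricted logarithmic Sobolev inequality via a Muckenhoupt/Hardy-type criterion in which the ``growth weight'' $e^{\eta(x-x_o)^2}$ of the restricted class is absorbed into the integral conditions $A^\pm$.

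First, I would recall the Bobkov--G\"otze Muckenhoupt characterization of the logarithmic Sobolev inequality on $\R$: with $m$ a median of $\mu$, the inequality $\LSI$ is controlled by
\[
\sup_{x\ge m}\mu([x,\infty))\log\!\frac{1}{\mu([x,\infty))}\int_m^x e^{V(t)}\,dt
\]
and its symmetric counterpart. For the \emph{restricted} inequality, only functions with $f^2(x)\le M e^{\eta(x-x_o)^2}$ (for $M=\int f^2\,d\mu\cdot e^{\int(x_o-y)^2 d\mu(y)}$) are tested. Following the standard Hardy-type derivation, one reduces $\mathrm{Ent}_\mu(f^2)$ on the half-line $[0,\infty)$ to a weighted Poincar\'e inequality of the form
\begin{equation*}
\int_0^\infty (f(x)-f(0))^2\,\rho(x)\,d\mu(x)\;\le\;C'\int_0^\infty (f'(x))^2\,d\mu(x),
\end{equation*}
where the weight $\rho(x)$ incorporates the logarithm $\log(f^2/\!\int f^2 d\mu)$. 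On the restricted class this logarithm is bounded by $\eta x^2+\mathrm{const}$, so the effective weight is $\rho(x)\lesssim 1+\eta x^2$, and the $x^2$ factor is what produces the integrand $t^2 e^{-V(t)}$.

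Next, by Muckenhoupt's weighted Hardy inequality, such a weighted Poincar\'e inequality on $[0,\infty)$ holds with a finite constant if and only if
\[
\sup_{x\ge 0}\Bigl(\int_x^\infty(1+\eta t^2)e^{-V(t)}\,dt\Bigr)\Bigl(\int_0^x e^{V(t)}\,dt\Bigr)<\infty,
\]
and this is finite precisely when $A^+<\infty$, since $\int e^{\varepsilon t^2}d\mu<\infty$ controls the contribution $\int_x^\infty e^{-V(t)}dt$ for $x$ bounded and the $t^2$-piece is $\eta A^+$ up to a universal factor. The same argument on $(-\infty,0]$ uses $A^-$. Summing both halves with a standard splitting of $\mathrm{Ent}_\mu(f^2)$ around the origin yields $\rLSI(C,\eta)$.

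Finally, $\eta$ can be chosen strictly smaller than $\varepsilon/2$: the argument only requires $\eta$ smaller than an absolute multiple of the Muckenhoupt constant, so by shrinking $\eta$ (and enlarging $C$ accordingly) one obtains $\rLSI(C,\eta)$ with $\eta<\varepsilon/2$, whence the previous theorem yields $\T_2(\tilde C)$. \emph{The main obstacle} is the weighted Hardy step: one must identify the correct weight $\rho$ produced by the restricted growth condition and then verify that the finiteness of $A^\pm$ (with the specific $t^2$ factor, not a general power) is exactly what the Muckenhoupt criterion demands. Once that equivalence is set up, the remainder of the proof reduces to bookkeeping between the constants $C$, $\eta$, and $\varepsilon$.
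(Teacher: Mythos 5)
The survey itself states this theorem without proof, deferring to Cattiaux and Guillin \cite{CG06}, and your outline does follow their route. The two points you get exactly right are the identification of $A^{\pm}$ as the Muckenhoupt constants for the weighted Hardy inequality on each half-line with weight $t^2e^{-V(t)}$ against $e^{-V(t)}$ (which also dominate the unweighted Muckenhoupt constants, so the Poincar\'e inequality comes along for free), and the observation about $\eta$: since shrinking $\eta$ only shrinks the restricted class, $\rLSI(C,\eta)$ implies $\rLSI(C,\eta')$ for every $\eta'\le\eta$ with the \emph{same} constant, so reaching $\eta<\varepsilon/2$ costs nothing.

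The genuine gap is the step you dismiss as bookkeeping: the passage from $\mathrm{Ent}_\mu(f^2)$ to a weighted Poincar\'e inequality for $(f-f(0))^2$. The only thing the restriction gives you pointwise is $\log\bigl(f^2/\int f^2\,d\mu\bigr)\le\eta(x-x_o)^2+K$, hence $\mathrm{Ent}_\mu(f^2)\le\int f^2\bigl(\eta(x-x_o)^2+K\bigr)\,d\mu$; but this upper bound is \emph{not} dominated by $C\int f'^2\,d\mu$ --- for $f$ constant it is strictly positive while the Dirichlet energy vanishes. The weight multiplies $f^2$, not the centered square $(f-f(0))^2$, and the two are not comparable, so you cannot simply feed this bound into Muckenhoupt's criterion. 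Nor does Rothaus-type centering fix it for free: replacing $f$ by $g=f-a$ destroys the growth restriction, and $\log\bigl(g^2/\int g^2\,d\mu\bigr)$ is no longer controlled by $\eta x^2+K$ when $\int g^2\,d\mu$ is small. What is actually required is either a defective inequality $\mathrm{Ent}_\mu(f^2)\le a\int f'^2\,d\mu+b\int f^2\,d\mu$ on the restricted class followed by a tightening via the Poincar\'e inequality organized so as to stay inside that class, or the truncation device of \cite{CG06}, where one bounds the entropy by the weighted integral of $(|f|-c)_+^2$ for a level $c$ at which $(|f|-c)_+$ vanishes on a set of positive measure, up to a variance term, and only then applies the Hardy inequality. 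Until one of these mechanisms is made explicit, the proof is incomplete; the remaining parts of your plan are sound.
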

The finiteness of $A^+$ and $A^-$ can be determined using the
following proposition (see \cite[Proposition 5.5]{CG06}).
\begin{prop}\label{CG}
Suppose that $d\mu(x)=e^{-V(x)}\,dx$ be a probability measure on
$\R$ with $V$ of class $\mathcal{C}^2$ such that $\frac{V''}{(V')^2}(x)\to
0$ when $x\to \infty$. If $V$ verifies
\begin{equation}\label{Cattiaux-Guillin}
\limsup_{x\to\pm\infty} \left|\frac{x}{V'(x)}\right|<+\infty,
\end{equation}
then $A^+$ and $A^-$ are finite (and there is $\varepsilon>0$ such
that $\int e^{\varepsilon|x|^2}\,d\mu(x)<+\infty$).
\end{prop}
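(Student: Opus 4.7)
My plan is to establish sharp asymptotics for each of the four integrals appearing in $A^+$ and $A^-$ using integration by parts, and to absorb the resulting error terms thanks to the hypothesis $V''/(V')^2 \to 0$. By symmetry, I focus on $A^+$.

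\textbf{Step 1 (growth of $V$ and integrability).} Since $\mu$ is a probability measure and $|V'(x)| \geq |x|/K$ for $|x|$ large (for some $K$, by the $\limsup$ assumption), $V'$ must be positive at $+\infty$ and negative at $-\infty$. Integrating $V'(t) \geq t/K$ yields $V(x) \geq x^2/(2K) - C_0$ for large $x$, and symmetrically on the left. Hence $\int e^{\varepsilon x^2}\,d\mu(x) < \infty$ for every $\varepsilon < 1/(2K)$.

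\textbf{Step 2 (denominator integral).} Fix $\eta \in (0,1)$ and choose $M$ so that $|V''(t)/V'(t)^2| \leq \eta$ for $t \geq M$. Writing $V'(t)e^{V(t)}\,dt = d(e^{V(t)})$ and integrating by parts,
\[
\int_M^x e^{V(t)}\,dt = \frac{e^{V(x)}}{V'(x)} - \frac{e^{V(M)}}{V'(M)} + \int_M^x \frac{V''(t)}{V'(t)^2}\,e^{V(t)}\,dt.
\]
Bounding the last integral by $\eta \int_M^x e^{V(t)}\,dt$ and rearranging gives
\[
\int_0^x e^{V(t)}\,dt \leq \frac{C_1\,e^{V(x)}}{V'(x)} + C_1'
\]
for all $x$ large enough.

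\textbf{Step 3 (numerator integral).} Using $V'(t)e^{-V(t)}\,dt = -d(e^{-V(t)})$ and integration by parts,
\[
\int_x^\infty t^2 e^{-V(t)}\,dt = \frac{x^2 e^{-V(x)}}{V'(x)} + \int_x^\infty e^{-V(t)}\!\left(\frac{2t}{V'(t)} - \frac{t^2 V''(t)}{V'(t)^2}\right)dt,
\]
the boundary term at $+\infty$ vanishing because $V(t) \geq t^2/(2K) - C_0$. For $t \geq M$ large, $|2t/V'(t)| \leq 2K+1$ and $|V''(t)/V'(t)^2| \leq \eta$, so
\[
(1-\eta)\int_x^\infty t^2 e^{-V(t)}\,dt \leq \frac{x^2 e^{-V(x)}}{V'(x)} + (2K+1)\int_x^\infty e^{-V(t)}\,dt.
\]
A completely analogous integration by parts (with $t^2$ replaced by $1$) shows $\int_x^\infty e^{-V(t)}\,dt \leq C_2\,e^{-V(x)}/V'(x)$, so finally
\[
\int_x^\infty t^2 e^{-V(t)}\,dt \leq C_3\,\frac{x^2 e^{-V(x)}}{V'(x)}
\]
for $x$ large.

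\textbf{Step 4 (conclusion).} Multiplying Steps 2 and 3 gives, for $x$ large,
\[
\int_x^\infty t^2 e^{-V(t)}\,dt \cdot \int_0^x e^{V(t)}\,dt \;\leq\; C_1C_3\left(\frac{x}{V'(x)}\right)^2 + C_1'C_3\,\frac{x^2\,e^{-V(x)}}{V'(x)}.
\]
The first term is bounded by the $\limsup$ hypothesis, and the second term tends to $0$ since $V(x) \geq x^2/(2K) - C_0$. On the compact interval where $x$ is not yet large, both integrals are finite and continuous, hence bounded. Thus $A^+ < \infty$, and $A^- < \infty$ by the symmetric argument on $(-\infty,0]$.

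\textbf{Main obstacle.} The delicate point is Step~3: the term $t^2 V''(t)/V'(t)^2$ involves the large factor $t^2$, and only the combination $V''(t)/V'(t)^2$ is known to be small. The trick is to not bound this term in isolation but to compare it with $\int_x^\infty t^2 e^{-V(t)}\,dt$ itself (with multiplicative factor $\eta < 1$) and absorb it into the left-hand side, which is the only way the small quantity $V''/(V')^2$ effectively controls the $t^2$ growth.
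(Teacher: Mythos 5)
Your proof is correct. Note that the paper itself gives no argument for this proposition --- it simply cites \cite[Proposition 5.5]{CG06} --- so your write-up supplies a proof where the survey offers none; the argument you give (integration by parts against $d(e^{\pm V})$, plus absorption of the error terms controlled by $V''/(V')^2$ into the left-hand side) is exactly the standard Laplace-type estimate behind the Cattiaux--Guillin criterion. All the steps check out: Step 1 correctly forces $V'>0$ near $+\infty$ (otherwise $e^{-V}$ would not be integrable) and yields the Gaussian minorization $V(x)\ge x^2/(2K)-C_0$, which both gives the integrability claim and kills the boundary terms at infinity in Step 3; the sign of the boundary term $-e^{V(M)}/V'(M)$ in Step 2 is the right one to discard; and the absorption of $\int_x^\infty e^{-V}\,t^2V''/(V')^2\,dt$ into $\eta\int_x^\infty t^2e^{-V}\,dt$ is legitimate because Step 1 already guarantees that the latter integral is finite. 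The only cosmetic remark is that the absorption and the integration by parts over $[x,\infty)$ implicitly use this a priori finiteness (obtained by truncating to $[x,R]$ and letting $R\to\infty$); you may want to say so explicitly, but it is routine given the Gaussian bound of Step 1.
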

The condition $\frac{V''}{(V')^2}(x)\to 0$ when $x\to \infty$ is
not very restrictive and appears very often in results of this
type (see \cite[Corollary 6.4.2 and Theorem 6.4.3]{Log-Sob} for
instance).

Now let us recall the following result by Bobkov and G\"otze (see
\cite[Theorem 5.3]{BG99} and \cite[Theorems 6.3.4 and
6.4.3]{Log-Sob}) dealing this time with the logarithmic Sobolev
inequality.
\begin{thm}\label{BG Log Sob}
Let $d\mu(x)=e^{-V(x)}\,dx$ be a probability measure on $\R$, and
$m$ a median of $\mu$. If $V$ is such that
\begin{gather*}
D^-=\sup_{x<m} \int_{-\infty}^x e^{-V(t)}\,dt \log\left(\frac{1}{\int_{-\infty}^x e^{-V(t)}\,dt}\right) \int_{x}^me^{V(t)}\,dt\\
D^+=\sup_{x>m} \int^{+\infty}_{x} e^{-V(t)}\,dt \log\left(\frac{1}{\int^{+\infty}_x e^{-V(t)}\,dt}\right) \int_{m}^xe^{V(t)}\,dt\\
\end{gather*}
are finite, then $\mu$ verifies the logarithmic Sobolev
inequality, and the optimal constant $C_{opt}$ verifies
$$\tau_{1} \max(D^-,D^+)\leq C_{opt}\leq \tau_{2} \max(D^-,D^+),$$
where $\tau_{1}$ and $\tau_{2}$ are known universal constants.

Moreover if $V$ is of class $\mathcal{C}^2$ and verifies
$\lim_{x\to \infty} \frac{V''}{(V')^2}(x)=0$, then $D^-$ and $D^+$
are finite if and only if $V$ verifies the following conditions:
$$\liminf_{x\to \infty} |V'(x)|>0 \qquad \text{and}\qquad \limsup_{x\to \infty}\frac{V(x)}{(V')^2(x)}<+\infty$$
\end{thm}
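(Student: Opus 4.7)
The plan is to decouple the theorem into two independent statements: first, a Hardy-type characterization establishing the two-sided estimate $\tau_1 \max(D^-,D^+) \le C_{opt} \le \tau_2 \max(D^-,D^+)$; second, an asymptotic computation that, under the regularity assumption on $V$, translates finiteness of $D^\pm$ into the stated conditions on $V'$.

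For the first part, I would adapt the classical Muckenhoupt theory to the entropic setting. Using a Rothaus-type reduction around the median (any $f$ can be decomposed via its restrictions to $(-\infty,m)$ and $(m,\infty)$, at the cost of a universal constant in both entropy and energy), it suffices to prove one-sided LSI's on each half-line. On $(m,\infty)$, the resulting one-sided inequality can be rewritten, after the substitution $g=f^2$ and a level-set decomposition of the entropy, as a weighted Hardy-type inequality with weight $e^{-V(t)}\,dt$ on one side and conjugate weight $e^{V(t)}\,dt$ on the other, plus an additional logarithmic factor coming from $\log(g/\int g\,d\mu)$. The sharp constant in this entropic weighted Hardy inequality is, up to universal factors, precisely
\[
\sup_{x>m}\int_x^{+\infty}e^{-V(t)}\,dt\,\log\!\left(\frac{1}{\int_x^{+\infty}e^{-V(t)}\,dt}\right)\int_m^x e^{V(t)}\,dt = D^+.
\]
The lower bound $\tau_1 D^+ \le C_{opt}$ is obtained by testing the LSI against smooth approximations of indicators $\mathbf{1}_{[x,+\infty)}$, optimized over $x>m$.

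For the second part, assume $V''/(V')^2 \to 0$ at infinity. An integration by parts gives the Laplace-type asymptotics
\[
\int_x^{+\infty} e^{-V(t)}\,dt = \frac{e^{-V(x)}}{V'(x)}(1+o(1)),\qquad
\int_m^x e^{V(t)}\,dt = \frac{e^{V(x)}}{V'(x)}(1+o(1)),
\]
as $x\to+\infty$. Taking logarithms, $\log(1/\int_x^{+\infty}e^{-V})=V(x)+\log V'(x)+o(1)$. Multiplying the three factors, the supremand in the definition of $D^+$ is asymptotically equivalent to $[V(x)+\log V'(x)]/V'(x)^2$. The condition $\liminf_{x\to+\infty}|V'(x)|>0$ both prevents $V'$ from collapsing and absorbs the auxiliary term $\log V'(x)$, while $\limsup_{x\to+\infty} V(x)/V'(x)^2<+\infty$ controls the main term; together they are necessary and sufficient for $D^+<+\infty$ (on compact regions away from $\pm\infty$ the integrand is bounded by continuity and integrability of $e^{-V}$). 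The same argument at $-\infty$ yields the criterion for $D^-$.

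The hard step is the first part: producing tight universal constants $\tau_1,\tau_2$ in the entropic Hardy inequality requires a careful adaptation of Muckenhoupt's interpolation in the presence of the $\log(g/\int g\,d\mu)$ weight, together with the symmetrization/reduction to half-line problems around the median. By contrast, the second part is mostly routine L'Hospital-type asymptotics, though one must be attentive in passing from $\log(1/\int_x^{+\infty}e^{-V})$ to $V(x)$: this requires $\liminf|V'|>0$ to dominate the secondary term $\log V'(x)$, a point easy to overlook.
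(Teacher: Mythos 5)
The paper does not prove this theorem: it is quoted from Bobkov--G\"otze \cite[Theorem 5.3]{BG99} and \cite[Theorems 6.3.4 and 6.4.3]{Log-Sob}, so there is no in-paper argument to compare against. Your outline follows exactly the route of those references (median splitting plus an entropic Muckenhoupt/Hardy criterion for the two-sided bound; Laplace-type asymptotics under $V''/(V')^2\to 0$ for the workable criterion), and it is sound as a plan. Two points deserve sharpening. First, for the lower bound $\tau_1 D^+\le C_{opt}$, ``smooth approximations of indicators'' is not quite the right family: a generic mollification of $\mathbf{1}_{[x,+\infty)}$ has uncontrolled Dirichlet energy, and one must use the capacitor profile $f(t)=\min\bigl(1,\int_m^{t\wedge x}e^{V}/\int_m^{x}e^{V}\bigr)$ vanishing on $(-\infty,m]$, whose energy is exactly $1/\int_m^x e^V$ while $\mathrm{Ent}_\mu(f^2)\gtrsim \mu([x,\infty))\log(1/\mu([x,\infty)))$. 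Second, in the ``only if'' direction of the asymptotic criterion you need to rule out the scenario where $V'(x_k)\to 0$ along a sequence while $V(x_k)+\log V'(x_k)$ stays bounded (which would keep the supremand $[V+\log V']/(V')^2$ finite without $\liminf|V'|>0$ holding); this is excluded because $\mu([x,\infty))\sim e^{-(V+\log V')}$ must tend to $0$, forcing $V+\log V'\to+\infty$ and hence blowing up the supremand whenever $V'\to 0$. With these two repairs your sketch is a faithful reconstruction of the cited proof.
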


\begin{thm}[Cattiaux-Guillin's counterexample]\label{Cattiaux Guillin example}
The probability measure $d\mu(x)=\frac{1}{Z}e^{-V(x)}\,dx$ defined
on $\R$ with $V(x)=|x|^3+3x^2\sin^2x+|x|^\beta$, with $Z$ a
normalizing constant and $2<\beta<5/2$ satisfies the inequality
$\T_{2}$ but not the logarithmic Sobolev inequality.
\end{thm}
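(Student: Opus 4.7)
The strategy is twofold: we establish $\T_{2}(\tilde C)$ through Cattiaux--Guillin's restricted logarithmic Sobolev route (Proposition \ref{CG} followed by the associated theorem), and we rule out the logarithmic Sobolev inequality via the Bobkov--G\"otze criterion (Theorem \ref{BG Log Sob}). Both arguments reduce to an asymptotic analysis of $V$, $V'$ and $V''$ as $|x|\to\infty$. Since $\beta>2$, the function $V$ lies in $\mathcal{C}^{2}(\R)$, and since $V(x)\geq |x|^{3}$ for $|x|$ large, $\int e^{\varepsilon x^{2}}\,d\mu(x)<\infty$ for every $\varepsilon>0$.

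The heart of the matter is to understand $V'$ near the points where its dominant term vanishes. For $x>0$ one has
\[V'(x)=3x^{2}\bigl(1+\sin 2x\bigr)+6x\sin^{2}x+\beta x^{\beta-1},\]
and the first (potentially largest) summand is zero exactly at $x_{k}:=k\pi-\pi/4$, where $\sin 2x_{k}=-1$ and $\sin^{2}x_{k}=1/2$. Setting $u=x-x_{k}$ and using $1+\sin 2x=2u^{2}+O(u^{4})$ and $\sin^{2}x=1/2-u+O(u^{3})$, Taylor expansion yields
\[V'(x)=6x^{2}u^{2}-6xu+3x+\beta x^{\beta-1}+O\!\bigl(x^{2}u^{3}+x^{\beta-2}\bigr).\]
Minimizing the visible quadratic in $u$ gives $\min_{u}(6x^{2}u^{2}-6xu+3x)=3x-3/2$, so for $x$ large the term $\beta x^{\beta-1}$ (super-linear since $\beta>2$) dominates and $V'(x)\geq \tfrac{1}{2}\beta x^{\beta-1}$ throughout a neighborhood of each $x_{k}$. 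A symmetric computation at the corresponding critical points for $x<0$ gives $|V'(x)|\geq \tfrac{1}{2}\beta |x|^{\beta-1}$ for $|x|$ large, while away from these critical bands $|V'(x)|$ has order $x^{2}$. An analogous inspection of $V''$, whose leading oscillatory term is $6x^{2}\cos 2x$, yields $|V''(x)|\leq C(1+x^{2})$ globally and $|V''(x_{k})|\asymp x_{k}$ (the contributions $6x_{k}$ and $-12 x_{k}\sin 2x_{k}=12x_{k}$ surviving while $6x_{k}^{2}\cos 2x_{k}=0$).

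With these estimates the three hypotheses of Proposition \ref{CG} are immediate: (i) integrability has been noted; (ii) $\limsup_{|x|\to\infty}|x/V'(x)|=0<\infty$ because $V'$ is at least of order $|x|^{\beta-1}$ with $\beta-1>1$; (iii) $V''/(V')^{2}\to 0$, the worst case being at $x_{k}$ where the ratio is $\asymp x_{k}^{3-2\beta}\to 0$ (since $\beta>3/2$), and $\lesssim 1/x^{2}$ elsewhere. Proposition \ref{CG} then supplies finite constants $A^{\pm}$, so $\mu$ satisfies $\rLSI(C,\eta)$ for suitable parameters, and the Cattiaux--Guillin theorem gives $\T_{2}(\tilde C)$.

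For the negative part, the $\mathcal{C}^{2}$ regularity and $V''/(V')^{2}\to 0$ are in place, so Theorem \ref{BG Log Sob} reduces the logarithmic Sobolev inequality to the joint conditions $\liminf_{|x|\to\infty}|V'(x)|>0$ (trivially true) and $\limsup_{|x|\to\infty}V(x)/V'(x)^{2}<\infty$. But at $x_{k}$, $V(x_{k})=x_{k}^{3}+\tfrac{3}{2}x_{k}^{2}+x_{k}^{\beta}\sim x_{k}^{3}$ (since $\beta<3$) while $V'(x_{k})=3x_{k}+\beta x_{k}^{\beta-1}\sim \beta x_{k}^{\beta-1}$, hence
\[\frac{V(x_{k})}{V'(x_{k})^{2}}\asymp x_{k}^{5-2\beta}\longrightarrow +\infty,\]
precisely because $\beta<5/2$. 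Therefore $D^{+}=+\infty$ and $\mu$ violates the logarithmic Sobolev inequality. The principal technical obstacle is the Taylor analysis at the critical points $x_{k}$ and tracking how the hypothesis $2<\beta<5/2$ splits the two regimes: $\beta>2$ lets the power correction $\beta x^{\beta-1}$ beat the linear slack $3x-3/2$ (and secures $\T_{2}$), while $\beta<5/2$ lets $V(x_{k})/V'(x_{k})^{2}$ diverge (and destroys $\mathbf{LS}$).
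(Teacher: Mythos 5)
Your proof is correct and follows essentially the same route as the paper: Proposition \ref{CG} (via the restricted logarithmic Sobolev inequality) for the $\T_2$ half, and Theorem \ref{BG Log Sob} evaluated along the critical sequence $x_k=k\pi-\pi/4$ for the failure of $\LSI$. The only remark is that the Taylor analysis near the $x_k$ is unnecessary for the positive half: all three summands of $V'(x)=3x^2(1+\sin 2x)+6x\sin^2x+\beta x^{\beta-1}$ are nonnegative for $x>0$, so $V'(x)\ge\beta x^{\beta-1}$ outright, which already yields $x/V'(x)\to0$ and $V''/(V')^2=O(x^{4-2\beta})\to0$.
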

\proof For all $x>0$,
\begin{align*}
V'(x)&=3x^2+6x\sin^2 x+6x^2\cos x \sin x + \beta x^{\beta-1}\\
&=3x^2(1+\cos 2x)+6x\sin^2 x+\beta x^{\beta-1}.
\end{align*}
and
\begin{align*}
V''(x)&=6x^2\cos 2x + 6x(1+2\sin 2x) + 6\sin^2 x +\beta(\beta-1)x^{\beta-2}.\\
\end{align*}
First, observe that $V'(x)>0$ for all $x>0$ and $V'(x)\to \infty$
when $x\to +\infty$. Moreover, for $x$ large enough,
$\left|\frac{V''}{V'^2}(x)\right|\leq D \frac{x^2}{x^{2\beta-2}}$,
and $0\leq \frac{x}{V'}\leq D\frac{x}{x^{\beta-1}}$, for some
numerical constant $D>0$. Since, $\beta>2$, it follows that
$\frac{V''}{V'^2}(x)\to0,$ and $\frac{x}{V'(x)}\to 0$ when
$x\to+\infty.$ Consequently, it follows from Proposition \ref{CG},
that $\mu$ verifies $\T_2(C)$, for some $C>0.$ On the other hand,
consider the sequence $x_k=\frac{\pi}{4}+k\pi$, then
$V'^2(x_k)=(6x_k+\beta x_k^{\beta-1})^2\sim \beta^2 (\pi
k)^{2\beta-2} $ and $V(x_k)\sim (k\pi)^3$. So
$\frac{V(x_k)}{V'^2(x_k)}\sim \beta^2 (\pi k)^{5-2\beta}$, and
since $\beta<5/2$, one concludes that $\limsup_{x\to +
\infty}\frac{V}{V'^2}(x)=+\infty$. According to Theorem \ref{BG
Log Sob}, it follows that $\mu$ does not verify the logarithmic-
Sobolev inequality.
\endproof

Recently, Cattiaux, Guillin and Wu have obtained in \cite{CGW08}
different sufficient conditions for the restricted logarithmic
Sobolev inequality $\rLSI$ in dimension $k\geq 1.$
\begin{thm}\label{CGW}
Let $\mu$ be a probabilikty measure on $\R^k$ with a density of
the form $d\mu(x)=e^{-V(x)}\,dx$, with $V:\R^k\to\R$ of class
$\mathcal{C}^2$. If one of the following conditions
$$\exists a<1, R,c>0,\text{ such that } \forall |x|>R,\qquad (1-a)|\nabla V(x)|^2-\Delta V(x) \geq c|x|^2$$
or
$$\exists R,c>0, \text{such that } \forall |x|>R,\qquad x\cdot \nabla V(x)\geq c|x|^2$$
is satisfied, then $\rLSI$ holds.
\end{thm}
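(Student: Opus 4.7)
The plan is to prove the theorem via the \emph{Lyapunov function} method, which is the standard tool introduced by Cattiaux-Guillin-Wu for establishing restricted logarithmic Sobolev inequalities. The strategy, for either assumption, is to produce a smooth positive function $W:\R^k\to[1,\infty)$ and constants $c_0,b>0$, $R>0$ such that
\begin{equation*}
\frac{\mathcal{L}W(x)}{W(x)}\le -c_0|x|^2 + b\mathbf{1}_{\{|x|\le R\}}(x),\qquad x\in\R^k,
\end{equation*}
where $\mathcal{L}=\Delta-\nabla V\cdot\nabla$ is the generator of the diffusion reversible with respect to $\mu.$ Such an inequality is a Lyapunov condition with control function $|x|^2$, and together with a local logarithmic Sobolev inequality on balls it is known (see \cite{CGW08}) to yield the restricted logarithmic Sobolev inequality $\rLSI(C,\eta)$ for suitable $C,\eta.$

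For the second condition $x\cdot\nabla V(x)\ge c|x|^2$ outside $B_R$, I would try $W(x)=\exp(\gamma|x|^2/2)$. A direct computation gives
\begin{equation*}
\frac{\mathcal{L}W}{W}(x)= \gamma k+\gamma^2|x|^2-\gamma\, x\cdot\nabla V(x),
\end{equation*}
and choosing any $\gamma\in(0,c)$ makes this $\le \gamma k-\gamma(c-\gamma)|x|^2$ for $|x|>R$, which is the desired inequality (with a constant $b$ absorbing the contribution of the compact region $|x|\le R$). For the first condition $(1-a)|\nabla V|^2-\Delta V\ge c|x|^2$, I would instead try $W(x)=\exp(\gamma V(x))$, for which
\begin{equation*}
\frac{\mathcal{L}W}{W}(x)=\gamma\Delta V(x)+\gamma(\gamma-1)|\nabla V(x)|^2=-\gamma\bigl[(1-\gamma)|\nabla V(x)|^2-\Delta V(x)\bigr].
\end{equation*}
Choosing $\gamma\in(0,a)$ yields $(1-\gamma)|\nabla V|^2-\Delta V\ge(1-a)|\nabla V|^2-\Delta V\ge c|x|^2$ outside $B_R$, and the Lyapunov inequality follows.

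Given the Lyapunov bound, I would then exploit the standard consequences: integrating $\mathcal{L}W\le 0$ outside a compact set against $\mu$ gives $\int |x|^2\,d\mu<\infty$ and, iterating or using a similar argument with $e^{\gamma|x|^2/2}$ replaced by $e^{\gamma|x|^2}$, one gets $\int e^{\varepsilon|x|^2}\,d\mu<\infty$ for some $\varepsilon>0$. This supplies the integrability hypothesis of the Cattiaux-Guillin restricted log-Sobolev theorem. Next, since $V$ is $\mathcal{C}^2$, the restriction of $\mu$ to any Euclidean ball is a smooth, compactly supported probability measure with density bounded above and below, and therefore satisfies a local logarithmic Sobolev inequality (via Holley-Stroock applied to a uniform density on the ball). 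Combining the Lyapunov inequality with the local log-Sobolev inequality through the variational argument of \cite{CGW08} delivers the restricted logarithmic Sobolev inequality with the prescribed test-function constraint $f^2(x)\le (\int f^2\,d\mu)\,e^{\eta|x|^2+\mathrm{const}}$ (the parameter $\eta$ is governed by the $\gamma$ appearing in the Lyapunov function, which is why it can be chosen strictly smaller than the Gaussian integrability exponent $\varepsilon/2$).

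The main obstacle is the last step: transforming the Lyapunov/local-LSI pair into a genuine $\rLSI(C,\eta)$. This is a nontrivial variational manipulation—one splits the entropy $\mathrm{Ent}_\mu(f^2)$ into a contribution from $B_R$ (controlled by the local LSI applied to $f\mathbf{1}_{B_R}$) and one from the complement (controlled by multiplying by $W$ and using $\mathcal{L}W/W\le-c_0|x|^2$ to convert the mass of $f^2|x|^2$ into Dirichlet energy), and one must carefully tune the constraint $f^2\le(\int f^2\,d\mu)\,e^{\eta|x|^2+\cdots}$ so that the ``bad" boundary terms created by the splitting are absorbed by $\int f^2|x|^2\,d\mu$. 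I would follow the argument of \cite[Thm.~1.1]{CGW08} rather than attempt to re-derive this absorption from scratch, since the bookkeeping is delicate but the underlying principle—Lyapunov outside a compact set plus local regularity inside—is exactly the standard one.
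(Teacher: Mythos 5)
The paper does not prove this theorem itself; it only points to \cite[Corollary 2.1]{CGW08} and names the Lyapunov-function method, which is exactly the route you take, and your two drift computations (with $W=e^{\gamma|x|^2/2}$ for the condition $x\cdot\nabla V\ge c|x|^2$ and $W=e^{\gamma V}$ for the condition $(1-a)|\nabla V|^2-\Delta V\ge c|x|^2$, choosing $\gamma$ respectively in $(0,c)$ and $(0,a)$) are correct. Deferring the final step---converting the Lyapunov inequality plus a local logarithmic Sobolev inequality into $\rLSI(C,\eta)$---to \cite[Theorem 1.1]{CGW08} is consistent with the level of detail of the survey, so your proposal is correct and takes essentially the same approach as the cited proof.
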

We refer to \cite[Corollary 2.1]{CGW08} for a proof relying on the
so called Lyapunov functions method.

\subsection{Contraction methods}
In \cite{G07}, Gozlan recovered Cattiaux and Guillin's sufficient
condition \eqref{Cattiaux-Guillin} for $\T_2$ and extended it to
other transport-entropy inequalities on the real axis. The proof
relies on a simple contraction argument, we shall now explain it
in a general setting.
\subsubsection*{Contraction of transport-entropy inequalities}
In the sequel, $\mathcal{X}$ and $\mathcal{Y}$ will be polish
spaces. If $\mu$ is a probability measure on $\mathcal{X}$ and
$T:\mathcal{X}\to\Y$ is a measurable map, the image of $\mu$ under
$T$ will be denoted by $T_\# \mu$; by definition, it is the
probability measure on $\Y$ defined by
$$T_\#\mu(A)=\mu\left(T^{-1}(A)\right),$$
for all measurable subset $A$ of $Y$.

The result below shows that if $\mu$ verifies a transport-entropy inequality on $\X$ then the image $T_\# \mu$ verifies
verifies a transport-entropy inequality on $\Y$ with a new cost
function  expressed in terms on $T$.

\begin{thm}\label{contraction}
Let $\mu_o$ be a probability measure on $\X$ and $T:\X\to\Y$ be a
measurable bijection. If $\mu_o$ satisfies the transport-entropy inequality $\alpha(\mathcal{T}_c)\leq H$ with a cost function $c$
on $\X$, then $T_\#\mu_o$ satisfies the transport-entropy
inequality $\alpha(\mathcal{T}_{c^T})\leq H$ with the cost
function $c^T$ defined on $\Y$ by
\[c^T(y_1,y_2)=c(T^{-1}y_1,T^{-1}y_2),\quad y_1,y_2\in \Y.\]
\end{thm}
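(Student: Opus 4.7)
The plan is to reduce the inequality on $\Y$ to the assumed inequality on $\X$ by pulling everything back through $T^{-1}$. Fix an arbitrary $\nu\in \mathrm{P}(\Y)$ and set $\tilde\nu:=T^{-1}_{\#}\nu\in\mathrm{P}(\X)$ (throughout, we tacitly use that a measurable bijection of polish spaces has a measurable inverse, so $\tilde\nu$ is well defined and $c^T$ is a bona fide measurable cost function). The strategy rests on two invariance identities, which I would establish in turn: (i) $\mathcal{T}_{c^T}(\nu,T_\#\mu_o)=\mathcal{T}_c(\tilde\nu,\mu_o)$, and (ii) $H(\nu\,|\,T_\#\mu_o)=H(\tilde\nu\,|\,\mu_o)$. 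Once these are in hand, applying the assumed transport-entropy inequality $\alpha(\mathcal{T}_c(\tilde\nu,\mu_o))\leq H(\tilde\nu\,|\,\mu_o)$ to $\tilde\nu$ immediately gives
\[
\alpha\!\left(\mathcal{T}_{c^T}(\nu,T_\#\mu_o)\right)=\alpha\!\left(\mathcal{T}_c(\tilde\nu,\mu_o)\right)\leq H(\tilde\nu\,|\,\mu_o)=H(\nu\,|\,T_\#\mu_o),
\]
which is the desired inequality.

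For identity (i), the key observation is that the map $\pi\mapsto (T\otimes T)_\#\pi$ is a bijection between couplings of $\tilde\nu$ with $\mu_o$ and couplings of $\nu$ with $T_\#\mu_o$ (with inverse $(T^{-1}\otimes T^{-1})_\#$). Indeed, $(T\otimes T)_\#\pi$ has first marginal $T_\#\tilde\nu=T_\#T^{-1}_\#\nu=\nu$ and second marginal $T_\#\mu_o$. Moreover, by the definition of $c^T$ and the change-of-variables formula,
\[
\int_{\Y^2} c^T(y_1,y_2)\,d(T\otimes T)_\#\pi(y_1,y_2)=\int_{\X^2}c^T(Tx_1,Tx_2)\,d\pi(x_1,x_2)=\int_{\X^2}c(x_1,x_2)\,d\pi(x_1,x_2),
\]
so taking the infimum over $\pi$ on both sides gives (i).

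Identity (ii) is the standard invariance of relative entropy under bi-measurable bijections: if $\tilde\nu\ll\mu_o$ with density $f=d\tilde\nu/d\mu_o$, then $\nu=T_\#\tilde\nu$ is absolutely continuous with respect to $T_\#\mu_o$ with density $f\circ T^{-1}$, and hence
\[
H(\nu\,|\,T_\#\mu_o)=\int_\Y \log\!\left(f\circ T^{-1}\right)\,d\nu=\int_\X \log f\,d\tilde\nu=H(\tilde\nu\,|\,\mu_o),
\]
while the equality trivially extends (with value $+\infty$) to the singular case since $T_\#$ preserves absolute continuity in both directions. The only real technical point is the measurability of $T^{-1}$, which is the essential reason for requiring $T$ to be a bijection in the statement; everything else is a bookkeeping transport of measures through $T$.
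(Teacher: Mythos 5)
Your proof is correct and follows essentially the same route as the paper's: both arguments rest on the observation that $(T\otimes T)_\#$ (equivalently, its inverse $Q_\#$ with $Q=T^{-1}\otimes T^{-1}$, which is what the paper uses) is a bijection between the couplings of $T^{-1}_\#\nu$ with $\mu_o$ and those of $\nu$ with $T_\#\mu_o$ preserving the transport cost, combined with the invariance of relative entropy under the bi-measurable bijection $T$. The only cosmetic difference is that you verify the entropy identity directly via densities, whereas the paper invokes its variational representation (Proposition \ref{resL-13}); both are fine.
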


\proof Let us define $Q(y_1,y_2)=(T^{-1}y_1,T^{-1}y_2), \
y_1,y_2\in \Y$, and $\mu_1=T_\#\mu_o$. Let $\nu\in \mathrm{P}(\Y)$
and take $\pi\in \Pi(\nu,\mu_1),$ the subset of
$\mathrm{P}(\mathcal{Y}^2)$ consisting of the probability $\pi$
with their marginal measures $\pi_0=\nu$ and $\pi_1=\mu_1.$ Then
$\int c^T(y_1,y_2)\,d\pi=\int c(x,y)\,dQ_\# \pi,$ so
$$
    \mathcal{T}_{c^T}(\nu,\mu_1)=\inf_{\pi\in Q_\# \Pi(\nu,\mu_1)}\int c(x,y)d\pi.
$$
But it is easily seen that $Q_\# \Pi(\nu,\mu_1)=\Pi({T^{-1}}_\#
\nu,\mu_o)$. Consequently
\[\mathcal{T}_{ c^T}(\nu,\mu_1)=\mathcal{T}_{c}({T^{-1}}_\#
\nu,\mu_o).\] Since $\mu_o$ satisfies the transport-entropy
inequality $\alpha(\mathcal{T}_c)\leq H$, it holds
\[\alpha\left(\mathcal{T}_{c}({T^{-1}}_\# \nu,\mu_o)\right)\leq H({T^{-1}}_\# \nu|\mu_o).\]
But it is easy to check , with Proposition \ref{resL-13} and the
fact that $T$ is one-one, that
\[ H({T^{-1}}_\# \nu|\mu_o)=H(\nu| T_\#\mu_o).\] Hence
\[\alpha\left(\mathcal{T}_{c^T}(\nu,\mu_1)\right)\leq H(\nu | \mu_1),\]
for all $\nu\in \mathrm{P}(Y).$
\endproof
\begin{rem}
This contraction property was first observed by Maurey (see
\cite[Lemma 2]{Mau91}) in the context of inf-convolution
inequalities. Theorem \ref{contraction} is a simple but powerful
tool to derive new transport-entropy inequalities from already
known ones.
\end{rem}
\subsubsection*{Sufficient conditions on $\R$}
Let us recall that a probability measure $\mu$ on $\R$ is said to
satisfy Cheeger inequality with the constant $\lambda>0$ if
\begin{equation}\label{Cheeger}
\int \left|f(x)-m(f)\right|\,d\mu(x)\leq \lambda \int
|f'(x)|\,d\mu(x),
\end{equation}
for all $f:\R\to \R$ sufficiently smooth, where $m(f)$ denotes a median of $f$ under $\mu$.\\

Using the contraction argument presented above, Gozlan
obtained the following theorem (\cite[Theorem 2]{G07}).
\begin{thm}\label{unidim}
Let $\theta:[0,\infty)\to [0,\infty)$ be such that $\theta(t)=t^2$
for all $t\in [0,1]$, $t\mapsto \frac{\theta(t)}{t}$ is increasing
and $\sup_{t>0} \frac{\theta(2t)}{\theta(t)}<+\infty$ and let
$\mu$ be a probability measure on $\R$ which verifies Cheeger
inequality for some $\lambda_o>0$. The following propositions are
equivalent :
\begin{enumerate}
\item The probability measure $\mu$ verifies the transport cost
inequality $\mathcal{T}_{\theta}\leq CH$, for some $C>0$. \item
The constants $K^+(\varepsilon)$ and $K^-(\varepsilon)$ defined by
$$K^+(\varepsilon)=\sup_{x\geq m} \frac{\int_x^{+\infty} e^{\varepsilon\theta(u-x)}\,d\mu(u)}{\mu[x,+\infty)}\quad\text{and}\quad K^-(\varepsilon)=\sup_{x\leq m} \frac{\int_{-\infty}^x e^{\varepsilon\theta(x-u)}\,d\mu(u)}{\mu(-\infty,x]}$$
are finite for some $\varepsilon>0$, where $m$ denotes the median
of $\mu.$
\end{enumerate}
\end{thm}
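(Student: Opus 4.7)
The strategy is to exploit the one-dimensional structure through the monotone rearrangement and the contraction principle of Theorem \ref{contraction}. The hypotheses on $\theta$ ensure $\theta$ grows at least quadratically (from $\theta(t)=t^2$ on $[0,1]$ and $t\mapsto\theta(t)/t$ increasing), and the doubling condition $\sup_t\theta(2t)/\theta(t)<\infty$ allows the absorption of multiplicative constants inside $\theta$. The Cheeger hypothesis plays the role of a one-dimensional regularity statement on $\mu$ (it controls how concentrated $\mu$ can be on any small interval), while the constants $K^{\pm}(\varepsilon)$ encode a uniform-in-$x$ exponential tail condition for the conditional measures of $\mu$ past $x$.

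\emph{Direction (1) $\Rightarrow$ (2).} I would run the dual formulation of Corollary \ref{BG2} with the test function $f(u)=\varepsilon\,\theta((u-x)_+)$ for fixed $x\geq m$ and small $\varepsilon>0$. Taking $u=y$ in the infimum defining $Q_{\theta}f$ shows $Q_{\theta}f(y)\leq 0$ for $y\leq x$, so the dual exponential bound gives
\[
\int_x^{+\infty} e^{\varepsilon\,\theta(u-x)}\,d\mu(u) \;\leq\; e^{\varepsilon\int f\,d\mu + \alpha^{\mc}(\varepsilon)},
\]
and combining this with the lower bound $\mu([x,\infty))\geq e^{-\varepsilon\int f\,d\mu}$ obtained by testing with an appropriate conditional measure yields $K^+(\varepsilon)<\infty$ for $\varepsilon$ small enough; the analysis of $K^-(\varepsilon)$ is symmetric. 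The integrability of $f$ in this argument is guaranteed by the preliminary consequence Proposition \ref{res-08}.

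\emph{Direction (2) $\Rightarrow$ (1).} This is the main direction and uses the contraction method. Let $\mu_o$ be a reference probability measure on $\R$ (for instance, a symmetric exponential-type distribution, or a measure explicitly tailored to $\theta$) which satisfies a known transport-entropy inequality $\mathcal{T}_{c_o}\leq C_o H$ with cost $c_o(s_1,s_2)=\theta_o(|s_1-s_2|)$ for a suitable $\theta_o$. Define the monotone rearrangement $T=F_{\mu}^{-1}\circ F_{\mu_o}$, so $T_{\#}\mu_o=\mu$. Theorem \ref{contraction} then transfers the inequality to $\mu$:
\[
\mathcal{T}_{c_o^T}(\nu,\mu)\;\leq\; C_o\,H(\nu\,|\,\mu),\qquad \nu\in\mathrm{P}(\R),
\]
where $c_o^T(y_1,y_2)=\theta_o(|T^{-1}y_1-T^{-1}y_2|)$. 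It then suffices to establish the pointwise comparison
\[
\theta_o\bigl(|T^{-1}y_1-T^{-1}y_2|\bigr)\;\geq\;\kappa'\,\theta(|y_1-y_2|)
\]
for some $\kappa'>0$ independent of $(y_1,y_2)$. Together with the doubling property of $\theta$ this implies $\mathcal{T}_\theta\leq (C_o/\kappa')H$.

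\emph{Main obstacle.} The crux is the quantitative estimate on the modulus of continuity of $T^{-1}$ supporting the pointwise cost comparison above. The two assumptions enter complementarily: the finiteness of $K^{\pm}(\varepsilon)$ yields via Chebyshev a tail bound of the form $\mu([x,\infty))\lesssim e^{-\varepsilon\theta(x-m)}$ (and symmetrically on the left), which via inversion of $F_\mu$ controls the \emph{tail regime} of $T^{-1}$; Cheeger's inequality \eqref{Cheeger}, which in dimension one is equivalent to a lower bound on the density of $\mu$ in terms of its survival function, controls $T^{-1}$ in the \emph{bulk regime} where neither $F_\mu$ nor $1-F_\mu$ is small. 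Splicing these two regimes at the correct scale, and absorbing the resulting constants into $\theta$ using the doubling property, is the technical heart of the proof.
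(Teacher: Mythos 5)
For the implication $(2)\Rightarrow(1)$ — which is the only direction the paper actually proves, and only in the model case $\theta(t)=t^2$ — your plan coincides with the paper's argument. The reference measure is the two-sided exponential $d\mu_o(x)=\tfrac12 e^{-|x|}\,dx$ with Talagrand's cost $c_o(x,y)=\min(|x-y|,|x-y|^2)$; one pushes it onto $\mu$ by the monotone rearrangement $T=F^{-1}\circ F_o$, applies the contraction Theorem \ref{contraction}, and reduces everything to the pointwise comparison $c_o(T^{-1}(y_1)-T^{-1}(y_2))\geq \kappa'\,\theta(|y_1-y_2|)$, i.e.\ to $T$ being simultaneously Lipschitz and H\"older-type in the tails. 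Exactly as you describe, the Lipschitz (bulk) part is the Bobkov--Houdr\'e characterization of Cheeger's inequality \eqref{Cheeger}, and the tail part comes from $K^+(\varepsilon)<\infty$ via Markov's inequality applied to $\frac{1-F(x+u)}{1-F(x)}$; the splicing you flag as the technical heart is resolved in the paper by choosing $a$ so that $u^2/a^2\leq\max\bigl(u/\lambda_o,\ \varepsilon u^2-\log K^+(\varepsilon)\bigr)$. So on this direction you are on the paper's route.

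Your sketch of $(1)\Rightarrow(2)$, however, contains a step that fails. With $f(u)=\varepsilon\,\theta((u-x)_+)$ one has $\int f\,d\mu=\varepsilon\int_x^{+\infty}\theta(u-x)\,d\mu(u)\leq\varepsilon\int_m^{+\infty}\theta(u-m)\,d\mu(u)$, which is bounded uniformly in $x\geq m$; hence the claimed lower bound $\mu([x,+\infty))\geq e^{-\varepsilon\int f\,d\mu}$ has a right-hand side bounded below by a positive constant while its left-hand side tends to $0$ as $x\to+\infty$, so it is false for large $x$. More structurally, the dual inequality of Corollary \ref{BG2} only yields a uniform bound on the \emph{numerator} $\int_x^{+\infty}e^{\varepsilon\theta(u-x)}\,d\mu(u)$, whereas $K^+(\varepsilon)<\infty$ requires this numerator to decay at the same rate as $\mu([x,+\infty))$; this conditional refinement is not a one-line consequence of the dual formulation (it is the content of \cite[Corollary 15]{G07}, to which the paper itself defers without proof). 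You would need either to work with the conditional measures $\mu(\cdot\,|\,[x,+\infty))$ directly, or to quote that result.
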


The condition $K^+$ and $K^-$ finite is always necessary to have
the transport-entropy inequality (see \cite[Corollary 15]{G07}).
This condition is sufficient if Cheeger inequality holds. Cheeger
inequality is slightly stronger than Poincar\'e inequality. On the
other hand transport-entropy inequalities of the form
$\mathcal{T}_\theta\leq H$, with a function $\theta$ as above,
imply Poincar\'e inequality (Theorem \ref{Trans->Poinc}). So
Theorem \ref{unidim} offers a characterization of
transport-entropy inequalities except perhaps on the ``small" set
of probability measures verifying Poincar\'e but not Cheeger
inequality.

\proof[Sketch of proof.] We will only prove the sufficiency of the
condition $K^+$ and $K^-$ finite. Moreover, to avoid technical
difficulties, we shall only consider the case $\theta(t)=t^2$. Let
$d\mu_o(x)=\frac{1}{2}e^{-|x|}\,dx$ be the two-sided exponential
measure on $\R$. According to a result by Talagrand \cite{T96} the
probability  $\mu_o$ verifies the transport-entropy inequality
$\mathcal{T}_{c_o}\leq C_oH$, with the cost function
$c_o(x,y)=\min(|x-y|^2,|x-y|),$ for some $C_o>0.$

Consider the cumulative distribution functions of $\mu$ and
$\mu_o$ defined by $F(x)=\mu(-\infty,x]$ and
$F_o(x)=\mu_o(-\infty,x]$, $x\in \R.$ The monotone rearrangement
map $T:\R\to\R$ defined by $T(x)=F^{-1}\circ F_o$, see
\eqref{eq-08}, transports the probability $\mu_o$ onto the
probability $\mu$ : $T_\#\mu_o=\mu.$ Consequently,  by application
of the contraction Theorem \ref{contraction}, the probability
$\mu$ verifies the transport-cost inequality
$\mathcal{T}_{c_o^T}\leq C_oH,$ with
$c_o^T(x-y)=c_o(T^{-1}(x)-T^{-1}(y))$. So, all we have to do is to
show that there is some constant $a>0$ such that
$c_o(T^{-1}(x)-T^{-1}(y))\geq \frac{1}{a^2}|x-y|^2$, for all
$x,y\in\R.$ This condition is equivalent to the following
$$|T(x)-T(y)|\leq a \min (|x-y|, |x-y|^{1/2}),\quad  x,y\in \R.$$
In other words, we have to show that $T$ is $a$-Lipschitz and
$a$-H\"older of order $1/2$.

According to a result by Bobkov and Houdr\'e, $\mu$ verifies
Cheeger inequality \eqref{Cheeger} with the constant $\lambda_o$
if and only if $T$ is $\lambda_o$-Lipschitz (see \cite[Theorem
1.3]{BH97}).

To deal with the H\"older condition, observe that if $T$ is
$a$-H\"older on $[0,\infty)$ and on $\R^-$, then it is
$\sqrt{2}a$-H\"older on $\R$. Let us treat the case of
$[0,\infty)$, the other case being similar. The condition $T$ is
$a$-H\"older on $[0,\infty)$ is equivalent to
$$T^{-1}(x+u)-T^{-1}(x)\geq \frac{u^2}{a^2},\quad x\geq m,u\geq 0.$$
But a simple computation gives : $T^{-1}(x)=-\log(2(1-F(x))),$ for
all $x\geq m.$ So the condition above reads
\begin{equation}\label{unidim1}
\frac{1-F(x+u)}{1-F(x)}\leq
e^{-\frac{u^2}{a^2}},\quad  x\geq m,u\geq 0.
\end{equation}
Since, $K^+(\varepsilon)=\sup_{x\geq m} \frac{\int_x^{+\infty}
e^{\varepsilon(u-x)^2}\,d\mu(u)}{\mu[x,+\infty)}$ is finite, an
application of Markov inequality yields
$$\frac{1-F(x+u)}{1-F(x)}\leq K^+(\varepsilon)e^{-\varepsilon u^2},\quad x\geq m,u\geq 0.$$
On the other hand the Lipschitz continuity of $T$ can be written
$$\frac{1-F(x+u)}{1-F(x)}\leq e^{-\frac{u}{\lambda_o}},\quad x\geq m,u\geq 0.$$
So, if $a>0$ is chosen so that $\frac{u^2}{a^2}\leq
\max\left(\frac{u}{\lambda_o}, \varepsilon u^2-\log
K^+(\varepsilon)\right),$ then \eqref{unidim1} holds and this
completes the proof.
\endproof

The following corollary gives a concrete criterion to decide
whether a probability measure on $\R$ verifies a given
transport-entropy inequality. It can be deduced from Theorem
\ref{unidim} thanks to an estimation of the integrals defining
$K^+$ and $K^-$. We refer to \cite{G07} for this technical proof.
\begin{cor}\label{unidim-cond}
Let $\theta:[0,\infty)\to[0,\infty)$ of class $\mathcal{C}^2$ be
as in Theorem \ref{unidim} and let $\mu$ be a probability measure
on $\R$ with a density of the form $d\mu(x)=e^{-V(x)}\,dx$, with
$V$ of class $\mathcal{C}^2.$ Suppose that
$\frac{\theta''}{\theta'^2}(x)\to0$ and $\frac{V''}{V'^2}(x)\to 0$
when $x\to\infty.$ If there is some $a>0$ such that
$$\limsup_{x\to\pm\infty}\frac{\theta'(a|x|)}{|V'(m+x)|}<+\infty,$$
with $m$ the median of $\mu$, then $\mu$ verifies the
transport-entropy inequality $\mathcal{T}_\theta \leq CH$, for
some $C>0.$
\end{cor}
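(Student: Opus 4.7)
The plan is to deduce the corollary from Theorem \ref{unidim} by verifying its two hypotheses: that $\mu$ satisfies Cheeger's inequality and that $K^+(\varepsilon)$ and $K^-(\varepsilon)$ are finite for some $\varepsilon>0$. By symmetry between $\pm\infty$, I will concentrate on the $+\infty$ side. A preliminary observation: since $\theta(t)=t^2$ near $0$ and $t\mapsto \theta(t)/t$ is increasing, $\theta'(t)\to\infty$ as $t\to\infty$, so the hypothesis $\theta'(a|x|)\leq M|V'(m+x)|$ for $|x|$ large forces $|V'(x)|\to\infty$ as $|x|\to\infty$. Combined with $V''/V'^2\to 0$, this guarantees that $V'$ is of constant sign and monotone outside a compact set, and in particular $V'(x)>0$ for $x$ sufficiently large.

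For Cheeger's inequality, I would use the Bobkov--Houdr\'e characterization \cite{BH97}: it suffices to show that the Muckenhoupt-type constants
$$B^+:=\sup_{x>m}\mu([x,+\infty))\int_m^x e^{V(t)}\,dt,\qquad B^-:=\sup_{x<m}\mu((-\infty,x])\int_x^m e^{V(t)}\,dt$$
are finite. Under $V''/V'^2\to 0$ and $|V'|\to\infty$, a standard Laplace estimate yields $\mu([x,+\infty))\sim e^{-V(x)}/V'(x)$ and $\int_m^x e^{V(t)}\,dt\sim e^{V(x)}/V'(x)$ as $x\to+\infty$, so their product is of order $1/V'(x)^2\to 0$; together with the analogous argument at $-\infty$, this yields $B^\pm<\infty$, hence Cheeger's inequality with some $\lambda_o>0$.

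The heart of the proof is to show $K^+(\varepsilon)<\infty$ for $\varepsilon>0$ small. For $x\geq m$, the change of variable $u=x+s$ gives
$$\frac{\int_x^\infty e^{\varepsilon\theta(u-x)}\,d\mu(u)}{\mu([x,+\infty))}=\frac{\int_0^\infty e^{\varepsilon\theta(s)-V(x+s)}\,ds}{\int_0^\infty e^{-V(x+s)}\,ds}.$$
The Laplace estimate above controls the denominator from below by a constant times $e^{-V(x)}/V'(x)$. For the numerator, choose $\varepsilon<1/(2M)$: the hypothesis gives $\varepsilon\theta'(s)\leq \tfrac{1}{2}V'(m+s)\leq \tfrac{1}{2}V'(x+s)$ for $s$ large (using monotonicity of $V'$ at infinity together with $x\geq m$), so that $\varepsilon\theta(s)-[V(x+s)-V(x)]$ is eventually decreasing in $s$ and bounded above by $-\tfrac12[V(x+s)-V(x)]$ there. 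Splitting the $s$-integral into a bounded region and its complement, the first piece contributes $O(e^{-V(x)}/V'(x))$ by the same Laplace argument, and the second piece is controlled by $\int_0^\infty e^{-\tfrac12(V(x+s)-V(x))}\,ds=O(e^{-V(x)}/V'(x))$. Dividing by the denominator then yields an $x$-uniform bound for large $x$. The remaining case of $x$ in a compact set reduces to the finiteness of $\int_m^\infty e^{\varepsilon\theta(u-m)}\,d\mu(u)$, which itself is a consequence of the growth hypothesis and $V''/V'^2\to 0$. The symmetric argument gives $K^-(\varepsilon)<\infty$.

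The main obstacle will be turning the pointwise condition $V''/V'^2\to 0$ into quantitative two-sided bounds on the ratios $V'(x+s)/V'(x)$ that are uniform over both $x$ and the effective range of $s$; the companion hypothesis $\theta''/\theta'^2\to 0$ plays an analogous role on the $\theta$ side, ensuring that the split above really captures the bulk of the integrals. Once these Laplace-type estimates are made rigorous, Theorem \ref{unidim} immediately delivers $\mathcal{T}_\theta(\nu,\mu)\leq CH(\nu|\mu)$ for some $C>0$ and every $\nu\in\mathrm{P}(\R)$.
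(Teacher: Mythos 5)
Your overall strategy --- apply Theorem \ref{unidim} after checking Cheeger's inequality and the finiteness of $K^{\pm}(\varepsilon)$ --- is exactly the route the paper points to (it defers the computation to \cite{G07}), and the Laplace asymptotics $\mu([x,+\infty))\sim e^{-V(x)}/V'(x)$ under $V''/V'^2\to0$ are the right basic estimates. But two steps as written do not hold up. First, the criterion you invoke for Cheeger's inequality is the wrong one: finiteness of $\sup_{x>m}\mu([x,+\infty))\int_m^xe^{V(t)}\,dt$ is Muckenhoupt's characterization of the \emph{Poincar\'e} inequality (the one used in Proposition \ref{unidimSG}), and Poincar\'e does not imply Cheeger. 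The relevant Bobkov--Houdr\'e criterion is the pointwise one, $\sup_{x>m}\mu([x,+\infty))\,e^{V(x)}<\infty$ (equivalently the Lipschitz property of the monotone rearrangement from the two-sided exponential, as used inside the proof of Theorem \ref{unidim}). Fortunately this also follows from your Laplace estimate, since that quantity is $\asymp 1/V'(x)$ and $V'$ is bounded away from $0$ at infinity: indeed $\theta(t)/t$ increasing gives $\theta'(t)\ge\theta(t)/t\ge1$ for $t\ge1$, hence $|V'(m+x)|\ge1/M$ for $|x|$ large.

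Second, and more seriously, the key comparison in the $K^+$ estimate is justified by ``monotonicity of $V'$ at infinity,'' which does not follow from $V''/V'^2\to0$; nor do your preliminary claims that $\theta'\to\infty$ and $|V'|\to\infty$ follow from the hypotheses ($\theta$ may be asymptotically linear, e.g.\ a smoothing of $\theta_1$). You yourself flag the uniform-in-$x$ control as ``the main obstacle,'' but it is the crux, and the condition $V''/V'^2\to0$ only compares $V'$ at points within distance $O(1/V'(x))$ of each other, not $V'(m+s)$ with $V'(x+s)$ for arbitrary $x\ge m$. What you actually need, uniformly in $x\ge m$, is $V(x+u)-V(x)\ge\varepsilon_1\theta(u)-C_1$, and the correct way to get it bypasses $V'$-monotonicity entirely: write $V(x+u)-V(x)=\int_{x-m}^{x-m+u}V'(m+s)\,ds\ge\frac1M\int_{x-m}^{x-m+u}\theta'(as)\,ds=\frac{1}{aM}\bigl(\theta(a(x-m)+au)-\theta(a(x-m))\bigr)\ge\frac{1}{aM}\theta(au)$, using the superadditivity of $\theta$ (a consequence of $\theta(t)/t$ increasing), and then absorb the dilation by $a$ with the doubling condition $\sup_t\theta(2t)/\theta(t)<\infty$. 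This also repairs the fact that your hypothesis controls $\theta'(as)$, not $\theta'(s)$, which your choice ``$\varepsilon<1/(2M)$'' silently ignores. With these substitutions the remainder of your plan does go through.
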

Note that this corollary generalizes Cattiaux and Guillin's
condition \eqref{Cattiaux-Guillin}.

\subsubsection*{Poincar\'e inequalities for non-Euclidean metrics}

Our aim is now to partially generalize to the multidimensional
case the approach explained in the preceding section. The two main
ingredients of the proof of Theorem \ref{unidim} were the
following :
\begin{itemize}
\item The fact that $d\mu_o(x)=\frac{1}{2}e^{-|x|}\,dx$ verifies
the transport-entropy inequality $\mathcal{T}_c\leq CH$ with the
cost function $c(x,y)=\min(|x-y|^2,|x-y|)$. Let us define the cost
function $c_1(x,y)=\min(|x-y|_2,|x-y|_2^2)$ on $\R^k$ equipped
with its Euclidean distance. We have seen in Corollary
\ref{Poinc=Trans} that a probability measure on $\R^k$ verifies
the transport-entropy inequality $\mathcal{T}_{c_1}\leq C_1 H$
for some $C_1>0$ if and only if it verifies Poincar\'e inequality
with a constant $C_2>0$ related to $C_1$.

\item The fact that the application $T$ sending $\mu_o$ on $\mu$
was both Lipschitz and 1/2-H\"older. Consequently, the application
$\omega=T^{-1}$ which maps $\mu$ on $\mu_o$, behaves like $x$ for
small values of $x$ and like $x^2$ for large values of $x$.
\end{itemize}
So we can combine the two ingredients above by saying that``the
image of $\mu$ by an application $\omega$ which resembles
$\pm\max(|x|,|x|^2)$ verifies Poincar\'e inequality.'' It appears
that this gets well in higher dimension and gives a powerful way
to prove transport-entropy inequalities.

Let us introduce some notation. In the sequel, $\omega:\R\to \R$
will denote an application such that $x\mapsto \omega(x)/x$ is increasing on $(0,+\infty),$ $\omega(x)\geq 0$ for all $x\geq 0$,
and $\omega(-x)=-\omega(x)$ for all $x\in \R$. It will be
convenient to keep the notation $\omega$ to denote the application
$\R^k\to\R^k : (x_1,\ldots,x_k)\mapsto
(\omega(x_1),\ldots,\omega(x_k)).$ We will consider the metric
$d_\omega$ defined on $\R^k$ by
$$d_\omega(x,y)=|\omega(x)-\omega(y)|_2=\sqrt{\sum_{i=1}^k |\omega(x_i)-\omega(y_i)|^2},\quad x,y\in\R^k.$$

\begin{thm}\label{SGw=Tw}
Let $\mu$ be a probability measure on $\R^k$. The following
statements are equivalent.
\begin{enumerate}
\item The probability $\tilde{\mu}=\omega_\#\mu$ verifies
Poincar\'e inequality with the constant $C$:
$$\mathrm{Var}_{\tilde{\mu}}(f)\leq C\int |\nabla f|_2^2\,d\tilde{\mu},$$
for all $f:\R^k\to\R$ smooth enough. \item The probability $\mu$
verifies the following weighted Poincar\'e inequality with the
constant $C>0$:
\begin{equation}\label{SG omega}
\mathrm{Var}_\mu (f)\leq C\int \sum_{i=1}^k
\frac{1}{\omega'(x_i)^2}\left(\frac{\partial f}{\partial
x_i}(x)\right)^2\,d\mu(x),
\end{equation}
for all $f:\R^k\to\R$ smooth enough. \item The probability $\mu$
verifies the transport-entropy inequality $\mathcal{T}_c\leq H$,
with the cost function $c(x,y)=\theta_1(ad_\omega(x,y))$ for some
$a>0$, with $\theta_1(t)=\min(t^2,t)$, $t\geq 0$. More precisely,
\begin{equation}\label{T omega}
\inf_{\pi: \pi_0=\nu,\pi_1=\mu}\int_{\Rk\times\Rk}
\min\left(a^2|\omega(x)-\omega(y)|_2^2,a|\omega(x)-\omega(y)|_2\right)\,d\pi(x,y)\leq
H(\nu|\mu),
\end{equation}
for all $\nu\in \mathrm{P}(\R^k).$
\end{enumerate}
The constants $C$ and $a$ are related in the following way: (1)
implies (3) with $a=\frac{1}{\tau\sqrt{C}}$, where $\tau$ is a
universal constant, and (3) implies (1) with $C=\frac{1}{2a^2}.$
\end{thm}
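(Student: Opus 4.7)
The plan is to treat the three equivalences as two logically independent arguments: a change-of-variables argument linking (1) and (2), and a composition of Corollary \ref{Poinc=Trans} with the contraction Theorem \ref{contraction} to link (1) and (3).

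First, note that under the hypotheses on $\omega$ (odd, $\omega(x)\ge 0$ on $[0,\infty)$, and $\omega(x)/x$ increasing on $(0,\infty)$), the map $\omega:\R\to\R$ is strictly increasing and continuous, hence a bijection onto its range; we implicitly assume $\omega$ is onto (otherwise one works on the image, which is standard). Its coordinatewise extension $\omega:\R^k\to\R^k$ is then a bi-Lipschitz-on-compacts diffeomorphism away from the exceptional set $\{x:\prod_i\omega'(x_i)=0\}$, and in particular $\mu\mapsto\tilde\mu=\omega_\#\mu$ is a bijection at the level of measures.

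\textbf{Step 1: (1)$\iff$(2).} I would set $g(y)=f(\omega^{-1}(y))$ so that $f=g\circ\omega$. The change of variables $y=\omega(x)$ gives
\[
\Var_{\tilde\mu}(g)=\Var_\mu(f),\qquad \frac{\partial f}{\partial x_i}(x)=\omega'(x_i)\,\frac{\partial g}{\partial y_i}(\omega(x)),
\]
whence
\[
\int_{\Rk}|\nabla g|_2^2\,d\tilde\mu=\int_{\Rk}\sum_{i=1}^k\frac{1}{\omega'(x_i)^2}\!\left(\frac{\partial f}{\partial x_i}(x)\right)^{\!2}\!d\mu(x).
\]
Thus the Poincar\'e inequality for $\tilde\mu$ applied to $g$ is literally the weighted Poincar\'e inequality for $\mu$ applied to $f$, and conversely. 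This gives (1)$\iff$(2) with the same constant $C$. The only delicate point is to ensure $g$ ranges over a sufficiently rich class when $f$ does; this is handled by density because $\omega$ is a homeomorphism of $\R^k$, so smooth compactly supported $g$'s pull back to a class dense enough in the relevant weighted Sobolev space of $f$'s.

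\textbf{Step 2: (1)$\iff$(3) via contraction.} By Corollary \ref{Poinc=Trans}, statement (1) is equivalent to the existence of $a>0$ (with $a=1/(\tau\sqrt{C})$ in one direction and $C=1/(2a^2)$ in the other) such that $\tilde\mu$ satisfies
\[
\mathcal{T}_{\tilde c}(\tilde\nu,\tilde\mu)\le H(\tilde\nu|\tilde\mu),\qquad\tilde\nu\in\mathrm{P}(\Rk),
\]
with $\tilde c(y_1,y_2)=\theta_1(a|y_1-y_2|_2)$. Now apply Theorem \ref{contraction} to the bijection $T=\omega^{-1}:\Rk\to\Rk$, which sends $\tilde\mu$ to $\mu$. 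The pushed-forward cost is
\[
\tilde c^{\,\omega^{-1}}(x_1,x_2)=\tilde c(\omega(x_1),\omega(x_2))=\theta_1\bigl(a\,|\omega(x_1)-\omega(x_2)|_2\bigr)=\theta_1(a\,d_\omega(x_1,x_2)),
\]
which is exactly the cost appearing in (3). Since Theorem \ref{contraction} is an ``if and only if'' (apply it both with $T=\omega^{-1}$ and $T=\omega$), one obtains (1)$\iff$(3) with the constants tracked through Corollary \ref{Poinc=Trans}. Combining with Step~1 closes the chain (1)$\iff$(2)$\iff$(3).

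\textbf{Main obstacle.} There is no serious analytic difficulty; the crux is the bookkeeping at the exceptional set where $\omega'$ vanishes (the weights in (2) may blow up) and verifying that the chain rule manipulations in Step~1 are valid on a class of test functions rich enough for the Poincar\'e inequality. This is handled by the standard approximation of Sobolev-type functionals by smooth compactly supported ones and by noting that $\omega$ is a homeomorphism, so the push-forward/pull-back of test functions behaves well. Beyond that, the equivalence reduces cleanly to the already-proved Corollary \ref{Poinc=Trans} and the transport contraction principle Theorem \ref{contraction}, with the stated quantitative relation $a=1/(\tau\sqrt{C})$ and $C=1/(2a^2)$ inherited directly from Corollary \ref{Poinc=Trans}.
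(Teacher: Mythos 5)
Your proposal is correct and follows essentially the same route as the paper: the equivalence (1)$\iff$(2) by the change of variables $f=g\circ\omega$ (which the paper dismisses as ``straightforward''), and (1)$\iff$(3) by combining Corollary \ref{Poinc=Trans} for $\tilde\mu$ with the contraction Theorem \ref{contraction} applied to $\omega^{-1}$, yielding the cost $\theta_1(a\,d_\omega(x,y))$ with the same constant bookkeeping.
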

\proof
The equivalence between (1) and (2) is straightforward.\\
Let us show that (1) implies (3). Indeed, according to Corollary
\ref{Poinc=Trans}, $\tilde{\mu}$ verifies the transport-entropy
inequality $\mathcal{T}_{\tilde{c}}\leq H$ with
$\tilde{c}(x,y)=\theta_1(a|x-y|_2)$, and
$a=\frac{1}{\tau\sqrt{C}}.$ Consequently, according to the
contraction Theorem \ref{contraction}, $\mu$ which is the image of
$\tilde{\mu}$ under the map $\omega^{-1}$ verifies the
transport-entropy inequality $\mathcal{T}_{c}\leq H$ where
$c(x,y)=\tilde{c}(\omega(x),\omega(y))=\theta_1(ad_\omega(x,y)).$
The proof of the converse is similar.
\endproof
\begin{defi}
When $\mu$ verifies \eqref{SG omega}, one  says that the
inequality $\SG(\omega,C)$ holds.
\end{defi}
\begin{rem}
If $f:\R^k\to\R$ let us denote by $|\nabla f|_\omega(x)$ the
``length of the gradient'' of $f$ at point $x$ with respect to the
metric $d_\omega$ defined above. By definition,
$$|\nabla f|_\omega(x)=\limsup_{y\to x}\frac{|f(y)-f(x)|}{d_\omega(x,y)},\quad x\in\R^k.$$
It is not difficult to see that $\mu$ verifies the inequality
$\SG(\omega,C)$ if and only if it verifies the following
Poincar\'e inequality
$$\mathrm{Var}_\mu(f)\leq C \int |\nabla f|^2_\omega\,d\mu,$$
for all $f$ smooth enough. So, the inequality
$\SG(\omega,\,\cdot\,)$ is a true Poincar\'e inequality for the
non-Euclidean metric $d_\omega$.
\end{rem}
So according to Theorem \ref{SGw=Tw}, the Poincar\'e inequality
\eqref{SG omega} is qualitatively equivalent to the transport cost
inequality $\eqref{T omega}.$ Those transport-entropy inequalities
are rather unusual, but can be compared to more classical
transport-entropy inequalities using the following proposition.
\begin{prop}
The following inequality holds
\begin{equation}\label{comparaison couts}
\theta_1(ad_{\omega}(x,y))\geq
\theta_1\left(\frac{a}{\sqrt{k}}\right)\sum_{i=1}^k
\theta_1\circ\omega\left(\frac{|x_i-y_i|}{2}\right),\quad x,y\in \R^k.
\end{equation}
\end{prop}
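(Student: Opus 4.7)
The strategy is to peel off the two factors of the RHS (the prefactor $\theta_1(a/\sqrt k)$ and the sum of termwise contributions) by means of two elementary lemmas about $\theta_1$, and then to reduce the geometry of $d_\omega$ to a termwise estimate for $|\omega(x_i)-\omega(y_i)|$ that uses only the monotonicity of $\omega(t)/t$.

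\medskip

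\emph{Lemma A (submultiplicativity of $\theta_1$).} For all $s,t\ge 0$, $\theta_1(st)\ge \theta_1(s)\theta_1(t)$. I would check this by splitting on whether $s\le 1$ or $s>1$ and likewise for $t$: in each of the four regions $\theta_1$ collapses to a pure quadratic or a pure linear function, and the inequality reduces to a one-line algebraic check.

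\emph{Lemma B (aggregation).} For $v=(v_1,\dots,v_k)$ with $v_i\ge 0$,
$$\theta_1\bigl(\sqrt k\,\|v\|_2\bigr)\ \ge\ \sum_{i=1}^k \theta_1(v_i).$$
If $\sqrt k\,\|v\|_2\le 1$ then every $v_i\le 1$, so the RHS equals $\|v\|_2^2$ and the LHS equals $k\,\|v\|_2^2\ge \|v\|_2^2$. Otherwise the LHS equals $\sqrt k\,\|v\|_2$ and the RHS is at most $\sum v_i=\|v\|_1\le \sqrt k\,\|v\|_2$ by Cauchy--Schwarz.

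\emph{Lemma C (termwise estimate).} For all $x,y\in\R$,
$$|\omega(x)-\omega(y)|\ \ge\ \omega(|x-y|/2).$$
Since $t\mapsto \omega(t)/t$ is increasing on $(0,\infty)$, one obtains the superadditivity $\omega(a+b)\ge\omega(a)+\omega(b)$ for $a,b\ge 0$ (WLOG $a\ge b$, write $\omega(a+b)=\tfrac{\omega(a+b)}{a+b}(a+b)\ge \tfrac{\omega(a)}{a}(a+b)=\omega(a)+b\tfrac{\omega(a)}{a}\ge \omega(a)+\omega(b)$). Since $\omega$ is odd it is increasing on $\R$. A short case split on the signs of $x,y$ finishes: for $x\ge y\ge 0$, superadditivity gives $\omega(x)-\omega(y)\ge \omega(x-y)\ge \omega((x-y)/2)$; for $x\ge 0\ge y$ with (say) $x\ge -y$, the midpoint satisfies $(x-y)/2\le x$, so $|\omega(x)-\omega(y)|=\omega(x)+\omega(-y)\ge \omega(x)\ge \omega((x-y)/2)$; the remaining cases follow by oddness.

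\medskip

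\emph{Combination.} Set $v_i:=\omega(|x_i-y_i|/2)\ge 0$. Lemma C gives $d_\omega(x,y)^2=\sum_i(\omega(x_i)-\omega(y_i))^2\ge \sum_i v_i^2=\|v\|_2^2$. Using Lemma A with $s=a/\sqrt k$ and $t=\sqrt k\,d_\omega(x,y)$, then the monotonicity of $\theta_1$, and finally Lemma B,
\begin{align*}
\theta_1\bigl(a\,d_\omega(x,y)\bigr)
&\ \ge\ \theta_1\!\left(\frac{a}{\sqrt k}\right)\theta_1\!\left(\sqrt k\,d_\omega(x,y)\right)\\
&\ \ge\ \theta_1\!\left(\frac{a}{\sqrt k}\right)\theta_1\!\left(\sqrt k\,\|v\|_2\right)
\ \ge\ \theta_1\!\left(\frac{a}{\sqrt k}\right)\sum_{i=1}^k \theta_1(v_i),
\end{align*}
which is the desired inequality.

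\medskip

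The main obstacle is Lemma C. It is the only place where the geometric hypotheses on $\omega$ enter, and the factor $1/2$ in $\omega(|x-y|/2)$ is precisely what makes the mixed-sign case work: without it, a naive bound $|\omega(x)-\omega(y)|\ge\omega(|x-y|)$ fails as soon as $x$ and $y$ straddle $0$. Once Lemma C is in hand, Lemmas A and B are purely elementary and the rest of the proof is a one-line chaining argument.
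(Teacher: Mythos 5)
Your proof is correct. Note that the paper itself does not prove this proposition: it explicitly skips the argument and refers to \cite[Lemma 2.6 and Proof of Proposition 4.2]{G08}, so your write-up actually supplies a self-contained proof where the survey has none. The three lemmas all check out: (A) the submultiplicativity $\theta_1(st)\ge\theta_1(s)\theta_1(t)$ holds with equality in the two ``pure'' quadrants and by the one-line checks $s^2t^2\ge s^2t$ and $st\ge s^2t$ in the mixed ones; (B) the aggregation step is exactly the Cauchy--Schwarz dichotomy you describe; and (C) the termwise bound $|\omega(x)-\omega(y)|\ge\omega(|x-y|/2)$ is the genuinely nontrivial point, and your treatment is right --- superadditivity of $\omega$ on $[0,\infty)$ follows from the monotonicity of $\omega(t)/t$, and the factor $1/2$ is indeed what rescues the straddling case (e.g.\ $\omega=\omega_2$, $x=1$, $y=-1$ kills the naive bound $|\omega(x)-\omega(y)|\ge\omega(|x-y|)$). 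The final chaining $\theta_1(ad_\omega)\ge\theta_1(a/\sqrt k)\,\theta_1(\sqrt k\,d_\omega)\ge\theta_1(a/\sqrt k)\,\theta_1(\sqrt k\,\|v\|_2)\ge\theta_1(a/\sqrt k)\sum_i\theta_1(v_i)$ uses only the monotonicity of $\theta_1$ and the termwise bound $d_\omega(x,y)\ge\|v\|_2$, so the argument is complete.
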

We skip the technical proof of this inequality and refer to
\cite[Lemma 2.6 and Proof of Proposition 4.2]{G08}. Let us
emphasize an important particular case. In the sequel,
$\omega_2:\R\to\R$ will be the function defined by
$\omega_2(x)=\max(x,x^2)$, for all $x\geq 0$ and such that
$\omega_2(-x)=-\omega_2(x)$, for all $x\in \R$.

\begin{cor}
If a probability measure $\mu$ on $\R^k$ verifies the inequality
$\SG(\omega_2,C)$ for some $C>0$ then it verifies the inequality
$\T_2(4\omega_2(\tau\sqrt{kC}))$, where $\tau$ is some universal
constant.
\end{cor}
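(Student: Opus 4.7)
The strategy is to combine Theorem \ref{SGw=Tw} with the comparison inequality \eqref{comparaison couts} specialized to $\omega=\omega_2$, and then identify the resulting constant with $4\omega_2(\tau\sqrt{kC})$.

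First, I would invoke Theorem \ref{SGw=Tw}: the assumption $\SG(\omega_2,C)$ is statement (2) of that theorem with $\omega=\omega_2$, hence $\mu$ satisfies \eqref{T omega}, i.e.
\begin{equation*}
\mathcal{T}_{c_\omega}(\nu,\mu)\le H(\nu|\mu),\quad \nu\in\mathrm{P}(\R^k),
\end{equation*}
where $c_\omega(x,y)=\theta_1\bigl(a\,d_{\omega_2}(x,y)\bigr)$ and $a=1/(\tau\sqrt{C})$ for the universal constant $\tau$ of Theorem \ref{SGw=Tw}.

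Next I would apply \eqref{comparaison couts} with $\omega=\omega_2$. The key observation is that $\theta_1\circ\omega_2(t)=t^2$ for every $t\ge0$: indeed, for $t\in[0,1]$ one has $\omega_2(t)=t\le 1$ so $\theta_1(\omega_2(t))=t^2$, while for $t\ge 1$ one has $\omega_2(t)=t^2\ge 1$ so $\theta_1(\omega_2(t))=t^2$. Consequently, \eqref{comparaison couts} becomes
\begin{equation*}
\theta_1\bigl(a\,d_{\omega_2}(x,y)\bigr)\ \ge\ \theta_1\!\left(\frac{a}{\sqrt{k}}\right)\sum_{i=1}^k\frac{|x_i-y_i|^2}{4}\ =\ \frac{\theta_1(a/\sqrt{k})}{4}\,|x-y|_2^2.
\end{equation*}
Inserting this pointwise lower bound into the transport cost and using the previous step yields
\begin{equation*}
\frac{\theta_1(a/\sqrt{k})}{4}\,W_2^2(\nu,\mu)\ \le\ \mathcal{T}_{c_\omega}(\nu,\mu)\ \le\ H(\nu|\mu).
\end{equation*}

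Finally, I would identify the constant. A direct case check shows $1/\theta_1(t)=\omega_2(1/t)$ for every $t>0$ (both equal $1/t^2$ when $t\le 1$ and both equal $1/t$ when $t\ge 1$). Applying this with $t=a/\sqrt{k}=1/(\tau\sqrt{kC})$ gives $1/\theta_1(a/\sqrt{k})=\omega_2(\tau\sqrt{kC})$, so the inequality above is exactly $\T_2\bigl(4\omega_2(\tau\sqrt{kC})\bigr)$. There is no real obstacle here — the only non-automatic step is the identity $\theta_1\circ\omega_2(t)=t^2$ together with the reciprocal relation $1/\theta_1(t)=\omega_2(1/t)$, both of which follow from a trivial case analysis around $t=1$.
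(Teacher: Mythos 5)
Your proof is correct and follows exactly the same route as the paper: Theorem \ref{SGw=Tw} to pass from $\SG(\omega_2,C)$ to the transport inequality with cost $\theta_1(a\,d_{\omega_2})$, then the comparison inequality \eqref{comparaison couts} with the identity $\theta_1\circ\omega_2(t)=t^2$, and finally the reciprocal relation between $\theta_1$ and $\omega_2$ to identify the constant $4\omega_2(\tau\sqrt{kC})$. Nothing to add.
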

In other words, a sufficient condition for $\mu$ to verify $\T_2$
is that the image of $\mu$ under the map $\omega_2$ verifies
Poincar\'e inequality. We do not know if this condition is also
necessary. \proof According to Theorem \ref{SGw=Tw}, if $\mu$
verifies $\SG(\omega_2,C)$ then it verifies the transport-entropy
inequality $\mathcal{T}_c\leq H$ with the cost function
$c(x,y)=\theta_1(ad_{\omega_2}(x,y)),$ with $a=\frac{1}{\tau
\sqrt{C}}$. According to \eqref{comparaison couts}, one has
$$\theta_1(ad_{\omega_2}(x,y))\geq
\theta_1\left(\frac{a}{\sqrt{k}}\right)\sum_{i=1}^k
\theta_1\circ\omega_2\left(\frac{|x_i-y_i|}{2}\right)=\frac{\theta_1\left(\frac{1}{\tau\sqrt{kC}}\right)}{4}|x-y|_2^2,$$
since $\theta_1\circ\omega_2(t)=t^2$, for all $t\in \R.$ Observing
that $\frac{1}{\theta_1(1/t)}=\omega_2(t)$, $t>0$, one concludes
that $\mu$ verifies the inequality
$\T_2(4\omega_2(\tau\sqrt{kC})),$ which completes the proof.
\endproof

Poincar\'e inequality has been deeply studied by many authors and
several necessary or sufficient   conditions are now available for
this functional inequality. Using the equivalence
\begin{equation}\label{SGw<->P}
\mu \text{ verifies } \SG(\omega,C)\Leftrightarrow\omega_\#\mu
\text{ verifies }\PI(C),
\end{equation}
it is an easy job to convert the known criteria for Poincar\'e
inequality into criteria for the $\SG(\omega,\,\cdot\,)$
inequality.

In dimension one, one has a necessary and sufficient condition.
\begin{prop}\label{unidimSG}
An absolutely continuous probability measure $\mu$ on $\R$  with
density $h>0$ satisfies the inequality $\SG(\omega,C)$ for some
$C>0$ if and only if
\begin{equation}
D_\omega^-=\sup_{x\leq
m}\mu(-\infty,x]\int_x^m\frac{\omega'(u)^2}{h(u)}\,du<+\infty
\quad\text{and}\quad D_\omega^+=\sup_{x\geq
m}\mu[x,+\infty)\int_m^x\frac{\omega'(u)^2}{h(u)}\,du<+\infty,
\end{equation}
where $m$ denotes the median of $\mu$. Moreover the optimal
constant $C$ denoted by $C_{\mathrm{opt}}$ verifies $$\max
(D_\omega^-,D_\omega^+)\leq C_{\mathrm{opt}}\leq 4\max
(D_\omega^-,D_\omega^+).$$
\end{prop}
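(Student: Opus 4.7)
The plan is to use the equivalence \eqref{SGw<->P} to reduce the problem to a one-dimensional Poincar\'e inequality for the pushed-forward measure $\tilde{\mu}:=\omega_\#\mu$, and then to apply the classical Muckenhoupt-type characterization of Poincar\'e on $\R$ (the analogue for the variance of the criterion recalled in Theorem \ref{BG Log Sob}), which reads: an absolutely continuous probability measure $\tilde{\mu}$ on $\R$ with density $\tilde{h}>0$ and median $\tilde{m}$ satisfies $\mathbf{P}(\tilde{C})$ if and only if
\begin{equation*}
    \tilde{B}^{-}=\sup_{y\leq\tilde{m}}\tilde{\mu}(-\infty,y]\int_{y}^{\tilde{m}}\frac{1}{\tilde{h}(v)}\,dv<\infty,\quad \tilde{B}^{+}=\sup_{y\geq\tilde{m}}\tilde{\mu}[y,+\infty)\int_{\tilde{m}}^{y}\frac{1}{\tilde{h}(v)}\,dv<\infty,
\end{equation*}
and moreover $\max(\tilde{B}^{-},\tilde{B}^{+})\leq\tilde{C}_{\mathrm{opt}}\leq 4\max(\tilde{B}^{-},\tilde{B}^{+})$.

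The first step is to describe $\tilde{\mu}$ explicitly. Since $\omega:\R\to\R$ is odd and strictly increasing with $\omega'>0$ almost everywhere, the map $\omega$ is a bijection of $\R$ onto $\R$, its inverse $\omega^{-1}$ is continuous, and $\tilde{\mu}=\omega_\#\mu$ has density
\begin{equation*}
    \tilde{h}(y)=\frac{h(\omega^{-1}(y))}{\omega'(\omega^{-1}(y))},\quad y\in\R,
\end{equation*}
with respect to Lebesgue measure. The monotonicity of $\omega$ also implies $\tilde{\mu}(-\infty,\omega(x)]=\mu(-\infty,x]$ for every $x$, so the median of $\tilde{\mu}$ is exactly $\tilde{m}=\omega(m)$.

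The second step is a change of variables $v=\omega(u)$, $dv=\omega'(u)\,du$, in the Muckenhoupt integrals. For $x\leq m$, setting $y=\omega(x)\leq\tilde{m}$,
\begin{equation*}
    \tilde{\mu}(-\infty,y]\int_{y}^{\tilde{m}}\frac{dv}{\tilde{h}(v)}
    = \mu(-\infty,x]\int_{x}^{m}\frac{\omega'(u)}{h(u)/\omega'(u)}\,du
    = \mu(-\infty,x]\int_{x}^{m}\frac{\omega'(u)^{2}}{h(u)}\,du,
\end{equation*}
and similarly for the right branch. Taking suprema we obtain $\tilde{B}^{-}=D_\omega^{-}$ and $\tilde{B}^{+}=D_\omega^{+}$.

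The final step combines these identities with \eqref{SGw<->P}: the optimal constant in $\SG(\omega,\,\cdot\,)$ for $\mu$ equals the optimal Poincar\'e constant of $\tilde{\mu}$, and the Muckenhoupt criterion applied to $\tilde{\mu}$ gives the two-sided bound
\begin{equation*}
    \max(D_\omega^{-},D_\omega^{+})\leq C_{\mathrm{opt}}\leq 4\max(D_\omega^{-},D_\omega^{+}),
\end{equation*}
which is exactly the claim. The only technical point to check carefully is the justification of the change of variables when $\omega$ fails to be $\mathcal{C}^1$ at isolated points (e.g.\ at $0$ for $\omega=\omega_2$); this is handled by approximating $\omega$ by smooth strictly increasing functions, or equivalently by noting that $\omega$, being monotone, is differentiable almost everywhere and an absolutely continuous homeomorphism, so the substitution formula applies. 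I expect this last measurability/regularity verification to be the main (mild) obstacle, while the core of the argument is the purely formal reduction described above.
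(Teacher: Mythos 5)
Your proposal is correct and follows exactly the same route as the paper: reduce via \eqref{SGw<->P} to a Poincar\'e inequality for $\tilde{\mu}=\omega_\#\mu$, compute its density $\tilde h=\frac{h\circ\omega^{-1}}{\omega'\circ\omega^{-1}}$, and apply Muckenhoupt's criterion; the paper simply leaves the change of variables implicit where you write it out. The substitution $v=\omega(u)$ and the identification of the median $\tilde m=\omega(m)$ are carried out correctly, so nothing further is needed.
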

\proof This proposition follows at once from the celebrated
Muckenhoupt condition for Poincar\'e inequality (see
\cite{Muck72}). According to Muckenhoupt condition, a probability
measure $d\nu=h\,dx$ having a positive density with respect to
Lebesgue measure, satisfies Poincar\'e inequality if and only if
\[D^-=\sup_{x\leq m} \nu(-\infty,x]\int_x^m \frac{1}{h(u)}\,du<+\infty\quad\text{and}\quad D^+=\sup_{x\geq m} \nu[x,+\infty)\int_m^x \frac{1}{h(u)}\,du<+\infty,\]
and the optimal constant $C_{opt}$ verifies $\max(D^-,D^+)\leq
C_{opt}\leq 4\max(D^-,D^+).$ Now, according to \eqref{SGw<->P},
$\mu$ satisfies $\SG(\omega,C)$ if and only if
$\tilde{\mu}=\omega_\#\mu$ satisfies Poincar\'e inequality with
the constant $C$. The density of $\tilde{\mu}$ is
$\tilde{h}=\frac{h\circ \omega^{-1}}{\omega'\circ \omega^{-1}}$.
Plugging $\tilde{h}$ into Muckenhoupt conditions immediately gives
us the announced result.
\endproof

Estimating the integrals defining $D^{-}$ and $D^+$ by routine
arguments, one can obtain the following workable sufficient
conditions (see \cite[Proposition 3.3]{G08} for a proof).
\begin{prop}
Let $\mu$ be an absolutely continuous probability measure on $\R$
with density $d\mu(x)=e^{-V(x)}\,dx$. Assume that the potential
$V$ is of class $\mathcal{C}^1$ and that $\omega$ verifies the following
regularity condition:
\[\frac{\omega''(x)}{\omega'^2(x)}\xrightarrow[x\to +\infty]{} 0.\]
If $V$ is such that
$$\limsup_{x\to\pm\infty}\frac{|\omega' (x)|}{|V'(x)|}<+\infty,$$
then the probability measure $\mu$ verifies the inequality
$\SG(\omega, C)$ for some $C>0$.
\end{prop}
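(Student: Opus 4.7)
The plan is to invoke Proposition \ref{unidimSG}, which reduces the conclusion to proving the finiteness of the two Muckenhoupt-type constants
\[
D_\omega^+=\sup_{x\geq m}\mu[x,+\infty)\int_m^x \omega'(u)^2 e^{V(u)}\,du \quad\text{and}\quad D_\omega^-=\sup_{x\leq m}\mu(-\infty,x]\int_x^m \omega'(u)^2 e^{V(u)}\,du.
\]
Rather than attack these integrals head-on, which would naturally require integration by parts and hence $V\in\mathcal{C}^2$, I would go through the pushforward measure $\tilde{\mu}=\omega_{\#}\mu$ and use the equivalence \eqref{SGw<->P}: $\mu$ satisfies $\SG(\omega,C)$ if and only if $\tilde{\mu}$ satisfies the usual Poincar\'e inequality on $\R$. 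By Muckenhoupt's theorem, it therefore suffices to show that the log-density $\tilde{V}$ of $\tilde{\mu}$ has derivative bounded away from zero at $\pm\infty$.

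The change of variable $y=\omega(x)$ gives $\tilde{\mu}$ the density $\tilde{h}(y)=e^{-\tilde{V}(y)}$ with $\tilde{V}(y)=V(\omega^{-1}(y))+\log\omega'(\omega^{-1}(y))$, so, setting $x=\omega^{-1}(y)$,
\[
\tilde{V}'(y)=\frac{V'(x)}{\omega'(x)}+\frac{\omega''(x)}{\omega'(x)^2}.
\]
The standing hypothesis $\limsup_{x\to\pm\infty}\omega'(x)/|V'(x)|\leq M<\infty$, together with the requirement that $V\to+\infty$ at $\pm\infty$ (needed for $e^{-V}$ to be integrable), forces $V'$ to keep a constant sign for $|x|$ large: indeed $|V'|\geq\omega'/(M+1)$ and $\omega'$ is bounded below by a positive constant on $(x_0,\infty)$ because $\omega(x)/x$ is increasing and nontrivial, so continuity of $V'$ prevents any sign change. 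Hence $V'(x)/\omega'(x)\geq 1/(M+1)$ at $+\infty$, with an analogous lower bound on $-V'(x)/\omega'(x)$ at $-\infty$ (using $\omega(-x)=-\omega(x)$, which also propagates $\omega''/\omega'^2\to 0$ to $-\infty$). Combined with $\omega''(x)/\omega'(x)^2\to 0$, this yields $\tilde{V}'(y)\geq\delta>0$ for $y$ large and $\tilde{V}'(y)\leq-\delta$ for $y$ very negative.

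Once $|\tilde{V}'|\geq\delta$ at infinity, verifying Muckenhoupt's condition for $\tilde{\mu}$ is immediate: for $y$ large,
\[
\int_y^{+\infty}e^{-\tilde{V}(t)}\,dt\leq\frac{1}{\delta}\int_y^{+\infty}\tilde{V}'(t)e^{-\tilde{V}(t)}\,dt=\frac{e^{-\tilde{V}(y)}}{\delta},\qquad \int_{\tilde{m}}^{y}e^{\tilde{V}(t)}\,dt\leq\frac{e^{\tilde{V}(y)}}{\delta},
\]
so their product is bounded by $\delta^{-2}$. The compact range $y\in[\tilde{m},y_0]$ is handled by continuity. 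Muckenhoupt's theorem then gives Poincar\'e for $\tilde{\mu}$, and \eqref{SGw<->P} transfers this to $\SG(\omega,C)$ for $\mu$.

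\textbf{Main obstacle.} The central difficulty is the low regularity of $V$: only $\mathcal{C}^1$, which rules out any direct integration by parts in the Muckenhoupt integrals defining $D_\omega^\pm$. The pushforward trick sidesteps this because the only derivative of $\omega$ that enters $\tilde{V}'$ is $\omega''$ --- precisely the quantity controlled by the regularity assumption on $\omega$ --- and no derivative of $V$ beyond $V'$ is ever needed. After the transfer, the Muckenhoupt condition becomes essentially trivial; the only delicate point is the continuity/sign argument showing that $V'$ does not oscillate through zero at infinity.
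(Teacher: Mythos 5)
Your proof is correct and carries out the route the paper only sketches (the survey defers to \cite[Proposition 3.3]{G08} with the hint that one should ``estimate the integrals defining $D^-$ and $D^+$''): after the change of variables $y=\omega(x)$, the constants $D_\omega^{\pm}$ of Proposition \ref{unidimSG} are exactly the classical Muckenhoupt constants of $\tilde{\mu}=\omega_\#\mu$, and your lower bound $|\tilde{V}'|\geq\delta$ at infinity is precisely the required estimate --- with the merit, which you rightly emphasize, that it never touches $V''$, so the $\mathcal{C}^1$ hypothesis on $V$ suffices. One small slip in the write-up: integrability of $e^{-V}$ does not by itself force $V\to+\infty$ (an oscillating $V$ could be integrable without tending to infinity); the correct logical order is that non-vanishing of $V'$ at infinity (which you obtain from $|V'|\geq\omega'/(M+1)\geq c>0$ and continuity) gives eventual monotonicity of $V$, and only then does integrability of $e^{-V}$ pin down the sign, namely $V'>0$ near $+\infty$ and $V'<0$ near $-\infty$. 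Since this is exactly what the rest of your argument uses, the proof is sound once that clause is reordered.
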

Observe that this proposition together with the inequality
\eqref{comparaison couts} furnishes another proof of Corollary
\ref{unidim-cond} and enables us to recover (as a  particular
instance, taking $\omega=\omega_2$) Cattiaux and Guillin's
condition for $\T_2.$

In dimension $k$, it is well known that a probability
$d\nu(x)=e^{-W(x)}\,dx$ on $\R^k$ satisfies Poincar\'e inequality
if $W$ verifies the following condition:
$$\liminf_{|x|\to +\infty}\ \frac{1}{2}|\nabla W|_2^2(x)-\Delta W(x)>0.$$
This condition is rather classical in the functional inequality
literature. The interested reader can find a nice elementary proof
in \cite{BBCG08}. Using \eqref{SGw<->P} again, it is not difficult
to derive a similar multidimensional condition for the inequality
$\SG(\omega,\,\cdot\,)$ (see \cite[Proposition 3.5]{G08} for a
proof).

\section{Transport-information inequalities}\label{section transport-information}

Instead of the transport-entropy inequality
$\alpha(\mathcal{T}_c)\le H,$ Guillin, L\'eonard, Wu and Yao have
investigated in \cite{GLWY09} the following transport-information
inequality
\begin{equation}
    \alpha(\mathcal{T}_c(\nu,\mu))\le \IDV(\nu|\mu),
    \tag{$\mathcal{T}_cI$}
\end{equation}
for all $\nu\in \PX$, where the relative entropy $H(\nu|\mu)$ is
replaced by the Donsker-Varadhan information $\IDV(\nu|\mu)$ of
$\nu$ with respect to $\mu$ which  was defined at \eqref{FDV}.

This section reports some results of \cite{GLWY09}.

\subsection*{Background material from large deviation theory}
We have seen in Section \ref{section large deviations} that any
transport-entropy inequality satisfied by a probability measure
$\mu$ is connected to the large deviations of the empirical
measure $L_n=\frac 1n\sum_{i=1}^n\delta_{X_i}$ of the sequence
$(X_i)_{i\ge1}$ of independent copies of $\mu$-distributed random
variables. The link between these two notions is Sanov's theorem
which asserts that $L_n$ obey the large deviation principle with
the relative entropy $\nu\mapsto H(\nu|\mu)$ as its rate function.
In this section, we are going to play the same game replacing
$(X_i)_{i\ge1}$ with an $\X$-valued time-continuous Markov process
$(X_t)_{t\ge0}$  with a unique invariant probability measure
$\mu.$ Instead of the large deviations of $L_n,$ it is natural to
consider the large deviations of the occupation measure
$$L_t:=\frac 1t\int_0^t \delta_{X_s} ds $$
as the length of observation $t$ tends to infinity. The random
probability measure $L_t$ describes the ratio of time the random
path $(X_s)_{0\le s\le t}$ has spent in each subset of $\X.$ If
$(X_t)_{t\ge0}$ is $\mu$-ergodic, then the ergodic theorem states
that, almost surely, $L_t$ tends to $\mu$ as $t$ tends to
infinity. If in addition $(X_t)_{t\ge0}$ is $\mu$-reversible, then
$L_t$ obeys the large deviation principle with some rate function
$\IDV(\,\cdot\,|\mu).$ Roughly speaking:
\begin{equation}\label{eqL-10}
    \P(L_{t}\in
A)\underset{t\rightarrow\infty}{\asymp} e^{-t\inf_{\nu\in
A}\IDV(\nu| \mu)}.
\end{equation}
The functional $\nu\in\PX\mapsto \IDV(\nu|\mu)\in[0,\infty]$
measures some kind of difference between $\nu$ and $\mu,$ i.e.\
some quantity of information that $\nu$ brings out with respect to
the prior knowledge of $\mu.$ With the same strategy as in Section
\ref{section large deviations}, based on similar heuristics, we
are lead  to a  new class of transport inequalities which are
called transport-information inequalities.

We give now a rigorous statement of \eqref{eqL-10} which  plays
the same role as Sanov's theorem played in Section \ref{section
large deviations}.

Let the Markov process $(X_t)_{t\ge0}$ satisfy the assumptions
which have been described at Section \ref{sec-PLS}. Recall that
the Donsker-Varadhan information  $\IDV(\cdot|\mu)$ is defined at
\eqref{FDV}.

\begin{thm}[Large deviations of the occupation measure]\label{DV}
Denoting $\P_\beta(\,\cdot\,):=\int_\X \P_x(\,\cdot\,)\,d\beta(x)$
for any initial probability measure $\beta,$ suppose as in Remark
\ref{rem-10} that $((X_t)_{t\ge0}, \P_\mu)$ is a stationary
ergodic process.
\\
In addition to these assumptions on the Markov process, suppose
that the initial law $\beta\in\PX$ is absolutely continuous with
respect to $\mu$ and $d\beta/d\mu$ is in $L^2(\mu).$ Then, $L_t$
obeys the large deviation principle in $\PX$ with the rate
function $\IDV(\,\cdot\,|\mu),$ as $t$ tends to infinity. This
means that, for all Borel measurable $A\subset \PX$,
$$
-\inf_{\nu\in \mathrm{int}(A)}\IDV(\nu| \mu)\leq \liminf_{t
\to+\infty}\frac{1}{t}\log \P_\beta\left(L_{t}\in A\right)\leq
\limsup_{t \to+\infty}\frac{1}{t}\log \P_\beta\left(L_{t}\in
A\right)\leq -\inf_{\nu\in \mathrm{cl}(A)}\IDV(\nu| \mu)
$$
where $\mathrm{int}(A)$ denotes the interior of $A$ and
$\mathrm{cl}(A)$ its closure (for the weak topology).
\end{thm}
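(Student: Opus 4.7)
The plan is to follow the standard Donsker--Varadhan strategy, splitting the large deviation principle into an upper bound for closed sets (with exponential tightness) and a lower bound for open sets, with the reversibility assumption and the $L^2$--condition on $d\beta/d\mu$ giving the necessary spectral control.

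For the upper bound, I would first fix a potential $V\in\CX$ and consider the Feynman--Kac semigroup $P_t^V f(x):=\E_x\bigl[f(X_t)\exp\bigl(\int_0^t V(X_s)\,ds\bigr)\bigr]$. Under reversibility, $P_t^V$ is self-adjoint on $L^2(\mu)$, and its operator norm satisfies $\log\|P_t^V\|_{L^2\to L^2}=t\lambda(V)$ with
\[
\lambda(V)=\sup\Bigl\{\IX Vg^2\,d\mu-\mathcal{E}(g,g);\ g\in\mathbb{D}(\mathcal{E}),\ \|g\|_{L^2(\mu)}=1\Bigr\}.
\]
The hypothesis $d\beta/d\mu\in L^2(\mu)$ then yields $\E_\beta\bigl[\exp\bigl(t\int V\,dL_t\bigr)\bigr]=\langle d\beta/d\mu,P_t^V\mathbf{1}\rangle_\mu\le \|d\beta/d\mu\|_{L^2(\mu)}\,e^{t\lambda(V)}$. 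A Chebyshev argument applied to every finite family $(V_1,\dots,V_n)\in\CX^n$ delivers a weak upper bound on compacts with rate function
\[
\widetilde{I}(\nu|\mu):=\sup_{V\in\CX}\Bigl\{\int V\,d\nu-\lambda(V)\Bigr\},
\]
and a Legendre--type duality argument (substituting $V=-\mathcal{L} g/g$ for smooth $g>0$ and using the variational formula for $\lambda$) identifies $\widetilde I(\cdot|\mu)$ with $I(\cdot|\mu)$ defined in \eqref{FDV}.

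To upgrade the upper bound from compacts to arbitrary closed sets, I would establish exponential tightness: choose an exhaustion $\X=\bigcup_n K_n$ by compacts and use a Lyapunov function $W\ge 1$, growing to infinity, with $-\mathcal{L} W/W\ge \phi(W)\to\infty$. Applying the above Feynman--Kac bound to $V=-\mathcal{L} W/W$ gives $\E_\beta\exp(t\int\phi(W)\,dL_t)\le C$, and Chebyshev then produces compact level sets $\{\nu:\int\phi(W)\,d\nu\le L\}$ whose complements carry exponentially small mass. (Depending on the regularity available on $(X_t)$, one may instead invoke the hyperboundedness or strong Feller properties discussed in \cite{DV75a,DS} to conclude exponential tightness.)

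For the lower bound I would use the classical tilting argument: given $\nu=f\mu$ with $I(\nu|\mu)<\infty$, set $h=\sqrt{f}$ (assumed first in a smooth core, then extended by approximation) and perform Doob's $h$--transform to build a Markov process $(\widetilde X_t)$ that is $\nu$--reversible, with
\[
\frac{d\widetilde\P_\beta|_{\mathcal{F}_t}}{d\P_\beta|_{\mathcal{F}_t}}=\frac{h(X_t)}{h(X_0)}\exp\Bigl(-\int_0^t \frac{\mathcal{L} h}{h}(X_s)\,ds\Bigr).
\]
By the ergodic theorem under $\widetilde\P_\beta$, one has $L_t\to\nu$ weakly and $\frac{1}{t}\int_0^t(\mathcal{L} h/h)(X_s)\,ds\to \int (\mathcal{L} h/h)\,d\nu=-\mathcal{E}(h,h)=-I(\nu|\mu)$. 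Inverting the Radon--Nikodym derivative and localizing on the events $\{L_t\in G\}\cap\{h(X_t)\ge\epsilon\}$ then yields $\liminf_{t\to\infty}\tfrac1t\log\P_\beta(L_t\in G)\ge -I(\nu|\mu)$ for every neighborhood $G$ of $\nu$, completing the lower bound.

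The main obstacle is the identification of $\widetilde I$ with $I$ together with the rigorous justification of the $h$--transform when $\sqrt{f}$ lies only in $\mathbb{D}(\mathcal{E})$: one cannot in general compute $\mathcal{L} h/h$ pointwise, and one must interpret $-\int(\mathcal{L} h/h)\,d\nu$ via the Dirichlet form and carry out the change of measure by approximating $h$ by a sequence of smooth, strictly positive, bounded functions in $\mathbb{D}(\mathcal{L})$. The secondary technical point is establishing exponential tightness on a general polish space without extra structural assumptions, which is why the $L^2$--hypothesis on $d\beta/d\mu$ (or a comparable assumption ensuring a Lyapunov function) is crucial.
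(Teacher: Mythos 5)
You should first be aware that the paper does not prove Theorem \ref{DV} at all: it is quoted from Wu \cite[Corollary B.11]{Wu00b} (and from \cite{DV75a} for the regular case), so there is no internal proof to compare against. Your sketch is the classical Donsker--Varadhan programme, and its first block --- the Feynman--Kac upper bound on compacts, the estimate $\E_\beta\exp(t\int V\,dL_t)\le\|d\beta/d\mu\|_{L^2(\mu)}e^{t\lambda(V)}$, and the identification of $\sup_V\{\int V\,d\nu-\lambda(V)\}$ with $\mathcal{E}(\sqrt f,\sqrt f)$ using reversibility --- is sound and is indeed exactly where the $L^2$ hypothesis on the initial law enters.

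The genuine gaps are in the other two blocks, and they sit precisely where the theorem's generality bites. For exponential tightness you invoke a Lyapunov function $W$ with $-\mathcal{L}W/W\ge\phi(W)\to\infty$, or alternatively hyperboundedness or strong Feller properties; none of these is among the hypotheses, which are only $\mu$-reversibility, $\mu$-ergodicity and $d\beta/d\mu\in L^2(\mu)$ on a bare polish space. The raison d'\^etre of Wu's version is to dispense with the Donsker--Varadhan regularity conditions, so a proof that re-imports them establishes a strictly weaker statement. (The route that works with only symmetry bounds $\P_\mu\left(L_t(K^c)>\epsilon\right)$ by $e^{-t\lambda\epsilon}\|P_t^{\lambda\mathbf{1}_{K^c}}\|_{L^2(\mu)}$ and then shows, using ergodicity, that the corresponding spectral quantity tends to $0$ as $K\uparrow\X$; this is the nontrivial content of Wu's appendix, not a routine Chebyshev step.) Similarly, the lower bound by Doob $h$-transform needs $\mathcal{L}h/h$ to make pointwise sense and needs an ergodic theorem for the tilted process; when $\sqrt f$ lies only in $\mathbb{D}(\mathcal{E})$ the argument must be run on a core and then extended by proving that such ``nice'' measures are $I$-dense, i.e.\ can approximate any $\nu$ with $\IDV(\nu|\mu)<\infty$ together with the value of the rate function. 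You name this obstacle but do not resolve it, and it is the technical heart of the matter rather than a secondary point. As written, the proposal is a correct outline of the classical, regular case but does not establish the theorem in the stated generality.
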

This was proved by Donsker and Varadhan \cite{DV75a}
under some conditions of absolute continuity and regularity of
$P_t(x,dy)$ but without any restriction on the initial law. The
present statement has been proved by Wu \cite[Corollary
B.11]{Wu00b}.

\subsection*{The inequalities $\WI$ and $\WWI$}

The derivation of the large deviation results for $L_t$ as $t$
tends to infinity is intimately related to the Feynman-Kac
semigroup
\begin{equation*} P_t^u g(x):=\E^x\left[ g(X_t)
\exp\left(\int_0^tu(X_s)\,ds\right)\right].
\end{equation*}
When $u$ is bounded, $(P_t^u)$ is a strongly continuous semigroup
of bounded operators on $L^2(\mu)$ whose generator is given by
$\mathcal{L}^u g=\mathcal{L} g +u g$, for all $g\in
\mathbb{D}_2(\mathcal{L}^u)=\mathbb{D}_2(\mathcal{L})$.

\begin{thm}[Deviation of the empirical mean, \cite{GLWY09}]\label{resL-12}
Let $d$ be a lower semicontinuous metric on the polish space $\X$,
$(X_t)$ be a $\mu$-reversible and ergodic Markov process on $\X$
and $\alpha$ a function in the class $\mathcal{A},$ see Definition
\ref{class A}.
\begin{enumerate}
    \item The following statements are equivalent:
\begin{enumerate}
    \item[-] $\forall\nu\in\PX,$
$\IDV(\nu|\mu)<\infty \Rightarrow \IX
d(x_o,\,\cdot\,)\,d\nu<\infty;$
    \item[-] $\E_\mu\exp\left(\lambda_o\int_0^1
d(x_o,X_t)\,dt\right)<\infty$ for some $\lambda_o>0.$
\end{enumerate}
    \item Under this condition, the subsequent statements are equivalent.
\begin{enumerate}
    \item The following inequality  holds true:
\begin{equation*}\tag{$W_1I$}
     \alpha(W_1(\nu, \mu)) \le \IDV(\nu|\mu),
\end{equation*}
for all $\nu\in\PX$.

  \item For all Lipschitz function $u$ on $\X$ with
$\|u\|_{\mathrm{Lip}}\le 1$ and all $\lambda, t\ge0,$
$$\|P_t^{\lambda u}\|_{L^2(\mu)} \le \exp\left(t[
\lambda \IX u\,d\mu+ \alpha^\mc(\lambda)]\right);$$

    \item For all Lipschitz function $u$ on $\X$ with
$\|u\|_{\mathrm{Lip}}\le 1, \ \IX u\,d\mu=0$ and all $\lambda\ge
0$,
$$
\limsup_{t\rightarrow\infty}\frac 1t\log \E_\mu \exp\left(\lambda
\int_0^t u(X_s)\,ds\right)\le \alpha^\mc(\lambda);
$$

    \item For all Lipschitz function $u$ on $\X,$  $r,t> 0$ and $\beta\in
    \PX$ such that $d\beta/d\mu\in L^2(\mu),$
$$
\P_\beta\left(\frac 1t\int_0^t u(X_s)\,ds\ge\IX
u\,d\mu+r\right)\le \left\|\frac{d\beta}{d\mu}\right\|_2
\exp\Big(-t\alpha\left(r/\|u\|_{\mathrm{Lip}}\right)\Big).
$$
 \end{enumerate}
\end{enumerate}
\end{thm}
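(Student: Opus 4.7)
The proof has the same architecture as Theorem \ref{resL-10}, with the Donsker--Varadhan information $I(\cdot|\mu)$ (which by reversibility is the large deviation rate function of $L_t$ in Theorem \ref{DV}) replacing $H(\cdot|\mu)$, and the Feynman--Kac semigroup $(P_t^u)_{t\ge0}$ replacing the exponential moment generating functional. Two duality identities drive everything. The first is Kantorovich--Rubinstein (Theorem \ref{res-KR}): $W_1(\nu,\mu)=\sup_u\{\int u\,d\nu-\int u\,d\mu\}$ over $d$-Lipschitz(1) functions. The second, which plays here the role Gibbs' variational identity $H^*=\log\int e^u\,d\mu$ played in Theorem \ref{BG1}, is the Donsker--Varadhan formula
\[
I(\nu|\mu)=\sup_{u\in\BX}\Big\{\int u\,d\nu-\Lambda(u)\Big\},\qquad \Lambda(u):=\lim_{t\to\infty}\tfrac 1t\log\|P_t^u\|_{L^2(\mu)},
\]
together with the crucial fact that, because $\mathcal L^u=\mathcal L+u$ is self-adjoint on $L^2(\mu)$, $\|P_t^u\|_{L^2(\mu)}=e^{t\Lambda(u)}$ \emph{for every} $t>0$, and $\Lambda(u)=\sup_\nu\{\int u\,d\nu-I(\nu|\mu)\}$ is convex lsc on $\BX$.

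I would then run the cycle $(a)\Rightarrow(b)\Rightarrow(c)\Rightarrow(d)\Rightarrow(a)$. For $(a)\Rightarrow(b)$, Kantorovich--Rubinstein rewrites (a) as $\alpha(\int u\,d\nu-\int u\,d\mu)\le I(\nu|\mu)$ for every $d$-Lipschitz(1) $u$. Taking the monotone conjugate of $\alpha$ and optimising over $\nu$ (this is exactly the abstract mechanism of Theorem \ref{res-09} with $T=W_1$, $J=I$) yields $\Lambda(\lambda u)\le \lambda\int u\,d\mu+\alpha^\mc(\lambda)$ for $\lambda\ge0$, which in view of the spectral identity is (b). For $(b)\Rightarrow(c)$ one observes that when $\int u\,d\mu=0$,
\[
\E_\mu\exp\!\Big(\lambda\int_0^t u(X_s)\,ds\Big)=\langle \mathbf 1,P_t^{\lambda u}\mathbf 1\rangle_{L^2(\mu)}\le \|\mathbf 1\|_{L^2(\mu)}^2\,\|P_t^{\lambda u}\|_{L^2(\mu)}\le e^{t\alpha^\mc(\lambda)},
\]
and taking $\limsup$. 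For $(c)\Rightarrow(b)$ the self-adjointness gives $\tfrac1t\log\|P_t^u\|_{L^2(\mu)}=\Lambda(u)$ for every $t$, so the asymptotic bound in (c) (applied with $u-\int u\,d\mu$) returns the non-asymptotic bound (b). For $(b)\Rightarrow(d)$ I would run the Chernoff argument: for $\lambda\ge0$ and $u$ Lipschitz(1),
\[
\P_\beta\!\Big(\tfrac1t\!\int_0^t u(X_s)\,ds\ge\!\int u\,d\mu+r\Big)\le e^{-\lambda t(\int u\,d\mu+r)}\!\int\!\tfrac{d\beta}{d\mu}P_t^{\lambda u}\mathbf 1\,d\mu\le \Big\|\tfrac{d\beta}{d\mu}\Big\|_{L^2(\mu)}\!\!e^{t(\alpha^\mc(\lambda)-\lambda r)},
\]
and optimising $\lambda\ge0$ uses $\inf_{\lambda\ge0}\{\alpha^\mc(\lambda)-\lambda r\}=-\alpha(r)$; rescaling by $\|u\|_{\mathrm{Lip}}$ yields the general case. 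Finally $(d)\Rightarrow(a)$: apply (d) with $\beta=\mu$ and combine with the large deviation lower bound of Theorem \ref{DV} on the weakly open set $\{\nu:\int u\,d\nu>\int u\,d\mu+r\}$ to get $\inf\{I(\nu|\mu):\int u\,d\nu\ge \int u\,d\mu+r\}\ge\alpha(r)$; specialising $r=\int u\,d\nu-\int u\,d\mu$ and taking the supremum over Lipschitz(1) $u$ recovers (a).

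Part (1) is a preliminary integrability statement and sits on the same variational formula. Exponential integrability of $\int_0^1 d(x_o,X_s)\,ds$ under $\P_\mu$ is, by Feynman--Kac, equivalent to $\Lambda(\lambda_o d(x_o,\cdot))<\infty$ for some $\lambda_o>0$; plugging this into the Donsker--Varadhan duality gives $\lambda_o\int d(x_o,\cdot)\,d\nu\le I(\nu|\mu)+\Lambda(\lambda_o d(x_o,\cdot))$, hence $\nu\in\Pu$ whenever $I(\nu|\mu)<\infty$. The converse implication is obtained by noting that if $\Lambda(\lambda d(x_o,\cdot))=+\infty$ for all $\lambda>0$, then an explicit sequence of tilted measures $\nu_n$ with bounded Donsker--Varadhan information and diverging first moment of $d(x_o,\cdot)$ can be produced (either directly from the variational formula or by a standard truncation plus closed-graph argument).

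The main obstacle is technical rather than structural: the Donsker--Varadhan variational identity and the identity $\|P_t^u\|_{L^2(\mu)}=e^{t\Lambda(u)}$ are most cleanly stated for bounded $u$, while the inequalities (b)--(d) are meant to hold for \emph{unbounded} Lipschitz test functions. Extending the duality and the spectral identity to $u$ with controlled growth (via truncation $u\wedge N$, monotone convergence of $P_t^{u\wedge N}$ and closedness of the quadratic form $\mathcal E$) is where most of the real work lies; one also has to check that the large deviation lower bound in Theorem \ref{DV} can be applied with the absolutely continuous initial law assumption carried through to the integrability condition of part (1).
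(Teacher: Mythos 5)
Your proposal follows essentially the same route as the paper, which itself only sketches the argument: the paper points to the convex conjugacy between $\IDV(\cdot|\mu)$ and $\Upsilon(u)=\log\|P_1^{u}\|_{L^2(\mu)}=\frac1t\log\|P_t^{u}\|_{L^2(\mu)}$ (your spectral identity), invokes the abstract duality of Theorem \ref{res-09} for the equivalence of (a) and (b)--(c), replaces Sanov's theorem by Theorem \ref{DV} for the large-deviation direction, and proves part (1) by mimicking the proof of \eqref{eqL-f} exactly as you do. Your elaboration of the Chernoff step and your remark on the unbounded-Lipschitz issue are consistent with the intended proof.
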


\begin{rem}
The Laplace-Varadhan principle allows us to identify the left-hand
side of the inequality stated at (c), so that (c) is equivalent
to: For all Lipschitz function $u$ on $\X$ with
$\|u\|_{\mathrm{Lip}}\le 1, \ \IX u\,d\mu=0,$ all $\lambda\ge 0$
and all $\nu\in\PX$,
\begin{equation*}
    \lambda \IX u\,d\nu-\IDV(\nu|\mu)
    \le \alpha^\mc(\lambda).
\end{equation*}
\end{rem}

The proof of statement (1) follows the proof of \eqref{eqL-f} once
one knows that $\nu\mapsto \IDV(\nu|\mu)$ and $u\mapsto
\Upsilon(u):= \log \|P_1^{ u}\|_{L^2(\mu)}=\frac 1t\log \|P_t^{
u}\|_{L^2(\mu)}$ (for all $t>0$) are convex conjugate to each
other. The idea of the proof of the second statement is pretty
much the same as in Section \ref{section large deviations}. As was
already mentioned, one has to replace Sanov's theorem with Theorem
\ref{DV}. The equivalence of (a) and (c) can be obtained without
appealing to large deviations, but only invoking the duality of
inequalities stated at Theorem \ref{res-09} and the fact that
$\IDV$ and $\Upsilon$ are convex conjugate to each other, as was
mentioned a few lines above.

Let us turn our attention to the analogue of $\T_2.$

\begin{defi}
The probability measure $\mu\in \mathrm{P}_2(\X)$ satisfies the
inequality $\WWI(C)$ with constant $C$ if
\begin{equation*}
     W_2^2(\nu, \mu) \le C^2 \IDV(\nu|\mu),
\end{equation*}
for all $\nu\in\PX.$
\end{defi}

\begin{thm}[$\WWI,$ \cite{GLWY09}]
The statements below are equivalent.
\begin{enumerate}[(a)]
\item The probability measure $\mu\in\PX$ verifies $\WWI(C)$.

\item For any $v\in\BX$, $\|P_t^{\frac{Qv} {C^2} }\|_{L^2(\mu)}
\le e^{\frac{t} {C^2} \mu(v)},$ for all $t\ge0$ where
$\displaystyle{ Qv(x)=\inf_{y\in \X} \{v(y) + d^2(x,y)\}}.$

 \item For any $u\in\BX$,
$\|P_t^{\frac{u} {C^2}}\|_{L^2(\mu)} \le e^{\frac{t} {C^2}
\mu(Su)},$ for all $t\ge0$ where  $\displaystyle{ Su(y)=\sup_{x\in
\X} \{u(y) - d^2(x,y)\} }.$
 \end{enumerate}
\end{thm}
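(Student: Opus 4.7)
The plan is to reduce everything to the abstract duality of Theorem~\ref{res-09}, together with the standard Feynman--Kac/Donsker--Varadhan variational identity. The key observation, which replaces the role played by the log-Laplace transform in the transport--entropy theory, is that $\Upsilon(u):=\frac1t\log\|P_t^u\|_{L^2(\mu)}$ (independent of $t>0$, by the semigroup property and self-adjointness of $\mathcal{L}^u=\mathcal{L}+u$ under $\mu$-reversibility) is the convex conjugate of $\IDV(\,\cdot\,|\mu)$ on the class $\mathcal{B}_b(\X)$. Indeed, $\|P_t^u\|_{L^2(\mu)}=\exp(t\lambda_{\max}(\mathcal{L}+u))$ and the Rayleigh quotient identity
\[
\lambda_{\max}(\mathcal{L}+u)=\sup_{\int f^2\,d\mu=1}\left\{\int uf^2\,d\mu-\mathcal{E}(f,f)\right\}=\sup_{\nu\in\PX}\left\{\int u\,d\nu-\IDV(\nu|\mu)\right\}
\]
gives exactly this conjugacy. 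On the transport side, Kantorovich duality (Theorem~\ref{res-01}) applied to the cost $c=d^2$ yields
\[
W_2^2(\nu,\mu)=\sup_{w\in\CX}\left\{\int Qw\,d\nu-\int w\,d\mu\right\},
\]
since for fixed $v$ the optimal $u$ in $u\oplus v\le d^2$ is $u(x)=\inf_y\{d^2(x,y)-v(y)\}=Q(-v)(x)$.

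First I would establish (a)$\Leftrightarrow$(b). With $\alpha(t)=t/C^2$ (so $\alpha^{\mc}=\iota_{[0,1/C^2]}$, but we only need the functional inequality form), condition (a) reads $W_2^2(\nu,\mu)/C^2\le\IDV(\nu|\mu)$ for all $\nu$. Plugging in the dual formula for $W_2^2$, this is equivalent to
\[
\sup_{w}\left\{\int\tfrac{Qw}{C^2}\,d\nu-\IDV(\nu|\mu)\right\}\le\int\tfrac{w}{C^2}\,d\mu\quad\text{for all }\nu,
\]
and taking the supremum over $\nu\in\PX$ on the left-hand side (using the conjugacy above) gives
\[
\Upsilon(Qw/C^2)=\tfrac1t\log\|P_t^{Qw/C^2}\|_{L^2(\mu)}\le\tfrac{1}{C^2}\int w\,d\mu,
\]
which is exactly (b). Each step is reversible, so (a)$\Leftrightarrow$(b). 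This is essentially Theorem~\ref{res-09} in disguise, applied to $\mathcal{U}=\CX$ (or $\mathcal{B}_b$), $T=W_2^2(\,\cdot\,,\mu)$ and $J=\IDV(\,\cdot\,|\mu)$, but because $W_2^2$ is \emph{already} given as a supremum indexed by $Qw$, the ``test function'' in the dual inequality is $Qw$ rather than $w$ itself.

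Next I would handle (b)$\Leftrightarrow$(c) by the Legendre-type duality between $Q$ and $S$. A one-line computation shows
\[
Qv\ge u\text{ everywhere}\iff v\ge Su\text{ everywhere},
\]
so $SQv\le v$ and $QSu\ge u$ always. Because Feynman--Kac is monotone in the potential, the map $u\mapsto\|P_t^u\|_{L^2(\mu)}$ is monotone. Hence, assuming (b) and choosing $v=Su$ yields $\|P_t^{QSu/C^2}\|_{L^2}\le\exp(\tfrac{t}{C^2}\mu(Su))$; but $QSu\ge u$ so the left-hand side dominates $\|P_t^{u/C^2}\|_{L^2}$, giving (c). Conversely, from (c) with $u=Qv$, monotonicity together with $SQv\le v$ yields (b).

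The routine parts that I would not grind through are: (i) verifying the validity of the Feynman--Kac variational formula and the closability of $\mathcal{E}$ needed to identify $\Upsilon$ with $\IDV^\ast$ (this is in \cite{Wu00b} and its references); (ii) measurability and $\mu$-integrability of $Qv$ and $Su$ for $v,u\in\mathcal{B}_b$, which is standard since these are, respectively, upper and lower semicontinuous bounded functions when $d$ is \lsc. The real content is the first equivalence, whose only subtle point is the passage from an inequality valid for all $\nu$ to one valid for all test functions via simultaneous sup over $\nu$ and $w$; the obstacle to be careful about is that the dual representation of $W_2^2$ uses $Qw$ as test function, so the final inequality (b) features $Qv$ rather than $v$, and this is precisely what makes (b) and (c) two different (but equivalent) formulations.
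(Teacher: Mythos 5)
Your argument is correct and follows precisely the route the survey itself indicates for this theorem (which it states without proof, citing \cite{GLWY09}): Kantorovich duality for $W_2^2$ combined with the convex conjugacy between $\IDV(\,\cdot\,|\mu)$ and $\Upsilon(u)=\frac1t\log\|P_t^u\|_{L^2(\mu)}$ — i.e.\ Theorem \ref{res-09} applied with $J=\IDV$ — together with the order duality $Qv\ge u\Leftrightarrow v\ge Su$ and the monotonicity of $u\mapsto\|P_t^u\|_{L^2(\mu)}$ for (b)$\Leftrightarrow$(c). The only point to tighten is your parenthetical claim that $Qv$ and $Su$ are semicontinuous ``when $d$ is \lsc'': this is immediate only for continuous $d$ (an infimum of \lsc\ functions need not be measurable), so for merely \lsc\ costs the measurability of these inf/sup-convolutions requires the kind of selection argument discussed in the paper's remark on the measurability of $u^c$.
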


\begin{prop}[$\WWI$ in relation with $\LSI$ and $\PI,$ \cite{GLWY09}]\label{prop25} In the framework of the Riemannian manifold
as above, the following results  hold.
\begin{enumerate}[(a)]
\item $\LSI(C)$ implies $\WWI(C)$.

 \item $\WWI(C)$ implies  $\PI(C/2)$.

 \item Assume that $\rm{Ric} + {\rm Hess} V\ge\kappa \mathrm{Id}$ with $\kappa\in\R$. If
$C\kappa\le 2,$ Then,\\ $\WWI(C)$ implies $\LSI(2C -
C^2\kappa/2).$
\end{enumerate}
\end{prop}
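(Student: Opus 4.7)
\textbf{Part (a).} By definition, $\LSI(C)$ gives $H(\nu|\mu)\le C\,\IDV(\nu|\mu)$ for every $\nu\in\PX$. By Otto--Villani (Theorem~\ref{OV}), $\LSI(C)$ also implies $\T_2(C)$, that is $W_{2}^{2}(\nu,\mu)\le C\,H(\nu|\mu)$. Composing these two bounds yields $W_{2}^{2}(\nu,\mu)\le C^{2}\,\IDV(\nu|\mu)$, which is precisely $\WWI(C)$.

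\textbf{Part (b).} My plan is to linearize $\WWI(C)$ around $\mu$. For a smooth bounded centered $\varphi$ (so $\IX\varphi\,d\mu=0$), apply $\WWI(C)$ to $\nu_{\epsilon}:=(1+\epsilon\varphi)\mu$ and expand both sides in $\epsilon$. Using $\sqrt{1+\epsilon\varphi}=1+\epsilon\varphi/2+O(\epsilon^{2})$ inside the Dirichlet form gives
\[
\IDV(\nu_{\epsilon}|\mu)=\mathcal{E}\bigl(\sqrt{1+\epsilon\varphi},\sqrt{1+\epsilon\varphi}\bigr)=\tfrac{\epsilon^{2}}{4}\,\mathcal{E}(\varphi,\varphi)+o(\epsilon^{2}),
\]
while the Otto-calculus/Benamou--Brenier expansion of the Wasserstein distance yields
\[
W_{2}^{2}(\nu_{\epsilon},\mu)=\epsilon^{2}\,\|\varphi\|_{\dot{H}^{-1}(\mu)}^{2}+o(\epsilon^{2}),\qquad \|\varphi\|_{\dot{H}^{-1}(\mu)}^{2}:=\sup_{h}\frac{\bigl(\IX\varphi h\,d\mu\bigr)^{2}}{\mathcal{E}(h,h)}.
\]
Inserting both into $\WWI(C)$, dividing by $\epsilon^{2}$ and letting $\epsilon\to 0$ gives $\|\varphi\|_{\dot{H}^{-1}(\mu)}^{2}\le (C^{2}/4)\,\mathcal{E}(\varphi,\varphi)$. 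Choosing $h=\varphi$ in the supremum yields $\bigl(\IX\varphi^{2}\,d\mu\bigr)^{2}\le (C^{2}/4)\,\mathcal{E}(\varphi,\varphi)^{2}$, i.e.\ $\Var_{\mu}(\varphi)\le (C/2)\,\mathcal{E}(\varphi,\varphi)$, which is $\PI(C/2)$. The main technical obstacle is the rigorous $W_{2}^{2}$-expansion; to stay entirely within the dual Feynman--Kac framework of the preceding theorem, one can instead expand $\mu(S(\epsilon h))=(\epsilon^{2}/4)\,\mathcal{E}(h,h)+O(\epsilon^{3})$ by Hopf--Lax and match this with the second-order perturbation $\lambda_{\max}(\mathcal{L}+\epsilon h/C^{2})=(\epsilon^{2}/C^{4})\,\|h\|_{\dot{H}^{-1}(\mu)}^{2}+O(\epsilon^{3})$ of the top eigenvalue; the same Cauchy--Schwarz step $h=\varphi$ then closes the argument.

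\textbf{Part (c).} The natural tool is the HWI inequality of Corollary~\ref{HWI}: under $\mathrm{Ric}+\mathrm{Hess}\,V\ge\kappa\,\mathrm{Id}$, using $\IF=4\,\IDV$ in our normalization, it reads
\[
H(\nu|\mu)\le 2\,W_{2}(\nu,\mu)\sqrt{\IDV(\nu|\mu)}-\tfrac{\kappa}{2}W_{2}^{2}(\nu,\mu).
\]
Writing $w:=W_{2}(\nu,\mu)$ and $J:=\IDV(\nu|\mu)$, the hypothesis $\WWI(C)$ supplies the constraint $w\le C\sqrt{J}$, and it remains to upper-bound $g(w):=2w\sqrt{J}-(\kappa/2)w^{2}$ on $[0,C\sqrt{J}]$. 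If $\kappa\le 0$, the function $g$ is convex with $g(0)=0$ and nondecreasing on $[0,\infty)$, so its maximum on $[0,C\sqrt{J}]$ is attained at the right endpoint, equal to $(2C-C^{2}\kappa/2)\,J$. If $\kappa>0$, $g$ is concave with vertex at $w^{*}=2\sqrt{J}/\kappa$; the standing hypothesis $C\kappa\le 2$ rewrites as $w^{*}\ge C\sqrt{J}$, so $g$ is still nondecreasing on $[0,C\sqrt{J}]$ and the same endpoint bound applies. In both cases $H(\nu|\mu)\le (2C-C^{2}\kappa/2)\,\IDV(\nu|\mu)$, which is exactly $\LSI(2C-C^{2}\kappa/2)$. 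The role of $C\kappa\le 2$ is precisely to prevent the unconstrained maximizer of $g$ from falling strictly inside the feasible interval, which would produce a sharper bound unrelated to the $(2C-C^{2}\kappa/2)$ constant.
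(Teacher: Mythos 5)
Your proposal is correct and follows essentially the same three-step route as the paper: (a) compose Otto--Villani with $\LSI(C)$, (b) linearize $\WWI$ around $\mu$, (c) feed $\WWI$ into the HWI inequality. For (a) and (c) the arguments coincide; in (c) the paper merely says ``direct application of HWI,'' and your endpoint-optimization of $g(w)=2w\sqrt{J}-\tfrac{\kappa}{2}w^2$ on $[0,C\sqrt{J}]$, including the observation that $C\kappa\le 2$ is exactly what keeps the unconstrained maximizer outside the feasible interval, is a correct and complete filling-in. The only genuine divergence is in (b): you invoke the full second-order expansion $W_2^2(\nu_\epsilon,\mu)=\epsilon^2\|\varphi\|_{\dot H^{-1}(\mu)}^2+o(\epsilon^2)$, of which the direction you actually need is the \emph{lower} bound $\liminf_{\epsilon\to0}W_2^2(\nu_\epsilon,\mu)/\epsilon^2\ge\|\varphi\|_{\dot H^{-1}(\mu)}^2$ (the upper bound, easy via Benamou--Brenier, is useless here since $\WWI$ already upper-bounds $W_2^2$). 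The paper sidesteps this delicate step by quoting an inequality from \cite[p.~394]{OV00}, namely $\int g^2\,d\mu\le\sqrt{\mathcal{E}(g,g)}\,W_2(\mu_\epsilon,\mu)/\epsilon+\tfrac{r}{\epsilon}W_2^2(\mu_\epsilon,\mu)$, which directly converts the $\WWI$ upper bound on $W_2$ into the Poincar\'e inequality after letting $\epsilon\to0$; your alternative via the dual Feynman--Kac/spectral characterization and second-order perturbation of $\lambda_{\max}$ is a legitimate substitute, but as written your main route leaves the hard half of the $W_2$ linearization unproved.
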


Note that $\WWI(C)$ with $C\kappa\le 2$ is possible. This follows
from Part (a) and the Bakry-Emery criterion in the case
$\kappa>0,$ see Corollary \ref{resL-d}.

\begin{proof}\boulette{(a)}
    By Theorem \ref{OV}, we know that $\LSI(C)$ implies $\T_2(C).$ Hence,
    $W_2(\nu,\mu)\le\sqrt{ 2C H(\nu|\mu)}\le 2C \sqrt{\IDV(\nu|\mu)}.$

   \Boulette{(b)} The proof follows from the usual
linearization procedure. Set $\mu_\epsilon=(1+\epsilon g)\mu$ for
some smooth and compactly supported $g$ with $\int g\,d\mu=0$, we
easily get\\ 
    $\lim_{\epsilon\rightarrow 0}
\IDV(\mu_\epsilon|\mu)/\epsilon^2= \frac 14 \mathcal{E}(g,g)$ and
by Otto-Villani \cite[p.394]{OV00}, there exists $r$ such that
    $
\int g^2\,d\mu\le
\sqrt{\mathcal{E}(g,g)}\frac{W_2(\mu_\epsilon,\mu)}{\epsilon}
+\frac{r}{\epsilon}W_2^2(\mu_\epsilon,\mu).
    $
Using now $\WWI(C)$ we get
$$
\int g^2\,d\mu\le
C\sqrt{\mathcal{E}(g,g)}\sqrt{\frac{\IDV(\mu_\epsilon|\mu)}{\epsilon^2}}
+\frac{rC^2}{\epsilon}\IDV(\mu_\epsilon|\mu).
$$
Letting $\epsilon\to0$ gives the result.
    \Boulette{(c)} It is a direct application of the HWI inequality, see Corollary \ref{HWI}.
\end{proof}

\subsection*{Tensorization}
In Section \ref{sec-overview} we have already seen how
transport-entropy inequalities tensorize. We revisit
tensorization, but this time we replace the relative entropy
$H(\cdot|\mu)$ with the Donsker-Varadhan $\IDV(\cdot|\mu).$ This
will be quite similar in spirit to what as already been done in
Section \ref{sec-overview}, but we are going to use alternate
technical lemmas which will prepare the road to Section
\ref{sec-Gibbs} where a Gibbs measure will replace our product
measure. This approach which is partly based on Gozlan \&
L\'eonard \cite{GL07} is developed in Guillin, L\'eonard, Wu \&
Yao's article \cite{GLWY09}.

On the polish product space $\X^{(n)}:=\prod_{i=1}^n \X_i$ equipped
with the product measure $\mu:=\otimes_{i=1}^n \mu_i$, consider
the cost function
\begin{equation*}
\oplus_ic_i (x,y) :=\sum_{i=1}^n c_i(x_i, y_i), \quad  x,y\in
\X^{(n)}
\end{equation*}
where for  each index $i,$ $c_i$ is \lsc\ on $\X_i^2$ and assume
that for each $1\le i\le n,$ $\mu_i\in \mathrm{P}(\X_i)$ satisfies
the transport-information inequality
\begin{equation}\label{a21}
\alpha_i(\mathcal{T}_{c_i}(\nu, \mu_i))\le
I_{\mathcal{E}_i}(\nu|\mu_i),\quad  \nu\in \mathrm{P}(\X_i)
\end{equation}
where $I_{\mathcal{E}_i}(\nu|\mu_i)$ is the Donsker-Varadhan
information related to some Dirichlet form $(\mathcal{E}_i,
\dd(\mathcal{E}_i))$, and $\alpha_i$ stands in the class
$\mathcal{A}$, see Definition \ref{class A}. Define the global
Dirichlet form $\oplus_i^\mu\mathcal{E}_i$ by
\begin{equation*}
 \dd(\oplus_i^\mu\mathcal{E}_i)
    :=\left\{g\in L^2(\mu): g_i^{\widetilde{x}_i}\in
    \dd(\mathcal{E}_i), \textrm{for }
\mu\textrm{-a.e. } \widetilde{x}_i \textrm{ and } \int_{\X^{(n)}}
\sum_{i=1}^n\mathcal{E}_i(g_i^{\widetilde{x}_i},g_i^{\widetilde{x}_i})\,d\mu(x)<+\infty\right\}
\end{equation*}
where $ g_i^{\widetilde{x}_i}: x_i\mapsto
g_i^{\widetilde{x}_i}(x_i):=g(x)$ with
$\widetilde{x}_i:=(x_1,\cdots, x_{i-1}, x_{i+1}, \cdots, x_n)$
considered as fixed and
\begin{equation}\label{a24}
    \oplus_i^\mu\mathcal{E}_i(g, g)
    := \int_{\X^{(n)}} \sum_{i=1}^n
\mathcal{E}_i(g_i^{\widetilde{x}_i},g_i^{\widetilde{x}_i})\,d\mu(x),
\quad  g\in \dd(\oplus_i^\mu\mathcal{E}_i).
\end{equation}
Let $I_{\oplus_i\mathcal{E}_i}(\nu|\mu)$ be the Donsker-Varadhan
information associated with $(\oplus_i^\mu\mathcal{E}_i,
\dd(\oplus_i^\mu\mathcal{E}_i))$, see \eqref{FDV}. We denote
$\alpha_1\Box\cdots\Box \alpha_n$ the inf-convolution of
$\alpha_1,\dots,\alpha_n$ which is defined by
$$\alpha_1\Box\cdots\Box
\alpha_n(r)=\inf\{\alpha_1(r_1)+\cdots+\alpha_n(r_n);
r_1,\dots,r_n\ge0, r_1+\cdots+r_n=r\},\quad r\ge0.$$

\begin{thm}[\cite{GLWY09}]\label{thm22}
Assume that for each $i=1,\cdots, n,$ $\mu_i$ satisfies the
transport-entropy inequality \eqref{a21}.  Then, the product
measure $\mu$ satisfies the following transport-entropy inequality
\begin{equation}
    \alpha_1\Box\cdots\Box \alpha_n (\mathcal{T}_{\oplus c_i}(\nu,\mu))
    \le I_{\oplus_i\mathcal{E}_i}(\nu|\mu),
    \quad \nu\in\mathrm{P}(\X^{(n)}).
\end{equation}
\end{thm}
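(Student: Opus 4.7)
The plan is to adapt the proof of Proposition \ref{res-tensorization} to this information-theoretic setting, keeping the same five-step chaining but replacing one key identity with an inequality. The structure will be: tensorize the transport cost, invoke monotonicity of the inf-convolution, apply Jensen with convexity of the $\alpha_i$, use the assumed one-factor inequalities \eqref{a21}, and finally assemble the one-factor Donsker-Varadhan informations into the global one.

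First I would iterate the two-factor bound \eqref{eq-41} (Proposition \ref{res-tenstrans}) to establish an $n$-factor tensorization of the transport cost: writing $\widetilde x_i:=(x_1,\dots,x_{i-1},x_{i+1},\dots,x_n)$ for the ``hat-i'' coordinates, $\widetilde \nu_i$ for the corresponding marginal of $\nu$, and $\nu_i^{\widetilde x_i}$ for the regular conditional of the $i$-th coordinate given the others, I would show
\begin{equation*}
\mathcal{T}_{\oplus_i c_i}(\nu,\mu) \;\le\; \sum_{i=1}^n \int \mathcal{T}_{c_i}\!\left(\nu_i^{\widetilde x_i},\mu_i\right) d\widetilde\nu_i(\widetilde x_i).
\end{equation*}
This is a routine induction on $n$. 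The genuinely new ingredient, and the main obstacle, is a subadditivity inequality for the Donsker-Varadhan information which replaces the exact chain rule \eqref{eq-42} available for the relative entropy:
\begin{equation*}
I_{\oplus_i\mathcal{E}_i}(\nu|\mu) \;\ge\; \sum_{i=1}^n \int I_{\mathcal{E}_i}\!\left(\nu_i^{\widetilde x_i}\,|\,\mu_i\right) d\widetilde\nu_i(\widetilde x_i).
\end{equation*}
Unlike the entropy case this is only an inequality. I would derive it from the classical convexity of $f\mapsto \mathcal{E}_i(\sqrt f,\sqrt f)$ on densities (a standard feature of Dirichlet forms) combined with the explicit formula \eqref{a24} for $\oplus_i^\mu\mathcal{E}_i$: writing $f=d\nu/d\mu$, the conditional density of $\nu_i^{\widetilde x_i}$ with respect to $\mu_i$ is $f(x)/\widetilde f_i(\widetilde x_i)$ with $\widetilde f_i(\widetilde x_i):=\int f(x)\,d\mu_i(x_i)$, and a coordinatewise Jensen argument applied to the convex functional above yields the claim.

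Once these two ingredients are in hand, the chaining proceeds exactly as in the proof of Proposition \ref{res-tensorization}. Setting $T_i:=\mathcal{T}_{c_i}(\nu_i^{\widetilde x_i},\mu_i)$ and using successively the monotonicity of $\alpha_1\square\cdots\square\alpha_n$, the transport tensorization, the very definition of the inf-convolution, Jensen's inequality (with the convexity of each $\alpha_i\in\mathcal{A}$), the hypotheses \eqref{a21}, and the subadditivity of the Donsker-Varadhan information, one obtains the chain
\begin{equation*}
\alpha_1\square\cdots\square\alpha_n\!\big(\mathcal{T}_{\oplus_i c_i}(\nu,\mu)\big) \le \sum_{i=1}^n \int \alpha_i(T_i)\, d\widetilde\nu_i \le \sum_{i=1}^n \int I_{\mathcal{E}_i}\!\left(\nu_i^{\widetilde x_i}|\mu_i\right) d\widetilde\nu_i \le I_{\oplus_i\mathcal{E}_i}(\nu|\mu),
\end{equation*}
which is the desired tensorized transport-information inequality. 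The combinatorial bookkeeping of the $n$-fold inf-convolution is essentially the same as that already carried out in \cite{GL07}; the Dirichlet-form subadditivity is where the proof truly diverges from the relative entropy case and is the step on which I would spend the most care.
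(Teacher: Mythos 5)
Your overall architecture is exactly that of the paper: a sub-additivity bound for $\mathcal{T}_{\oplus c_i}$ over the full conditionals $\nu_i^{\widetilde x_i}$, a matching decomposition of $I_{\oplus_i\mathcal{E}_i}$ over the same conditionals, and then the monotonicity/inf-convolution/Jensen chaining already used for Proposition \ref{res-tensorization}. Two of your justifications, however, do not quite deliver the statements you need as written.

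First, the transport-cost bound with \emph{full} conditionals is not ``a routine induction on $n$'' from the two-factor bound \eqref{eq-41}. Iterating Proposition \ref{res-tenstrans} conditions each coordinate only on the \emph{preceding} ones, i.e.\ it yields Marton's ordered decomposition $\mathcal{T}_{\oplus c_i}(\nu,\mu)\le\sum_i\int\mathcal{T}_{c_i}(\nu_i^{x_1,\dots,x_{i-1}},\mu_i)\,d\nu$, not the bound involving $\nu_i^{\widetilde x_i}$. The full-conditional version is essential here, because \eqref{a24} decomposes $\oplus_i^\mu\mathcal{E}_i$ over one-coordinate sections with \emph{all} other coordinates frozen; a sequential decomposition of the cost would have nothing to pair with on the information side. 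You can repair your route by adding one Jensen step — $\nu_i^{x_1,\dots,x_{i-1}}$ is the average of $\nu_i^{\widetilde x_i}$ over the conditional law of $(x_{i+1},\dots,x_n)$, and $\rho\mapsto\mathcal{T}_{c_i}(\rho,\mu_i)$ is convex, so the ordered bound implies the unordered one — or simply invoke Proposition \ref{lem23} (proved in the Appendix, Proposition \ref{res-tensliming}, by modifying an optimal coupling one coordinate at a time), which is what the paper does. Second, your ``subadditivity'' of the Donsker--Varadhan information is in fact an exact \emph{additivity} (Lemma \ref{lem26}), and the mechanism is not convexity/Jensen but the $2$-homogeneity of each $\mathcal{E}_i$: writing $f=d\nu/d\mu$, one has $I_{\mathcal{E}_i}(\nu_i^{\widetilde x_i}|\mu_i)=\mathcal{E}_i\bigl(\sqrt{f_i^{\widetilde x_i}},\sqrt{f_i^{\widetilde x_i}}\bigr)/\mu_i(f_i^{\widetilde x_i})$, and integrating against $d\nu=f\,d\mu$ cancels the normalizing factor exactly, giving $\oplus_i^\mu\mathcal{E}_i(\sqrt f,\sqrt f)$. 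Your inequality is true and suffices for the chaining, but as stated the ``coordinatewise Jensen argument'' is not a proof; convexity of $\rho\mapsto I_{\mathcal{E}_i}(\rho|\mu_i)$ by itself points in the wrong direction for this comparison. With these two steps replaced by Proposition \ref{lem23} and Lemma \ref{lem26}, your chain of inequalities is precisely the paper's proof.
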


This result is similar to Proposition \ref{res-tensorization}. But
its proof will be different. It is based on the following
sub-additivity result for the transport cost of a product measure.

Let $(X_i)_{1\le i\le n}$ be the canonical process on $\X^{(n)}.$
For each $i,$ $\widetilde{X}_i=(X_j)_{1\le j\le n; j\not=i}$ is
the configuration without its value at index $i.$

Given a probability measure $\nu$ on $\X^{(n)}$,
$$\nu_i^{\widetilde{x}_i}=\nu(X_i\in\cdot|\widetilde{X}_i=\widetilde{x}_i)$$ denotes the regular conditional
distribution of $X_i$ knowing that
$\widetilde{X}_i=\widetilde{x}_i$ under $\nu$ and
$$
\nu_i=\nu(X_i\in\cdot)
$$
denotes the $i$-th marginal of $\nu.$

\begin{prop}[\cite{GLWY09}]\label{lem23} Let $\mu=\bigotimes_{i=1}^n\mu_i$ be a product probability measure on $\X^{(n)}.$
For all $\nu\in\mathrm{P}(\X^{(n)}),$
    $$
    \mathcal{T}_{\oplus c_i}(\nu,\mu)
    \le \int_{\X^{(n)}}\left(\sum_{i=1}^n\mathcal{T}_{c_i}(\nu_i^{\widetilde{x}_i}, \mu_i)\right)\,d\nu(x).
    $$
\end{prop}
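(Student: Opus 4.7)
My plan is to prove Proposition \ref{lem23} by induction on $n$, using the two-block tensorization inequality \eqref{eq-41} (proven in Proposition \ref{res-tenstrans}) as the engine and Jensen's inequality to upgrade the leftover non-conditional marginal term. The base case $n=1$ is trivial since $\widetilde{x}_1$ is the empty tuple, so $\nu_1^{\widetilde{x}_1}=\nu$ and both sides equal $\mathcal{T}_{c_1}(\nu,\mu_1)$.

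For the inductive step from $n-1$ to $n$, I decompose $\X^{(n)} = \X_1 \times \prod_{i=2}^n \X_i$ and view $\mu = \mu_1 \otimes \bigl(\bigotimes_{i=2}^n \mu_i\bigr)$ with cost $c_1 \oplus (c_2 \oplus \cdots \oplus c_n)$. Writing $\nu_1$ for the first marginal of $\nu$ and $\nu^{x_1}$ for the regular conditional law of $(X_2,\ldots,X_n)$ given $X_1=x_1$, the inequality \eqref{eq-41} yields
\begin{equation*}
\mathcal{T}_{\oplus c_i}(\nu,\mu) \le \mathcal{T}_{c_1}(\nu_1,\mu_1) + \int_{\X_1}\mathcal{T}_{c_2\oplus\cdots\oplus c_n}\Bigl(\nu^{x_1},\bigotimes_{i\ge 2}\mu_i\Bigr)\, d\nu_1(x_1).
\end{equation*}

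The induction hypothesis applied to $\nu^{x_1}$ bounds the integrand by $\sum_{i=2}^n \int \mathcal{T}_{c_i}\bigl((\nu^{x_1})_i^{\widetilde{y}_i},\mu_i\bigr)\,d\nu^{x_1}(y_2,\ldots,y_n)$, where $\widetilde{y}_i=(y_2,\ldots,y_n)$ with $y_i$ omitted and conditionals are taken under $\nu^{x_1}$. The key observation is a compatibility of iterated conditional distributions, namely $(\nu^{x_1})_i^{\widetilde{y}_i} = \nu_i^{\widetilde{x}_i}$ with $\widetilde{x}_i=(x_1,y_2,\ldots,y_{i-1},y_{i+1},\ldots,y_n)$; this is a standard consequence of the tower property for regular disintegrations on polish spaces. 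Fubini's theorem then collapses the resulting double integral to $\int \mathcal{T}_{c_i}(\nu_i^{\widetilde{x}_i},\mu_i)\,d\nu(x)$ for each $i\ge 2$.

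It remains to absorb the leftover $\mathcal{T}_{c_1}(\nu_1,\mu_1)$ term into the desired form. I will invoke the convexity of $\nu \mapsto \mathcal{T}_{c_1}(\nu,\mu_1)$ (an immediate consequence of the fact that linear mixtures of transport plans are transport plans) together with the disintegration $\nu_1 = \int \nu_1^{\widetilde{x}_1}\,d\nu_{\widetilde{1}}(\widetilde{x}_1)$, where $\nu_{\widetilde{1}}$ is the marginal of $\nu$ on the coordinates other than the first. Jensen's inequality then gives $\mathcal{T}_{c_1}(\nu_1,\mu_1) \le \int \mathcal{T}_{c_1}(\nu_1^{\widetilde{x}_1},\mu_1)\,d\nu(x)$. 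Summing this with the bounds on the $n-1$ other terms completes the inductive step. The only genuinely delicate point is the compatibility identity for iterated conditionals, but the polish setting ensures its validity; no fresh measurable selection is needed because Proposition \ref{res-tenstrans} is applied as a black box.
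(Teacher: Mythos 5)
Your argument is correct, but it is genuinely different from the one in the paper. The paper's proof of this proposition (Proposition \ref{res-tensliming} in the Appendix) is a direct, non-inductive exchange argument: one starts from an \emph{optimal} coupling $(U_i,V_i)_{1\le i\le n}$ of $\nu$ and $\mu$, and for each fixed index $i$ replaces the $i$-th pair by a conditionally optimal coupling of $\nu_i^{\widetilde{U}_i}$ and $\mu_i^{\widetilde{V}_i}$; the optimality of the original coupling forces $\E\, c_i(U_i,V_i)\le \E\,\mathcal{T}_{c_i}(\nu_i^{\widetilde{U}_i},\mu_i)$, and summing over $i$ gives the claim. That route is symmetric in the indices from the start (which is precisely what distinguishes this proposition from Marton's ordered version, as the paper remarks) and uses no convexity of the transport cost. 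Your route instead iterates the ordered two-block bound \eqref{eq-41} and then restores the symmetry by two extra ingredients: the tower property $(\nu^{x_1})_i^{\widetilde{y}_i}=\nu_i^{\widetilde{x}_i}$ for the conditionals of the coordinates $i\ge 2$, and convexity of $\rho\mapsto\mathcal{T}_{c_1}(\rho,\mu_1)$ plus Jensen to upgrade the unconditional term $\mathcal{T}_{c_1}(\nu_1,\mu_1)$ to $\int\mathcal{T}_{c_1}(\nu_1^{\widetilde{x}_1},\mu_1)\,d\nu$. Both ingredients are valid (the convexity of the optimal cost under mixtures is the fact the paper itself invokes, via \cite[Theorem 4.8]{Vill2}, in the proof of Corollary \ref{T2=concentration}), and the product structure of $\mu$ is used exactly where it must be, since otherwise the conditional $\mu_i^{\widetilde{x}_i}$ would not reduce to $\mu_i$. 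One small caveat: the Jensen step for a continuum mixture $\nu_1=\int\nu_1^{\widetilde{x}_1}\,d\nu_{\widetilde 1}$ does require a measurable family of (near-)optimal plans, so your remark that no fresh measurable selection is needed beyond Proposition \ref{res-tenstrans} is a bit optimistic; but this is the same standard polish-space selection issue the paper already handles via Kantorovich duality, so it is a presentational point rather than a gap. The trade-off is that the paper's proof needs an optimal coupling to exist (finite cost) and a conditional measurable selection, while yours needs convexity of $\mathcal{T}_{c}$ and the tower property; each is a legitimate price for avoiding the other's machinery.
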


The proof of this proposition is given in the Appendix at
Proposition \ref{res-tensliming}.

The following additivity property of the Fisher information will
be needed. It holds true even in the dependent case.

\begin{lem}[\cite{GLWY09}]\label{lem26} Let $\nu, \mu$ be probability measures on
$\X^{(n)}$ such that $I_{\oplus_i\mathcal{E}_i}(\nu|\mu)<+\infty$.
 Then,
\begin{equation*}
    I_{\oplus_i\mathcal{E}_i}(\nu|\mu)
    = \int_{\X^{(n)}}\sum_{i=1}^n I_{\mathcal{E}_i}(\nu_i^{\widetilde{x}_i}|\mu_i^{\widetilde{x}_i})\,d\nu(x).
\end{equation*}
\end{lem}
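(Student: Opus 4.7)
The idea is a direct computation based on disintegrating $\mu$ and $\nu$ against the ``all but the $i$-th'' coordinates and exploiting the quadratic homogeneity $\mathcal{E}_i(\lambda h,\lambda h)=\lambda^2\mathcal{E}_i(h,h)$ of each Dirichlet form. First I would set up notation. The hypothesis $I_{\oplus_i\mathcal{E}_i}(\nu|\mu)<+\infty$ forces $\nu\ll\mu$; write $f:=d\nu/d\mu$, so $\sqrt{f}\in\dd(\oplus^\mu_i\mathcal{E}_i)$ and in particular, for each $i$ and $\mu$-a.e.\ $\widetilde{x}_i$, the slice $(\sqrt f)_i^{\widetilde x_i}=\sqrt{f_i^{\widetilde x_i}}$ lies in $\dd(\mathcal{E}_i)$. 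Disintegrate: $d\mu(x)=d\widetilde{\mu}_i(\widetilde{x}_i)\,d\mu_i^{\widetilde{x}_i}(x_i)$, $d\nu(x)=d\widetilde{\nu}_i(\widetilde{x}_i)\,d\nu_i^{\widetilde{x}_i}(x_i)$, and set $\widetilde{f}_i(\widetilde{x}_i):=\int f(x)\,d\mu_i^{\widetilde{x}_i}(x_i)=d\widetilde{\nu}_i/d\widetilde{\mu}_i$. On $\{\widetilde{f}_i>0\}$ (which has full $\widetilde{\nu}_i$-measure), define the conditional density $g_i^{\widetilde{x}_i}(x_i):=f_i^{\widetilde{x}_i}(x_i)/\widetilde{f}_i(\widetilde{x}_i)$; then $g_i^{\widetilde{x}_i}=d\nu_i^{\widetilde{x}_i}/d\mu_i^{\widetilde{x}_i}$ and
\[
\sqrt{f_i^{\widetilde{x}_i}}=\sqrt{\widetilde{f}_i(\widetilde{x}_i)}\,\sqrt{g_i^{\widetilde{x}_i}}
\]
as functions of $x_i$, with the first factor a constant in $x_i$.

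Next I would apply the quadratic homogeneity of $\mathcal{E}_i$: for $\widetilde{\nu}_i$-a.e.\ $\widetilde{x}_i$,
\[
\mathcal{E}_i\bigl(\sqrt{f_i^{\widetilde{x}_i}},\sqrt{f_i^{\widetilde{x}_i}}\bigr)
=\widetilde{f}_i(\widetilde{x}_i)\,\mathcal{E}_i\bigl(\sqrt{g_i^{\widetilde{x}_i}},\sqrt{g_i^{\widetilde{x}_i}}\bigr)
=\widetilde{f}_i(\widetilde{x}_i)\,I_{\mathcal{E}_i}(\nu_i^{\widetilde{x}_i}|\mu_i^{\widetilde{x}_i}).
\]
This in particular shows that $\sqrt{g_i^{\widetilde{x}_i}}\in\dd(\mathcal{E}_i)$ and $I_{\mathcal{E}_i}(\nu_i^{\widetilde{x}_i}|\mu_i^{\widetilde{x}_i})<\infty$ for $\widetilde{\nu}_i$-a.e.\ $\widetilde{x}_i$. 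Now integrate against $d\mu$: since the integrand depends only on $\widetilde{x}_i$, the inner integral over $x_i$ against $d\mu_i^{\widetilde{x}_i}$ is trivial and yields
\[
\int_{\X^{(n)}}\mathcal{E}_i\bigl(\sqrt{f_i^{\widetilde{x}_i}},\sqrt{f_i^{\widetilde{x}_i}}\bigr)\,d\mu(x)
=\int\widetilde{f}_i(\widetilde{x}_i)\,I_{\mathcal{E}_i}(\nu_i^{\widetilde{x}_i}|\mu_i^{\widetilde{x}_i})\,d\widetilde{\mu}_i(\widetilde{x}_i)
=\int_{\X^{(n)}}I_{\mathcal{E}_i}(\nu_i^{\widetilde{x}_i}|\mu_i^{\widetilde{x}_i})\,d\nu(x),
\]
the last equality using $d\widetilde{\nu}_i=\widetilde{f}_i\,d\widetilde{\mu}_i$ and that the integrand does not depend on $x_i$. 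Summing over $i$ and comparing with the definition (\ref{a24}) of $\oplus^\mu_i\mathcal{E}_i$ applied to $\sqrt f$ gives the claimed identity.

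The technical point requiring the most care is justifying the measurability of $\widetilde{x}_i\mapsto I_{\mathcal{E}_i}(\nu_i^{\widetilde{x}_i}|\mu_i^{\widetilde{x}_i})$ and, more importantly, that membership of $\sqrt{f_i^{\widetilde{x}_i}}$ in $\dd(\mathcal{E}_i)$ transfers to $\sqrt{g_i^{\widetilde{x}_i}}$; this relies on the simple observation that multiplication by the $x_i$-independent scalar $1/\sqrt{\widetilde{f}_i(\widetilde{x}_i)}$ preserves the domain of $\mathcal{E}_i$ on $\{\widetilde{f}_i>0\}$. The rest is bookkeeping with the disintegration theorem in the Polish setting, which is standard. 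Once these routine measurability issues are settled, the two integrated forms match term by term and the lemma follows.
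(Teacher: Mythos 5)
Your argument is correct and is essentially the paper's own proof: both rest on the identification $d\nu_i^{\widetilde{x}_i}/d\mu_i^{\widetilde{x}_i}=f_i^{\widetilde{x}_i}/\mu_i^{\widetilde{x}_i}(f_i^{\widetilde{x}_i})$, the quadratic homogeneity of $\mathcal{E}_i$ to pull out the $x_i$-independent normalizing factor, and the change of measure $d\widetilde{\nu}_i=\widetilde{f}_i\,d\widetilde{\mu}_i$ in the fibrewise integration (you merely run the computation from left to right where the paper runs it from right to left). The paper likewise defers the same regularity/approximation issues, which it acknowledges by presenting only a sketch.
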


\begin{proof}[Sketch of proof]
Let $f$ be a regular enough function. This why this is only a
sketch of proof, because an approximation argument which we do not
present here, is needed to obtain the result for any $f$ in the
domain $\dd(\oplus_i\mathcal{E}_i).$
\\
Then,
    $\D{\frac{d\nu_i^{\widetilde{x}_i}}{d\mu_i^{\widetilde{x}_i}}(x_i)
    = \frac{f_i^{\widetilde{x}_i}(x_i)}{\mu_i^{\widetilde{x}_i}(f_i^{\widetilde{x}_i})}},$ $\nu$-a.s. where $f_i^{\widetilde{x}_i}$ is the
function $f$ of $x_i$ with $\widetilde{x}_i$ fixed. For $\nu$-a.e.
$\widetilde{x}_i$,
$$
    I_{\mathcal{E}_i}(\nu_i^{\widetilde{x}_i}|\mu_i^{\widetilde{x}_i})
    = \mathcal{E}_i\left(\sqrt{ \frac{f_i^{\widetilde{x}_i}}{\mu_i^{\widetilde{x}_i}(f_i^{\widetilde{x}_i})}},
        \sqrt{ \frac{f_i^{\widetilde{x}_i}}{\mu_i^{\widetilde{x}_i}(f_i^{\widetilde{x}_i})}}\right)
    = \frac 1{\mu_i^{\widetilde{x}_i}(f_i^{\widetilde{x}_i})} \mathcal{E}_i(\sqrt{f_i^{\widetilde{x}_i}},
\sqrt{f_i^{\widetilde{x}_i}}).
$$
Thus,
$$
\aligned
    \int_{\X^{(n)}}\sum_{i=1}^n
    I_{\mathcal{E}_i}(\nu_i^{\widetilde{x}_i}|\mu_i^{\widetilde{x}_i})\,d\nu(x)
    &= \int_{\X^{(n)}}f(x)\sum_{i=1}^n \frac 1{\mu_i^{\widetilde{x}_i}(f_i^{\widetilde{x}_i})}
    \mathcal{E}_i(\sqrt{f_i^{\widetilde{x}_i}},\sqrt{f_i^{\widetilde{x}_i}})\,
    d\mu(x)\\
    &= \int_{\X^{(n)}} \sum_{i=1}^n  \mathcal{E}_i(\sqrt{f_i^{\widetilde{x}_i}}, \sqrt{f_i^{\widetilde{x}_i}})\, d\mu(x)\\
    &=\oplus_i^\mu\mathcal{E}_i (\sqrt{f},\sqrt{f})\\
    &=I_{\oplus_i\mathcal{E}_i}(\nu|\mu),
\endaligned
$$
which completes the sketch of the proof.
\end{proof}

This additivity  is different from the super-additivity of the
Fisher information for a product measure obtained by Carlen
\cite{Car91}.

We are now ready to write the proof of Theorem \ref{thm22}.
\begin{proof}[Proof of Theorem \ref{thm22}] Without loss of
generality we may assume that $I(\nu|\mu)<+\infty$. By Proposition
\ref{lem23}, Jensen inequality and the definition of
$\alpha_1\Box\cdots\Box \alpha_n$,
$$
\aligned
    \alpha_1\Box\cdots\Box \alpha_n(\mathcal{T}_{\oplus
    c_i}(\nu,\mu))
    &\le \alpha_1\Box\cdots\Box \alpha_n\left(\int_{\X^{(n)}}\sum_{i=1}^n \mathcal{T}_{c_i}(\nu_i^{\widetilde{x}_i},
    \mu_i)\right)\,d\nu(x)\\
    &\le \int_{\X^{(n)}}\alpha_1\Box\cdots\Box \alpha_n\left(\sum_{i=1}^n \mathcal{T}_{c_i}(\nu_i^{\widetilde{x}_i}, \mu_i)\right)\,d\nu(x)\\
    &\le \int_{\X^{(n)}}\sum_{i=1}^n\alpha_i(
    \mathcal{T}_{c_i}(\nu_i^{\widetilde{x}_i},\mu_i))\,d\nu(x)\\
    &\le \int_{\X^{(n)}}\sum_{i=1}^n I_{\mathcal{E}_i}(\nu_i^{\widetilde{x}_i}|\mu_i)\,d\nu(x).
\endaligned
$$
The last quantity is equal to $I_{\oplus\mathcal{E}_i}(\nu|\mu)$,
by Lemma \ref{lem26}.
\end{proof}

As an example of application, let $(X^i_t)_{t\ge 0}, i=1,\cdots,
n$ be $n$ Markov processes with the same transition semigroup
$(P_t)$ and the same symmetrized Dirichlet form $\mathcal{E}$ on
$L^2(\rho)$, and conditionally independent once the initial
configuration $(X^i_0)_{i=1,\cdots, n}$ is fixed. Then
$X_t:=(X^1_t,\cdots, X^n_t)$ is a Markov process with the
symmetrized Dirichlet form given by
$$
\oplus_n^{\rho^n}\mathcal{E}(g, g)= \int \sum_{i=1}^n
\mathcal{E}(\ii gx, \ii gx)\, \rho(dx_1)\cdots\rho(dx_n).
$$

\begin{cor}[\cite{GLWY09}]\label{cor24}\
\begin{enumerate}
    \item Assume that  $\rho$ satisfies the transport-information inequality $\alpha(\mathcal{T}_c)\le I_\mathcal{E}$ on $\X$ with
$\alpha$ in the class $\mathcal{A}$.  Then $\rho^{n}$ satisfies
\begin{equation*}
n \alpha\left(\frac{\mathcal{T}_{\oplus_n c}(\nu, \rho^{n})}{n}
\right) \le I_{\oplus_n\mathcal{E}}(\nu|\rho^{n}),\quad 
\nu\in \mathrm{P}(\X^n).
\end{equation*}
    \item Suppose in particular that $\rho$ verifies $\alpha(\mathcal{T}_d)\le I_\mathcal{E}$ for the metric \lsc\ cost $d.$
    Then, for any Borel measurable $d$-Lipschitz(1) function $u,$ any initial
measure $\beta$ on $\X^n$ with $d\beta/d\rho^{n}\in L^2(\rho^{n})$
and  any $t,r>0$,
\begin{equation*}
\P_\beta\left(\frac 1n\sum_{i=1}^n \frac 1t \int_0^t u(X^i_s)\,ds
\ge \rho(u)+r\right)\le \left\|\frac{d\beta}{d\rho^{n}}\right\|_2
e^{-nt\alpha (r)}.
\end{equation*}
    \item If $\rho$ satisfies $\WWI(C):$ $\mathcal{T}_{d^2}\le C^2
    I_\mathcal{E},$ then $\rho^n$ satisfies $\WWI(C):$ $\mathcal{T}_{\oplus_{n}d^2}\le C^2
    I_{\oplus_n\mathcal{E}}.$
\end{enumerate}
\end{cor}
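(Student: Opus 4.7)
All three statements will follow quickly from Theorem \ref{thm22} combined with Theorem \ref{resL-12}, once we identify the inf-convolution of $n$ copies of $\alpha$. The plan is to first handle Part (1), deduce Part (3) as the special linear case, and then extract Part (2) via the dual deviation formulation.

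\textbf{Part (1).} The idea is to apply Theorem \ref{thm22} with $\X_i=\X$, $\mu_i=\rho$, $c_i=c$, $\mathcal{E}_i=\mathcal{E}$ and $\alpha_i=\alpha$ for every $i=1,\dots,n$. The product Dirichlet form $\oplus_n^{\rho^n}\mathcal{E}$ is precisely the one defined at \eqref{a24} in this symmetric setting, so $I_{\oplus_n\mathcal{E}}(\nu|\rho^n)$ is the Donsker-Varadhan information that appears on the right-hand side. The only point to check is the identity
\begin{equation*}
    \underbrace{\alpha\Box\cdots\Box\alpha}_{n\text{ times}}(t)=n\alpha(t/n),\quad t\ge 0,
\end{equation*}
which, since $\alpha$ is convex with $\alpha(0)=0$, follows from Jensen's inequality applied to any decomposition $r_1+\cdots+r_n=t$ with $r_i\ge 0$: indeed $\sum_i\alpha(r_i)\ge n\alpha\bigl(\frac{1}{n}\sum_i r_i\bigr)=n\alpha(t/n)$, and the choice $r_i=t/n$ shows equality is attained. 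Plugging this into Theorem \ref{thm22} yields exactly the claimed inequality.

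\textbf{Part (3).} This is an immediate specialization of Part (1). The hypothesis $\WWI(C)$ reads $\alpha(\mathcal{T}_{d^2})\le I_\mathcal{E}$ with the linear function $\alpha(r)=r/C^2$. For such a linear $\alpha$ one has $n\alpha(t/n)=\alpha(t)$, so the tensorized inequality from Part (1) becomes $\mathcal{T}_{\oplus_n d^2}(\nu,\rho^n)/C^2\le I_{\oplus_n\mathcal{E}}(\nu|\rho^n)$, which is $\WWI(C)$ on $\X^n$ with cost $\oplus_n d^2$.

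\textbf{Part (2).} Here I plan to invoke the deviation statement (d) of Theorem \ref{resL-12} applied to the probability measure $\rho^n$ on $\X^n$ equipped with the $\ell^1$ metric $d^{\oplus n}(x,y)=\sum_i d(x_i,y_i)$ and the Markov semigroup generated by $\oplus_n^{\rho^n}\mathcal{E}$. By Part (1), $\rho^n$ satisfies the transport-information inequality $\tilde\alpha(\mathcal{T}_{d^{\oplus n}})\le I_{\oplus_n\mathcal{E}}$ with $\tilde\alpha(t):=n\alpha(t/n)$, and $\tilde\alpha\in\mathcal{A}$ since $\alpha\in\mathcal{A}$. Consider the observable $g(x_1,\dots,x_n):=\frac{1}{n}\sum_{i=1}^n u(x_i)$. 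A direct estimate gives $|g(x)-g(y)|\le\frac{1}{n}\sum_i d(x_i,y_i)=\frac{1}{n}d^{\oplus n}(x,y)$, so $\|g\|_{\mathrm{Lip}}\le 1/n$, and clearly $\rho^n(g)=\rho(u)$. Applying the (a)$\Leftrightarrow$(d) equivalence of Theorem \ref{resL-12} to $g$ yields
\begin{equation*}
    \P_\beta\!\left(\frac{1}{t}\int_0^t g(X_s)\,ds\ge\rho(u)+r\right)\le\Bigl\|\tfrac{d\beta}{d\rho^n}\Bigr\|_2 \exp\!\bigl(-t\,\tilde\alpha(r/\|g\|_{\mathrm{Lip}})\bigr),
\end{equation*}
and since $\tilde\alpha(nr)=n\alpha(r)$ we obtain the desired bound $\|d\beta/d\rho^n\|_2 e^{-nt\alpha(r)}$. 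It remains only to rewrite $\frac{1}{t}\int_0^t g(X_s)\,ds=\frac{1}{n}\sum_{i=1}^n\frac{1}{t}\int_0^t u(X^i_s)\,ds$ using the coordinate structure of the product process.

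The only mildly delicate point is verifying that Theorem \ref{resL-12} genuinely applies in the tensorized setting of Part (2): we must check that the hypothesis $\E_{\rho^n}\exp\bigl(\lambda_o\int_0^1 d^{\oplus n}(x_o,X_s)\,ds\bigr)<\infty$ holds for some $\lambda_o>0$, but this follows from the corresponding one-dimensional integrability (itself a consequence of statement (1) applied in the one-particle case) by independence of the $n$ components. No other obstacle is expected; the argument is otherwise a clean bookkeeping of the transport-information formalism developed earlier.
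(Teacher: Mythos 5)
Your proposal is correct and follows essentially the same route as the paper's own (very terse) proof: Part (1) is Theorem \ref{thm22} together with the identity $\alpha^{\Box n}(r)=n\alpha(r/n)$, Part (3) is the linear specialization of Part (1), and Part (2) combines Part (1) with the deviation statement of Theorem \ref{resL-12} applied to the $1/n$-Lipschitz observable $\frac1n\sum_i u(x_i)$. Your additional verifications (the Jensen argument for the inf-convolution and the product integrability condition) are sound and merely make explicit what the paper leaves implicit.
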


\begin{proof} As $\alpha^{\Box n}(r)= n \alpha(r/n),$ the first
part (1) follows from Theorem \ref{thm22}. The second part (2)
follows from Theorem \ref{resL-12} and the third part (3) is a
direct application of (1).
\end{proof}

\subsection*{Transport-information inequalities in the literature}

Several integral criteria are worked out in \cite{GLWY09}, mostly
in terms of Lyapunov functions. Note also that the assumptions in
\cite{GLWY09} are a little less restrictive than those of the
present section, in particular the Markov process might not be
reversible, but it is required that its Dirichlet form is
closable.

For further relations between $\alpha(\mathcal{T})\le I$ and other
functional inequalities, one can read the paper \cite{GLWW08} by
Guillin, L\'eonard, Wang and  Wu.

In \cite{GGW09}, Gao, Guillin and Wu have refined the
above concentration results in such a way that Bernstein
inequalities are accessible. The strategy remains the same since
it is based on the transport-information inequalities of Theorem
\ref{resL-12}, but the challenge is to express the constants in
terms of asymptotic variances. Lyapunov function conditions allow
to derive explicit rates.

An interesting feature with Theorem \ref{resL-12} is that it
allows to treat time-continuous Markov processes with jumps. This
is widely done in \cite{GGW09}. But processes with jumps might not
verify a Poincar\'e inequality even in presence of good
concentration properties, for instance when considering processes
with strong pulling-back drifts. In such cases, even the
$\alpha(\mathcal{T})\le I$ strategy fails. An alternative attack
of the problem of finding concentration estimates for the
empirical means (of Lipschitz observables) has been  performed by
Wu in \cite{Wu09} where usual  transport inequalities
$\alpha(\mathcal{T})\le H$ at the level of the Markov transition
kernel are successfully exploited.

Gibbs measures are also investigated by Gao and Wu \cite{GW07} by
means of transport-information inequalities. This is developed in
the next Section \ref{sec-Gibbs}.

\section{Transport inequalities for Gibbs measures}\label{sec-Gibbs}

We have seen transport inequalities with respect to a reference
measure $\mu$ and  how to derive  transport inequalities for the
product measure $\mu=\rho^n$ from transport inequalities for
$\rho.$ A step away from this product measure structure, one is
naturally lead to consider Markov structures. This is the case
with Gibbs measures, a description of equilibrium states in
statistical physics. Three natural problem encountered with Gibbs
measures are:
\begin{enumerate}
    \item Find criteria for the uniqueness/non-uniqueness of the solutions to the Dobrushin-Lanford-Ruelle (DLR) problem associated with the local
    specifications (see next subsection below). This uniqueness corresponds to the absence of
    phase coexistence of the physical system and the
    unique solution is our Gibbs measure $\mu.$
    \item Obtain concentration estimates for the Gibbs measures.
    \item In case of uniqueness, estimate the speed of convergence
    of the Glauber dynamics (see below) towards the equilibrium $\mu.$
\end{enumerate}
A powerful tool for investigating this program is the logarithmic
Sobolev inequality. This is known since the remarkable
contribution in 1992 of Zegarlinski \cite{Zeg92}, see also the
papers  \cite{SZ92a,SZ92b} by Stroock \& Zegarlinski. Lecture
notes on the subject have been written by Martinelli \cite{Mar97},
Royer \cite{Roy99} and Guionnet \& Zegarlinski \cite{GZ03}. An
alternate approach is to exchange logarithmic Sobolev inequalities
with Poincar\'e inequality. Indeed, in some situations both these
inequalities are equivalent  \cite{SZ92a,SZ92b}.

Recently, another approach of this problem has been proposed which
consists of replacing logarithmic Sobolev inequalities by
transport inequalities. This is what this section is about. The
main recent contributions in this area are due to Marton
\cite{M04}, Wu \cite{Wu06}, Gao \& Wu \cite{GW07} and Ma, Shen,
Wang \& Wu \cite{MSWW09}.

\subsection*{Gibbs measures}
The configuration space is $\X^\II$ where $\X$ is the spin space
and $\II$ is a countable  set of sites, for instance a finite set
with a graph structure or the lattice $\II=\mathbb{Z}^d$. A
configuration is $x=(x_i)_{i\in \II}$ where $x_i\in \X$ is the
spin value at site $i\in \II.$ The spin space might be finite, for
instance $\X=\{-1,1\} $ as in the Ising model, or infinite, for
instance $\X=S^k$ the $k$-dimensional sphere or $\X=\Rk.$ It is
assumed that $\X$ is a polish space furnished with its Borel
$\sigma$-field. Consequently, any conditional probability measure
admits a regular version.

Let us introduce some notation.  For any $i\in \II,$
$\widetilde{x}_i$ is the restriction of  the configuration $x$ to
$\{i\}^c:=\II\setminus\{i\}.$ Given $\nu\in\PXI,$ one can consider
the family of conditional probability laws of $X_i$ knowing
$\widetilde{X}_i$ where $X=(X_i)_{i\in \II}$ is the canonical
configuration. We denote these conditional laws:
$$
    \nu_i^{\widetilde{x}_i}:=\mu(X_i\in\cdot|\widetilde{X}_i=\widetilde{x}_i),\quad i\in \II, x\in\X^\II.
$$
As different projections of the same $\nu,$ these conditional laws
satisfy a collection of compatibility conditions.

The DLR problem is the following inverse problem. Consider a
family of prescribed local specifications
$\mu_i^{\widetilde{x}_i},$ $i\in \II,$ $x\in\X^\II$ which satisfy
the appropriate  collection of compatibility conditions.  Does
there exist some  $\mu\in\PXI$ whose conditional distributions are
precisely these prescribed local specifications? Is there a unique
such $\mu?$
\\
The solutions of the DLR problem are called Gibbs measures.

\subsection*{Glauber dynamics}
It is well-known that $d\mu(x)=Z^{-1}e^{-V(x)}\,dx$ where $Z$ is a
normalizing constant, is the invariant probability measure of the
Markov generator $\Delta -\nabla V\cdot\nabla.$ This fact is
extensively exploited in the semigroup approach of the Poincar\'e
and logarithmic Sobolev inequalities. Indeed, these inequalities
exhibit on their right-hand side the Dirichlet form $\mathcal{E}$
associated with this Markov generator.

This differs from the  $\mathbf{WH}$ inequalities such as $\T_1$
or $\T_2$ which do not give any role to any Dirichlet form: it is
the main reason why we didn't encounter the semigroup approach in
these notes up to now. But replacing the entropy $H$  by the
information $I(\cdot|\mu),$ one obtains transport-information
inequalities $\mathbf{WI}$ and the semigroups might have something
to tell us.

Why should one introduce some dynamics related to a Gibbs measure?
Partly because in practice the normalizing constant $Z$  is
inaccessible to computation in very high dimension, so that
simulating a Markov process $(X_t)_{t\ge0}$ admitting our Gibbs
measure as its (unique) invariant measure during a long period of
time allows us to compute estimates for average quantities.
Another reason is precisely the semigroup approach which helps us
deriving functional inequalities dealing with Dirichlet forms.
This relevant dynamics, which is often called the Glauber
dynamics, is precisely the Markov dynamics associated with the
closure of the Dirichlet form which admits our Gibbs measure as
its invariant measure.

Now, let us describe the Glauber dynamics  precisely.

Let $\mu$ be a Gibbs measure (solution of the DLR problem) with
the local specifications $\{\ii\mu{x}\in \mathrm{P}(\X);\ i\in\II,
x\in\X^\II\}.$ For each $i\in\II, x\in\X^\II,$ consider a
Dirichlet form $(\ii{\mathcal{E}}x, \dd(\ii{\mathcal{E}}x))$ and
define the global Dirichlet form $\mathcal{E}^\mu$ by
\begin{equation*}
\begin{split}
    \dd(\mathcal{E}^\mu)
    :=\Big\{f\in L^2(\mu): \textrm{for all }i\in\II, &\ f_i^{\widetilde{x}_i}\in
    \dd(\ii{\mathcal{E}}x), \textrm{for }
\mu\textrm{-a.e. } \widetilde{x}_i \\
        & \textrm{ and } \int_{\X^\II}
\sum_{i\in\II}\ii{\mathcal{E}}x(f_i^{\widetilde{x}_i},f_i^{\widetilde{x}_i})\,d\mu(x)<+\infty\Big\}
\end{split}
\end{equation*}
where $ f_i^{\widetilde{x}_i}: x_i\mapsto
f_i^{\widetilde{x}_i}(x_i):=f(x)$ with $\widetilde{x}_i$
considered as fixed and
\begin{equation}\label{eqL-07}
    \mathcal{E}^\mu(f, f)
    := \int_{\X^\II} \sum_{i\in\II}
\ii{\mathcal{E}}{x}(f_i^{\widetilde{x}_i},f_i^{\widetilde{x}_i})\,d\mu(x),
\quad  f\in \dd(\mathcal{E}^\mu).
\end{equation}

Assume that $\mathcal{E}^\mu$ is closable. Then, the Glauber
dynamics is the Markov process associated with the closure of
$\mathcal{E}^\mu.$

\begin{expl}\label{exp-typic-b}
An interesting example is given by the following extension of the
standard Example \ref{exp-typic}. Let $\X$ be a complete connected
Riemannian manifold. Consider a Gibbs measure $\mu$ solution to
the DLR problem as above. For each $i\in\II$ and $x\in\X^\II,$ the
one-site Dirichlet form $\ii{\mathcal{E}}x$ is defined for any
smooth enough function $f$ on $\X$ by
$$
\ii{\mathcal{E}}x(f,f)=\IX |\nabla f|^2\, d\ii{\mu}{x}
$$
and the global Dirichlet form $ \mathcal{E}^\mu$ which is defined
by \eqref{eqL-07} is given for any smooth enough cylindrical
function $f$ on $\X^\II$ by
$$
\mathcal{E}^\mu(f,f)=\int_{\X^\II}|\nabla_\II f|^2\,d\mu
$$
where $\nabla_\II$ is the gradient on the product manifold
$\X^\II.$ The corresponding Markov process is a family indexed by
$\II$ of interacting diffusion processes, all of them sharing the
same fixed temperature (diffusion coefficient=2). This process on
$\X^\II$ admits the Gibbs measure $\mu$ as an invariant measure.
\end{expl}

\subsection*{Dimension-free
tensorization property}

It is well known that the Poincar\'e inequality $\PI$ implies an
exponentially fast $L^2$-convergence as $t$ tends to infinity of
the law of $X_t$  to the invariant measure $\mu.$ Similarly, a
logarithmic Sobolev inequality $\LSI$ implies a stronger
convergence in entropy. Moreover, both $\PI$ and $\LSI$ enjoy a
dimension-free tensorization property which is of fundamental
importance when working in an infinite dimensional setting. This
dimension-free tensorization property is also shared by
$\T_2=\mathbf{W}_2\mathbf{H},$ see Corollary \ref{Marton T2}, and
by $\WWI,$ see Corollary \ref{cor24}-(3).

Now, suppose that each one-site specification
$\mu_i^{\widetilde{x}_i},$ for any $i\in\II$ and any $x\in\X^\II,$
satisfies a functional inequality with the dimension-free
tensorization property. One can reasonably expect that, provided
that the constants $C_i^{\widetilde{x}_i}$ in these inequalities
enjoy some uniformity property in $i$ and $x,$ any Gibbs measure
built with the local specifications $\mu_i^{\widetilde{x}_i}$ also
shares some non-trivial functional inequality (in the same family
of inequalities). This is what Zegarlinski \cite{Zeg92} discovered
with bounded spin systems and $\LSI.$ On the other hand, this
inequality (say $\PI$ or $\LSI$) satisfied by the Gibbs measures
$\mu$ entails an exponentially fast convergence as $t$ tends to
infinity of the global Glauber dynamics to $\mu.$ By standard
arguments, one can prove that this implies the uniqueness of the
invariant measure and therefore, the uniqueness of the solution of
the DLR problem.

In conclusion, some uniformity property in $i$ and $x$ of the
inequality constants $C_i^{\widetilde{x}_i}$ is a sufficient
condition for the uniqueness of the DLR problem and an
exponentially fast convergence of the Glauber dynamics.

Recently, Marton \cite{M04} and Wu \cite{Wu06} considered the
``dimension-free'' transport-entropy inequality $\T_2$ and Gao \&
Wu \cite{GW07} the ``dimension-free'' transport-information
inequality $\WWI$ in the setting of Gibbs measures.

\subsection*{Dobrushin coefficients}

Let $d$ be a lower semicontinuous metric on $\X$ and let $\PpX$ be
the set of all Borel probability measures $\rho$ on $\X$ such that
$\int_\X d^p(\xi_o,\xi)\,d\rho(\xi)<\infty$ with $p\ge 1.$ Assume
that for each site $i\in \II$ and each boundary condition
$\widetilde{x}_i,$ the specification $\mu_i^{\widetilde{x}_i}$ is
in $\PpX.$ For any $i,j\in \II,$ the Dobrushin interaction
$W_p$-coefficient is defined by
\begin{equation*}
    c_p(i,j):=\sup_{x,y;\ x=y\textrm{ off }j}
    \frac{W_p\big(\mu_i^{\widetilde{x}_i},\mu_i^{\widetilde{y}_i}\big)}{d(x_j,y_j)}
\end{equation*}
where $W_p$ is the Wasserstein metric of order $p$ on $\PpX$ which
is built on the metric $d.$ Let
$\mathbf{c}_p=(c_p(i,j))_{i,j\in\II}$ denote the corresponding
matrix which is seen as an endomorphism of $\ell ^p(\II).$ Its
operator norm is denoted by $\|\mathbf{c}_p\|_p.$

Dobrushin \cite{Dob68,Dob70} obtained a criterion for the
uniqueness of the Gibbs measure (cf.\ Question (1) above) in terms
of the coefficients $ c_1(i,j)$ with $p=1.$ It is
\begin{equation*}
    \sup_{j\in \II}\sum_{i\in \II}c_1(i,j)<1.
\end{equation*}
This quantity is $\|\mathbf{c}_1\|_1$, so that Dobrushin's
condition expresses that $\mathbf{c}_1$ is contractive on
$\ell^1(\II)$ and the uniqueness follows from a fixed point
theorem, see F\"ollmer's lecture notes \cite{Foe85} for this
well-advised proof.

\subsection*{Wasserstein metrics on $\mathrm{P}(\X^\II)$}

Let $p\ge1$ be fixed. The metric on $\X^\II$ is
\begin{equation}\label{eq-43}
    d_{p,\II}(x,y):=\Big(\sum_{i\in\II}d^p(x_i,y_i)\Big)^{1/p},\quad
x,y\in\X^\II
\end{equation}
and the Wasserstein metric $W_{p,\II}$ on $\mathrm{P}(\X^\II)$ is
built upon $d_{p,\II}.$ One sees that it corresponds to the tensor
cost $d_{p,\II}^p=\oplus_{i\in\II}d_i^p$ with an obvious notation.

Gao and Wu \cite{GW07} have proved the following tensorization
result for the Wasserstein distance between Gibbs measures.

\begin{prop}\label{res-10}
Assume that $\ii\mu{x}\in \mathrm{P}_p(\X)$ for all $i\in\II$ and
$x\in\X^\II$ and also suppose that $\|\mathbf{c}_p\|_p<1.$ Then,
$\mu\in\mathrm{P}_p(\X^\II)$ and  for all $\nu\in
\mathrm{P}_p(\X^\II),$
\begin{equation*}
    W_{p,\II}^p(\nu,\mu)\le (1-\|\mathbf{c}_p\|_p)^{-1}
    \int_{\X^\II}\sum_{i\in\II}W_p^p(\ii\nu{x},\ii\mu{x})\,d\nu(x).
\end{equation*}
\end{prop}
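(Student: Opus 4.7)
The plan is the Marton coupling: build an explicit coupling of $\nu$ and $\mu$ whose one-site conditionals are optimal, and bound its cost via the Dobrushin contraction and a Minkowski inequality in $\ell^p(\II)$.

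The starting observation is a telescoping estimate that follows directly from the very definition of the Dobrushin coefficients: changing the boundary $\widetilde{x}_i$ into $\widetilde{y}_i$ one site at a time and invoking the triangle inequality for $W_p$ yields
\begin{equation*}
W_p(\mu_i^{\widetilde{x}_i},\mu_i^{\widetilde{y}_i}) \le \sum_{j\ne i} c_p(i,j)\,d(x_j,y_j),\qquad i\in\II,\ x,y\in\X^\II.
\end{equation*}
The integrability $\mu\in\mathrm{P}_p(\X^\II)$ follows by fixing a reference configuration $z\in\X^\II$, iterating this inequality starting from any single specification $\mu_i^{\widetilde{z}_i}\in\mathrm{P}_p(\X)$, and summing the resulting geometric series in powers of $\mathbf{c}_p$, which converges thanks to $\|\mathbf{c}_p\|_p<1$.

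For the main bound, I would construct a coupling $\pi\in\Pi(\nu,\mu)$ with the \emph{local optimality} property that, for each site $i\in\II$, the conditional distribution under $\pi$ of $(X_i,Y_i)$ given $(\widetilde{X}_i,\widetilde{Y}_i)=(\widetilde{x}_i,\widetilde{y}_i)$ is an optimal transport plan between $\nu_i^{\widetilde{x}_i}$ and $\mu_i^{\widetilde{y}_i}$ for the cost $d^p$. Such a $\pi$ is obtained as an invariant probability measure of a bivariate Gibbs-sampler-type kernel on $\X^\II\times\X^\II$ that at each step picks a site $i$ and replaces $(X_i,Y_i)$ by a sample from the one-site optimal plan (keeping the boundaries fixed); this kernel preserves both marginals $\nu$ and $\mu$, and $\|\mathbf{c}_p\|_p<1$ provides the contraction for the fixed-point argument. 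By construction the cost of $\pi$ factorises as
\begin{equation*}
\int d_{p,\II}^p\,d\pi = \sum_{i\in\II}\int W_p^p(\nu_i^{\widetilde{x}_i},\mu_i^{\widetilde{y}_i})\,d\pi(x,y).
\end{equation*}
Combining the triangle inequality for $W_p$ with the telescoping bound above gives pointwise in $(x,y)$
\begin{equation*}
W_p(\nu_i^{\widetilde{x}_i},\mu_i^{\widetilde{y}_i}) \le W_p(\nu_i^{\widetilde{x}_i},\mu_i^{\widetilde{x}_i}) + \sum_{j\ne i} c_p(i,j)\,d(x_j,y_j).
\end{equation*}
Taking $\ell^p(\II)$-norms in $i$, using $\|\mathbf{c}_p e\|_p\le\|\mathbf{c}_p\|_p\|e\|_p$ with $e_j=d(x_j,y_j)$ together with Minkowski's inequality, then $L^p(\pi)$-norms, and invoking the displayed identity to replace the $L^p(\pi)$-norm of the left-hand side by $\|d_{p,\II}\|_{L^p(\pi)}$, one arrives at
\begin{equation*}
(1-\|\mathbf{c}_p\|_p)\,\|d_{p,\II}\|_{L^p(\pi)} \le \Big(\int \sum_i W_p^p(\nu_i^{\widetilde{x}_i},\mu_i^{\widetilde{x}_i})\,d\nu(x)\Big)^{1/p},
\end{equation*}
after noting that the $x$-marginal of $\pi$ is $\nu$. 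Rearranging, raising to the $p$-th power, and using $W_{p,\II}^p(\nu,\mu)\le\int d_{p,\II}^p\,d\pi$ produces the announced bound.

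The hard part is the coupling construction: producing $\pi$ that preserves both marginals \emph{and} is locally optimal at every site simultaneously. For finite $\II$ the bivariate Gibbs-sampler yields, by the Dobrushin contraction, a unique invariant coupling from which local optimality is read off. For infinite $\II$ one must additionally apply a tightness/compactness argument on finite-volume restrictions to extract a limit in $\PXI\times\PXI$ while preserving the one-site optimality property. Once this step is secured, the telescoping of the first paragraph and the $\ell^p$-Minkowski manipulation are routine.
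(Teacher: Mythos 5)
Your second half --- the triangle inequality $W_p(\nu_i^{\widetilde{x}_i},\mu_i^{\widetilde{y}_i})\le W_p(\nu_i^{\widetilde{x}_i},\mu_i^{\widetilde{x}_i})+\sum_{j\ne i}c_p(i,j)\,d(x_j,y_j)$, followed by the mixed $L^p$-$\ell^p(\II)$ norm, Minkowski's inequality and the operator bound $\|\mathbf{c}_p\|_p<1$ --- is exactly the argument of the paper. The genuine divergence, and the gap, lies in how you compare $\int d^p(x_i,y_i)\,d\pi$ with $\int W_p^p\big(\nu_i^{\widetilde{x}_i},\mu_i^{\widetilde{y}_i}\big)\,d\pi$. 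You insist on an \emph{equality}, which forces you to produce a coupling $\pi\in\Pi(\nu,\mu)$ whose one-site conditionals are optimal plans \emph{simultaneously at every site}, and you propose to obtain it as an invariant measure of a bivariate Gibbs sampler. This step does not go through as stated: invariance under the random-scan kernel $\frac1n\sum_i K_i$ (or under the sweep $K_1\cdots K_n$) does not imply invariance under each single-site kernel $K_i$ separately, and only the latter would force the conditional law of $(X_i,Y_i)$ given the boundaries to be the one-site optimal plan for every $i$ at once. Without that, the factorisation identity $\int d_{p,\II}^p\,d\pi=\sum_i\int W_p^p\big(\nu_i^{\widetilde{x}_i},\mu_i^{\widetilde{y}_i}\big)\,d\pi$ is unjustified; the existence of such a simultaneously locally optimal coupling is precisely the delicate point that Marton's original construction circumvents by ordering the sites.

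The fix is that you never needed the equality: an inequality in the right direction suffices, and it comes for free from \emph{global} optimality. Take $\pi$ to be an optimal coupling of $\nu$ and $\mu$ for the tensorized cost $\oplus_i d^p$ (it exists by Proposition \ref{resL-a} since both measures lie in $\mathrm{P}_p(\X^\II)$). For a fixed site $i$, replace $(X_i,Y_i)$ by a pair $(\widehat X_i,\widehat Y_i)$ whose conditional law given the boundaries is an optimal plan between $\nu_i^{\widetilde{x}_i}$ and $\mu_i^{\widetilde{y}_i}$; the modified pair is still a coupling of $\nu$ and $\mu$, so optimality of $\pi$ yields $\E\, d^p(X_i,Y_i)\le \E\, d^p(\widehat X_i,\widehat Y_i)=\E\, W_p^p\big(\nu_i^{\widetilde{X}_i},\mu_i^{\widetilde{Y}_i}\big)$. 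This is the content of Proposition \ref{res-tensliming} in the Appendix (which also handles the measurable-selection issue for the conditional optimal plans that your construction would face as well), and it is exactly what the paper plugs into the estimates you carry out in your second half. Your telescoping derivation of the Dobrushin bound and the geometric-series argument for $\mu\in\mathrm{P}_p(\X^\II)$ are fine.
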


\begin{proof}[Sketch of proof]
As a first step, let us follow exactly the beginning  of the proof
of Proposition \ref{res-tensliming} in the Appendix. Keeping the
notation of Proposition \ref{res-tensliming}, we have
\begin{equation}\label{eqL-08}
    \E\sum_{i\in\II}d^p(U_i,V_i)=W^p_{p,\II}(\nu,\mu).
\end{equation}
and we arrive at \eqref{eqL-05} which, with $c_i=d^p,$ is
\begin{equation*}
    \E d^p(U_i,V_i)
    \le \E d^p(\widehat{U}_i, \widehat{V}_i)
    = \E
    W_p^p\Big(\nu_i^{\widetilde{U}_i},\mu_i^{\widetilde{V}_i}\Big).
\end{equation*}

 As in Marton's paper \cite{M04}, we can use the triangular
inequality for $W_p$ and the definition of $\mathbf{c}_p$ to
obtain for all $i\in\II,$
$$
    W_p\Big(\ii\nu{U},\ii\mu{V}\Big)
    \le W_p\Big(\ii\nu{U},\ii\mu{U}\Big)+ \sum_{j\in\II,\ j\ne i} c_p(i,j)
    d(U_j,V_j).
$$
Putting both last inequalities together, we see that
\begin{equation}\label{eqL-09}
    \E d^p(U_i,V_i)
    \le \E \left(W_p\Big(\ii\nu{U},\ii\mu{U}\Big)+ \sum_{j\in\II,\ j\ne i} c_p(i,j)
    d(U_j,V_j)\right)^p,\quad\textrm{ for all }i\in\II,
\end{equation}
and summing them over all the sites $i$ gives us
\begin{equation*}
    \E \sum_{i\in\II} d^p(U_i,V_i)
    \le \E \sum_{i\in\II}\left(W_p\Big(\ii\nu{U},\ii\mu{U}\Big)+ \sum_{j\in\II,\ j\ne i} c_p(i,j)
    d(U_j,V_j)\right)^p.
\end{equation*}
Consider  the norm $\|A\|:=(\E\sum_{i\in\II}|A_i|^p)^{1/p}$ of the
random vector $A=(A_i)_{i\in\II}.$ With $A_i=d(U_i,V_i)$ and
$B_i=W_p\Big(\ii\nu{U},\ii\mu{U}\Big),$ this inequality is simply
\begin{equation*}
    \|A\|\le \|\mathbf{c}_pA+B\|,
\end{equation*}
since $c_p(i,i)=0$ for all $i\in\II.$ This implies that
$$
(1-\|\mathbf{c}_p\|_p)\|A\|\le \|B\|
$$
which, with \eqref{eqL-08}, is the announced result.

Similarly to  the first step of the proof of Proposition
\ref{res-tensliming}, this proof contains a measurability bug and
one has to correct it exactly as in the complete proof of
Proposition \ref{res-tensliming}.
\end{proof}

Recall that the global Dirichlet form $\mathcal{E}^\mu$ is defined
at \eqref{eqL-07}. The corresponding Donsker-Varadhan information
is defined by
$$
\IDV_{\mathcal{E}^\mu}(\nu|\mu)=\left\{\begin{array}{ll}
  \mathcal{E}_\mu (\sqrt{f},\sqrt{f})& \textrm{if }\nu=f\mu\in\PXI, f\in\dd(\mathcal{E}_\mu) \\
  +\infty & \textrm{otherwise}. \\
\end{array} \right.
$$
Similarly, we define for each $i\in\II$ and $x\in\X^\II,$
$$
\IDV_{\ii{\mathcal{E}}{x}}(\rho|\ii\mu{x})=\left\{\begin{array}{ll}
  \ii{\mathcal{E}}{x} (\sqrt{g},\sqrt{g})& \textrm{if }\rho=g \ii\mu{x}\in\PX, g\in\dd(\ii{\mathcal{E}}{x}) \\
  +\infty & \textrm{otherwise}. \\
\end{array} \right.
$$

We are now ready to present a result of tensorization of one-site
$\WWI$ inequalities in the setting of Gibbs measures.

\begin{thm}[\cite{GW07}]\label{res-11} Assume that for each site $i\in\II$ and any
configuration $x\in\X^\II$ the local specifications are in
$\mathrm{P}_2(\X)$ and  satisfy the following one-site $\WWI$
inequality
\begin{equation*}
    W_2^2(\rho,\ii\mu{x})\le C^2 \IDV_{\ii{\mathcal{E}}{x}}(\rho|\ii\mu{x}),
    \ \rho\in \mathrm{P}_2(\X),
\end{equation*}
the constant $C$ being uniform in $i$ and $x.$
\\
It is also assumed that the Dobrushin $W_2$-coefficients satisfy
$\|\mathbf{c}_2\|_2<1.$
\\
Then, any Gibbs measure $\mu$ is in $\mathrm{P}_2(\X^\II)$ and
satisfies the following $\WWI$ inequality:
\begin{equation*}
    W^2_{2,\II}(\nu,\mu) \le   \frac{C^2}{1-\|\mathbf{c}_2\|_2} \IDV_{\mathcal{E}^\mu}(\nu|\mu),\quad 
    \nu\in\mathrm{P}_2(\X^\II).
\end{equation*}
\end{thm}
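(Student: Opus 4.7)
The plan is to combine three ingredients that have already been assembled in the paper: (i) the Gibbs version of the tensorization of Wasserstein distance given by Proposition~\ref{res-10}, (ii) the one-site transport-information inequality satisfied by each local specification, and (iii) the additivity property of the Donsker-Varadhan information in the spirit of Lemma~\ref{lem26}. The Dobrushin contraction $\|\mathbf{c}_2\|_2<1$ enters only through step (i), while the factor $C^2$ comes from step (ii), and step (iii) is the bridge that turns the right-hand side into the global Fisher information $I_{\mathcal{E}^\mu}(\nu|\mu)$.

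First I would fix $\nu\in\mathrm{P}_2(\X^\II)$, assume $I_{\mathcal{E}^\mu}(\nu|\mu)<\infty$ (otherwise there is nothing to prove), and check that $\mu\in\mathrm{P}_2(\X^\II)$, which is exactly the first assertion of Proposition~\ref{res-10} applied with $p=2$. Applying this proposition with $p=2$ then yields
\begin{equation*}
    W^2_{2,\II}(\nu,\mu)
    \le \frac{1}{1-\|\mathbf{c}_2\|_2}\int_{\X^\II}\sum_{i\in\II} W_2^2\bigl(\nu_i^{\widetilde{x}_i},\mu_i^{\widetilde{x}_i}\bigr)\,d\nu(x).
\end{equation*}
Here I use crucially that, since $\mu$ is a Gibbs measure, the regular conditional distribution of $X_i$ given $\widetilde{X}_i=\widetilde{x}_i$ under $\mu$ is exactly the prescribed specification $\mu_i^{\widetilde{x}_i}$ (this is the DLR equation).

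Next, I would invoke the one-site assumption $W_2^2(\rho,\mu_i^{\widetilde{x}_i})\le C^2 I_{\mathcal{E}_i^{\widetilde{x}_i}}(\rho|\mu_i^{\widetilde{x}_i})$ with $\rho=\nu_i^{\widetilde{x}_i}$, for each $i\in\II$ and $\nu$-a.e.\ $x$, to obtain
\begin{equation*}
    W^2_{2,\II}(\nu,\mu)
    \le \frac{C^2}{1-\|\mathbf{c}_2\|_2}\int_{\X^\II}\sum_{i\in\II} I_{\mathcal{E}_i^{\widetilde{x}_i}}\bigl(\nu_i^{\widetilde{x}_i}\bigm|\mu_i^{\widetilde{x}_i}\bigr)\,d\nu(x).
\end{equation*}
It now remains to identify the right-hand side with $I_{\mathcal{E}^\mu}(\nu|\mu)$. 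This is the Gibbs analogue of Lemma~\ref{lem26}: writing $\nu=f\mu$ with $\sqrt{f}\in \dd(\mathcal{E}^\mu)$, the DLR equation gives $\frac{d\nu_i^{\widetilde{x}_i}}{d\mu_i^{\widetilde{x}_i}}(x_i)=f_i^{\widetilde{x}_i}(x_i)/\mu_i^{\widetilde{x}_i}(f_i^{\widetilde{x}_i})$, and one computes
\begin{equation*}
    \int_{\X^\II}\sum_{i\in\II} I_{\mathcal{E}_i^{\widetilde{x}_i}}\bigl(\nu_i^{\widetilde{x}_i}\bigm|\mu_i^{\widetilde{x}_i}\bigr)\,d\nu(x)
    = \int_{\X^\II}\sum_{i\in\II}\mathcal{E}_i^{\widetilde{x}_i}\bigl(\sqrt{f_i^{\widetilde{x}_i}},\sqrt{f_i^{\widetilde{x}_i}}\bigr)\,d\mu(x)
    = \mathcal{E}^\mu(\sqrt{f},\sqrt{f})=I_{\mathcal{E}^\mu}(\nu|\mu),
\end{equation*}
exactly as in the sketch of proof of Lemma~\ref{lem26}. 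Combining the last two displays yields the stated inequality.

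The main obstacle is not in the overall architecture (which is essentially that of the product case in Corollary~\ref{cor24}(3)), but in the two technical points hidden behind the scenes. The first is the rigorous extension of Proposition~\ref{res-10}'s tensorization to the Gibbs setting with $p=2$ and with the dependence on the boundary condition $\widetilde{x}$ that enters through the specifications $\mu_i^{\widetilde{x}_i}$: one has to revisit the Gibbs coupling argument (as in Marton \cite{M04}) and control measurability of the minimizing selection in $W_2\bigl(\nu_i^{\widetilde{x}_i},\mu_i^{\widetilde{x}_i}\bigr)$ as a function of $\widetilde{x}_i$. The second is the approximation step that allows one to pass from cylindrical smooth functions to a general $\sqrt{f}\in\dd(\mathcal{E}^\mu)$ in the additivity identity for $I_{\mathcal{E}^\mu}$: the disintegration formula for Dirichlet forms requires that $\mathcal{E}^\mu$ is closable and that $f_i^{\widetilde{x}_i}\in \dd(\mathcal{E}_i^{\widetilde{x}_i})$ for $\mu$-a.e.\ $\widetilde{x}_i$, which is built into the very definition of $\dd(\mathcal{E}^\mu)$ in \eqref{eqL-07} but has to be checked against the given hypotheses.
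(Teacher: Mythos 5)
Your proposal is correct and follows exactly the paper's own three-step argument: Proposition~\ref{res-10} with $p=2$, the uniform one-site $\WWI$ inequality applied to each $\nu_i^{\widetilde{x}_i}$, and the additivity of the Donsker--Varadhan information as in Lemma~\ref{lem26} to identify the resulting sum with $\IDV_{\mathcal{E}^\mu}(\nu|\mu)$. The technical caveats you flag (measurable selection in the coupling and the approximation step behind the additivity identity) are real but are the same ones the paper itself leaves implicit.
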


\begin{proof}
By Proposition \ref{res-10}, we have for all
$\nu\in\mathrm{P}_2(\X^\II)$

\begin{equation*}
    W_{2,\II}^2(\nu,\mu)\le (1-\|\mathbf{c}_2\|_2)^{-1}
    \int_{\X^\II}\sum_{i\in\II}W_2^2(\ii\nu{x},\ii\mu{x})\,d\nu(x).
\end{equation*}
Since the local specifications satisfy a uniform inequality
$\WWI,$ we obtain
\begin{align*}
    W_{2,\II}^2(\nu,\mu)
    &\le \frac{C^2}{1-\|\mathbf{c}_2\|_2}
    \int_{\X^\II}\sum_{i\in\II}\IDV_{\ii{\mathcal{E}}{x}}(\ii\nu{x}|\ii\mu{x})\,d\nu(x)\\
    &= \frac{C^2}{1-\|\mathbf{c}_2\|_2} \IDV_{\mathcal{E}^\mu}(\nu|\mu)
\end{align*}
where the last equality is Lemma \ref{lem26}.
\end{proof}

We decided to restrict our attention to the case $p=2$ because of
its free-dimension property, but a similar result still holds with
$p>1$ under the additional requirement that $\II$ is a finite set.

As a direct consequence, under the assumptions of Theorem
\ref{res-11}, $\mu$ satisfies a fortiori the $\WI$ equality
\begin{equation*}
    W_{1,\II}^2(\nu,\mu)
    \le \frac{C^2}{1-\|\mathbf{c}_2\|_2}
    \IDV_{\mathcal{E}^\mu}(\nu|\mu),
    \quad  \nu\in\mathrm{P}_1(\X^\II).
\end{equation*}
Therefore, we can derive from Theorem \ref{resL-12} the following
deviation estimate for the Glauber dynamics.

\begin{cor}[Deviation of the Glauber dynamics]
Suppose that $\mu$ is the unique Gibbs measure (for instance if
$\|\mathbf{c}_1\|_1<1$) and that the assumptions of Theorem
\ref{res-11} are satisfied. Then, the Glauber dynamics
$(X_t)_{t\ge0}$ verifies the following deviation inequality.
\\
For all  $d_{1,\II}$-Lipschitz function $u$ on $\X^\II$ (see
\eqref{eq-43} for the definition of $d_{1,\II}$), for all $r,t> 0$
and all $\beta\in
    \PXI$ such that $d\beta/d\mu\in L^2(\mu),$
$$
\P_\beta\left(\frac 1t\int_0^t u(X_s)\,ds\ge\int_{\X^\II}
u\,d\mu+r\right)\le \left\|\frac{d\beta}{d\mu}\right\|_2
\exp\Big(-
\frac{1-\|\mathbf{c}_2\|_2}{C^2\|u\|_{\mathrm{Lip}}^2}\,tr^2
\Big).
$$
\end{cor}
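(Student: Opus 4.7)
The strategy is to chain the transport-information inequality for the Gibbs measure $\mu$ with the general deviation machinery of Theorem \ref{resL-12}. First, Theorem \ref{res-11} gives the global $\WWI$ inequality
\begin{equation*}
W_{2,\II}^2(\nu,\mu) \le \frac{C^2}{1-\|\mathbf{c}_2\|_2}\, \IDV_{\mathcal{E}^\mu}(\nu|\mu),\quad \nu \in \mathrm{P}_2(\X^\II).
\end{equation*}
As noted in the remark immediately after that theorem, Jensen's inequality applied to an optimal transport plan downgrades this to the $\WI$ inequality
\begin{equation*}
W_{1,\II}^2(\nu,\mu) \le \frac{C^2}{1-\|\mathbf{c}_2\|_2}\, \IDV_{\mathcal{E}^\mu}(\nu|\mu),
\end{equation*}
which can be rewritten $\alpha\bigl(W_{1,\II}(\nu,\mu)\bigr) \le \IDV_{\mathcal{E}^\mu}(\nu|\mu)$ with the convex increasing function $\alpha(r) := \frac{1-\|\mathbf{c}_2\|_2}{C^2}\, r^2$, an element of the class $\mathcal{A}$.

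Next, I would verify the standing hypotheses of Theorem \ref{resL-12} for the polish space $\X^\II$ endowed with the lsc metric $d_{1,\II}$, the Glauber dynamics attached to $\mathcal{E}^\mu$, and the reference measure $\mu$. Reversibility is built into the definition of $\mathcal{E}^\mu$, while ergodicity follows from the assumed uniqueness of the Gibbs measure: $\mu$ is then the unique invariant probability measure of the dynamics. The integrability condition of Theorem \ref{resL-12}(1) is automatic, since the $\WI$ inequality just obtained forces $W_{1,\II}(\nu,\mu)<\infty$ whenever $\IDV_{\mathcal{E}^\mu}(\nu|\mu)<\infty$; moreover $\mu$ has a finite first $d_{1,\II}$-moment because the one-site $\WWI$ hypothesis yields Gaussian tails at each specification (via Marton's argument, Theorem \ref{Marton-intro}) which are then propagated through the product structure by the bound of Proposition \ref{res-10}.

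Granting these preliminaries, I would apply statement (2)(d) of Theorem \ref{resL-12} directly. For any $d_{1,\II}$-Lipschitz $u$ and any $\beta$ with $d\beta/d\mu \in L^2(\mu)$, it asserts
\begin{equation*}
\P_\beta\!\left(\frac 1t \int_0^t u(X_s)\,ds \ge \int u\,d\mu + r\right) \le \left\|\frac{d\beta}{d\mu}\right\|_2 \exp\bigl(-t\,\alpha(r/\|u\|_\mathrm{Lip})\bigr),
\end{equation*}
and substituting $\alpha(r/\|u\|_\mathrm{Lip}) = (1-\|\mathbf{c}_2\|_2)\,r^2/(C^2\|u\|_\mathrm{Lip}^2)$ yields exactly the stated inequality. (Equivalently, one could use statement (2)(c) of the same theorem together with the monotone conjugate $\alpha^\mc(\lambda) = C^2\lambda^2/(4(1-\|\mathbf{c}_2\|_2))$, together with a Chebyshev-type argument, but (2)(d) already packages this step.)

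The main obstacle is not the deviation estimate itself, which is a direct plug-in, but the structural verification that $\mathcal{E}^\mu$ is closable and that its associated Glauber dynamics is well-defined, $\mu$-reversible and $\mu$-ergodic. In the infinite-volume setting these points require some care, but they are guaranteed under Dobrushin-type uniqueness assumptions (for instance $\|\mathbf{c}_1\|_1<1$). Once these structural facts and the preliminary integrability of $d_{1,\II}$ under $\mu$ are secured, the corollary follows at once by combining Theorem \ref{res-11} with Theorem \ref{resL-12}.
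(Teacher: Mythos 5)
Your argument is exactly the paper's: Theorem \ref{res-11} yields the global $\WWI$ inequality, which is weakened \emph{a fortiori} to a $\WI$ inequality with $\alpha(r)=\frac{1-\|\mathbf{c}_2\|_2}{C^2}\,r^2$, and then Theorem \ref{resL-12}(2)(d) is applied to the Glauber dynamics. The additional care you take in checking closability, reversibility, ergodicity and the integrability hypothesis of Theorem \ref{resL-12} is welcome but goes beyond the paper's own (very terse) derivation, which simply presents the corollary as a direct consequence of those two results.
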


\section{Free transport inequalities}
The semicircular law is the probability distribution $\sigma$ on $\R$ defined by
$$d\sigma(x)=\frac{1}{2\pi} \sqrt{4-x^2}\1_{[-2,2]}(x)\,dx.$$
This distribution plays a fundamental role in the asymptotic
theory of Wigner random matrices.
\begin{defi}[Wigner matrices]
Let $N$ be a positive integer; a (complex) $N\times N$ Wigner
matrix $M$ is an Hermitian random matrix such that the entries
$M(i,j)$ with $i<j$ are i.i.d $\mathbb{C}$-valued random variables
with $\E[M(i,j)]=0$ and $\E[|M(i,j)|^2]=1$ and such that the
diagonal entries $M(i,i)$ are i.i.d centered real random variables
independent of the off-diagonal entries and having finite
variance. When the entries of $M$ are Gaussian random variables
and $\E[M(1,1)^2]=1$, $M$ is referred to as the Gaussian Unitary
Ensemble (GUE).
\end{defi}
Let us recall the famous Wigner theorem (see e.g
\cite{Anderson-Guionnet-Zeitouni10} or \cite{Guionnet09} for a
proof).

\begin{thm}[Wigner theorem]
Let $(M_{N})_{N\geq 0}$ be a sequence of complex Wigner matrices
such that $\max_{N\geq 0} (\E[M_{N}(1,1)^2])<+\infty$ and let
$L_{N}$ be the empirical distribution of
$X_{N}:=\frac{1}{\sqrt{N}}M_{N}$, that is to say
$$L_{N}=\frac{1}{N}\sum_{i=1}^N \delta_{\lambda^N_{i}},$$
where $\lambda^N_{1}\leq \lambda^N_{2}\leq \ldots\leq \lambda^N_{N}$ are the (real) eigenvalues of $X_{N}$. Then the sequence of random probability measures $L_{N}$ converges almost surely to the semicircular law (for the weak topology).
\end{thm}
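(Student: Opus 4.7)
The plan is to use the method of moments, in the classical style going back to Wigner himself. First I would verify that the semicircular law $\sigma$ is characterized by its moments: since $\sigma$ is compactly supported on $[-2,2]$, its moment sequence $m_k := \int x^k\,d\sigma(x)$ satisfies Carleman's condition and therefore determines $\sigma$ uniquely among probability measures on $\R$. A direct computation (change of variables $x = 2\cos\theta$) shows that $m_{2k+1} = 0$ and $m_{2k} = C_k$, the $k$-th Catalan number $C_k = \frac{1}{k+1}\binom{2k}{k}$.

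Next, for each fixed $k \geq 0$, I would show the convergence in expectation
\begin{equation*}
\E\!\left[\int x^k \, dL_N(x)\right] \;=\; \frac{1}{N}\,\E\!\left[\operatorname{Tr}(X_N^k)\right] \;\xrightarrow[N\to\infty]{}\; m_k.
\end{equation*}
Expanding the trace,
\begin{equation*}
\frac{1}{N}\,\E[\operatorname{Tr}(X_N^k)] \;=\; \frac{1}{N^{1+k/2}} \sum_{i_1,\dots,i_k} \E\!\big[M_N(i_1,i_2)M_N(i_2,i_3)\cdots M_N(i_k,i_1)\big],
\end{equation*}
which is a sum indexed by closed walks of length $k$ on $\{1,\dots,N\}$. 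Each walk can be encoded by the pair partition (or more general partition) it induces on $\{1,\dots,k\}$ according to which edges coincide. The key combinatorial step, which is the heart of the proof, is the standard count: walks whose edge partition is not a pairing contribute $0$ (by the centering hypothesis $\E[M_N(i,j)] = 0$), while walks associated with a pair partition contribute asymptotically only when the pairing is non-crossing and the walk is a ``tree walk'' on a subset of $\lfloor k/2\rfloor + 1$ vertices. Counting vertices gives a factor $N^{k/2+1}/N^{1+k/2} = 1$ in the leading order, and the number of non-crossing pair partitions of $\{1,\dots,2k\}$ is precisely $C_k$. All other terms are $O(1/N)$ or vanish. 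For $k$ odd, no pair partition exists, so the limit is $0$.

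To upgrade convergence in expectation to almost sure convergence, I would estimate the variance
\begin{equation*}
\operatorname{Var}\!\left(\int x^k \, dL_N(x)\right) \;=\; \operatorname{Var}\!\left(\frac{1}{N}\operatorname{Tr}(X_N^k)\right) \;=\; O\!\left(\frac{1}{N^2}\right),
\end{equation*}
using a similar combinatorial expansion: when one subtracts the disconnected contributions, the remaining walks force an extra pairing constraint that costs at least two powers of $N$. Chebyshev's inequality together with Borel--Cantelli then yields $\int x^k\,dL_N \to m_k$ almost surely for each $k$. Since $\PX$ endowed with the weak topology is separable and moment convergence to a compactly supported limit implies weak convergence, a union over a countable dense set of test functions (or equivalently over $k \in \N$) shows that almost surely $L_N \rightharpoonup \sigma$.

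The main obstacle is the combinatorial bookkeeping in step two, specifically proving rigorously that the leading-order walks are in bijection with non-crossing pair partitions (equivalently with rooted plane trees) and controlling the error terms uniformly in $N$, using only the hypotheses $\E[M_N(i,j)] = 0$, $\E[|M_N(i,j)|^2] = 1$, and uniform boundedness of diagonal variances. A truncation argument on the entries may be required if one wants to handle the case where higher moments are not assumed finite, reducing to a matrix with bounded entries by a standard Hoffman--Wielandt perturbation estimate.
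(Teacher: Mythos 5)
Your proposal is correct, and it matches the paper's (implicit) approach: the survey does not prove Wigner's theorem itself but defers to the cited textbooks of Anderson--Guionnet--Zeitouni and Guionnet, whose proof is precisely the moment method you outline (trace expansion, bijection of leading-order closed walks with non-crossing pair partitions counted by Catalan numbers, an $O(N^{-2})$ variance bound plus Borel--Cantelli, and moment determinacy of the compactly supported limit); the paper only adds the remark that the GUE case also follows from the Ben Arous--Guionnet large deviation principle. One caveat: since the hypotheses grant only finite second moments of the entries, the truncation you mention as possibly required is in fact mandatory --- without it the terms of $\E[\operatorname{Tr}(X_N^k)]$ involve entry moments that need not exist, and walks with an edge of multiplicity $\ge 3$ are not killed by centering alone --- but the Hoffman--Wielandt perturbation estimate you name is exactly the right tool to reduce to bounded entries.
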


In \cite{BV01}, Biane and Voiculescu have obtained the following
transport inequality for the semicircular distribution $\sigma$
\begin{equation}\label{Biane-Voiculescu}
\mathcal{T}_{2}(\nu,\sigma)\leq 2 \widetilde{\Sigma}(\nu|\sigma),
\end{equation}
which holds for all $\nu\in \mathrm{P}(\R)$ with compact support
(see \cite[Theorem 2.8]{BV01}). The functional appearing in the
left-hand side of \eqref{Biane-Voiculescu} is the \emph{relative
free entropy} defined as follows:
$$
    \widetilde{\Sigma}(\nu|\sigma)=E(\nu)-E(\sigma),
$$
where $$E(\nu)=\int \frac{x^2}{2}\,d\nu(x)- \iint \log
(|x-y|)\,d\nu(x)d\nu(y).$$ The relative free entropy
$\widetilde{\Sigma} (\,\cdot\,|\sigma)$ is a natural candidate to
replace the relative entropy $H(\,\cdot\,|\sigma)$, because it
governs the large deviations of $L_{N}$ when $M_{N}$ is drawn from
the GUE, as was shown by Ben Arous and Guionnet in \cite{BAG97}.
More precisely, we have the following: for every open (resp.
closed) subset $O$ (resp. $F$) of $\mathrm{P}(\R)$,
\begin{gather*}
\liminf_{N\to \infty} \frac{1}{N^2} \log\P\left(L_{N}\in O\right)\geq -\inf \{\widetilde{\Sigma}(\nu|\sigma); \nu\in O\},\\
\limsup_{N\to \infty} \frac{1}{N^2} \log\P\left(L_{N}\in F\right)\leq -\inf \{\widetilde{\Sigma}(\nu|\sigma); \nu\in F\}.
\end{gather*}

Different approaches were considered to prove
\eqref{Biane-Voiculescu} and to generalize it to other compactly
supported probability measures. The original proof by Biane and
Voiculescu was inspired by \cite{OV00}. Then Hiai, Petz and Ueda
\cite{HPU04} proposed a simpler proof relying on Ben Arous and
Guionnet large deviation principle. Later Ledoux gave alternative
arguments based on a free analogue of the Brunn-Minkowski
inequality. Recently, Ledoux and Popescu \cite{P07,LP09} proposed
yet another approach using optimal transport tools. Here, we will
sketch the proof of Hiai, Petz and Ueda.

We need to introduce some supplementary material. Define
$\mathcal{H}_{N}$ as the set of Hermitian $N\times N$ matrices. We
will identify $\mathcal{H}_N$ with the space $\R^{N^2}$ using the
map
\begin{equation}\label{hermitian}
H\in \mathcal{H}_N \mapsto \left((H(i,i))_i, (\mathrm{Re}(H(i,j)))_{i<j}, (\mathrm{Im}(H(i,j)))_{i<j}\right).
\end{equation}
The Lebesgue measure $dH$ on $\mathcal{H}_N$ is
$$
    dH:=\prod_{i=1}^N dH_{i,i} \prod_{i<j} d\left(\mathrm{Re}(H_{i,j})\right)\, \prod_{i<j}d\left(\mathrm{Im}(H_{i,j})\right).
$$
For all continuous function $Q:\R\to\R$, let us define the probability measure $P_{N,Q}$ on $\mathcal{H}_N$ by
\begin{equation}\label{PNQ}
\int f\,dP_{{N,Q}}:=\frac{1}{Z_{N}(Q)}\int
f(H)e^{-N\mathrm{Tr}(Q(H))}\,dH
\end{equation}
for all bounded and measurable $f:\mathcal{H}_{N}\to \R$, where
$Q(H)$ is defined using the basic functional calculus, and
$\mathrm{Tr}$ is the trace operator. In particular, when $M_{N}$
is drawn from the GUE, then it is easy to check that the law of
$X_{N}=N^{-1/2}M_{N}$ is $P_{N, x^2/2}$.

The following theorem is due to Ben Arous and Guionnet.
\begin{thm}\label{BAG}
Assume that $Q:\R\to\R$ is a continuous function such that
\begin{equation}\label{condition sur Q}
\liminf_{|x|\to\infty} \frac{Q(x)}{\log |x|} >2,
\end{equation}
and for all $N\geq 1$ consider a random matrix $X_{N,Q}$
distributed according to $P_{N,Q}$. Let
$\lambda_{1}^N\leq\ldots\leq\lambda_{N}^N$ be the ordered
eigenvalues of $X_{N,Q}$ and define $L_{N}=\frac{1}{N}\sum_{i=1}^N
\delta_{\lambda_{i}^N}\in \mathrm{P}(\R)$. The sequence of random
measures $(L_N)_{N\geq 1}$ obeys a large deviation principle, in
$\mathrm{P}(\R)$ equipped with the weak topology, with speed $N^2$
and the good rate function $I_{Q}$ defined by
$$
    I_{Q}(\nu)=E_{Q}(\nu)-\inf_{\nu}E_{Q}(\nu),\quad \nu\in \mathrm{P}(\R)
$$
where
$$
    E_{Q}(\nu)=\int Q(x)\,d\nu(x)-\iint \log |x-y|\,d\nu(x)d\nu(y),\quad \nu\in \mathrm{P}(\R).
$$
In other words, for all open (resp. closed) $O$ (resp. $F$) of $\mathrm{P}(\R)$, it holds
\begin{gather*}
\liminf_{N\to \infty} \frac{1}{N^2} \log\P\left(L_{N}\in O\right)\geq -\inf \{I_{Q}(\nu); \nu\in O\},\\
\limsup_{N\to \infty} \frac{1}{N^2} \log\P\left(L_{N}\in F\right)\leq -\inf \{I_{Q}(\nu); \nu\in F\}.
\end{gather*}
Moreover, the functional $I_{Q}$ admits a unique minimizer denoted
by $\mu_{Q}$. The probability measure $\mu_{Q}$ is compactly
supported and is characterized by the following two conditions:
there is a constant $C_{Q}\in \R$ such that
$$
    Q(x)\geq 2\int \log|x-y|\,d\mu_{Q}(y)+C_{Q},\quad\text{for all } x\in \R
$$
and
$$
    Q(x)=2\int \log|x-y|\,d\mu_{Q}(y)+C_{Q},\quad\text{for all } x\in \mathrm{Supp}(\mu_{Q}).
$$
Finally, the asymptotic behavior of the normalizing constant
$Z_{N}(Q)$ in \eqref{PNQ} is given by:
$$
    \lim_{N\to\infty}\frac{1}{N^2}\log Z_{N}(Q)=-E_{Q}(\mu_{Q})=-\inf_{\nu}E_{Q}(\nu).
$$
\end{thm}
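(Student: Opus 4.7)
My plan is to follow the Ben Arous--Guionnet strategy: combining Weyl's integration formula with a Laplace-type analysis of the singular logarithmic energy, then extracting the variational characterization of $\mu_Q$ from classical potential theory. The starting point is Weyl's formula, which expresses the joint law of the unordered eigenvalues $(\lambda_1,\dots,\lambda_N)$ of $X_{N,Q}$, up to a normalizing constant $\widetilde Z_N(Q)$, via the density
\[
\prod_{i<j}|\lambda_i-\lambda_j|^2\ \exp\!\Big(-N\sum_{i=1}^N Q(\lambda_i)\Big).
\]
Rewriting in terms of $L_N=\tfrac{1}{N}\sum_i\delta_{\lambda_i}$, this becomes
\[
\exp\!\Big(-N^2\Big[\textstyle\int Q\,dL_N-\iint_{x\neq y}\log|x-y|\,dL_N(x)dL_N(y)\Big]\Big).
\]
Formally the bracket is $E_Q(L_N)$, but the off-diagonal restriction is essential since $\iint\log|x-y|\,dL_N\otimes L_N=-\infty$ for atomic $L_N$; this is the source of every technical complication below.

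Next I would establish the LDP. The hypothesis $\liminf Q/\log|\cdot|>2$ furnishes exponential tightness of $(L_N)$ in $\mathrm{P}(\R)$ at speed $N^2$, so one can restrict attention to weakly compact sets of probability measures. For the upper bound, introduce the truncated kernel $\log_M:=\log\vee(-M)$, yielding a continuous, bounded-below functional $E_{Q,M}(\nu):=\int Q\,d\nu-\iint\log_M|x-y|\,d\nu(x)d\nu(y)$ that satisfies $E_{Q,M}\nearrow E_Q$ pointwise; a Varadhan-type estimate at the truncated level combined with monotone passage $M\to\infty$ gives the upper bound at rate $I_Q$, relying on the weak lower semi-continuity of $E_Q$. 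For the lower bound, given a target $\nu$ with $I_Q(\nu)<\infty$, approximate it by a smooth compactly supported density $\nu_\varepsilon$ and place the $\lambda_i$ near the quantiles $F_{\nu_\varepsilon}^{-1}(i/N)$; a Riemann-sum computation identifies the logarithm of the probability of a weak neighborhood of $\nu_\varepsilon$ as $-N^2E_Q(\nu_\varepsilon)+o(N^2)$, and one then lets $\varepsilon\to 0$.

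The variational analysis hinges on strict convexity of the logarithmic energy on signed measures of zero mass: for any finite signed measure $\eta$ on $\R$ with $\eta(\R)=0$ and finite energy, $-\iint\log|x-y|\,d\eta(x)d\eta(y)\ge 0$ with equality only if $\eta\equiv 0$, a classical fact of one-dimensional potential theory reflecting the positivity of the Fourier transform of $-\log|\cdot|$. Combined with the confinement supplied by the growth of $Q$ and the weak lower semi-continuity of $E_Q$, this yields existence, uniqueness, and compactness of support of the minimizer $\mu_Q$. Perturbing $\mu_Q$ by compactly supported signed measures of zero mass and exploiting convexity delivers the Euler--Lagrange inequality $Q(x)\ge 2\int\log|x-y|\,d\mu_Q(y)+C_Q$ with equality on $\mathrm{Supp}(\mu_Q)$, the constant $C_Q$ being the Lagrange multiplier enforcing the mass constraint. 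Finally, $\tfrac{1}{N^2}\log Z_N(Q)\to -E_Q(\mu_Q)$ drops out from the LDP via a Varadhan-style Laplace argument applied to the density above, after absorbing the Weyl Jacobian constants and the $N!$ that relate $\widetilde Z_N(Q)$ to $Z_N(Q)$.

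The main obstacle is precisely the logarithmic singularity: on atomic measures the naive energy is $-\infty$, so the Laplace method cannot be applied directly and one must argue that pairs of very close eigenvalues do not destroy the estimates. The truncation-and-monotone-limit device handles both bounds simultaneously, and lower semi-continuity of $E_Q$ is what guarantees that the limiting rate is the full $I_Q$ rather than a smaller functional; propagating this control uniformly in $M$ is the technical heart of the argument.
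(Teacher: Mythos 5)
The paper offers no proof of this theorem: it is quoted verbatim from Ben Arous and Guionnet \cite{BAG97} (see also \cite{Anderson-Guionnet-Zeitouni10}, \cite{Guionnet09}) and is used as a black box in the proof of \eqref{Biane-Voiculescu}. So there is nothing internal to compare you against; measured against the standard argument, your outline is the right one and its skeleton is sound. Weyl's formula, exponential tightness from \eqref{condition sur Q} (which is exactly $\liminf Q/\log|x|>\beta$ with $\beta=2$), the truncation $\log_M=\log\vee(-M)$ for the upper bound (noting that removing the diagonal costs only $e^{O(MN)}=e^{o(N^2)}$), the quantile construction for the lower bound, the positive-definiteness of $-\log|x-y|$ on compactly supported signed measures of zero mass for uniqueness, and the perturbation argument for the Euler--Lagrange characterization are precisely the ingredients of \cite{BAG97}. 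The one genuinely delicate point --- interchanging $\sup_M$ and $\inf_F$ when passing from $E_{Q,M}$ to $E_Q$ in the upper bound --- you correctly identify as the technical heart.

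One concrete caveat concerns your last step, where you propose to "absorb the Weyl Jacobian constants and the $N!$". These constants are \emph{not} $e^{o(N^2)}$: for the Gaussian potential one computes directly $Z_N(x^2/2)=(2\pi/N)^{N/2}(\pi/N)^{N(N-1)/2}$, so that $N^{-2}\log Z_N(x^2/2)\to-\infty$. The free-energy limit therefore holds for the eigenvalue-level partition function $\int_{\R^N}\prod_{i<j}|\lambda_i-\lambda_j|^2\,e^{-N\sum_iQ(\lambda_i)}\,d\lambda$ (this is the statement actually proved in \cite{BAG97}), not literally for the matrix-level constant $Z_N(Q)$ of \eqref{PNQ}; equivalently, one may state it for the ratio $Z_N(Q)/Z_N(x^2/2)$, since the Weyl constant cancels there. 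This imprecision is inherited from the theorem as quoted and is harmless downstream --- the fourth step of the proof of \eqref{Biane-Voiculescu} only ever uses the difference $N^{-2}\bigl(\log Z_N(Q_\sigma)-\log Z_N(Q_\nu)\bigr)$ --- but your write-up should make explicit which partition function the Laplace asymptotics are being applied to, rather than suggesting the Jacobian factors are negligible at scale $e^{N^2}$.
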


\begin{rem}
Let us make a few comments on this theorem.
\begin{enumerate}
    \item When $Q(x)=x^2/2$, then $\mu_{Q}$ is the semicircular law
$\sigma$.

    \item For a general $Q$, one has the identity
$I_{Q}(\nu)=E_{Q}(\nu)-E_{Q}(\mu_{Q})$. So, to be coherent with
the notation given at the beginning of this section, we will
denote $I_{Q}(\nu)=\widetilde{\Sigma}(\nu|\mu_{Q})$ in the sequel.

    \item As a by-product of the large deviation principle, we can
conclude that the sequence of random measures $L_{N}$ converges
almost surely to $\mu_{Q}$ (for the weak topology). When
$Q(x)=x^2/2$, this provides a proof of Wigner theorem in the
particular case of the GUE.
\end{enumerate}
\end{rem}

Now we can prove the transport inequality
\eqref{Biane-Voiculescu}.

\proof[Proof of\eqref{Biane-Voiculescu}]
 We will prove the
inequality \eqref{Biane-Voiculescu} only in the case where $\nu$
is a probability measure with support included in $[-A;A]$, $A>0$
and such that the function $$S_{\nu}(x):=2\int
\log|x-y|\,d\nu(y)$$ is finite and continuous over $\R$. The
general case is then obtained by approximation (see \cite{HPU04}
for explanations).

\noindent\textbf{First step.} To prove that
$\mathcal{T}_{2}(\nu,\sigma)\leq 2\widetilde{\Sigma}(\nu|\sigma)$,
the first idea is to use Theorem \ref{BAG} to provide a matrix
approximation of $\nu$ and $\sigma$.

Let $Q_\nu:\R\to\R$ be a continuous function such that
$Q_\nu=S_\nu$ on $[-A,A]$, $Q_\nu\geq S_\nu$ and
$Q_\nu(x)=\frac{x^2}{2}$ when $|x|$ is large. Let $X_{N,\nu}$,
$N\geq 1$ be a sequence of random matrices distributed according
to the probability $P_{N,\nu}$ associated to $Q_\nu$ in
\eqref{PNQ} (we shall write in the sequel $P_{N,\nu}$ instead of
$P_{N,Q_\nu}$).  The characterization of the equilibrium measure
$\mu_{Q_{\nu}}$ easily implies that $\mu_{Q_{\nu}}=\nu$. So, the
random empirical measures $L_{N,\nu}$ of $X_{N,\nu}$ follows the
large deviation principle with the good rate function
$\widetilde{\Sigma}(\,\cdot\,|\nu)$. In particular, $L_{N,\nu}$
converges almost surely to $\nu$ (for the weak topology). Let us
consider the probability measure $\nu_{N}$ defined  for all
bounded measurable function $f$ by
$$\int f\,d\nu_{N}:=\E\left[\int f\,dL_{N,\nu}\right].$$
The almost sure convergence of $L_{N,\nu}$ to $\nu$ easily implies
that $\nu_{N}$ converges to $\nu$ for the weak topology. We do the
same construction with $Q_{\sigma}(x)=x^2/2$ yielding a sequence
$\sigma_{N}$ converging to $\sigma$ (note that in this case, the
sequences $X_{N,\sigma}$ and $P_{N,\sigma}$ correspond to the GUE
rescaled by a factor $\sqrt{N}$).

\noindent\textbf{Second step.} Now we compare the Wasserstein
distance between $\nu_{N}$ and $\sigma_{N}$ to the one between
$P_{N,\nu}$ and $P_{N,\sigma}$. To define the latter, we equip
$\mathcal{H}_{N}$ with the Frobenius norm defined as follows:
$$
    \|A-B\|^2_{F}=\sum_{i=1}^N\sum_{j=1}^N |A(i,j)-B(i,j)|^2.
$$
By definition, if $P_{1},P_{2}$ are probability measures on $\mathcal{H}_{N}$, then
$$
    \mathcal{T}_{2}(P_{1},P_2):=\inf \E\left[\|X-Y\|_{F}^2\right],
$$
where the infimum is over all the couples of $N\times N$ random
matrices $(X,Y)$ such that $X$ is distributed according to $P_{1}$
and $Y$ according to $P_{2}$. According to the classical
Hoffman-Wielandt inequality (see e.g \cite{Horn-Johnson94}), if
$A,B\in \mathcal{H}_{N}$ then,
$$
    \sum_{i=1}^N |\lambda_{i}(A)-\lambda_{i}(B)|^2\leq\|A-B\|_{F}^2,
$$
where $\lambda_{1}(A)\leq \lambda_{2}(A)\leq \ldots\leq
\lambda_{N}(A)$ (resp. $\lambda_{1}(B)\leq \lambda_{2}(B)\leq
\ldots\leq \lambda_{N}(B)$) are the eigenvalues of $A$ (resp. $B$)
in increasing order. So, if $(X_{N,\nu},X_{N,\sigma})$ is an
optimal coupling between $P_{N,\nu}$ and $P_{N,\sigma}$, we have
\begin{align*}
    \mathcal{T}_{2}(P_{N,\nu},P_{N,\sigma})=\E[\|X_{N,\nu}-X_{N,\sigma}\|_{F}^2]
    &\geq \E \left[\sum_{i=1}^N|\lambda_{i}(X_{N,\nu})-\lambda_{i}(X_{N,\sigma})|^2\right]\\
    &=N\E\left[\iint |x-y|^2\,dR_N\right],
\end{align*}
where $R_N$ is the random probability measure on $\R\times \R$
defined by
$$
    R_N:=\frac{1}{N}\sum_{i=1}^N \delta_{(\lambda_i(X_{N,\nu}),\lambda_i(X_N,\sigma))}.
$$
It is clear that $\pi_N:=\E[R_N]$ has marginals $\nu_N$ and
$\sigma_N$. Hence, applying Fubini theorem in the above inequality
yields
$$
    \mathcal{T}_2(P_{N,\nu},P_{N,\sigma})\geq N\mathcal{T}_2(\nu_N,\sigma_N).
$$

\noindent\textbf{Third step.} If we identify the space
$\mathcal{H}_N$ to the space $\R^{N^2}$ using the map defined in
\eqref{hermitian}, then $P_{N,\sigma}$ is a product of Gaussian
measures:
$$
    P_{N,\sigma}=\mathcal{N}(0,1/N)^N\otimes
\mathcal{N}(0,1/(2N))^{N(N-1)/2} \otimes
\mathcal{N}(0,1/(2N))^{N(N-1)/2}.
$$
Each factor verifies Talagrand inequality $\T_2$ (with the
constant $2/N$ or $1/N$). Therefore, using the dimension-free
tensorization property of $\T_2$, it is easy to check that
$P_{N,\sigma}$ verifies the transport inequality
$\mathcal{T}_c\leq 2N^{-1}H$ on $\mathcal{H}_N$, where the cost
function $c$ is defined by
$$
    c(A,B):=\sum_{i=1}^N |A(i,i)-B(i,i)|^2+2\sum_{i<j}|A(i,j)-B(i,j)|^2=\|A-B\|_{F}^2.
$$
As a conclusion, for all $N\geq 1$, the inequality
$\mathcal{T}_2(P_{N,\nu},P_{N,\sigma})\leq
2N^{-1}H\left(P_{N,\nu}| P_{N,\sigma}\right)$ holds. Using Step 2,
we get
$$
    \mathcal{T}_2(\nu_N,\sigma_N)\leq \frac{2}{N^2}H\left(P_{N,\nu}| P_{N,\sigma}\right),\quad N\geq 1.
$$

\noindent\textbf{Fourth step.} The last step is devoted to the
computation of the limit of $N^{-2}H\left(P_{N,\nu}|
P_{N,\sigma}\right)$ when $N$ goes to $\infty$. We have
\begin{align*}
    \frac{H\left(P_{N,\nu}| P_{N,\sigma}\right)}{N^2}
    &=\frac{1}{N^2}\log Z_N(Q_\sigma)-\frac{1}{N^2}\log Z_N(Q_\nu)+\frac{1}{N}\int \mathrm{Tr}\left(Q_\nu(A)-\frac{1}{2}A^2\right)\,dP_{N,\nu}\\
    &=\frac{1}{N^2}\log Z_N(Q_\sigma)-\frac{1}{N^2}\log Z_N(Q_\nu)+\int Q_\nu(x)-\frac{x^2}{2}\,d\nu_N(x)
\end{align*}
Using Theorem \ref{BAG} and the convergence of $\nu_N$ to $\nu$,
it is not difficult to see that the right-hand side tends to
$\widetilde{\Sigma}(\nu|\sigma)$ when $N$ goes to $\infty$
(observe that the function $x\mapsto Q_\nu(x)-x^2/2$ is continuous
and has a compact support). Since $\mathcal{T}_2$ is lower
semicontinuous (this is a direct consequence of the Kantorovich
dual equality, see Theorem \ref{res-01}),
$\mathcal{T}_2(\nu,\sigma)\leq \liminf_{N\to \infty}
\mathcal{T}_2(\nu_N,\sigma_N)$, which completes the proof.
\endproof
\begin{rem}
\begin{enumerate}
    \item It is possible to adapt the preceding proof to show that
probability measures $\mu_Q$ with $Q''\geq \rho$, with $\rho>0$
verify the transport inequality $\mathcal{T}_2(\nu,\mu_Q)\leq
\frac{2}{\rho}\widetilde{\Sigma}(\nu|\mu_Q)$, for all $\nu\in
\mathrm{P}(\R),$  see \cite{HPU04}.

    \item The random matrix
approximation method can be applied to obtain a free analogue of
the logarithmic Sobolev inequality, see \cite{B03}. It has been
shown by Ledoux in \cite{L05} that a free analogue of Otto-Villani
theorem holds. Ledoux and Popescu have also obtained in
\cite{LP09} a free HWI inequality.
\end{enumerate}
\end{rem}

\section{Optimal transport is a tool for proving other functional inequalities}\label{section optimal transport is a tool}

We already saw in Section \ref{section uniformly convex} that the
logarithmic Sobolev inequality can be derived by means of the
quadratic optimal transport. It has  been discovered by Barthe,
Cordero-Erausquin, McCann, Nazaret and Villani, among others, that
this is also true for other well-known  functional inequalities
such as Pr\'ekopa-Leindler, Brascamp-Lieb and Sobolev
inequalities, see \cite{Bar97, Bar98, CMS01, CMS06, CNV04, McC94}.

In this section, we do an excursion a step away from transport
inequalities and visit Brunn-Minkowski and Pr\'ekopa-Leindler
inequalities.  We are going to sketch their proofs. Our main tool
will be the Brenier map which was described at Theorem
\ref{res-06}. For a concise and enlightening discussion on this
topic, it is worth reading Villani's exposition in \cite[Ch.\!
6]{Vill}.

\subsection*{The Pr\'ekopa-Leindler inequality} It is a functional
version of Brunn-Minkowski inequality which has been proved
several times and named after the papers by Pr\'ekopa
\cite{Pre73} and Leindler \cite{Lei72}.

\begin{thm}[Pr\'ekopa-Leindler inequality]\label{PL}
Let $f,g,h$ be three nonnegative integrable functions on $\Rk$ and
$0\le\lambda\le1$ be such that for all $x,y\in\Rk,$
\begin{equation*}
    h((1-\lambda)x+\lambda y)\ge f(x)^{1-\lambda}g(y)^\lambda.
\end{equation*}
Then,
\begin{equation*}
    \int_{\Rk}h(x)\,dx\ge \left(\int_{\Rk}f(x)\,dx\right)^{1-\lambda}\left(\int_{\Rk}g(x)\,dx\right)^\lambda.
\end{equation*}
\end{thm}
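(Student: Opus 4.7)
The plan is to derive Prékopa–Leindler from the existence of the Brenier map, in the same spirit as the optimal transport proofs of logarithmic Sobolev and Talagrand inequalities developed earlier in the paper. First I would normalize: by homogeneity it suffices to treat the case $\int f = \int g = 1$ and to show that $\int h \ge 1$. Viewing $f\,dx$ and $g\,dx$ as probability measures on $\Rk$, Theorem \ref{res-06} then provides a convex function $\phi:\Rk\to(-\infty,\infty]$ whose gradient $T=\nabla\phi$ is the (unique) Brenier map pushing $f\,dx$ forward to $g\,dx$, together with the Monge–Ampère relation
\begin{equation*}
f(x)=g(T(x))\,\det(\mathrm{Hess}_A\,\phi)(x),\quad f\text{-a.e. }x,
\end{equation*}
where $\mathrm{Hess}_A$ denotes the Alexandrov Hessian.

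Next I would introduce the displacement interpolation map
\begin{equation*}
S_\lambda(x):=(1-\lambda)x+\lambda T(x)=(1-\lambda)x+\lambda\nabla\phi(x).
\end{equation*}
Because $\phi$ is convex, $\mathrm{Hess}_A\,\phi\ge 0$, so $S_\lambda$ is strictly monotone and hence injective on the set of points of twice differentiability of $\phi$; its Jacobian matrix $(1-\lambda)\mathrm{Id}+\lambda\,\mathrm{Hess}_A\,\phi$ is positive definite almost everywhere. Applying the change of variables formula and dropping the integral outside the image $S_\lambda(\Rk)$, I would estimate
\begin{equation*}
\int_\Rk h(y)\,dy\ge\int_\Rk h(S_\lambda(x))\,\det\bigl((1-\lambda)\mathrm{Id}+\lambda\,\mathrm{Hess}_A\,\phi(x)\bigr)\,dx.
\end{equation*}
The pointwise hypothesis on $(f,g,h)$ applied at the pair $(x,T(x))$ then yields $h(S_\lambda(x))\ge f(x)^{1-\lambda}g(T(x))^\lambda$, and substituting $g(T(x))=f(x)/\det(\mathrm{Hess}_A\,\phi)(x)$ from Monge–Ampère reduces the right-hand side to
\begin{equation*}
\int_\Rk f(x)\,\frac{\det\bigl((1-\lambda)\mathrm{Id}+\lambda\,\mathrm{Hess}_A\,\phi(x)\bigr)}{\det(\mathrm{Hess}_A\,\phi(x))^{\lambda}}\,dx.
\end{equation*}

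The final step is a purely linear-algebraic inequality: for any symmetric positive semi-definite $k\times k$ matrix $A$ with eigenvalues $a_1,\dots,a_k\ge 0$, the weighted AM–GM inequality $(1-\lambda)+\lambda a_i\ge a_i^\lambda$ yields
\begin{equation*}
\det\bigl((1-\lambda)\mathrm{Id}+\lambda A\bigr)=\prod_{i=1}^k\bigl((1-\lambda)+\lambda a_i\bigr)\ge\prod_{i=1}^k a_i^\lambda=(\det A)^\lambda.
\end{equation*}
Applied with $A=\mathrm{Hess}_A\,\phi(x)$, this bounds the quotient above by $1$, so the remaining integral is simply $\int f=1$, which gives the desired conclusion.

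The main obstacle I would expect is purely technical rather than conceptual: justifying the Monge–Ampère identity and the change of variables at the level of the Alexandrov Hessian, since $\phi$ is only twice differentiable almost everywhere and $S_\lambda$ need not be globally smooth. As in the proof of Theorem \ref{res-07}, this is handled by Alexandrov's theorem together with the inequality $\log\det(\mathrm{Id}+M)\le\mathrm{tr}(M)$ on the full Jacobian, plus a standard approximation argument to reduce to the case where $f,g$ are bounded with compact support (so that finiteness of $\mathcal{T}_{c_2}(f\mu,g\mu)$ and hence existence of the Brenier map are automatic).
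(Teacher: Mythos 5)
Your proof is correct, but it is wired differently from the one in the paper (which follows Barthe). You transport $f\,dx$ directly onto $g\,dx$ by a single Brenier map $T=\nabla\phi$ and then interpolate with the identity via $S_\lambda=(1-\lambda)\mathrm{Id}+\lambda\nabla\phi$ (McCann's displacement interpolation); the key matrix fact you need is only $\det\bigl((1-\lambda)\mathrm{Id}+\lambda A\bigr)\ge(\det A)^\lambda$, which is the weighted AM--GM on eigenvalues. The paper instead fixes a third reference density $p=\mathbf{1}_{[0,1]^d}$, takes \emph{two} Brenier maps $\nabla\phi_1$, $\nabla\phi_2$ pushing $p$ onto $f$ and $g$, sets $\phi=(1-\lambda)\phi_1+\lambda\phi_2$, and invokes the full concavity of $\log\det$ on the cone of nonnegative symmetric matrices, i.e.\ $\det\bigl((1-\lambda)A+\lambda B\bigr)\ge(\det A)^{1-\lambda}(\det B)^\lambda$. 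Your argument is the special case of this scheme obtained by choosing $p=f\,dx$ (so that $\phi_1=|x|^2/2$), which buys a more elementary determinant inequality and only one transport map, at the price of breaking the symmetry between $f$ and $g$ and of having to check injectivity of $S_\lambda$ (immediate for $\lambda<1$, since $S_\lambda$ is the gradient of the strongly convex function $(1-\lambda)|x|^2/2+\lambda\phi$; the endpoints $\lambda\in\{0,1\}$ are trivial). The technical caveats you flag --- the a.e.\ Monge--Amp\`ere equation for the Alexandrov Hessian and the change-of-variables \emph{inequality} for gradients of convex functions --- are exactly the ones the paper also leaves implicit in its step (i), so you are on equal footing there; just note that what is needed is the inequality form of the area formula (one may lose mass outside $S_\lambda(\Rk)$ and on the singular set), not an exact identity.
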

 The next proof comes from Barthe's PhD thesis \cite{Bar97}.

 \begin{proof}
Without loss of generality, assume that $f, g$ and $h$ are
probability densities. Pick another probability density $p$ on
$\Rk,$ for instance the indicator function of the unit cube
$[0,1]^d.$ By Theorem \ref{res-06}, there exist two Brenier maps
$\nabla \Phi_1$  and $\nabla \Phi_2$ which transport $p$ onto $f$
and $p$ onto $g,$ respectively. Since $\Phi_1$ is a convex
function, it admits an Alexandrov Hessian (defined almost
everywhere) $\nabla^2_A\phi_1$ which is nonnegative definite.
Similarly, for $\Phi_2$ and $\nabla^2_A\phi_2.$ The change of
variable formula leads us to the Monge-Amp\`ere equations
\begin{equation*}
    f(\nabla\phi_1(x))\, \det(\nabla^2_A\phi_1(x))=1,\qquad
    g(\nabla\phi_2(x))\, \det(\nabla^2_A\phi_2(x))=1
\end{equation*}
for almost all $x\in[0,1]^d.$ Defining
$\phi=(1-\lambda)\phi_1+\lambda\phi_2,$ one obtains
\begin{align*}
  &\int_{\Rk}h(y)\,dy\\
  &\ge \int_{[0,1]^d}h(\nabla\phi(x))\,\det(\nabla^2_A\phi(x))\,dx \\
  &\stackrel{\mathrm{(i)}}{\ge} \int_{[0,1]^d}h\Big((1-\lambda)\nabla\phi_1(x)+\lambda\nabla\phi_2(x)\Big)\,
  \Big[\det(\nabla^2_A\phi_1(x))\Big]^{1-\lambda} \Big[\det(\nabla^2_A\phi_2(x))\Big]^{\lambda}\,dx\\
  &\stackrel{\mathrm{(ii)}}{\ge} \int_{[0,1]^d}f(\nabla\phi_1(x))^{1-\lambda}g(\nabla\phi_2(x))^\lambda\,
  \Big[\det(\nabla^2_A\phi_1(x))\Big]^{1-\lambda} \Big[\det(\nabla^2_A\phi_2(x))\Big]^{\lambda}\,dx \\
  &\stackrel{\mathrm{(iii)}}{=} \int_{[0,1]^d} 1\,dx=1
\end{align*}
where inequality (i) follows from the claim below, inequality (ii)
uses the assumption on $f,g$ and $h$ and the equality (iii) is a
direct consequence of the above Monge-Amp\`ere equations.

\noindent \textit{Claim.}\ The function $S\in
\mathcal{S}_+\mapsto\log\det (S)\in[-\infty,\infty)$ is a concave
function on the convex cone $\mathcal{S}_+$ of nonnegative
definite symmetric matrices.
 \end{proof}

The decisive trick of this proof is to take advantage of  the
concavity of $\log\det,$ once it is noticed that the Hessian of
the \emph{convex} function $\phi,$ which gives rise to the Brenier
map $\nabla\phi,$ belongs to $\mathcal{S}_+.$

As a corollary, one obtains the celebrated Brunn-Minkowski
inequality.

\begin{cor}[Brunn-Minkowski inequality]
For all  $A,B$ compact subsets of $\Rk,$
\begin{equation*}
    \mathrm{vol}^{1/d}(A+B)\ge\mathrm{vol}^{1/d}(A)+\mathrm{vol}^{1/d}(B)
\end{equation*}
where $\mathrm{vol}^{1/d}(A):=\left(\int_A dx\right)^{1/d}$ and
$A+B:=\{a+b;a\in A,b\in B\}.$
\end{cor}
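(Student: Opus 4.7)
The plan is to derive Brunn--Minkowski from Prékopa--Leindler (Theorem \ref{PL}) in two stages: first obtain a multiplicative (dimension-free) version of Brunn--Minkowski, then convert it to the additive ($1/d$-homogeneous) form by an elementary rescaling argument.

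For the first stage, I would fix $\lambda \in (0,1)$ and apply Theorem \ref{PL} to the indicator functions $f=\mathbf{1}_A$, $g=\mathbf{1}_B$, and $h=\mathbf{1}_{(1-\lambda)A+\lambda B}$. The pointwise hypothesis of Prékopa--Leindler is immediate: if $f(x)^{1-\lambda}g(y)^\lambda>0$ then $x\in A$ and $y\in B$, hence $(1-\lambda)x+\lambda y\in (1-\lambda)A+\lambda B$ and $h((1-\lambda)x+\lambda y)=1$; otherwise the right-hand side vanishes. Theorem \ref{PL} then yields the multiplicative Brunn--Minkowski inequality
\begin{equation*}
    \mathrm{vol}\bigl((1-\lambda)A+\lambda B\bigr) \geq \mathrm{vol}(A)^{1-\lambda}\,\mathrm{vol}(B)^\lambda.
\end{equation*}

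For the second stage, I first dispose of the degenerate cases: if $\mathrm{vol}(A)=0$ or $\mathrm{vol}(B)=0$, the inequality reduces to a translate inclusion and is trivial (compactness ensures $A+B$ contains a translate of the nontrivial factor). Assuming both volumes positive, set $\alpha=\mathrm{vol}^{1/d}(A)$, $\beta=\mathrm{vol}^{1/d}(B)$, and apply the multiplicative inequality above to the normalized sets $A'=A/\alpha$ and $B'=B/\beta$, which have unit volume, with the specific choice $\lambda=\beta/(\alpha+\beta)$. The key algebraic observation is that
\begin{equation*}
    (1-\lambda)A'+\lambda B' = \frac{1}{\alpha+\beta}(A+B),
\end{equation*}
so the multiplicative inequality becomes $\mathrm{vol}\bigl((A+B)/(\alpha+\beta)\bigr)\geq 1$, i.e.\ $\mathrm{vol}(A+B)\geq(\alpha+\beta)^d$. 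Taking $d$-th roots yields the announced inequality.

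There is no real obstacle here: the two stages are essentially routine once Prékopa--Leindler is available. The only points requiring mild care are checking the measurability/compactness of $(1-\lambda)A+\lambda B$ (immediate since $A,B$ are compact) and verifying the homogeneity identity $(1-\lambda)A'+\lambda B'=(A+B)/(\alpha+\beta)$ for the specific choice of $\lambda$. The conceptual content of the corollary therefore lies entirely in Theorem \ref{PL}, whose proof, via the Brenier map and the concavity of $\log\det$, is the substantive step.
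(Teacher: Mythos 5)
Your proof is correct and follows essentially the same route as the paper: apply Prékopa--Leindler to the indicator functions to get the multiplicative form $\mathrm{vol}((1-\lambda)A+\lambda B)\ge \mathrm{vol}(A)^{1-\lambda}\mathrm{vol}(B)^{\lambda}$, then exploit the homogeneity of Lebesgue measure; your choice $\lambda=\beta/(\alpha+\beta)$ on the normalized sets is exactly the optimizer in the paper's ``optimize in $\lambda$'' step. Your explicit treatment of the degenerate case $\mathrm{vol}(A)=0$ or $\mathrm{vol}(B)=0$ (via a translate inclusion, for nonempty $A,B$) is a small point the paper glosses over.
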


\begin{proof}
For any $0\le\lambda\le1,$ the functions $f=\mathbf{1}_A,$ $g=\mathbf{1}_B$ and
$h=\mathbf{1}_{[(1-\lambda)A+\lambda B]}$ satisfy Theorem \ref{PL}'s
assumptions. Therefore, we have $\int h\ge(\int
f)^{1-\lambda}(\int g)^\lambda$ which is
$\mathrm{vol}((1-\lambda)A+\lambda B)\ge
\mathrm{vol}(A)^{1-\lambda}\mathrm{vol}(B)^\lambda.$ It follows
that
$\mathrm{vol}(A+B)=\mathrm{vol}((1-\lambda)\frac{A}{1-\lambda}+\lambda
\frac{B}{\lambda})\ge
\mathrm{vol}(\frac{A}{1-\lambda})^{1-\lambda}\mathrm{vol}(\frac{B}{\lambda})^\lambda$
which is equivalent to $\mathrm{vol}^{1/d}(A+B)\ge
\left(\frac{\mathrm{vol}^{1/d}(A)}{1-\lambda}\right)^{1-\lambda}\left(\frac{\mathrm{vol}^{1/d}(B)}{\lambda}\right)^\lambda.$
It remains to optimize in $\lambda.$
\end{proof}

\appendix

\section{Tensorization of transport costs}
 During the proof of the tensorization property of
 transport-entropy inequalities at Proposition
 \ref{res-tensorization}, we made use of the following
 tensorization property of transport costs. A detailed  proof of
 this property in the literature being unknown to the authors, we find it useful to present
 it here.

  \begin{prop}\label{res-tenstrans}
  We assume that the cost functions $c_1$ and $c_2$ are lower
  semicontinous on the products of polish spaces $\X_1\times\Y_1$
  and $\X_2\times\Y_2,$ respectively. Then,
for all $\nu\in\mathrm{P}(\Y_1\times\Y_2),$
$\mu_1\in\mathrm{P}(\X_1)$ and $\mu_2\in\mathrm{P}(\X_2),$ we have
\begin{equation}\label{eq-41b}
     \mathcal{T}_{c_1\oplus c_2}(\nu,\mu_1\otimes\mu_2)
 \leq
 \mathcal{T}_{c_1}(\nu_1,\mu_1)+\int_{\Y_1}\mathcal{T}_{c_2}(\nu_2^{y_1},\mu_2)\,d\nu_1(y_1)
\end{equation}
where $\nu$ disintegrates as follows: $
    d\nu(y_1,y_2)=d\nu_1(y_1)d\nu_2^{y_1}(y_2).
$
 \end{prop}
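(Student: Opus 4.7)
The plan is to construct an explicit coupling of $\nu$ and $\mu_1\otimes\mu_2$ whose cost is bounded by the right-hand side of \eqref{eq-41b}; the inequality then follows from the definition of $\mathcal{T}_{c_1\oplus c_2}$ as an infimum. We may assume that both terms on the right are finite, since otherwise there is nothing to prove. The finiteness of the integral implies that $\mathcal{T}_{c_2}(\nu_2^{y_1},\mu_2)<\infty$ for $\nu_1$-almost every $y_1\in\Y_1$, so by Proposition \ref{resL-a} an optimal plan between $\nu_2^{y_1}$ and $\mu_2$ exists for $\nu_1$-a.e.\ $y_1$; similarly an optimal plan $\pi^{(1)}\in\mathrm{P}(\X_1\times\Y_1)$ for $\mathcal{T}_{c_1}(\nu_1,\mu_1)$ exists.

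First I would invoke the disintegration theorem on the Polish space $\Y_1\times\Y_2$ to obtain the measurable family $y_1\mapsto \nu_2^{y_1}\in\mathrm{P}(\Y_2)$ appearing in the statement. Next, assuming a measurable selection $y_1\mapsto \pi^{(2),y_1}\in\mathrm{P}(\X_2\times\Y_2)$ of optimal couplings between $\nu_2^{y_1}$ and $\mu_2$ (this is the sole technical point, addressed below), I would define a probability measure $Q$ on $\X_1\times\Y_1\times\X_2\times\Y_2$ by the glueing
\begin{equation*}
    Q(dx_1,dy_1,dx_2,dy_2):=\pi^{(2),y_1}(dx_2,dy_2)\,\pi^{(1)}(dx_1,dy_1),
\end{equation*}
and let $\pi$ be its push-forward to $(\X_1\times\X_2)\times(\Y_1\times\Y_2)$.

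The verification then reduces to two computations. For the marginals, integrating out $(x_2,y_2)$ uses that the marginals of $\pi^{(2),y_1}$ are $\mu_2$ and $\nu_2^{y_1}$: the $\X_1\times\X_2$-marginal becomes $\int \pi^{(1)}(dx_1,dy_1)\,\mu_2(dx_2) = \mu_1\otimes\mu_2$, while the $\Y_1\times\Y_2$-marginal becomes $\int \nu_1(dy_1)\,\nu_2^{y_1}(dy_2)=\nu$, so $\pi$ is a coupling of $\nu$ and $\mu_1\otimes\mu_2$. For the cost, Fubini gives
\begin{equation*}
 \int (c_1\oplus c_2)\,d\pi \;=\;\int c_1\,d\pi^{(1)}+\int\!\!\left(\int c_2\,d\pi^{(2),y_1}\right)\nu_1(dy_1)\;=\;\mathcal{T}_{c_1}(\nu_1,\mu_1)+\int_{\Y_1}\mathcal{T}_{c_2}(\nu_2^{y_1},\mu_2)\,d\nu_1(y_1),
\end{equation*}
where in the first term the integral of $1$ against $\pi^{(2),y_1}$ drops out and in the second term we used that the $\Y_1$-marginal of $\pi^{(1)}$ is $\nu_1$.

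The main obstacle is the measurable selection of the family $y_1\mapsto \pi^{(2),y_1}$. A clean way to avoid heavy measurable-selection machinery is to work with $\varepsilon$-optimal plans: for any $\varepsilon>0$, the set-valued map $y_1\rightrightarrows\{\pi\in\Pi(\nu_2^{y_1},\mu_2):\int c_2\,d\pi\le \mathcal{T}_{c_2}(\nu_2^{y_1},\mu_2)+\varepsilon\}$ has nonempty values and admits a Borel measurable selection by the Jankov--von Neumann or Kuratowski--Ryll-Nardzewski theorem, once one checks that $y_1\mapsto\nu_2^{y_1}$ is Borel into $\mathrm{P}(\Y_2)$ (which is the content of disintegration) and that $y_1\mapsto\mathcal{T}_{c_2}(\nu_2^{y_1},\mu_2)$ is measurable (by lower semicontinuity of $\mathcal{T}_{c_2}$ in its first argument, since $c_2$ is \lsc). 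Carrying out the construction with such $\varepsilon$-optimal plans yields the bound with an extra $\varepsilon$ on the right, and letting $\varepsilon\downarrow 0$ gives \eqref{eq-41b}. On the $\nu_1$-negligible set where $\mathcal{T}_{c_2}(\nu_2^{y_1},\mu_2)=\infty$, any fixed coupling (e.g., the product) may be used without affecting the estimate.
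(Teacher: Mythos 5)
Your proof is correct and is essentially the coupling construction the paper itself uses: gluing an ($\varepsilon$-)optimal plan for $(\nu_1,\mu_1)$ with a measurable family of $\varepsilon$-optimal plans for the pairs $(\nu_2^{y_1},\mu_2)$ is exactly what the paper's interchange of $\inf_{\pi_2^{\diamond}}$ with $\int$ amounts to, and both arguments rest on the same two measurability facts, namely Borel measurability of the disintegration $y_1\mapsto \nu_2^{y_1}$ and lower semicontinuity of $\mathcal{T}_{c_2}$ obtained from the Kantorovich dual equality. If anything, you are more explicit than the paper about the measurable-selection step, which the paper dispatches with the remark that the conditional kernels form a ``rich enough family'' to optimize separately at each point.
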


\begin{proof}
One first faces a nightmare of notation. It might be helpful to
introduce random variables and see
 $\pi\in\mathrm{P}(\X\times\Y)=\mathrm{P}(\X_1\times\X_2\times\Y_1\times\Y_2)$
as the law of $(X_1,X_2,Y_1,Y_2).$ One denotes
 $\pi_1=\mathcal{L}(X_1,Y_1),$
 $\pi_2^{x_1,y_1}\mathcal{L}(X_2,Y_2 |  X_1=x_1, Y_1=y_1),$
 $\pi_{X_2}^{x_1,y_1}=\mathcal{L}(X_2 |  X_1=x_1,Y_1=y_1),$
 $\pi_{Y_2}^{x_1,y_1}=\mathcal{L}(Y_2 |  X_1=x_1,Y_1=y_1),$
 $\pi_X=\mathcal{L}(X_1,X_2),$  $\pi_Y=\mathcal{L}(Y_1,Y_2)$ and so on.

 Let us denote $\Pi(\nu,\mu)$ the set of all $\pi\in\mathrm{P}(\X\times\Y)$ such
 that
 $\pi_X=\nu$ and $\pi_Y=\mu,$  $\Pi_1(\nu_1,\mu_1)$ the set of all $\eta\in\mathrm{P}(\X_1\times\Y_1)$ such that
 $\eta_{X_1}=\nu_1$ and $\eta_{Y_1}=\mu_1$ and
 $\Pi_2(\nu_2,\mu_2)$ the set of all $\eta\in\mathrm{P}(\X_2\times\Y_2)$ such that
 $\eta_{X_2}=\nu_2$ and $\eta_{Y_2}=\mu_2.$

We only consider couplings $\pi$ such that under the law $\pi$
\begin{itemize}
    \item $\mathcal{L}(X_1,X_2)=\nu,$
    \item $\mathcal{L}(Y_1,Y_2)=\mu,$
    \item $Y_1$ and $X_2$ are independent conditionally on $X_1$ and
    \item $X_1$ and $Y_2$ are independent conditionally on $Y_1.$
\end{itemize}
By the definition of the optimal cost, optimizing over this
collection of couplings leads us to
\[
\mathcal{T}_c(\nu,\mu)\leq \inf_{\pi_1,\pi_2^{\diamond}} \int
c_1\oplus
c_2(x_1,y_1,x_2,y_2)\,d\pi_1(x_1,y_1)d\pi_2^{x_1,y_1}(x_2,y_2)
\]
where the infimum is taken over all $\pi_1\in \Pi_1(\nu_1,\mu_1)$
and all Markov kernels $\pi_2^\diamond=(\pi_2^{x_1,y_1};
x_1\in\X_1, y_1\in\Y_1)$ such that $\pi_2^{x_1,y_1}\in
\Pi_2(\nu_{X_2}^{x_1},\mu_{Y_2}^{y_1})$ for $\pi_1$-almost every
$(x_1,y_1).$ As $\mu$ is a tensor product:
$\mu=\mu_1\otimes\mu_2,$ we have $\mu_{Y_2}^{y_1}=\mu_2,$
$\pi_1$-a.e. so that $\pi_2^{x_1,y_1}\in
\Pi_2(\nu_{X_2}^{x_1},\mu_2)$ for $\pi_1$-almost every
$(x_1,y_1).$
\\
We obtain
\begin{align*}
    \mathcal{T}_c(\nu,\mu)
   &\leq \inf_{\pi_1,\pi_2^{\diamond}} \int c_1\oplus c_2(x_1,y_1,x_2,y_2)\,d\pi_1(x_1,y_1)d\pi_2^{x_1,y_1}(x_2,y_2) \\
   &= \inf_{\pi_1} \left[\int_{\X_1\times\Y_1} c_1\,d\pi_1
        + \inf_{\pi_2^{\diamond}} \int_{\X_1\times\Y_1}\int_{\X_2\times\Y_2}c_2(x_2,y_2)d\pi_2^{x_1,y_1}(x_2,y_2)\,d\pi_1(x_1,y_1)\right] \\
     &\stackrel{(a)}{=} \inf_{\pi_1} \left[\int_{\X_1\times\Y_1} c_1\,d\pi_1
        + \int_{\X_1\times\Y_1}\left(\inf_{\pi_2^{\diamond}}\int_{\X_2\times\Y_2}c_2(x_2,y_2)d\pi_2^{x_1,y_1}(x_2,y_2)\right)\,d\pi_1(x_1,y_1)\right] \\
  &\stackrel{(b)}{=} \inf_{\pi_1} \left[\int_{\X_1\times\Y_1} c_1\,d\pi_1
        +
        \int_{\X_1\times\Y_1}\mathcal{T}_{c_2}\big(\nu_{X_2}^{x_1},\mu_2\big)\,d\pi_1(x_1,y_1)\right]\\
  &= \inf_{\pi_1}\left\{\int_{\X_1\times\Y_1} c_1\,d\pi_1\right\}+ \int_{\X_1}\mathcal{T}_{c_2}(\nu_{X_2}^{x_1},\mu_2)\,d\nu_1(x_1) \\
   &= \mathcal{T}_{c_1}(\nu_1,\mu_1)+ \int_{\X_1}\mathcal{T}_{c_2}(\nu_{X_2}^{x_1},\mu_2)\,d\nu_1(x_1)
\end{align*}
which is the desired result.
\\
Equality (a) is not that obvious. First of all, one is allowed to
commute $\inf_{\pi_2^{\diamond}}$ and $\int_{\X_1\times\Y_1}$
since $\pi_2^{\diamond}$ lives in a rich enough family for being
able to optimize separately for each $(x_1,y_1).$ But also, one
must check that after commuting, the integrand
$\inf_{\pi_2^{\diamond}}\int_{\X_2\times\Y_2}c_2(x_2,y_2)d\pi_2^{x_1,y_1}(x_2,y_2)$
is measurable as a function of $(x_1,y_1).$ But for each fixed
$(x_1,y_1),$ this integrand is the optimal transport cost
$\mathcal{T}_{c_2}(\nu_{X_2}^{x_1},\mu_2)$ (this is the content of
equality (b)). Now, with the Kantorovich dual equality
\eqref{eq-01a}, one sees that $\mathcal{T}_c$ is a lower
semicontinuous function as the supremum of a family of continuous
functions. A fortiori, $\mathcal{T}_{c_2}$ is measurable on
$\mathrm{P}(\X_2)\times\mathrm{P}(\Y_2)$ and
$(x_1,y_1)\mapsto\mathcal{T}_{c_2}(\nu_{X_2}^{x_1},\mu_2)$ is also
measurable as a composition of measurable functions (use the
polish assumption for the existence of measurable Markov kernels).
This completes the proof of the proposition.
\end{proof}

Let us have a look at another tensorization result which appears
in \cite{GLWY09}. On the polish product space
$\X^{(n)}:=\prod_{i=1}^n \X_i,$ consider the cost function
\begin{equation*}
\oplus_ic_i (x,y) :=\sum_{i=1}^n c(x_i, y_i), \quad  x,y\in
\X^{(n)}
\end{equation*}
where for  each index $i,$ $c_i$ is \lsc\ on $\X_i^2.$ Let
$(X_i)_{1\le i\le n}$ be the canonical process on
$\X^{(n)}=\prod_{i=1}^n \X_i.$ For each $i,$
$\widetilde{X}_i=(X_j)_{1\le j\le n; j\not=i}$ is the
configuration without its value at index $i.$ Given a probability
measure $\nu$ on $\X^{(n)}$,
$$\nu_i^{\widetilde{x}_i}=\nu(X_i\in\cdot|\widetilde{X}_i=\widetilde{x}_i)$$ denotes the regular conditional
distribution of $X_i$ knowing that
$\widetilde{X}_i=\widetilde{x}_i$ under $\nu$ and
$$
\nu_i=\nu(X_i\in\cdot)
$$
denotes the $i$-th marginal of $\nu.$

\begin{prop}\label{res-tensliming} Let $\mu=\bigotimes_{i=1}^n\mu_i$ be a product probability measure on $\X^{(n)}.$
For all $\nu\in\mathrm{P}(\X^{(n)}),$
    $$
\mathcal{T}_{\oplus c_i}(\nu,\mu) \le
\int_{\X^{(n)}}\left(\sum_{i=1}^n
\mathcal{T}_{c_i}(\nu_i^{\widetilde{x}_i}, \mu_i)\right)\,d\nu(x).
    $$
\end{prop}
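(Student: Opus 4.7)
The plan is to argue by induction on $n$, using Proposition \ref{res-tenstrans} to peel off a single coordinate at each step, and Jensen's inequality to convert the ``global'' marginal term produced by that lemma into the conditional-average form desired.

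The base case $n=1$ is trivial since $\widetilde{x}_1$ is empty and $\nu_1^{\widetilde{x}_1}=\nu$. For the inductive step, assume the result for $n-1$ factors, fix $\nu\in \mathrm{P}(\X^{(n)})$ and write $\X^{(n)}=\X_1\times \X'$ with $\X':=\prod_{i=2}^n\X_i$, equipped with the product measure $\mu=\mu_1\otimes\mu'$, $\mu':=\bigotimes_{i=2}^n\mu_i$, and the cost $\oplus_i c_i=c_1\oplus(c_2\oplus\cdots\oplus c_n)$. Let $\nu_1$ denote the first marginal of $\nu$ and $\nu'^{x_1}\in \mathrm{P}(\X')$ the regular conditional law of $(X_2,\dots,X_n)$ given $X_1=x_1$. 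Proposition \ref{res-tenstrans} applied to this splitting yields
\begin{equation}\label{eq-plan-1}
\mathcal{T}_{\oplus c_i}(\nu,\mu)\le \mathcal{T}_{c_1}(\nu_1,\mu_1)+\int_{\X_1}\mathcal{T}_{c_2\oplus\cdots\oplus c_n}(\nu'^{x_1},\mu')\,d\nu_1(x_1).
\end{equation}

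The induction hypothesis, applied on $\X'$ to the measure $\nu'^{x_1}$ (for $\nu_1$-almost every $x_1$), gives
\[
\mathcal{T}_{c_2\oplus\cdots\oplus c_n}(\nu'^{x_1},\mu')\le \int_{\X'}\sum_{i=2}^n\mathcal{T}_{c_i}\bigl((\nu'^{x_1})_i^{\widehat{x}_i},\mu_i\bigr)\,d\nu'^{x_1}(x_2,\dots,x_n),
\]
where $\widehat{x}_i=(x_2,\dots,x_{i-1},x_{i+1},\dots,x_n)$. The crucial identification is now the tower property for regular conditional probabilities: for each $i\ge 2$, one has $(\nu'^{x_1})_i^{\widehat{x}_i}=\nu_i^{\widetilde{x}_i}$ for $\nu$-almost every $x$, where $\widetilde{x}_i=(x_1,\widehat{x}_i)$. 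Integrating this last display against $d\nu_1(x_1)$ and using Fubini to recombine $d\nu_1(x_1)\,d\nu'^{x_1}=d\nu(x)$ transforms the second summand of \eqref{eq-plan-1} into $\int\sum_{i=2}^n\mathcal{T}_{c_i}(\nu_i^{\widetilde{x}_i},\mu_i)\,d\nu(x)$.

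It remains to absorb the leading term $\mathcal{T}_{c_1}(\nu_1,\mu_1)$ into the sum corresponding to $i=1$. Let $\nu_{\widetilde{1}}$ denote the law of $\widetilde{X}_1=(X_2,\dots,X_n)$ under $\nu$, so that $\nu_1=\int_{\widetilde{\X}_1}\nu_1^{\widetilde{x}_1}\,d\nu_{\widetilde{1}}(\widetilde{x}_1)$. By Kantorovich duality (Theorem \ref{res-01}), the map $\sigma\mapsto \mathcal{T}_{c_1}(\sigma,\mu_1)$ is a supremum of affine functionals of $\sigma$, hence convex, so Jensen's inequality yields
\[
\mathcal{T}_{c_1}(\nu_1,\mu_1)\le \int_{\widetilde{\X}_1}\mathcal{T}_{c_1}(\nu_1^{\widetilde{x}_1},\mu_1)\,d\nu_{\widetilde{1}}(\widetilde{x}_1)=\int_{\X^{(n)}}\mathcal{T}_{c_1}(\nu_1^{\widetilde{x}_1},\mu_1)\,d\nu(x),
\]
since the integrand depends only on $\widetilde{x}_1$. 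Combining with the previous display closes the induction.

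The only non-routine point is the tower-property identification $(\nu'^{x_1})_i^{\widehat{x}_i}=\nu_i^{\widetilde{x}_i}$, which is a standard statement about disintegrations on a polish space but requires working with a measurable choice of regular conditional distributions (this is where the polish assumption enters, exactly as in the proof of Proposition \ref{res-tenstrans}); the associated measurability of $\widetilde{x}_i\mapsto\mathcal{T}_{c_i}(\nu_i^{\widetilde{x}_i},\mu_i)$ has already been justified there via the lower semicontinuity of $\mathcal{T}_{c_i}$ inherited from Kantorovich duality.
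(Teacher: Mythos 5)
Your proof is correct, but it takes a genuinely different route from the paper's. The paper argues directly: it takes an (almost) optimal coupling $(U,V)$ for the full cost $\oplus_i c_i$, and for each fixed $i$ replaces the $i$-th pair by a conditionally optimal coupling of $\nu_i^{\widetilde U_i}$ and $\mu_i^{\widetilde V_i}$; the optimality of $(U,V)$ then forces $\E\,c_i(U_i,V_i)\le \E\,\mathcal{T}_{c_i}(\nu_i^{\widetilde U_i},\mu_i^{\widetilde V_i})$, and summing over $i$ and using $\mu_i^{\widetilde V_i}=\mu_i$ gives the claim with no ordering of the coordinates. You instead iterate the sequential lemma (Proposition \ref{res-tenstrans}), which produces Marton's \emph{ordered} bound in which $X_i$ is conditioned only on $X_1,\dots,X_{i-1}$, and then pass to the symmetric bound via the tower property of disintegrations together with the convexity of $\sigma\mapsto\mathcal{T}_{c_1}(\sigma,\mu_1)$ (Jensen applied to $\nu_1=\int\nu_1^{\widetilde x_1}\,d\nu_{\widetilde 1}$); this correctly shows that the all-coordinates-conditioned bound is a consequence of the ordered one, and the ordering used in the proof disappears from the statement. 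What each approach buys: yours reuses machinery already proved in the Appendix and makes transparent the relation to Marton's original result (which the paper only mentions in a closing remark), at the price of the tower-property identification $(\nu'^{x_1})_i^{\widehat x_i}=\nu_i^{\widetilde x_i}$ and one Jensen step per coordinate; the paper's coupling argument is self-contained, symmetric from the start, and — importantly — its first step is recycled verbatim in Proposition \ref{res-10} for \emph{non-product} Gibbs measures (where one keeps $\mu_i^{\widetilde V_i}$ and controls it by Dobrushin coefficients), a generalization your induction cannot reach since Proposition \ref{res-tenstrans} hinges on the product structure of $\mu$. Both proofs face the same measurability caveats, which you flag appropriately.
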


\begin{proof} \
$\bullet$ \emph{A first sketch}.\
 Let $(W_i)_{1\le i\le n}=(U_i, V_i)_{1\le i\le n}$
be a sequence of random variables taking their  values in
$\prod_{i=1}^n \X_i^2$ which is defined on some probability space
$(\Omega, \P)$ so that it realizes $\mathcal{T}_{\oplus
c_i}(\nu,\mu).$ This means that the law of $U=(U_i)_{1\le i\le n}$
is $\nu$, the law of $V=(V_i)_{1\le i\le n}$ is $\mu$ and
    $
 \E \sum_{i} c_i(U_i, V_i)= \mathcal{T}_{\oplus c_i}(\nu,\mu).
    $

Let $i$ be a fixed index. There exists a couple of random
variables $\widehat{W}_i:=(\widehat{U}_i, \widehat{V}_i)$ such
that its conditional law given $(\widetilde{U}_i,
\widetilde{V}_i)=\widetilde{W}_i:=(W_j)_{j\ne i}$ is a coupling of
$\nu_i^{\widetilde{U}_i}$ and $\mu_i^{\widetilde{V}_i},$ and
$\P$-a.s.,
    $
    \E [c_i(\widehat{U}_i, \widehat{V}_i)|\widetilde{W}_i]
    =\mathcal{T}_{c_i}\Big(\nu_i^{\widetilde{U}_i},\mu_i^{\widetilde{V}_i}\Big).
    $ This implies
\begin{equation}\label{eqL-04}
     \E c_i(\widehat{U}_i, \widehat{V}_i)
    =\E\mathcal{T}_{c_i}\Big(\nu_i^{\widetilde{U}_i},\mu_i^{\widetilde{V}_i}\Big).
\end{equation}
Clearly, $\left[(\widetilde{U}_i, \widehat{U}_i);(\widetilde{V}_i,
\widehat{V}_i)\right]$ is a coupling of $(\nu,\mu).$ The
optimality of $W$ gives us\\
    $
    \E \sum_{j} c_j(U_j, V_j)
    \le \E \left(\sum_{j\ne i} c_j(U_j, V_j)+ c_i(\widehat{U}_i,\widehat{V}_i)\right)
    $
which boils down to
\begin{equation}\label{eqL-05}
    \E c_i(U_i,V_i)
    \le \E c_i(\widehat{U}_i, \widehat{V}_i)
    = \E\mathcal{T}_{c_i}\Big(\nu_i^{\widetilde{U}_i},\mu_i^{\widetilde{V}_i}\Big)
\end{equation}
where the equality is \eqref{eqL-04}. Summing over all the indices
$i,$ we see that
$$
    \mathcal{T}_{\oplus c_i}(\nu,\mu)
    = \E \sum_{i=1}^n\ c_i(U_i, V_i)
    \le \E \sum_{i=1}^n
    \mathcal{T}_{c_i}\Big(\nu_i^{\widetilde{U}_i},\mu_i^{\widetilde{V}_i}\Big).
$$
As $\mu$ is a product measure, we have
$\mu_i^{\widetilde{V}_i}=\mu_i,$ $\P$-almost surely and we obtain
$$
\mathcal{T}_{\oplus c_i}(\nu,\mu) \le\int_{\X^{(n)}} \sum_{i=1}^n
 \mathcal{T}_{c_i}(\nu_i^{\widetilde{x}_i},\mu_i)\,d\nu(x)
$$
which is the announced result.
\par\medskip\noindent$\bullet$ \emph{Completion of the proof}.\
This first part is an incomplete proof, since one faces a
measurability problem when constructing the conditional optimal
coupling $\widehat{W}_i:=(\widehat{U}_i, \widehat{V}_i).$ This
measurability is needed to take the expectation in \eqref{eqL-04}.
More precisely, it is true that for each value $\widetilde{w}_i$
of $\widetilde{W}_i,$ there exists a coupling
$\widehat{W}_i(\tilde{w}_i)$ of $\nu_i^{\widetilde{u}_i}$ and
$\mu_i^{\widetilde{v}_i}.$ But the dependence in $\widetilde{w}_i$
must be Borel measurable for
$\widehat{W}_i=\widehat{W}_i(\widetilde{W}_i)$ to be a random
variable.
\\
One way to circumvent this problem is to proceed  as in
Proposition \ref{res-tenstrans}. The important features of this
proof are:
\begin{enumerate}
    \item  The
 Markov kernels $\nu_i^{\widetilde{u}_i}$ and
$\mu_i^{\widetilde{v}_i}$ are built with conditional independence
properties as in Proposition \ref{res-tenstrans}'s proof. More
precisely
\begin{itemize}
    \item $\widetilde{V}_i$ and $U_i$ are independent
    conditionally on $\widetilde{U}_i$ and
    \item $\widetilde{U}_i$ and $V_i$ are independent
    conditionally on $\widetilde{V}_i.$
\end{itemize}
These kernels admit measurable versions since the state space is
polish.
    \item The measurability is not required
at the level of the optimal coupling but only through the optimal
cost.
\end{enumerate}
 This leads us to \eqref{eqL-05}. We omit the details of the
proof which is a variation on Proposition \ref{res-tenstrans}'s
proof with another ``nightmare of notation".
\end{proof}

Proposition \ref{res-tensliming} differs from Marton's original
result \cite{M96b} which requires an ordering of the indices.

\section{Variational representations of the relative entropy}

At Section \ref{section transport inequalities}, we  took  great
advantage of the variational representations of $\mathcal{T}_c$
and the relative entropy. Here, we give a proof of the variational
representation formulae \eqref{eqL-c} and \eqref{eqL-d} of the
relative entropy.

\begin{prop}\label{resL-13}
For all $\nu\in\PX,$
\begin{equation}\label{eqAp-c}
\begin{split}
  H(\nu|\mu)&= \sup\left\{\int u\,d\nu-\log\int e^u\,d\mu; u\in\CX\right\}.\\
        &= \sup\left\{\int u\,d\nu-\log\int e^u\,d\mu; u\in \BX\right\}
\end{split}
\end{equation}
and for all $\nu\in\PX$ \textrm{ such that } $\nu\ll\mu,$
\begin{equation}\label{eqAp-d}
    H(\nu|\mu) =\sup\left\{\int u\,d\nu-\log\int e^u\,d\mu; u: \textrm{measurable,} \int
    e^{u}\,d\mu<\infty, \int u_-\,d\nu<\infty
    \right\}
\end{equation}
where $u_-=(-u)\vee 0$ and $\int u\,d\nu\in(-\infty,\infty]$ is
well-defined for all $u$ such that $\int u_-\,d\nu<\infty.$
\end{prop}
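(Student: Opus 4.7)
The plan is to first prove the ``$\ge$'' inequality (upper bound on the suprema) uniformly in all three settings, then produce matching lower bounds by explicit construction.

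For the upper bound, I would invoke the elementary inequality $ab\le a\log a-a+e^b$ valid for $a\ge 0$, $b\in\R$ (Young's inequality for the Legendre pair $h(a)=a\log a-a+1$ and $h^*(b)=e^b-1$). Assume first $\nu\ll\mu$ with density $f=d\nu/d\mu$; for any measurable $u$ with $\int e^u\,d\mu<\infty$ and $\int u_-\,d\nu<\infty$, apply this pointwise with $a=f(x)$ and $b=u(x)-\log\int e^u\,d\mu$ and integrate against $\mu$. The exponential term integrates to $1$ and we get
\[
\int u\,d\nu-\log\int e^u\,d\mu\le H(\nu|\mu),
\]
which handles the ``$\ge$'' direction in both \eqref{eqAp-c} and \eqref{eqAp-d}. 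The case $\nu\not\ll\mu$ makes the right-hand side infinite, so the inequality is trivial there (and in \eqref{eqAp-d} this case is excluded by hypothesis).

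For the lower bound in \eqref{eqAp-d}, assume $\nu\ll\mu$ with density $f$. If $H(\nu|\mu)<\infty$ I will take $u_n:=\log f$ on $\{f>0\}$ and $u_n:=-n$ on $\{f=0\}$; this is measurable, $\int e^{u_n}\,d\mu=1+e^{-n}\mu(f=0)\to 1$, $\int (u_n)_-\,d\nu=\int_{0<f<1}f|\log f|\,d\mu\le 1/e<\infty$ (since $t|\log t|\le 1/e$ on $(0,1)$), and $\int u_n\,d\nu=\int f\log f\,d\mu=H(\nu|\mu)$, so $\int u_n\,d\nu-\log\int e^{u_n}\,d\mu\to H(\nu|\mu)$. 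If $H(\nu|\mu)=+\infty$ I instead truncate from above: $v_{n}:=\log(f\wedge n)$ on $\{f>0\}$ and $-n$ on $\{f=0\}$, so that $\int e^{v_n}\,d\mu\le 2$ while $\int v_n\,d\nu\to\int f\log f\,d\mu=+\infty$ by monotone/dominated convergence applied separately to positive and negative parts. This proves \eqref{eqAp-d}.

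The passage from \eqref{eqAp-d} to the two formulas in \eqref{eqAp-c} is by approximation. First restrict to bounded measurable $u$: given a candidate $u$ admissible in \eqref{eqAp-d}, set $u_{N}:=(-N)\vee u\wedge N\in\BX$ and apply dominated/monotone convergence to both $\int u_N\,d\nu$ and $\int e^{u_N}\,d\mu$ to recover the supremum; this gives the second identity in \eqref{eqAp-c}. For the first identity, use that $\BX$ is dense in $L^1(\lambda)$ for $\lambda:=(\mu+\nu)/2$ by continuous bounded functions (a consequence of Lusin/Urysohn on polish spaces): given $u\in\BX$ with $\|u\|_\infty\le N$, pick $u_k\in\CX$ with $\|u_k\|_\infty\le N$ and $u_k\to u$ in $L^1(\lambda)$, hence $\lambda$-a.s.\ along a subsequence; dominated convergence under $\mu$ and $\nu$ then gives convergence of both terms. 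Finally, if $\nu\not\ll\mu$ then there is a Borel set $A$ with $\mu(A)=0$ and $\nu(A)>0$; approximating $\mathbf 1_A$ in $L^1(\mu+\nu)$ by functions $\varphi_k\in\CX$ with $0\le\varphi_k\le 1$ and then testing with $u=n\varphi_k$, a diagonal extraction yields sequences along which $\int u\,d\nu-\log\int e^u\,d\mu\to+\infty$, matching $H(\nu|\mu)=+\infty$.

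The main obstacle is the last step, the continuous approximation on a polish space: one must simultaneously control the exponential integral $\int e^u\,d\mu$ (which is not an $L^1$-continuous functional of $u$ without an a priori bound) and the linear integral $\int u\,d\nu$. The uniform bound $\|u_k\|_\infty\le N$ in the bounded-measurable reduction is what rescues the argument by allowing dominated convergence with dominator $e^N$; without first reducing to bounded $u$ via the truncation step, the exponential term would be out of reach.
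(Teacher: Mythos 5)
Your proof is correct and follows essentially the same route as the paper: the upper bound is the Fenchel--Young inequality for $h(t)=t\log t-t+1$ (you apply it directly with the normalized $b=u-\log\int e^u\,d\mu$, whereas the paper first proves the unnormalized bound $\int u\,d\nu\le H(\nu|\mu)+\int(e^u-1)\,d\mu$ and then optimizes over additive constants via $\log a=\inf_b\{ae^b-b-1\}$), and the lower bound comes from explicit truncations of $\log(d\nu/d\mu)$, where the paper uses the single sequence $u_n=\log(\tfrac{d\nu}{d\mu}\vee e^{-n})$ while you split the cases $H<\infty$ and $H=\infty$. Your treatment of the passage to bounded and continuous $u$ (truncation plus Lusin/Urysohn, and the diagonal argument when $\nu\not\ll\mu$) correctly fills in what the paper dismisses as ``standard approximation arguments.''
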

\begin{proof} Once we
have \eqref{eqAp-d}, \eqref{eqAp-c} follows by standard
approximation arguments.
\\
The proof of \eqref{eqAp-d} relies on Fenchel inequality for the
convex function $h(t)=t\log t-t+1$: $$st\le (t\log
t-t+1)+(e^s-1)$$ for all $s\in[-\infty,\infty),$ $t\in[0,\infty),$
with the conventions $0\log 0=0,$ $e^{-\infty}=0$ and
$-\infty\times 0=0$ which are legitimated by limiting procedures.
The equality is attained when $t=e^s.$

Taking $s=u(x),$  $t=\frac{d\nu}{d\mu}(x)$ and integrating with
respect to $\mu$ leads us to $$\int u\,d\nu\le
H(\nu|\mu)+\int(e^u-1)\,d\mu,$$ whose terms are meaningful with
values in $(-\infty,\infty],$ provided that $\int
u_-\,d\nu<\infty.$ Formally, the case of equality corresponds to
$\frac{d\nu}{d\mu}=e^u.$ With the monotone convergence theorem,
one sees that it is approached by the sequence $u_n=
\log(\frac{d\nu}{d\mu}\vee e^{-n}),$ as $n$ tends to infinity.
This gives us $H(\nu|\mu)=\sup\left\{\int u\,d\nu-\int
(e^u-1)\,d\mu; u: \int e^{u}\,d\mu<\infty,\inf u>-\infty
\right\},$ which in turn implies that
\begin{equation*}
    H(\nu|\mu)=\sup\left\{\int u\,d\nu-\int (e^u-1)\,d\mu; u: \int
e^{u}\,d\mu<\infty,\int u_-\,d\nu<\infty \right\},
\end{equation*}
since the integral $\int \log(d\nu/d\mu)\,d\nu=\int
h(d\nu/d\mu)\,d\mu\in[0,\infty]$  is well-defined.

Now, we take advantage of the unit mass of $\nu\in\PX:$ $$\int
(u+b)\,d\nu-\int (e^{(u+b)}-1)\,d\mu=\int u\,d\nu-e^b\int
e^u\,d\mu +b+1,\quad  b\in \R,$$ and we use the easy
identity $\log a=\inf_{b\in\R}\{ae^b-b-1\}$ to obtain
    $$\sup_{b\in\R}\left\{\int (u+b)\,d\nu-\int
(e^{(u+b)}-1)\,d\mu\right\}=\int u\,d\nu-\log\int e^u\,d\mu.$$
Whence,
\begin{align*}
  &\sup\left\{\int u\,d\nu-\int (e^u-1)\,d\mu; u: \int e^{u}\,d\mu<\infty,\int u_-\,d\nu<\infty
  \right\}\\
    &= \sup\left\{\int (u+b)\,d\nu-\int (e^{(u+b)}-1)\,d\mu;b\in\R,  u: \int e^{u}\,d\mu<\infty,\int u_-\,d\nu<\infty \right\}\\
  &= \sup\left\{\int u\,d\nu-\log\int e^u\,d\mu; u: \int e^{u}\,d\mu<\infty,\int u_-\,d\nu<\infty
  \right\}.
\end{align*}
This completes the proof of \eqref{eqAp-d}.
\end{proof}

\bibliographystyle{plain}

\begin{thebibliography}{100}

\bibitem{Anderson-Guionnet-Zeitouni10}
G.~Anderson, A.~Guionnet, and O.~Zeitouni.
\newblock {\em An Introduction to Random Matrices}, volume 118 of {\em
  Cambridge Studies in Advanced Mathematics}.
\newblock Cambridge University Press, Cambridge, 2010.

\bibitem{Log-Sob}
C.~An{\'e}, S.~Blach{\`e}re, D.~Chafa{\"{\i}}, P.~Foug{\`e}res, I.~Gentil,
  F.~Malrieu, C.~Roberto, and G.~Scheffer.
\newblock {\em Sur les in\'egalit\'es de {S}obolev logarithmiques}, volume~10
  of {\em Panoramas et Synth\`eses [Panoramas and Syntheses]}.
\newblock Soci\'et\'e Math\'ematique de France, Paris, 2000.

\bibitem{BBCG08}
D.~Bakry, F.~Barthe, P.~Cattiaux, and A.~Guillin.
\newblock A simple proof of the {P}oincar\'e inequality for a large class of
  probability measures including the log-concave case.
\newblock {\em Electron. Commun. Probab.}, 13:60--66, 2008.

\bibitem{BE85}
D.~Bakry and M.~Emery.
\newblock Diffusions hypercontractives.
\newblock In {\em S\'eminaire de probabilit\'es}, volume 1123 of {\em Lecture
  Notes in Mathematics}, pages 177--206. Springer-Verlag, 1985.

\bibitem{Bar97}
F.~Barthe.
\newblock {\em In\'egalit\'es fonctionnelles et g\'eom\'etriques obtenues par
  transport de mesures}.
\newblock PhD thesis, Univ. Marne-la-Vall\'ee, 1997.

\bibitem{Bar98}
F.~Barthe.
\newblock On a reverse form of the brascamp-lieb inequality.
\newblock {\em Invent. Math.}, 134(2):335--361, 1998.

\bibitem{BCR06}
F.~Barthe, P.~Cattiaux, and C.~Roberto.
\newblock Interpolated inequalities between exponential and {G}aussian,
  {O}rlicz hypercontractivity and isoperimetry.
\newblock {\em Rev. Mat. Iberoamericana}, 22(3):993--1067, 2006.

\bibitem{BK08}
F.~Barthe and A.~V. Kolesnikov.
\newblock Mass transport and variants of the logarithmic {S}obolev inequality.
\newblock {\em J. Geom. Anal.}, 18(4):921--979, 2008.

\bibitem{BR03}
F.~Barthe and C.~Roberto.
\newblock Sobolev inequalities for probability measures on the real line.
\newblock {\em Studia Math.}, 159(3):481--497, 2003.

\bibitem{BR08}
F.~Barthe and C.~Roberto.
\newblock Modified logarithmic {S}obolev inequalities on {$\Bbb R$}.
\newblock {\em Potential Anal.}, 29(2):167--193, 2008.

\bibitem{Beck89}
W.~Beckner.
\newblock A generalized {P}oincar\'e inequality for {G}aussian measures.
\newblock {\em Proc. Amer. Math. Soc.}, 105:397--400, 1989.

\bibitem{BS09}
M.~Beiglb\"ock and W.~Schachermayer.
\newblock Duality for {B}orel measurable cost functions.
\newblock To appear in Trans. Amer. Math. Soc.

\bibitem{BAG97}
G.~Ben~Arous and A.~Guionnet.
\newblock Large deviations for {W}igner's law and {V}oiculescu's
  non-commutative entropy.
\newblock {\em Probab. Theory Related Fields}, 108(4):517--542, 1997.

\bibitem{B03}
P.~Biane.
\newblock Logarithmic {S}obolev inequalities, matrix models and free entropy.
\newblock {\em Acta Math. Sin. (Engl. Ser.)}, 19(3):497--506, 2003.
\newblock International Workshop on Operator Algebra and Operator Theory
  (Linfen, 2001).

\bibitem{BV01}
P.~Biane and D.~Voiculescu.
\newblock A free probability analogue of the {W}asserstein metric on the
  trace-state space.
\newblock {\em Geom. Funct. Anal.}, 11(6):1125--1138, 2001.

\bibitem{BGL01}
S.~Bobkov, I.~Gentil, and M.~Ledoux.
\newblock Hypercontractivity of {H}amilton-{J}acobi equations.
\newblock {\em J. Math. Pures Appl. (9)}, 80(7):669--696, 2001.

\bibitem{BG99}
S.~Bobkov and F.~G{\"o}tze.
\newblock Exponential integrability and transportation cost related to
  logarithmic {S}obolev inequalities.
\newblock {\em J. Funct. Anal.}, 163(1):1--28, 1999.

\bibitem{BH97}
S.~Bobkov and C.~Houdr{\'e}.
\newblock Isoperimetric constants for product probability measures.
\newblock {\em Ann. Probab.}, 25(1):184--205, 1997.

\bibitem{BL97}
S.~Bobkov and M.~Ledoux.
\newblock Poincar\'e's inequalities and {T}alagrand's concentration phenomenon
  for the exponential distribution.
\newblock {\em Probab. Theory Related Fields}, 107(3):383--400, 1997.

\bibitem{BL00}
S.~Bobkov and M.~Ledoux.
\newblock From {B}runn-{M}inkowski to {B}rascamp-{L}ieb and to logarithmic
  {S}obolev inequalities.
\newblock {\em Geom. Funct. Anal.}, 10(5):1028--1052, 2000.

\bibitem{BZ05}
S.~Bobkov and B.~Zegarlinski.
\newblock Entropy bounds and isoperimetry.
\newblock {\em Mem. Amer. Math. Soc.}, 176(829):x+69, 2005.

\bibitem{BV05}
F.~Bolley and C.~Villani.
\newblock Weighted {C}sisz\'ar-{K}ullback-{P}insker inequalities and
  applications to transportation inequalities.
\newblock {\em Ann. Fac. Sci. Toulouse Math. (6)}, 14(3):331--352, 2005.

\bibitem{Bre91}
Y.~Brenier.
\newblock Polar factorization and monotone rearrangement of vector-valued
  functions.
\newblock {\em Comm. Pure Appl. Math.}, 44:375--417, 1991.

\bibitem{Car91}
E.~Carlen.
\newblock Superadditivity of {F}isher's information and logarithmic {S}obolev
  inequalities.
\newblock {\em Journal of Functional Analysis}, 101(1):194--211, 1991.

\bibitem{CG06}
P.~Cattiaux and A.~Guillin.
\newblock Talagrand's like quadratic transportation cost inequalities.
\newblock {\em J. Math. Pures Appl.}, 86(9):341--361, 2006.

\bibitem{CGW08}
P.~Cattiaux, A.~Guillin, and L-M. Wu.
\newblock A note on {T}alagrand transportation inequality and logarithmic
  {S}obolev inequality.
\newblock to appear in Probab. Theory Related Fields, 2008.

\bibitem{C02}
D.~Cordero-Erausquin.
\newblock Some applications of mass transport to {G}aussian-type inequalities.
\newblock {\em Arch. Ration. Mech. Anal.}, 161(3):257--269, 2002.

\bibitem{CGH04}
D.~Cordero-Erausquin, W.~Gangbo, and C.~Houdr{\'e}.
\newblock Inequalities for generalized entropy and optimal transportation.
\newblock In {\em Recent advances in the theory and applications of mass
  transport}, volume 353 of {\em Contemp. Math.}, pages 73--94. Amer. Math.
  Soc., Providence, RI, 2004.

\bibitem{CMS01}
D.~Cordero-Erausquin, R.~McCann, and M.~Schmuckenschl{\"a}ger.
\newblock A {R}iemannian interpolation inequality \`a la {B}orell, {B}rascamp
  and {L}ieb.
\newblock {\em Invent. Math.}, 146(2):219--257, 2001.

\bibitem{CMS06}
D.~Cordero-Erausquin, R.~McCann, and M.~Schmuckenschl{\"a}ger.
\newblock Pr\'ekopa-{L}eindler type inequalities on {R}iemannian manifolds,
  {J}acobi fields, and optimal transport.
\newblock {\em Ann. Fac. Sci. Toulouse Math. (6)}, 15(4):613--635, 2006.

\bibitem{CNV04}
D.~Cordero-Erausquin, B.~Nazaret, and C.~Villani.
\newblock A mass-transportation approach to sharp {S}obolev inequalities and
  {G}agliardo-{N}irenberg inequalities.
\newblock {\em Adv. Math.}, 182(2):307--332, 2004.

\bibitem{Csi67}
I.~Csisz\'ar.
\newblock Information-type measures of difference of probability distributions
  and indirect observations.
\newblock {\em Stud. Sci. Math. Hung.}, 2:299--318, 1967.

\bibitem{D96}
A.~Dembo.
\newblock Information inequalities and concentration of measure.
\newblock {\em Ann. Probab.}, 25(2):927--939, 1997.

\bibitem{DZ96}
A.~Dembo and O.~Zeitouni.
\newblock Transportation approach to some concentration inequalities in product
  spaces.
\newblock {\em Electron. Comm. Probab.}, 1:no.\ 9, 83--90 (electronic), 1996.

\bibitem{DZ}
A.~Dembo and O.~Zeitouni.
\newblock {\em Large deviations techniques and applications. {S}econd edition}.
\newblock Applications of Mathematics 38. Springer Verlag, 1998.

\bibitem{DS}
J-D. Deuschel and D.~Stroock.
\newblock {\em Large Deviations}, volume 137 of {\em Pure and Applied
  Mathematics}.
\newblock Academic Press, 1989.

\bibitem{DGW04}
H.~Djellout, A.~Guillin, and L.~Wu.
\newblock Transportation cost-information inequalities and applications to
  random dynamical systems and diffusions.
\newblock {\em Ann. Probab.}, 32(3B):2702--2732, 2004.

\bibitem{Dob68}
R.~Dobrushin.
\newblock The description of a random field by means of conditional
  probabilities and condition of itd regularity.
\newblock {\em Theor. Probab. Appl.}, 13:197--224, 1968.

\bibitem{Dob70}
R.~Dobrushin.
\newblock Prescribing a system of random variables by conditional
  distributions.
\newblock {\em Theor. Probab. Appl.}, 15:458--486, 1970.

\bibitem{DV75a}
M.~Donsker and S.~Varadhan.
\newblock Asymptotic evaluations of certain {M}arkov process expectations for
  large time, {I}.
\newblock {\em Comm. Pure Appl. Math.}, 28:1--47, 1975.

\bibitem{Dud02}
R.~Dudley.
\newblock {\em Real Analysis and Probability}, volume~74 of {\em Cambridge
  studies in advanced mathematics}.
\newblock Cambridge University Press, 2002.

\bibitem{FeyUstu04a}
D.~Feyel and A.S. \"Ust\"unel.
\newblock {M}onge-{K}antorovitch measure transportation and {M}onge-{A}mp\`ere
  equation on {W}iener space.
\newblock {\em Probab. Theory Related Fields}, 128(3):347--385, 2004.

\bibitem{Foe85}
H.~F\"ollmer.
\newblock {\em Random fields and diffusion processes, in Ecole d'Et\'e de
  Probabilit\'es de Saint-Flour XV-XVII-1985-87}, volume 1362 of {\em Lecture
  Notes in Mathematics}.
\newblock Springer, Berlin, 1988.

\bibitem{GMC96}
W.~Gangbo and R.~McCann.
\newblock The geometry of optimal transportation.
\newblock {\em Acta Math.}, 177:113--161, 1996.

\bibitem{GGW09}
F.~Gao, A.~Guillin, and L.~Wu.
\newblock Bernstein type's concentration inequalities for symmetric {M}arkov
  processes.
\newblock Preprint, 2009.

\bibitem{GW07}
F.~Gao and L.~Wu.
\newblock Transportation-information inequalities for {G}ibbs measures.
\newblock Preprint, 2007.

\bibitem{GGM05}
I.~Gentil, A.~Guillin, and L.~Miclo.
\newblock Modified logarithmic {S}obolev inequalities and transportation
  inequalities.
\newblock {\em Probab. Theory Related Fields}, 133(3):409--436, 2005.

\bibitem{GM02}
I.~Gentil and F.~Malrieu.
\newblock Equation de {H}amilton-{J}acobi et in\'egalit\'e entropiques
  g\'en\'eralis\'ees.
\newblock {\em C. R. Acad. Sci. Paris}, 335:437--440, 2002.

\bibitem{G06}
N.~Gozlan.
\newblock Integral criteria for transportation cost inequalities.
\newblock {\em Electron. Comm. Probab.}, 11:64--77, 2006.

\bibitem{G07}
N.~Gozlan.
\newblock Characterization of {T}alagrand's like transportation-cost
  inequalities on the real line.
\newblock {\em J. Funct. Anal.}, 250(2):400--425, 2007.

\bibitem{G08}
N.~Gozlan.
\newblock Poincar\'e inequality and dimension free concentration of measure.
\newblock To appear in Ann. Inst. H. Poincar\'e Probab. Statist., 2008.

\bibitem{G09}
N.~Gozlan.
\newblock A characterization of dimension free concentration in terms of
  transport inequalities.
\newblock {\em Ann. Probab.}, 37(6):2480--2498, 2009.

\bibitem{GL07}
N.~Gozlan and C.~L\'eonard.
\newblock A large deviation approach to some transportation cost inequalities.
\newblock {\em Probab. Theory Related Fields}, 139(1-2):235--283, 2007.

\bibitem{GRS09}
N.~Gozlan, C.~Roberto, and P-M. Samson.
\newblock A refined version of {O}tto and {V}illani theorem.
\newblock In preparation, 2009.

\bibitem{GM83}
M.~Gromov and V.D. Milman.
\newblock A topological application of the isoperimetric inequality.
\newblock {\em Amer. J. Math.}, 105:843--854, 1983.

\bibitem{GLWW08}
A.~Guillin, C.~L\'eonard, F.~Wang, and L.~Wu.
\newblock Transportation-information inequalities for markov processes ({II}) :
  relations with other functional inequalities.
\newblock Preprint, http://arxiv.org/abs/0902.2101, 2008.

\bibitem{GLWY09}
A.~Guillin, C.~L\'eonard, L.~Wu, and N.~Yao.
\newblock Transportation-information inequalities for {M}arkov processes.
\newblock {\em Probab. Th. Rel. Fields}, 144:669--695, 2009.

\bibitem{Guionnet09}
A.~Guionnet.
\newblock {\em Large random matrices: lectures on macroscopic asymptotics},
  volume 1957 of {\em Lecture Notes in Mathematics}.
\newblock Springer-Verlag, Berlin, 2009.
\newblock Lectures from the 36th Probability Summer School held in Saint-Flour,
  2006.

\bibitem{GZ03}
A.~Guionnet and B.~Zegarlinski.
\newblock {\em Lectures on logarithmic {S}obolev inequalities}, volume 1801 of
  {\em Lecture Notes in Mathematics}.
\newblock Springer, 2003.
\newblock S\'eminaire de probabilit\'es XXXVI.

\bibitem{HPU04}
F.~Hiai, D.~Petz, and Y.~Ueda.
\newblock Free transportation cost inequalities via random matrix
  approximation.
\newblock {\em Probab. Theory Related Fields}, 130(2):199--221, 2004.

\bibitem{Horn-Johnson94}
R.~A. Horn and C.~R. Johnson.
\newblock {\em Topics in matrix analysis}.
\newblock Cambridge University Press, Cambridge, 1994.
\newblock Corrected reprint of the 1991 original.

\bibitem{KLS95}
R.~Kannan, L.~Lov{\'a}sz, and M.~Simonovits.
\newblock Isoperimetric problems for convex bodies and a localization lemma.
\newblock {\em Discrete Comput. Geom.}, 13(3-4):541--559, 1995.

\bibitem{KS84}
M.~Knott and C.~Smith.
\newblock On the optimal mapping of distributions.
\newblock {\em J. Optim. Theory Appl.}, 43(1):39--49, 1984.

\bibitem{Kul67}
S.~Kullback.
\newblock A lower bound for discrimination information in terms of variation.
\newblock {\em IEEE Trans. Inform. Theory}, 4:126--127, 1967.

\bibitem{LO00}
R.~Lata{\l}a and K.~Oleszkiewicz.
\newblock Between {S}obolev and {P}oincar\'e.
\newblock In {\em Geometric aspects of functional analysis}, volume 1745 of
  {\em Lecture Notes in Math.}, pages 147--168. Springer, Berlin, 2000.

\bibitem{LW08}
R.~Lata{\l}a and J.~Wojtaszczyk.
\newblock On the infimum convolution inequality.
\newblock {\em Studia Math.}, 189(2):147--187, 2008.

\bibitem{L96}
M.~Ledoux.
\newblock On {T}alagrand's deviation inequalities for product measures.
\newblock {\em ESAIM Probab. Statist.}, 1:63--87, 1996.

\bibitem{Led}
M.~Ledoux.
\newblock {\em The concentration of measure phenomenon}, volume~89 of {\em
  Mathematical Surveys and Monographs}.
\newblock American Mathematical Society, Providence, RI, 2001.

\bibitem{L05}
M.~Ledoux.
\newblock A (one-dimensional) free {B}runn-{M}inkowski inequality.
\newblock {\em C. R. Math. Acad. Sci. Paris}, 340(4):301--304, 2005.

\bibitem{LP09}
M.~Ledoux and I.~Popescu.
\newblock Mass transportation proofs of free functional inequalities, and free
  {P}oincar\'e inequalities.
\newblock {\em J. Funct. Anal.}, 257(4):1175--1221, 2009.

\bibitem{Lei72}
L.~Leindler.
\newblock On a certain converse of {H}\"older's inequality.
\newblock In {\em Linear operators and approximations (Proc. Conf. Oberwolfach,
  1971)}, volume~20 of {\em International Series Numer. Math.}, pages 182--184,
  Basel, 1972. Birkha\"user.

\bibitem{Leo10}
C.~L\'eonard.
\newblock A saddle-point approach to the {M}onge-{K}antorovich optimal
  transport problem.
\newblock {\em ESAIM-COCV (Control, Optimization and Calculus of Variations)}.
\newblock To appear. Preprint downloadable from
  \verb#www.cmap.polytechnique.fr/~leonard#.

\bibitem{LV07}
J.~Lott and C.~Villani.
\newblock Hamilton-{J}acobi semigroup on length spaces and applications.
\newblock {\em J. Math. Pures Appl. (9)}, 88(3):219--229, 2007.

\bibitem{MSWW09}
Y.~Ma, S.~Shen, X.~Wang, and L.~Wu.
\newblock Transportation inequalities: {F}rom {P}oisson to {G}ibbs measures.
\newblock To appear in Stoch. Proc. Appl.

\bibitem{Mar97}
F.~Martinelli.
\newblock {\em Lectures on Glauber dynamics for discrete spin models}, volume
  1717 of {\em Lecture Notes in Mathematics}.
\newblock Springer, 1999.
\newblock \'Ecole d'\'et\'e de Saint-Flour (1997).

\bibitem{M86}
K.~Marton.
\newblock A simple proof of the blowing-up lemma.
\newblock {\em IEEE Trans. Inform. Theory}, 32(3):445--446, 1986.

\bibitem{M96a}
K.~Marton.
\newblock Bounding {$\overline d$}-distance by informational divergence: a
  method to prove measure concentration.
\newblock {\em Ann. Probab.}, 24(2):857--866, 1996.

\bibitem{M96b}
K.~Marton.
\newblock A measure concentration inequality for contracting {M}arkov chains.
\newblock {\em Geom. Funct. Anal.}, 6(3):556--571, 1996.

\bibitem{M98}
K.~Marton.
\newblock Measure concentration for a class of random processes.
\newblock {\em Probab. Theory Related Fields}, 110(3):427--439, 1998.

\bibitem{M04}
K.~Marton.
\newblock Measure concentration for {E}uclidean distance in the case of
  dependent random variables.
\newblock {\em Ann. Probab.}, 32(3B):2526--2544, 2004.

\bibitem{Mas07}
P.~Massart.
\newblock {\em Concentration inequalities and model selection}, volume 1896 of
  {\em Lecture Notes in Mathematics}.
\newblock Springer, Berlin, 2007.
\newblock Lectures from the 33rd Summer School on Probability Theory held in
  Saint-Flour, July 6--23, 2003, With a foreword by Jean Picard.

\bibitem{Mau91}
B.~Maurey.
\newblock Some deviation inequalities.
\newblock {\em Geom. Funct. Anal.}, 1(2):188--197, 1991.

\bibitem{McC94}
R.~McCann.
\newblock {\em A convexity theory for interacting gases and equilibrium
  crystals}.
\newblock PhD thesis, Princeton Univ., 1994.

\bibitem{McC95}
R.~McCann.
\newblock Existence and uniqueness of monotone measure-preserving maps.
\newblock {\em Duke Math. J.}, 80:309--323, 1995.

\bibitem{Mil09d}
E.~Milman.
\newblock Isoperimetric and concentration inequalities - part i: Equivalence
  under curvature lower bound.
\newblock http://arxiv.org/abs/0902.1560, 2009.

\bibitem{Mil09b}
E.~Milman.
\newblock On the role of convexity in functional and isoperimetric
  inequalities.
\newblock {\em Proc. Lond. Math. Soc. (3)}, 99(1):32--66, 2009.

\bibitem{Mil09a}
E.~Milman.
\newblock On the role of convexity in isoperimetry, spectral gap and
  concentration.
\newblock {\em Invent. Math.}, 177(1):1--43, 2009.

\bibitem{Muck72}
B.~Muckenhoupt.
\newblock Hardy's inequality with weights.
\newblock {\em Studia Math.}, 44:31--38, 1972.

\bibitem{OV00}
F.~Otto and C.~Villani.
\newblock Generalization of an inequality by {T}alagrand and links with the
  logarithmic {S}obolev inequality.
\newblock {\em J. Funct. Anal.}, 173(2):361--400, 2000.

\bibitem{Pin64}
M.~Pinsker.
\newblock {\em Information and information stability of random variables and
  processes}.
\newblock Holden-Day, San Francisco, 1964.

\bibitem{P07}
I.~Popescu.
\newblock Talagrand inequality for the semicircular law and energy of the
  eigenvalues of beta ensembles.
\newblock {\em Math. Res. Lett.}, 14(6):1023--1032, 2007.

\bibitem{Pre73}
A.~Pr\'ekopa.
\newblock On logarithmic concave measures and functions.
\newblock {\em Acta Sci. Math. (Szeged)}, 34:335--343, 1973.

\bibitem{Roy99}
G.~Royer.
\newblock {\em Une initiation aux in\'egalit\'es de Sobolev logarithmiques},
  volume~5 of {\em Cours sp\'ecialis\'es}.
\newblock Soci\'et\'e Math\'ematique de France, 1999.

\bibitem{S00}
P-M. Samson.
\newblock Concentration of measure inequalities for {M}arkov chains and
  {$\Phi$}-mixing processes.
\newblock {\em Ann. Probab.}, 28(1):416--461, 2000.

\bibitem{S03}
P-M. Samson.
\newblock Concentration inequalities for convex functions on product spaces.
\newblock In {\em Stochastic inequalities and applications}, volume~56 of {\em
  Progr. Probab.}, pages 33--52. Birkh\"auser, Basel, 2003.

\bibitem{S07}
P-M Samson.
\newblock Infimum-convolution description of concentration properties of
  product probability measures, with applications.
\newblock {\em Ann. Inst. H. Poincar\'e Probab. Statist.}, 43(3):321--338,
  2007.

\bibitem{SZ92a}
D.~Stroock and B.~Zegarlinski.
\newblock The equivalence between the logarithmic {S}obolev inequality and the
  {D}obrushin-{S}hlosman mixing condition.
\newblock {\em Comm. Math. Physics}, 144:303--323, 1992.

\bibitem{SZ92b}
D.~Stroock and B.~Zegarlinski.
\newblock The logarithmic {S}obolev inequality for discrete spin systems on the
  lattice.
\newblock {\em Comm. Math. Physics}, 149:175--193, 1992.

\bibitem{T95}
M.~Talagrand.
\newblock Concentration of measure and isoperimetric inequalities in product
  spaces.
\newblock {\em Publications Math\'ematiques de l'I.H.E.S.}, 81:73--203, 1995.

\bibitem{T96b}
M.~Talagrand.
\newblock New concentration inequalities in product spaces.
\newblock {\em Invent. Math.}, 126(3):505--563, 1996.

\bibitem{T96c}
M.~Talagrand.
\newblock A new look at independence.
\newblock {\em Ann. Probab.}, 24(1):1--34, 1996.

\bibitem{T96}
M.~Talagrand.
\newblock Transportation cost for {G}aussian and other product measures.
\newblock {\em Geom. Funct. Anal.}, 6(3):587--600, 1996.

\bibitem{Vill}
C.~Villani.
\newblock {\em Topics in Optimal Transportation}.
\newblock Graduate Studies in Mathematics 58. American Mathematical Society,
  Providence RI, 2003.

\bibitem{Vill2}
C.~Villani.
\newblock {\em Optimal transport}, volume 338 of {\em Grundlehren der
  Mathematischen Wissenschaften [Fundamental Principles of Mathematical
  Sciences]}.
\newblock Springer-Verlag, Berlin, 2009.
\newblock Old and new.

\bibitem{W97}
F-Y. Wang.
\newblock Logarithmic {S}obolev inequalities on noncompact {R}iemannian
  manifolds.
\newblock {\em Probab. Theory Related Fields}, 109(3):417--424, 1997.

\bibitem{W00}
F-Y. Wang.
\newblock Functional inequalities for empty essential spectrum.
\newblock {\em J. Funct. Anal.}, 170(1):219--245, 2000.

\bibitem{W04}
F-Y. Wang.
\newblock Probability distance inequalities on {R}iemannian manifolds and path
  spaces.
\newblock {\em J. Funct. Anal.}, 206(1):167--190, 2004.

\bibitem{W05}
F-Y. Wang.
\newblock A generalization of {P}oincar\'e and log-{S}obolev inequalities.
\newblock {\em Potential Anal.}, 22(1):1--15, 2005.

\bibitem{W08b}
F-Y. Wang.
\newblock From super {P}oincar\'e to weighted log-{S}obolev and entropy-cost
  inequalities.
\newblock {\em J. Math. Pures Appl. (9)}, 90(3):270--285, 2008.

\bibitem{W08a}
F-Y. Wang.
\newblock Generalized transportation-cost inequalities and applications.
\newblock {\em Potential Anal.}, 28(4):321--334, 2008.

\bibitem{Wu00b}
L.~Wu.
\newblock Uniformly integrable operators and large deviations for {M}arkov
  processes.
\newblock {\em J. Funct. Anal}, 172:301--376, 2000.

\bibitem{Wu06}
L.~Wu.
\newblock Poincar\'e and transportation inequalities for {G}ibbs measures under
  the {D}obrushin uniqueness condition.
\newblock {\em Ann. Probab.}, 34(5):1960--1989, 2006.

\bibitem{Wu09}
L.~Wu.
\newblock Transportation inequalities for stochastic differential equations of
  pure jumps.
\newblock Preprint, 2009.

\bibitem{Zeg92}
B.~Zegarlinski.
\newblock Dobrushin uniqueness theorem and logarithmic {S}obolev inequalities.
\newblock {\em J. Funct. Anal.}, 105:77--111, 1992.

\end{thebibliography}

\end{document}